\documentclass[11pt]{article}
\usepackage[T1]{fontenc}
\usepackage{amsmath,amsthm,amsfonts,tikz,bm,tipa}

\usetikzlibrary{arrows.meta}
\usepackage{amssymb}
\usepackage[hidelinks]{hyperref}

\newtheorem{theorem}{Theorem}[section]
\newtheorem{lemma}[theorem]{Lemma}
\newtheorem{prop}[theorem]{Proposition}
\newtheorem{corollary}[theorem]{Corollary}

\DeclareMathAlphabet{\mathbfit}{OML}{cmm}{b}{it}
\DeclareMathAlphabet{\mathbfcal}{OMS}{cmsy}{b}{n}

\usepackage[margin=1in]{geometry}

\DeclareSymbolFont{kplargesymbols}{OMX}{jkp}{m}{n}
\DeclareMathAccent{\widearc}{\mathalpha}{kplargesymbols}{"86}

\DeclareMathOperator{\Iso}{\mathrm{Isom}}

\DeclareMathOperator{\trr}{\mathrm{tr}}
\DeclareMathOperator{\arcosh}{\mathrm{arcosh}}

\DeclareMathOperator{\sig}{\mathrm{sig}}

\begin{document}

\newcommand{\R}{\mathbb{R}}
\newcommand{\C}{\mathbb{C}}
\newcommand{\hh}{\mathcal{H}}
\newcommand{\PGL}{\mathrm{PGL}}
\newcommand{\PSL}{\mathrm{PSL}}
\newcommand{\Ann}{\mathrm{Ann}}
\newcommand{\Gr}{\mathrm{Gr}}
\newcommand{\Herm}{\mathrm{Herm}}

\newcommand{\la}{\langle}
\newcommand{\ra}{\rangle}
\newcommand{\pr}{\prime}
\newcommand{\oline}{\overline}
\newcommand{\triag}{\triangle}

\newcommand{\cl}{\bm{\ell}}
\newcommand{\ca}{\mathbfit{a}}
\newcommand{\cb}{\mathbfit{b}}
\newcommand{\cc}{\mathbfit{c}}
\newcommand{\cd}{\mathbfit{d}}
\newcommand{\cn}{\mathbfit{n}}
\newcommand{\cm}{\mathbfit{m}}
\newcommand{\ch}{\mathbfit{h}}
\newcommand{\ci}{\mathbfit{i}}
\newcommand{\ce}{\mathbfit{e}}
\newcommand{\cp}{\mathbfit{p}}
\newcommand{\cs}{\mathbfit{s}}

\newcommand{\M}{\mathbfit{M}}
\newcommand{\CC}{\mathbfit{C}}
\newcommand{\GG}{\mathbfit{G}}
\newcommand{\PP}{\mathbfit{P}}
\newcommand{\LL}{\mathbfit{L}}

\newcommand{\gA}{\mathbfcal{A}}
\newcommand{\gB}{\mathbfcal{B}}
\newcommand{\gC}{\mathbfcal{C}}
\newcommand{\gP}{\mathbfcal{P}}
\newcommand{\gQ}{\mathbfcal{Q}}
\newcommand{\gL}{\mathbfcal{L}}
\newcommand{\gI}{\mathbfcal{I}}
\newcommand{\gH}{\mathbfcal{H}}
\newcommand{\gO}{\mathbfcal{O}}

\title{On Ceva's and Menelaus's Theorems for a M\"obius triangle}
\author{Ivan Livinsky}
\date{}
\maketitle

\begin{abstract}
We generalize the classical Ceva's and Menelaus's theorems to curvilinear
triangles bounded by circular arcs (Figure~\ref{fig:ceva}).
We introduce trilinear coordinates associated with such triangles
and develop several geometric constructions.
In particular, for any proper M\"obius triangle we define the incenter,
excenters, and orthocenter. An interactive demo containing most of our results is available at
\url{https://github.com/livinsky-research/Moebius}
\end{abstract}

\begin{figure}[h]
\[
\begin{tikzpicture}
\tikzstyle{vertex}=[circle, fill, inner sep=1.5pt]
\draw [thick](6.000000,2.000000) arc (196.928605:115.666103:5.500987);
\draw [thick](13.260000,3.420000) arc (-124.794809:-154.333897:13.244863);
\node [vertex] at (13.260000,3.420000) {};
\node [vertex] at (8.880000,8.560000) {};
\node [vertex] at (16.45,1.80000) {};
\draw [thick](6.000000,2.000000) arc (126.928605:75.205191:8.479550);
\node [vertex] at (6.000000,2.000000) {};
\draw (6.000000,2.000000) arc (167.882244:0:5.326801);
\draw (8.880000,8.560000) arc (133.928993:333:5.094527);
\draw (13.260000,3.420000) arc (69.074771:99:16.926158);
\draw (13.260000,3.420000) arc (69.074771:55:16.926158);
\node [vertex] at (7.332268,4.535918) {};
\end{tikzpicture}
\]
\caption{Ceva's Theorem for a M\"obius triangle gives necessary and sufficient conditions for three cevians to belong to one pencil.}
\label{fig:ceva}
\end{figure}

\section{The M\"obius plane}
The main purpose of this paper is to show that M\"obius geometry is no less rich than classical geometries such as affine or hyperbolic. As we shall see, the results obtained here naturally yield corresponding statements
for spherical, hyperbolic, and Euclidean geometries, illustrating the unifying role of M\"obius geometry.

We begin by recalling the basic properties of the M\"obius plane.
The M\"obius plane $\M$ is an incidence structure of \textit{points} and \textit{cycles}. 
In its classical model we identify $\M$ with the Riemann sphere $\C\cup\{\infty\}$ (or, equivalently, with the complex projective line $\C P^1$). In this representation the cycles are all circles and all straight lines with $\infty$ attached. For any three points there exists a unique cycle containing them. Straight lines are exactly the cycles containing $\infty$. 

Any cycle $\ca$ can be defined by an equation of the form
$$
k z\oline{z} + l z + m\oline{z} + n=0,
$$
where the matrix $M=\left(\begin{smallmatrix}k & l\\ m & n\end{smallmatrix}\right)\in\Herm(2)$ is Hermitian, non-degenerate, and indefinite \cite{Sch}. In particular, $k$ and $n$ are real, $l=\oline{m}$, and $\det M<0$. Two matrices define the same cycle whenever they differ by a real scalar factor. A cycle is a straight line iff $k=0$. When $k\neq 0$ the cycle is a circle with center $O=-\frac{m}{k}$ and radius $r$ satisfying $r^2=\frac{|m|^2}{k^2}-\frac{n}{k}=-\frac{1}{k^2}\det M$.

The M\"obius plane has the isometry group $\Iso\M$ that maps cycles to cycles and preserves incidence. Two sets $X, Y\subseteq\M$ are \textit{congruent} if there exists an isometry $\sigma\in\Iso\M$ such that $\sigma(X)=Y$.
The group $\Iso\M$ is generated by all inversions $\sigma_\ca$ about all cycles $\ca$. In the case of line the corresponding inversion is the reflection across the line.
The isometry group has two connectedness components. The component of the identity is the orientation-preserving subgroup $\Iso^+\M$. It is a subgroup of index $2$ and consists of all M\"obius transformations
$$
z\mapsto\frac{az+b}{cz+d}
$$
and is naturally isomorphic to the group $\PSL(2;\C)$.
The remaining isometries are maps of the form
$$
z\mapsto\frac{a\oline{z}+b}{c\oline{z}+d}.
$$
In particular, all rotations, translations, homotheties, and similarities are M\"obius isometries. The only isometries that preserve $\infty$ are of the form $z\mapsto az+b$ or $z\mapsto a\oline{z}+b$, for some $a,b\in\C$, $a\neq 0$. These are exactly the Euclidean similarities. The full group $\Iso\M$ is generated by $\Iso^+\M$ and any inversion $\sigma_\ca$, or by all similarities and any inversion $\sigma_\ca$ provided that $\ca$ is a circle.

All the isometry transformations are conformal. Therefore, angles are well-defined in M\"obius geometry and agree with the angles in the Euclidean representation. On the other hand, there is no distance, no area, and no in-between relation for triples of points; moreover, every cycle has no center.

The standard constant-curvature geometries (spherical, Euclidean, and hyperbolic) are naturally embedded into M\"obius geometry, and every theorem in M\"obius geometry becomes some result in these geometries involving lines and circles. 
\begin{itemize}
\item for spherical geometry we identify $\M$ with the sphere $S^2$ via stereographic projection. The cycles are then all straight lines and all circles.
All are obtained as planar sections of the sphere. The planes corresponding to the
straight lines are the ones passing through the center of $S^2$. After projecting   onto $\C$ the straight lines become the lines passing through $0$ and the circles relative to which the power of $0$ is $-1$. In the matrix language these cycles correspond to the matrices $M$ with an additional property $k+n=\trr M=0$.

\item for Euclidean geometry we consider $\C=\M\setminus\{\infty\} $ and once again the cycles are all straight lines and all circles. The straight lines are identified by the condition $k=0$. 

\item for hyperbolic geometry we consider the upper half-plane 
$\hh\subset\C$ as the Poincar\'e model. The straight lines are the half-circles orthogonal to $\R$ and vertical rays. These are exactly the cycles defined by real matrices $M$. The remaining 
cycles that intersect $\hh$ cover all hyperbolic circles, horocycles, and equidistant curves depending on the number of intersection points with $\oline{\R}=\R\cup\{\infty\}$ (this is also the real projective line $\R P^1$).
\end{itemize}

In all our proofs we are not going beyond complex numbers, trigonometry, basic linear algebra, and quadratic equations.

\section{Points and cycles}
The following property of M\"obius transformations is well-known from Complex Analysis.
\begin{prop}
For any two triples $A, B, C$ and $A^\pr, B^\pr, C^\pr$ of distinct points on $\M$ there exists a unique orientation-preserving 
isometry $\sigma\in\Iso^+ \M$ such that $\sigma(A)=A^\pr$, $\sigma(B)=B^\pr$, $\sigma(C)=C^\pr$.
\end{prop}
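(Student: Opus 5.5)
The plan is to reduce to a normal form: first show that every triple of distinct points can be sent to the standard triple $0,1,\infty$ (in this order) by some element of $\Iso^+\M$. Then show that the only element of $\Iso^+\M$ fixing $0$, $1$ and $\infty$ is the identity. Both existence and uniqueness for arbitrary pairs of triples follow formally from these two facts. If $\alpha(A,B,C)=(0,1,\infty)$ and $\beta(A^\pr,B^\pr,C^\pr)=(0,1,\infty)$, then $\sigma=\beta^{-1}\alpha$ works. If $\sigma$ and $\sigma^\pr$ both work, then $\beta\sigma^{-1}\sigma^{\pr}\beta^{-1}$ fixes $0,1,\infty$, so it is the identity and $\sigma=\sigma^\pr$.

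For existence, I will use the identification of $\Iso^+\M$ with the group of maps $z\mapsto\frac{az+b}{cz+d}$, $ad-bc\neq 0$, recalled in Section~1. When $A,B,C$ are all finite, I take the cross-ratio map
$$
\alpha(z)=\frac{(z-A)(B-C)}{(z-C)(B-A)}.
$$
Its coefficients are $a=B-C$, $b=-A(B-C)$, $c=B-A$, $d=-C(B-A)$. Then $ad-bc=(B-C)(B-A)(A-C)\cdot(\pm1)$, which is nonzero because the points are distinct, so $\alpha$ is a genuine M\"obius transformation. One checks directly that $\alpha(A)=0$, $\alpha(B)=1$ and $\alpha(C)=\infty$.

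If one of the points is $\infty$, I delete the two factors containing it:
\begin{itemize}
\item for $A=\infty$, take $\alpha(z)=\frac{B-C}{z-C}$;
\item for $B=\infty$, take $\alpha(z)=\frac{z-A}{z-C}$;
\item for $C=\infty$, take $\alpha(z)=\frac{z-A}{B-A}$.
\end{itemize}
Each of these is again visibly invertible and sends the triple to $0,1,\infty$. These cases are routine bookkeeping with the usual conventions for $\infty$.

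For uniqueness, suppose $\tau(z)=\frac{az+b}{cz+d}$ fixes $0$, $1$ and $\infty$. Fixing $\infty$ forces $c=0$, since otherwise $\tau(\infty)=a/c$ is finite. Then $\tau(z)=(a/d)z+b/d$, and fixing $0$ gives $b=0$. Finally, fixing $1$ gives $a=d$, so $\tau=\mathrm{id}$. This rests on the fact that a M\"obius transformation acting as the identity on $\M$ is the identity element of $\Iso^+\M$. That is built into regarding $\Iso^+\M$ as a group of maps, and is consistent with the isomorphism to $\PSL(2;\C)$, where proportional matrices are identified.

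I expect no real obstacle. The only points needing care are the handling of $\infty$ in the existence step and the invertibility check $ad-bc\neq 0$. An alternative is to argue in projective terms: three distinct points of $\C P^1$ are represented by vectors $v_1,v_2,v_3\in\C^2$, pairwise independent. One can rescale $v_1,v_2$ so that $v_3=v_1+v_2$, and the matrix with columns $v_1,v_2$ sends $[1:0],[0:1],[1:1]$ to the given triple. Uniqueness then says a matrix fixing these three lines is scalar. This version avoids the case split entirely, and I would use it if the case analysis became messy.
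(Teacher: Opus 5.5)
Your proof is correct. The paper gives no proof of this proposition at all; it simply quotes it as ``well-known from Complex Analysis''. Your argument is the standard one being alluded to: normalize to $0,1,\infty$ with the cross-ratio map (where $ad-bc=(B-C)(B-A)(A-C)\neq 0$), then check that the stabilizer of $0,1,\infty$ in $\Iso^+\M$ is trivial.

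The only slip is in the uniqueness step. As written, $\beta\sigma^{-1}\sigma^\pr\beta^{-1}$ need not fix $0$: it sends $0$ to $\beta\sigma^{-1}\sigma^\pr(A^\pr)$, which is uncontrolled. The composite you want is $\beta\sigma^\pr\sigma^{-1}\beta^{-1}$, or equivalently $\alpha\sigma^{-1}\sigma^\pr\alpha^{-1}$. Either one fixes $0,1,\infty$ and so yields $\sigma^\pr=\sigma$.
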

It means that any three distinct points on the M\"obius plane are in general position. Hence, not only there are no two-point invariants such as distance, but no three-point invariants either. Only for four points we have some special cases and nontrivial invariants.

For any two points there are infinitely many cycles passing through them. Two points lying on a cycle split it into two cycle segments.
 Let  $A, B, C, D$ be four distinct points
lying on some cycle. We say that $A,B$ \textit{split} $C,D$ if $C$ and $D$ belong to different segments formed by $A$ and $B$. For any $A, B, C, D$ we have either $A,B$ split $C,D$, or $A,C$ split $B, D$, or $A, D$ split $B, C$ (Figure~\ref{fig:splitting}). 

\begin{figure}[h]
\[
\begin{tikzpicture}
\tikzstyle{vertex}=[circle, fill, inner sep=1.5pt]
\draw (2,2) circle (2cm);
\node [vertex] at (0,2) {};
\node [left] at (0,2) {$A$};
\node [vertex] at (0.58,0.58) {};
\node [left] at (0.58,0.58) {$C$};
\node [vertex] at (2,4) {};
\node at (2,4.3) {$D$};
\node [vertex] at (3.848,1.234) {};
\node [right] at (3.848,1.234) {$B$};
\node at (2,-0.4) {$A,B$ split $C,D$};
\draw (7,2) circle (2cm);
\node [vertex] at (5,2) {};
\node [left] at (5,2) {$A$};
\node [vertex] at (5.58,0.58) {};
\node [left] at (5.58,0.58) {$B$};
\node [vertex] at (7,4) {};
\node at (7,4.3) {$D$};
\node [vertex] at (8.848,1.234) {};
\node [right] at (8.848,1.234) {$C$};
\node at (7,-0.4) {$A,C$ split $B,D$};
\draw (12,2) circle (2cm);
\node [vertex] at (10,2) {};
\node [left] at (10,2) {$A$};
\node [vertex] at (10.58,0.58) {};
\node [left] at (10.58,0.58) {$B$};
\node [vertex] at (12,4) {};
\node at (12,4.3) {$C$};
\node [vertex] at (13.848,1.234) {};
\node [right] at (13.848,1.234) {$D$};
\node at (12,-0.4) {$A,D$ split $B,C$};
\end{tikzpicture}
\]
\caption{Relative position of four points on a cycle.}
\label{fig:splitting}
\end{figure}

Furthermore, the cross-ratio
$$
(A,B;C,D) = \frac{C-A}{C-B}\frac{D-B}{D-A}
$$
is real and is invariant under M\"obius isometries. However, we will not need it in this paper.


Now consider the relative position of cycles on the M\"obius plane.
Every cycle splits the plane into two \textit{half-planes}. Two cycles $\ca, \cb$ are either disjoint, intersect in two points, or intersect in one point. In the last case we say that $\ca$ and $\cb$ are \textit{tangent}. For example, two parallel lines are tangent in $\infty$.

For three pairwise disjoint cycles $\ca$, $\cb$, $\cc$ we naturally have
the in-between relation. We say that $\ca$ \textit{is between} $\cb$ and $\cc$, or $\ca$ \textit{splits $\cb$ and $\cc$} if $\cb$ and $\cc$ lie in different half-planes bounded by $\ca$. Clearly, for any three pairwise disjoint cycles either one of them or none is lying between two others (Figure~\ref{fig:between}).

\begin{figure}[h]
\[
\begin{tikzpicture}
\draw (4,2) circle (1.5cm);
\draw (3.5,2) circle (0.5cm);
\draw (1,2) circle (1cm);
\draw (9, 2.7) circle(0.7cm);
\draw (8, 1) circle(0.7cm);
\draw (10, 1) circle(0.7cm);
\end{tikzpicture}
\]
\caption{Relative position of three pairwise disjoint cycles.}
\label{fig:between}
\end{figure}

Two disjoint cycles split the plane into two half-planes and an \textit{annulus}. For example, a ring between two concentric circles is an annulus.
Two tangent cycles split the plane into two half-planes and a new region -- a \textit{monogon} -- a domain bounded by two cycles and one point -- its vertex. For example, a strip between two parallel lines is a monogon with $\infty$ being its vertex.

\begin{prop}
Any two monogons are congruent.
\end{prop}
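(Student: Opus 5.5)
Given a monogon bounded by two tangent cycles $\ca$, $\cb$ with common vertex $P$, we first move $P$ to $\infty$. We pick any orientation-preserving isometry $\sigma\in\Iso^+\M$ with $\sigma(P)=\infty$; by the preceding proposition on triples of points, such a $\sigma$ exists. Since isometries preserve incidence, $\sigma(\ca)$ and $\sigma(\cb)$ are cycles meeting only at $\infty$. Cycles through $\infty$ are straight lines, so both images are lines. Two lines meeting only at $\infty$ have no common finite point, so they are parallel. Isometries are bijections that map cycles to cycles, so $\sigma$ carries the complementary regions of $\ca\cup\cb$ onto those of $\sigma(\ca)\cup\sigma(\cb)$. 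The regions of two parallel lines are two half-planes and one strip. Hence the monogon goes to the strip between the two parallel lines: the two half-planes of the pair are the images of the two half-planes, and the remaining region is the image of the remaining one.

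Next we normalize the strip using Euclidean similarities, which are M\"obius isometries. A rotation makes the lines horizontal and a translation puts one of them on $\R$. The other line is then $\{\operatorname{Im} z=h\}$ with $h\neq 0$, and the homothety $z\mapsto z/h$ sends it to $\{\operatorname{Im} z=1\}$. The composite isometry therefore maps the original monogon onto the fixed standard strip $\Sigma=\{0<\operatorname{Im} z<1\}$ with vertex $\infty$. Doing the same for a second monogon and composing one map with the inverse of the other gives an isometry taking the first monogon onto the second.

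The only delicate point is the bookkeeping of regions: we must be sure the monogon goes to the strip rather than to one of the half-planes. This follows from the observation above. The monogon is the unique region of the complement whose boundary consists of both cycles, whereas each half-plane is bounded by one cycle only. A homeomorphism preserves this property, so the monogon is sent to the strip.
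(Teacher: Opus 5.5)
Your proof is correct and follows essentially the same approach as the paper: send the common point to $\infty$ so the cycles become parallel lines and the monogon becomes the strip between them, then match strips using Euclidean similarities. You additionally spell out the normalization to a fixed standard strip and check that the monogon (not a half-plane) goes to the strip; the paper leaves both points implicit.
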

\begin{proof}
Let $\ca, \cb$ be two tangent cycles. Without loss of generality assume that
their common point is $\infty$. Then $\ca$ and $\cb$ are simply a pair of parallel lines, and the monogon is a strip between them. By applying rotations, translations, and scaling we can superimpose exactly one strip onto another.
\end{proof}
On the other hand, two given annuli are not congruent in general.
For two intersecting cycles the intersection angles in the two intersection points are equal; therefore, we can introduce the angle between the cycles in this case. In particular, we naturally define \textit{orthogonal} cycles and use the standard notation $\ca\perp\cb$.

A cycle $\cc$ is a \textit{midcycle} for two cycles $\ca$ and $\cb$ if $\sigma_\cc(\ca)=\cb$, where $\sigma_\cc\in\Iso\M$ is the inversion about $\cc$.

\begin{prop}[\cite{J}] Let $\ca,\cb$ be two distinct cycles. Then
\begin{enumerate}
\item[$(i)$] if $\ca$ and $\cb$ do not intersect then there exists a unique midcycle $\cc$. It is represented by a straight line iff $\ca$ and $\cb$ have equal radii.
\item[$(ii)$] if $\ca$ and $\cb$ are tangent then there exists a unique midcycle. It is tangent to both $\ca$ and $\cb$ in the same point.
\item[$(iii)$] if $\ca$ and $\cb$ intersect then there are two midcycles. Both are passing through the intersection points of $\ca$ and $\cb$ and are mutually orthogonal.
\end{enumerate}
\end{prop}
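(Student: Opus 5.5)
The plan is to use the isometry group to move each configuration into a normal form where inversions are easy to write down, and then to solve $\sigma_\cc(\ca)=\cb$ directly. The basic tool is the observation that if $\cc$ is a midcycle of $\ca,\cb$ and $\tau\in\Iso\M$, then $\tau(\cc)$ is a midcycle of $\tau(\ca),\tau(\cb)$, because $\sigma_{\tau(\cc)}=\tau\sigma_\cc\tau^{-1}$. (This conjugation identity holds since an inversion is the unique nontrivial isometry fixing its cycle pointwise, and $\tau\sigma_\cc\tau^{-1}$ fixes $\tau(\cc)$ pointwise.) Hence existence, uniqueness and the number of midcycles are Möbius invariant, and so are incidence, tangency and orthogonality. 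The only non-invariant claim is the ``straight line iff equal radii'' part of $(i)$, and I will check it separately in the original coordinates.

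In case $(ii)$ I send the tangency point to $\infty$, so $\ca,\cb$ become parallel lines, say $\Im z=0$ and $\Im z=1$. An inversion $\sigma_\cc$ maps $\ca$ to $\cb$, so it must swap their unique common point $\infty$ with itself, i.e.\ fix $\infty$. An inversion fixing $\infty$ is either a reflection in a line, or an inversion in a circle with $\infty$ on it, which is again a line. So $\sigma_\cc$ is a reflection in a line, and it swaps two parallel lines iff it is the midline $\Im z=\tfrac12$. This cycle is unique and tangent to both at $\infty$.

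In case $(iii)$ I send one intersection point to $\infty$ and the other to $0$, so $\ca,\cb$ become two lines through $0$. The inversion must preserve $\{0,\infty\}$ and map $\ca\cup\cb$ to itself with the two lines swapped. If it fixes $\infty$, it is a reflection in a line through $0$, and exactly the two angle bisectors work. If it swaps $0$ and $\infty$, it is an inversion in a circle $|z|=r$ about $0$. That inversion maps every line through $0$ to itself, so it cannot swap $\ca$ and $\cb$. The two bisectors are orthogonal and pass through $0$ and $\infty$.

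In case $(i)$, the main obstacle, I first normalize two disjoint cycles to concentric circles $|z|=1$ and $|z|=R$ with $R>1$. To do this I find two points $P,Q$ that are symmetric with respect to both cycles (the limit points of the pencil), obtained from a quadratic equation on the line of centers, and send $P\mapsto0$, $Q\mapsto\infty$. In that position $|z|=\sqrt R$ is a midcycle. For uniqueness, $\sigma_\cc$ swaps the two cycles, so it preserves the pair of limit points $\{0,\infty\}$, which are determined by the pair of cycles. If it swaps $0$ and $\infty$, then $\cc$ is a circle centered at $0$, and the radius is forced to be $\sqrt R$. If it fixes both, $\cc$ is a line through $0$, and a reflection preserves each concentric circle, a contradiction.

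For the radius criterion, I return to Euclidean coordinates. A line $\cc$ is a midcycle iff reflection in it maps $\ca$ to $\cb$. Reflection preserves radii, so this forces equal radii, and conversely the perpendicular bisector of the centers works; by uniqueness it is the midcycle. The existence of the limit points (a discriminant being positive precisely because the cycles are disjoint) is the step I expect to require the most care.
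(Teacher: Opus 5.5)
The paper gives no proof of this proposition; it is quoted from \cite{J}, so there is no internal argument to match. Your proof is correct and self-contained. The reduction through $\sigma_{\tau(\cc)}=\tau\sigma_\cc\tau^{-1}$ is justified exactly as you say, and the normal-form analyses in $(ii)$ and $(iii)$ are complete.

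In $(i)$ two routine lines are left implicit:
\begin{itemize}
\item $\sigma_\cc$ preserves $\{0,\infty\}$ because, in the normal form, this is the \emph{only} pair symmetric with respect to both $|z|=1$ and $|z|=R$ (from $1/\oline{x}=R^2/\oline{x}$). Since $\sigma_\cc$ swaps the two cycles, the same conjugation identity shows it carries pairs symmetric for both cycles to such pairs.
\item Up to the factor $d^{-2}$, the discriminant for the limit points is $(d-r_1-r_2)(d-r_1+r_2)(d+r_1-r_2)(d+r_1+r_2)$. This is positive exactly for disjoint circles. A disjoint line--circle pair is handled the same way.
\end{itemize}

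A route closer to the paper's own machinery works in $\Herm(2)$. For $\cc\approx L$, the inversion acts as the Minkowski reflection $X\mapsto X-2\la X,L\ra L$; for the real axis this is the transposition the paper mentions, and the general case follows by conjugation. Let $\ca,\cb\approx M,N$.
\begin{itemize}
\item The condition $\sigma_\cc(\ca)=\cb$ means $M\mapsto\pm N$. This forces $\cc\sim M\mp N$, which is a genuine cycle iff $\la M\mp N,M\mp N\ra=2(1\mp\la\ca,\cb\ra)>0$.
\item All three cases then follow at once from $|\la\ca,\cb\ra|<1$, $=1$, $>1$. In the tangent case the rejected combination is the tangency point; in the disjoint case it is a virtual cycle.
\item Orthogonality in $(iii)$ is just $\la M-N,M+N\ra=0$.
\item Incidence and tangency come from $M\mp N\in\CC(\ca,\cb)$.
\item The line criterion is the vanishing of the $k$-entry $k_a\mp k_b$, i.e.\ $|r_a|=|r_b|$. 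For the converse, a combination with zero $k$-entry has $\det\le 0$, so it is never the rejected virtual one.
\end{itemize}
This is the description the paper later uses tacitly, e.g.\ $\cl\sim M-N$ in Lemma~\ref{tang} and in the digon and annulus sections. It is uniform across the three cases and avoids constructing limit points altogether. Your approach is more elementary: it needs only how inversions act on points, and it explains geometrically where the limit points come from.
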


\section{Orientation}
The M\"obius plane $\M$ is an orientable manifold and we can naturally introduce orientation. We consider oriented cycles and angles. In the Euclidean representation we assume the counter-clockwise rotation is positive.
For every oriented cycle each half-plane bounded by it is uniquely labeled to be its \textit{left} of \textit{right} depending on which side it appears while moving along the cycle following its orientation. Thus, for positively-oriented circles the inner half-plane (i.e. the one not containing $\infty$) is its left, and for negatively oriented circles it is right. We use notation $\ca\cong\cb$ if $\ca$ and $\cb$ agree as sets regardless of orientation.

For two oriented cycles $\ca,\cb$ and their intersection point $C$ we define the \textit{oriented angle} 
$\alpha=\angle(\ca,\cb)$ as the Euclidean oriented angle $\alpha$ between the unit tangent vectors in $C$. We assume $\alpha\in(\pi,\pi]$. Note, that the intersection point $C$ must be explicitly specified as the angle depends on it. However, in all our results the expressions involving angles will not depend on which intersection point was chosen in the end. 

In the matrix language we can distinguish the orientation by considering matrices up to a positive factor. For every cycle $\ca$ we can normalize its matrix $M=\left(\begin{smallmatrix}k & l\\ m & n\end{smallmatrix}\right)$ by imposing the condition $\det M=-1$. Then every oriented cycle corresponds to a unique normalized matrix if we assume that the left half-plane is defined by the inequality
$$
kz\oline{z}+lz+m\oline{z} + n<0.
$$
We use notation $\ca\sim M$ when a matrix $M$ defines a cycle $\ca$, and $\ca\approx M$ when a normalized matrix $M$ defines an oriented cycle $\ca$.
Therefore, $k>0$ defines positively oriented circles, and $k<0$ is for negatively oriented. Moreover, for an oriented circle we define its \textit{signed radius} by the formula $r=\frac{1}{k}\sqrt{-\det M}$; when $M$ is normalized then we have simply $r=\frac{1}{k}$.

Let $\ca\approx M$. The defining equation of $\ca$ can be written in the matrix form as
$\left(
\begin{smallmatrix}
z  \\
1 \\
\end{smallmatrix}
\right)^\top
M
\left(
\begin{smallmatrix}
\oline{z}  \\
1 \\
\end{smallmatrix}
\right)=0$.
Let $\sigma\in\Iso^+\M$ be a M\"obius transformation with the matrix 
$g=\left(\begin{smallmatrix}a & b\\ c & d\end{smallmatrix}\right)$, $\det g=1$. Then it is easy to see that $\sigma(\ca)\approx g^{-\top}M\oline{g}^{-1}$.
The complex conjugation $z\mapsto\oline{z}$ (this is simply the reflection across $\R$) acts by transposition on $M$.

Therefore, we have an action of the group $G=\Iso\M$ on the space
$\Herm(2)$ of all $2\times 2$ Hermitian matrices that agrees with the action on the oriented cycles. Moreover, consider the bilinear functional
$$
\la M,N\ra = \frac{1}{2}\left(\trr MN-\trr M\trr N\right)
$$
on $\Herm(2)$. It defines a real pseudoeuclidean inner product of signature $(3,1)$ -- a Minkowski metric. Furthermore, for $M=\left(\begin{smallmatrix}k & l\\ m & n\end{smallmatrix}\right)$  we have that
$$
\la M, M\ra = \frac{1}{2}(\trr M^2-(\trr M)^2)=
\frac{1}{2}\big(k^2+2lm+n^2-(k+n)^2\big) = lm-kn=-\det M.
$$
Clearly, $\det M$ is invariant with respect to $G$. Therefore, it follows via polarization that $G$ preserves the inner product.

Let $\ca,\cb\approx M, N$ respectively. The value
$$
\la\ca,\cb\ra = \la M,N\ra
$$
is, therefore, a M\"obius invariant, i.e. $\la \sigma(\ca),\sigma(\cb)\ra= \la\ca,\cb\ra$ for all $\sigma\in\Iso\M$.

\begin{prop}\label{cs}
Let $\ca,\cb$ be two oriented cycles represented by circles of signed radii $r_a$, $r_b$. Let $d$ be the distance between their centers. Then
$$
\la\ca,\cb\ra=\frac{r_a^2 + r_b^2 - d^2}{2r_a r_b}.
$$
\end{prop}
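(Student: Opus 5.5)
This is a direct computation. We write down the normalized matrix of each oriented circle in terms of its center and signed radius, and evaluate the bilinear form from its definition.

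\textbf{Step 1: normalized matrices.} Let $\ca$ have center $O_a$ and signed radius $r_a$. The circle is $|z-O_a|^2=r_a^2$, that is,
$$z\oline{z}-\oline{O_a}z-O_a\oline{z}+|O_a|^2-r_a^2=0.$$
Multiplying by $k_a=1/r_a$ gives
$$M=\frac{1}{r_a}\begin{pmatrix}1 & -\oline{O_a}\\ -O_a & |O_a|^2-r_a^2\end{pmatrix}.$$
Its determinant is $\frac{1}{r_a^2}\bigl(|O_a|^2-r_a^2-|O_a|^2\bigr)=-1$. The sign of $k=1/r_a$ matches the orientation convention, because the left half-plane is the set where $k\bigl(|z-O_a|^2-r_a^2\bigr)<0$. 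For $r_a>0$ this is the inside of the circle, as required. So $\ca\approx M$. In the same way, $\cb\approx N$ with $O_b$ and $r_b$ in place of $O_a$ and $r_a$.

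\textbf{Step 2: evaluate the form.} For $2\times2$ matrices,
$$\tfrac12(\trr MN-\trr M\trr N)=\tfrac12\bigl(k_1n_2+n_1k_2\bigr)\cdot(-1)+\tfrac12(l_1m_2+m_1l_2),$$
since the $k_1k_2$ and $n_1n_2$ terms cancel. Hence
$$\la M,N\ra=\tfrac12\bigl(l_1m_2+m_1l_2-k_1n_2-n_1k_2\bigr).$$
Substituting the entries from Step 1:
$$\la M,N\ra=\frac{1}{2r_ar_b}\Bigl(\oline{O_a}O_b+O_a\oline{O_b}-|O_b|^2+r_b^2-|O_a|^2+r_a^2\Bigr).$$
The bracket equals $r_a^2+r_b^2-|O_a-O_b|^2=r_a^2+r_b^2-d^2$, which is the claimed formula.

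The only point that needs care is the sign and orientation bookkeeping in Step 1: we must confirm that the normalized matrix has $k=1/r$ with the signed radius, so that negatively oriented circles come out with the correct sign. The rest is routine algebra.
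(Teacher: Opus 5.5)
Your proof is correct and is essentially the paper's own computation: both evaluate $\la M,N\ra=\frac12(l_am_b+l_bm_a-k_an_b-k_bn_a)$ on the normalized matrices, using $k=1/r$ and center $-m/k$. The only difference is the direction of the bookkeeping. You write the entries in terms of center and signed radius from the start, whereas the paper expands $d^2=|O_a-O_b|^2$ in the entries and uses $\det M=-1$ (i.e. $l_am_a=1+k_an_a$) to match terms.
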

\begin{proof}
Let $\ca\approx M=\left(\begin{smallmatrix}k_a & l_a\\ m_a & n_a\end{smallmatrix}\right)$, 
$\cb\approx N=\left(\begin{smallmatrix}k_b & l_b\\ m_b & n_b\end{smallmatrix}\right)$. Then
\begin{align*}
\trr MN &= k_ak_b + l_am_b+l_bm_a + n_an_b,\\
\trr M\trr N &= (k_a+n_a)(k_b+n_b) = k_ak_b + k_an_b+k_bn_a+n_an_b.
\end{align*}
Therefore,
$$
\la\ca,\cb\ra  = \frac{1}{2}(-k_an_b-k_bn_a+l_am_b+l_bm_a).
$$
On the other hand, $\ca$ and $\cb$ have centers $O_a=-\frac{m_a}{k_a}$, $O_b=-\frac{m_b}{k_b}$ and signed radii $r_a=\frac{1}{k_a}$, $r_b=\frac{1}{k_a}$. Thus,
\begin{gather*}
d^2 = |O_a-O_b|^2=\left|\frac{m_a}{k_a}-\frac{m_b}{k_b}\right|^2 = 
\left(\frac{l_a}{k_a}-\frac{l_b}{k_b}\right)
\left(\frac{m_a}{k_a}-\frac{m_b}{k_b}\right)=\\
=\frac{l_am_a}{k_a^2}  -\frac{l_am_b+l_bm_a}{k_ak_b}+ 
\frac{l_bm_b}{k_b^2}=
\frac{1+k_an_a}{k_a^2}  -\frac{l_am_b+l_bm_a}{k_ak_b}+ 
\frac{1+k_bn_b}{k_b^2}=\\
=\frac{1}{k_a^2}+\frac{1}{k_b^2}+
\frac{k_an_b+k_bn_a-l_am_b-l_bm_a}{k_ak_b}
=r_a^2+r_b^2-2r_ar_b\la\ca,\cb\ra.
\end{gather*}
\end{proof}
When $\cb$ is a line we can show that $\la\ca,\cb\ra = \frac{d}{r_a}$ where $d$ is the signed distance from the center of $\ca$ to $\cb$; positive when the center is in the left half-plane of $\cb$ and negative otherwise. 

We call $\la\ca,\cb\ra$ the \textit{M\"obius cosine} of $\ca$ and $\cb$.
Trivially, we always have $\la\ca,\cb\ra=\la\cb,\ca\ra$ and when $\ca$ and $\cb$ intersect $\la\ca,\cb\ra=\cos\angle(\ca,\cb)$.

A real-valued function of an oriented cycle argument is said to be \textit{even} or \textit{odd} depending on whether it changes the sign when the argument's orientation is reversed. For example, the M\"obius cosine is symmetric and an odd function in both arguments.

\section{Pencils}
Consider the spaces $\widetilde{\CC}$, $\CC$ of all oriented and  non-oriented cycles on $\M$ respectively. A topology is naturally introduced that turns both into three-dimensional manifolds.
\begin{prop}
The space $\widetilde{\CC}$ is homeomorphic to the product $S^2\times(-1,1)$.
\end{prop}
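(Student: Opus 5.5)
We identify $\widetilde{\CC}$ with the set of normalized matrices
$$
\widetilde{\CC}\cong\{M\in\Herm(2): \la M,M\ra = 1\},
$$
using the correspondence $\ca\approx M$ from the previous section. Every oriented cycle corresponds to a unique normalized matrix, since $\det M=-1$ and the left half-plane is $kz\oline z+lz+m\oline z+n<0$. Conversely, every Hermitian $M$ with $\det M=-1$ is indefinite and non-degenerate, so it defines an oriented cycle. The topology on $\widetilde{\CC}$ is the one induced from $\Herm(2)\cong\R^4$, so it suffices to show that this unit ``sphere'' of the Minkowski form is homeomorphic to $S^2\times(-1,1)$.

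Next we choose coordinates adapted to the form. Write $k=t+x$, $n=t-x$, $l=y+iw$, $m=y-iw$ with $t,x,y,w\in\R$. Then
$$
-\det M = lm-kn = y^2+w^2+x^2-t^2,
$$
so $\widetilde{\CC}$ is the one-sheeted hyperboloid $x^2+y^2+w^2-t^2=1$ in $\R^4$; this makes the signature $(3,1)$ explicit. The map
$$
(x,y,w,t)\mapsto\Bigl(\tfrac{(x,y,w)}{\sqrt{1+t^2}},\ \tfrac{t}{\sqrt{1+t^2}}\Bigr)\in S^2\times(-1,1)
$$
is continuous. Its inverse is $(u,s)\mapsto \bigl(u/\sqrt{1-s^2},\,s/\sqrt{1-s^2}\bigr)$, which is also continuous. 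Indeed, on the hyperboloid $|(x,y,w)|=\sqrt{1+t^2}$, so the first component is a unit vector. The composite is the required homeomorphism $\widetilde{\CC}\to S^2\times(-1,1)$. Here $t=\tfrac12\trr M$, and in the spherical model $\trr M=0$ picks out the spherical great circles; this is exactly the slice $s=0$, consistent with the earlier remarks.

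The only point needing care is the topology on $\widetilde{\CC}$, because the paper says only that it is ``naturally introduced''. I would take the natural topology to be the one transported from the matrix coefficients. If a geometric topology is intended instead (say, Hausdorff convergence of cycles on the Riemann sphere together with orientation), one must check that the two agree. This comparison is the main obstacle. I would handle it by noting that the coefficients of a normalized matrix depend continuously on three points of the cycle, and conversely. I expect this to be routine, so the hyperboloid computation above is the core of the proof.
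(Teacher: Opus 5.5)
Your argument is correct, but it takes a different route from the paper.

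\textbf{The paper's route.} It stays in the sphere model. An oriented cycle is a plane section $\{P\in S^2:\la P,\nu\ra=d\}$ with $|d|<1$. It is sent to the pair consisting of the oriented unit normal $\nu$ and the signed distance $d$ of the plane from the center. That this map is a homeomorphism is only asserted as ``clear''.

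\textbf{Your route.} You work in $\Herm(2)$. You identify $\widetilde{\CC}$ with the unit hyperboloid $\la M,M\ra=1$ of the Minkowski form, i.e.\ de Sitter space, and then project it radially onto the cylinder by explicit formulas.

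\textbf{The two maps coincide.} Pull the plane $aX+bY+cZ=d$ back by stereographic projection from $(0,0,1)$. This gives the matrix $\left(\begin{smallmatrix} c-d & a-ib\\ a+ib & -(c+d)\end{smallmatrix}\right)$, of determinant $d^2-1$. After normalizing, your map sends it to $\bigl((c,a,-b),-d\bigr)$. This is the paper's pair up to a fixed orthogonal change of coordinates on $\R^3$ and a sign convention.

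\textbf{What each version buys.} Yours gives explicit continuity of the map and its inverse, for a precisely specified topology. It also makes the de Sitter structure visible. It further shows directly that orientation reversal $M\mapsto -M$ becomes $(u,s)\mapsto(-u,-s)$, which is exactly the identification used in the corollary that follows. The paper's version is more geometric: it shows where the $S^2$ factor and the interval come from in terms of the cycle's position on the sphere. The cost is that the topology is left implicit.

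The comparison of topologies that you flag as the remaining obstacle is not addressed in the paper either. Your proof is therefore at least as complete as the original.
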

\begin{proof}
Identify $\M$ with $S^2$ -- the unit sphere centered at $0$. All cycles then are intersections of $S^2$ with the planes that lie at distance $d<1$ from the origin.
For every cycle $\ca\in\widetilde{\CC}$ consider the line $\ell$ passing through the origin and orthogonal to the plane containing $\ca$.
It intersects the sphere in two antipodal points.
The orientation of $\ca$ naturally defines the orientation on $\ell$. Moreover, a coordinate $t$ is naturally introduced on $\ell$ such that the origin corresponds to $t=0$ and the intersection points with $S^2$ to $t=\pm1$; we also assume that orientation on $\ell$ agrees with the standard orientation on $\R$.
Then for every cycle $\ca$ we select the right intersection point $x$ (the one corresponding to $t=1$) and the coordinate $t$ of the intersection point of $\ell$ with the plane containing $\ca$.
Thus, for each $\ca$ we have constructed a pair $(x,t)$, where $x\in S^2$ and $t\in(-1,1)$. Clearly, this map is a homeomorphism.
\end{proof}

\begin{corollary}
The space $\CC$ is homeomorphic to the quotient of $\widetilde{\CC}$ by the equivalence relation $(x,t)\sim(-x,-t)$ i.e. to the elliptic de Sitter 3-space.
The orientation-forgetful map $p:\widetilde{\CC}\rightarrow\CC$ 
is a universal two-fold covering of $\CC$.
\end{corollary}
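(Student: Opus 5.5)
Using the homeomorphism $\widetilde{\CC}\cong S^2\times(-1,1)$, $\ca\mapsto(x,t)$, of the preceding proposition, the plan is to show that reversing orientation acts by $(x,t)\mapsto(-x,-t)$, and then read off the covering statements.

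\textbf{Orientation reversal.} Let $\ca$ lie in the plane $\Pi$ and let $\ell$ be the line through the origin orthogonal to $\Pi$. Reversing the orientation of $\ca$ reverses the induced orientation of $\ell$. The rule ``choose the coordinate $t$ so that it agrees with the orientation of $\ell$'' then replaces $t$ by $-t$. Consequently the selected point of $S^2$ with $t=1$ becomes the antipode $-x$, and the coordinate of $\ell\cap\Pi$ becomes $-t$. Two oriented cycles have the same image under $p$ exactly when they coincide as sets, i.e.\ are equal or opposite. So $\CC$, as a set, is the quotient of $S^2\times(-1,1)$ by the involution $\iota(x,t)=(-x,-t)$.

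\textbf{Topology and covering.} $\iota$ is a fixed-point-free involution, since $x\neq -x$. The group $\{1,\iota\}\cong\mathbb{Z}/2$ acts freely on a Hausdorff manifold, and any free action of a finite group is properly discontinuous. Hence the quotient map is a two-fold covering map and the quotient is a $3$-manifold.

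\textbf{Identification of the quotient topology.} The topology of $\CC$ is the natural one, e.g.\ that of matrices up to a nonzero real scalar, which is the quotient of the normalized matrices up to sign. With respect to it, $p$ is continuous, surjective and open, since a small neighbourhood of an oriented cycle maps to a neighbourhood of the unoriented cycle. Therefore $\CC$ carries the quotient topology, and $p$ is identified with the covering above.

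\textbf{Universality.} $S^2\times(-1,1)$ is simply connected, so this two-fold covering is the universal covering and $\pi_1(\CC)=\mathbb{Z}/2$.

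\textbf{De Sitter model.} $\widetilde{\CC}$ is the one-sheeted hyperboloid $\la M,M\ra=1$ in the Minkowski space $\Herm(2)$, i.e.\ de Sitter space. Passing to $\CC$ is exactly the antipodal quotient $M\sim -M$, which is the elliptic de Sitter space.

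The main point needing care is the claim that orientation reversal is exactly $(x,t)\mapsto(-x,-t)$ rather than, say, $(-x,t)$, since the description of the coordinate convention in the previous proof is informal. I would check this on an example: take an equatorial-parallel circle at height $t_0$, oriented counterclockwise viewed from $x=(0,0,1)$, and reverse it.
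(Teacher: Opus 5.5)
Your proposal is correct and is essentially the argument the paper leaves implicit (it states the corollary without proof): read it off from the homeomorphism $\widetilde{\CC}\cong S^2\times(-1,1)$, where orientation reversal acts as the free involution $(x,t)\mapsto(-x,-t)$, and simple connectivity of $S^2\times(-1,1)$ makes the two-fold cover universal. The check you flag at the end is already settled by your own reasoning: the oriented cycle with data $(x,t)$ lies in the plane $\{y : y\cdot x=t\}$, which is also described by $(-x,-t)$ but not by $(-x,t)$ unless $t=0$.
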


The space of cycles $\CC$ has a geometry of its own.
A \textit{pencil} is a one-dimensional family of cycles
that naturally appears in M\"obius geometry. Let $\ca,\cb\approx M,N$ be two distinct oriented  cycles. The pencil 
$\CC(\ca,\cb) = \{\cc\sim xM+yN|x,y\in\R\}$
 is a family containing all non-oriented cycles that are real linear combinations of $\ca$ and $\cb$. If $\ca\not\cong\cb$ then $M$ and $N$ are not proportional and generate a subspace $W$ of dimension $2$ in $\Herm(2)$. The pencil $\CC(\ca,\cb)$ is a collection of cycles represented by 
matrices contained in $W$. Pencils are lines in $\CC$. Three cycles $\ca,\cb,\cc$ are \textit{collinear} when they belong to one pencil. Three pencils are \textit{concurrent} when they share one common cycle. The action of $\Iso\M$ on $\Herm(2)$ is linear; therefore, it maps pencils to pencils. 

Consider the properties and types of pencils. The next result is trivial.

\begin{prop}
Any two pencils either do not intersect, or share a unique common cycle.
\end{prop}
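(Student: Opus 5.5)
The plan is to translate the statement into linear algebra in $\Herm(2)\cong\R^4$. By definition, a pencil $\CC(\ca,\cb)$ with $\ca\not\cong\cb$ is the set of (non-oriented) cycles whose matrices lie in a two-dimensional subspace $W\subset\Herm(2)$. Such a cycle is a line through the origin, spanned by an indefinite nondegenerate Hermitian matrix, so a cycle corresponds to a one-dimensional subspace $\R M$ with $\det M<0$. Two pencils therefore correspond to two distinct two-dimensional subspaces $W_1\neq W_2$ of the four-dimensional real vector space $\Herm(2)$. I will take distinctness of the pencils to mean distinctness of the subspaces.

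The key step is that a common cycle $\cc$ of both pencils is a one-dimensional subspace $\R M\subseteq W_1\cap W_2$. Since $W_1\neq W_2$ are both of dimension $2$, their intersection has dimension at most $1$. There are then two cases. If $\dim(W_1\cap W_2)=0$, the pencils share no cycle at all. If $\dim(W_1\cap W_2)=1$, then $W_1\cap W_2=\R M$ for some $M$, and the only possible common cycle is the one represented by $M$. This cycle exists, and the pencils intersect, exactly when $\det M<0$. Otherwise $M$ is degenerate (a point-type matrix) or definite, and again there is no common cycle. In every case there is at most one common cycle, which is the claim.

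The main obstacle is making sure a pencil determines its subspace $W$ uniquely, so that "two distinct pencils" really gives $W_1\neq W_2$. I would argue this directly. A pencil contains infinitely many cycles; for instance $\ca$ and $\cb$ themselves, together with $xM+yN$ for small perturbations, still have negative determinant because the condition $\det<0$ is open. Any two non-proportional matrices among these span $W$. Hence $W$ is recovered as the span of the matrices of the pencil's cycles. Two pencils with the same $W$ are then equal as families, and pencils that are distinct as families of cycles have distinct subspaces, so the dimension count above applies.
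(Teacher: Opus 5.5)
Your proof is correct and is the same dimension count the paper leaves implicit when it calls this result trivial: a pencil is the set of cycles whose matrices lie in a fixed $2$-dimensional subspace $W\subset\Herm(2)$, and two distinct such subspaces of a $4$-dimensional space meet in at most a line, so the pencils share at most one cycle. Your ``main obstacle'' paragraph does more than is needed, since equal subspaces give equal pencils directly from the definition, and in any case $\ca$ and $\cb$ themselves already span $W$ without the openness argument.
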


Pencils are naturally distinguished by their \textit{type}. Let $\gP=\CC(\ca,\cb)$. If $\ca$ and $\cb$ intersect in two points $A,B$ then these points uniquely define the pencil $\gP$. Every other cycle passing through $A$ and $B$ belongs to $\gP$. In this case $\gP$ is \textit{elliptic}.
If $\ca$ and $\cb$ are tangent in a point $A$, then all cycles in $\gP$ are tangent to each other in $A$. This pencil is \textit{parabolic}.
Finally, if $\ca$ and $\cb$ do not intersect, then we have a \textit{hyperbolic} pencil. It is uniquely defined by a pair of asymptotic points that do not belong to any cycle from $\gP$. The common intersection points or the asymptotic ones are the \textit{distinguished points} of a pencil. When considering Euclidean representation, we call a pencil \textit{standard} if 
$\infty$ is not one of its distinguished points (Figure~\ref{fig:standard}), and \textit{special} otherwise. A special pencil is either a collection of straight lines passing through one point, a collection of parallel lines, or a collection of concentric circles (Figure~\ref{fig:special}). 

\begin{figure}[h]
\[
\begin{tikzpicture}
\tikzstyle{vertex}=[circle, fill, inner sep=1.5pt]
]
\draw(0.8, 2.0) -- (4.8, 2.0);

\draw(1.8, 2.0) arc (106.36363636363636:73.63636363636364:3.549465532884223);
\draw(1.8, 2.0) arc (253.63636363636363:286.3636363636364:3.549465532884223);
\draw(1.8, 2.0) arc (122.72727272727272:57.27272727272727:1.8496568659138088);
\draw(1.8, 2.0) arc (237.27272727272728:302.72727272727275:1.8496568659138088);
\draw(1.8, 2.0) arc (139.0909090909091:40.909090909090914:1.323189630446618);
\draw(1.8, 2.0) arc (220.9090909090909:319.09090909090907:1.323189630446618);
\draw(1.8, 2.0) arc (155.45454545454544:24.545454545454547:1.0993456750718866);
\draw(1.8, 2.0) arc (204.54545454545456:335.45454545454544:1.0993456750718866);
\draw(1.8, 2.0) arc (171.8181818181818:8.181818181818173:1.0102832265380364);
\draw(1.8, 2.0) arc (188.1818181818182:351.8181818181818:1.0102832265380364);
\draw(1.8, 2.0) arc (188.1818181818182:-8.181818181818173:1.0102832265380361);
\draw(1.8, 2.0) arc (171.8181818181818:368.1818181818182:1.0102832265380361);
\draw(1.8, 2.0) arc (204.54545454545456:-24.545454545454547:1.0993456750718864);
\draw(1.8, 2.0) arc (155.45454545454544:384.54545454545456:1.0993456750718864);
\draw(1.8, 2.0) arc (220.9090909090909:-40.90909090909091:1.323189630446618);
\draw(1.8, 2.0) arc (139.0909090909091:400.9090909090909:1.323189630446618);
\draw(1.8, 2.0) arc (237:165:1.8496568659138082);
\draw(1.8, 2.0) arc (122:194:1.8496568659138082);
\draw(3.8, 2.0) arc (303:375:1.8496568659138082);
\draw(3.8, 2.0) arc (58:-14:1.8496568659138082);
\draw(1.8, 2.0) arc (253:236:3.5494655328842235);
\draw(1.8, 2.0) arc (107:124:3.5494655328842235);
\draw(3.8, 2.0) arc (287:304:3.5494655328842235);
\draw(3.8, 2.0) arc (73:56:3.5494655328842235);

\draw(9, 2.0) -- (5, 2.0);
\draw(7, 2.1) circle (0.1);
\draw(7, 2.2) circle (0.2);
\draw(7, 2.4) circle (0.4);
\draw(7, 2.6) circle (0.6);
\draw(7, 2.8) circle (0.8);
\draw(7, 3) circle (1);
\draw(7, 1) circle (1);
\draw(7, 1.2) circle (0.8);
\draw(7, 1.4) circle (0.6);
\draw(7, 1.6) circle (0.4);
\draw(7, 1.8) circle (0.2);
\draw(7, 1.9) circle (0.1);
\draw(7, 2) arc (90:180: 2);
\draw(7, 2) arc (90:0: 2);
\draw(7, 2) arc (270:180: 2);
\draw(7, 2) arc (-90:0: 2);
\draw(7, 2) arc (90:220: 1.2);
\draw(7, 2) arc (90:205: 1.4);
\draw(7, 2) arc (90:190: 1.7);
\draw(7, 2) arc (90:144: 2.5);
\draw(7, 2) arc (90:120: 4.0);

\draw(7, 2) arc (90:-40: 1.2);
\draw(7, 2) arc (90:-25: 1.4);
\draw(7, 2) arc (90:-10: 1.7);
\draw(7, 2) arc (90:36: 2.5);
\draw(7, 2) arc (90:60: 4.0);

\draw(7, 2) arc (270:140: 1.2);
\draw(7, 2) arc (270:155: 1.4);
\draw(7, 2) arc (270:170: 1.7);
\draw(7, 2) arc (270:216: 2.5);
\draw(7, 2) arc (270:240: 4.0);

\draw(7, 2) arc (-90:40: 1.2);
\draw(7, 2) arc (-90:25: 1.4);
\draw(7, 2) arc (-90:10: 1.7);
\draw(7, 2) arc (-90:-36: 2.5);
\draw(7, 2) arc (-90:-60: 4.0);

\draw(9.2, 2.0) -- (13.2, 2.0);

\draw(11.2, 3.0000049999875) circle (0.00316227766016838);
\draw(11.2, 0.9999950000125) circle (0.00316227766016838);
\draw(11.2, 3.0013009053514006) circle (0.051024533878662764);
\draw(11.2, 0.9986990946485992) circle (0.051024533878662764);
\draw(11.2, 3.0168295978569843) circle (0.1842347173526109);
\draw(11.2, 0.9831704021430157) circle (0.1842347173526109);
\draw(11.2, 3.087943860484949) circle (0.4285111942142172);
\draw(11.2, 0.9120561395150508) circle (0.4285111942142172);
\draw(11.2, 3.28344374575134) circle (0.804504722489701);
\draw(11.2, 0.7165562542486599) circle (0.804504722489701);

\draw(12.486862135475985, 4.0) arc (14.633099829693208: -194.6330998296932:1.3300029238964377);
\draw(9.913137864524014, -6.661338147750939e-16) arc (194.6330998296932: -14.633099829693208:1.3300029238964377);
\draw(13.2, 3.965034860622859) arc (351.7573392407591: 188.2426607592409:2.0208761370574218);
\draw(13.2, 0.03496513937714213) arc (8.242660759240906: 171.7573392407591:2.0208761370574218);
\draw(13.2, 2.9712511390515473) arc (313.7616915644875: 226.23830843551252:2.8915922783899397);
\draw(13.2, 1.0287488609484527) arc (46.23830843551251: 133.76169156448748:2.8915922783899397);
\draw(13.2, 2.6673212101310555) arc (300.3725589028263: 239.62744109717372:3.955534914636118);
\draw(13.199999999999998, 1.3326787898689454) arc (59.62744109717371: 120.37255890282628:3.955534914636118);
\draw(13.2, 2.492739388217098) arc (292.50470327023515: 247.49529672976482:5.225216360038723);
\draw(13.2, 1.507260611782903) arc (67.49529672976483: 112.50470327023517:5.225216360038723);

\draw(13.200000000000003, 2.299848338432181) arc (283.726896361168: 256.273103638832:8.428356225816907);
\draw(13.200000000000001, 1.7001516615678192) arc (76.27310363883204: 103.72689636116796:8.428356225816907);

\draw(13.200000000000005, 2.1666369180765717) arc (277.6352101870891: 262.3647898129109:15.052816502866621);
\draw(13.200000000000001, 1.8333630819234266) arc (82.3647898129109: 97.6352101870891:15.052816502866621);

\end{tikzpicture}
\]
\caption{Standard pencils: elliptic, parabolic, hyperbolic.}
\label{fig:standard}
\end{figure}

\begin{figure}[h]
\[
\begin{tikzpicture}
\tikzstyle{vertex}=[circle, fill, inner sep=1.5pt]

\draw(4.80, 2.00) -- (0.80, 2.00);
\draw(4.80, 2.32) -- (0.80, 1.68);
\draw(4.80, 2.65) -- (0.80, 1.35);
\draw(4.80, 3.02) -- (0.80, 0.98);
\draw(4.80, 3.45) -- (0.80, 0.55);
\draw(4.80, 4.00) -- (0.80, 0.00);
\draw(4.25, 4.00) -- (1.35, 0.00);
\draw(3.82, 4.00) -- (1.78, 0.00);
\draw(3.45, 4.00) -- (2.15, 0.00);
\draw(3.12, 4.00) -- (2.48, 0.00);
\draw(2.80, 4.00) -- (2.80, 0.00);
\draw(2.48, 4.00) -- (3.12, 0.00);
\draw(2.15, 4.00) -- (3.45, 0.00);
\draw(1.78, 4.00) -- (3.82, 0.00);
\draw(1.35, 4.00) -- (4.25, 0.00);
\draw(0.80, 4.00) -- (4.80, 0.00);
\draw(0.80, 3.45) -- (4.80, 0.55);
\draw(0.80, 3.02) -- (4.80, 0.98);
\draw(0.80, 2.65) -- (4.80, 1.35);
\draw(0.80, 2.32) -- (4.80, 1.68);

\draw(9, 4) -- (5, 4);
\draw(9, 3.8) -- (5, 3.8);
\draw(9, 3.6) -- (5, 3.6);
\draw(9, 3.4) -- (5, 3.4);
\draw(9, 3.2) -- (5, 3.2);
\draw(9, 3.0) -- (5, 3.0);
\draw(9, 2.8) -- (5, 2.8);
\draw(9, 2.6) -- (5, 2.6);
\draw(9, 2.4) -- (5, 2.4);
\draw(9, 2.2) -- (5, 2.2);
\draw(9, 2.0) -- (5, 2.0);
\draw(9, 1.8) -- (5, 1.8);
\draw(9, 1.6) -- (5, 1.6);
\draw(9, 1.4) -- (5, 1.4);
\draw(9, 1.2) -- (5, 1.2);
\draw(9, 1.0) -- (5, 1.0);
\draw(9, 0.8) -- (5, 0.8);
\draw(9, 0.6) -- (5, 0.6);
\draw(9, 0.4) -- (5, 0.4);
\draw(9, 0.2) -- (5, 0.2);
\draw(9, 0.0) -- (5, 0.0);

\draw (11.2,2) circle (2cm);
\draw (11.2,2) circle (1.8cm);
\draw (11.2,2) circle (1.6cm);
\draw (11.2,2) circle (1.4cm);
\draw (11.2,2) circle (1.2cm);
\draw (11.2,2) circle (1.0cm);
\draw (11.2,2) circle (0.8cm);
\draw (11.2,2) circle (0.6cm);
\draw (11.2,2) circle (0.4cm);
\draw (11.2,2) circle (0.2cm);
\end{tikzpicture}
\]
\caption{Special pencils: elliptic, parabolic, hyperbolic.}
\label{fig:special}
\end{figure}

\begin{prop}[\cite{J}]
Let $\ca,\cb$ be two cycles. Then the collection of cycles 
$\gQ=\{\cc\in\CC|\cc\perp\ca,\cc\perp\cb\}$ is a pencil. Moreover, every cycle in $\gP=\CC(\ca,\cb)$ is orthogonal to every cycle in $\gQ$.
\end{prop}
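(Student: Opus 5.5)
The plan is to work entirely in the Minkowski space $(\Herm(2),\la\cdot,\cdot\ra)$ of signature $(3,1)$ and translate orthogonality into linear algebra. Cycles correspond to spacelike lines, i.e.\ to $M$ with $\la M,M\ra=-\det M>0$. For intersecting cycles we have $\la\ca,\cb\ra=\cos\angle(\ca,\cb)$, so $\ca\perp\cb$ iff $\la M,N\ra=0$. I would first make this precise for arbitrary cycles, since $\la M,N\ra=0$ must also force the cycles to intersect. Normalizing by isometries so that $\ca$ is the real line, Proposition~\ref{cs} and the line formula $\la\ca,\cb\ra=d/r_b$ show that $\la M,N\ra=0$ exactly when the center of $\cb$ lies on $\ca$. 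In that case $\cb$ meets $\ca$ at right angles, and the converse also holds.

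With this established, $\gQ$ consists of all cycles whose matrices lie in $W^\perp$, where $W=\mathrm{span}(M,N)$. Since $\ca\not\cong\cb$, $W$ is $2$-dimensional. The form is non-degenerate, so $W^\perp$ is also $2$-dimensional. Hence $\gQ=\CC(\cc,\cd)$ for any two non-proportional cycle matrices in $W^\perp$, provided $W^\perp$ contains at least two such, which is what makes $\gQ$ a pencil. The second claim then follows at once: any $xM+yN\in W$ and any element of $W^\perp$ have inner product zero, hence represent orthogonal cycles.

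The main obstacle is checking that $W^\perp$ really is spanned by cycle matrices, i.e.\ that it contains non-proportional spacelike vectors. I would split by the signature of the restriction of the form to $W$.

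\textbf{Elliptic $\gP$.} $W$ contains a timelike vector, which would be a "point-like" definite matrix, so $W$ is Lorentzian and $W^\perp$ is positive definite. Every nonzero element of $W^\perp$ is then a cycle.

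\textbf{Hyperbolic $\gP$.} $W$ is positive definite (only spacelike vectors), so $W^\perp$ is Lorentzian. It contains an open cone of spacelike vectors, which in particular includes two non-proportional ones.

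\textbf{Parabolic $\gP$.} $W$ is degenerate with a null direction. Then $W^\perp$ contains that null vector together with a spacelike vector, so it is still spanned by spacelike vectors.

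Rather than argue signatures abstractly, I would verify these cases by normalizing with Proposition 2.1 to special pencils. The elliptic case becomes lines through $0$, whose orthogonal family is the concentric circles. The parabolic case becomes parallel lines, with orthogonal family the perpendicular parallel lines. The hyperbolic case becomes concentric circles, with orthogonal family the lines through $0$. In each case I would read off $W^\perp$ explicitly. Since isometries act linearly and preserve $\la\cdot,\cdot\ra$, this suffices.
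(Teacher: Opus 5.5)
The paper gives no proof of this statement (it is only cited from the literature), but your approach is exactly the framework it relies on: Proposition~\ref{W-type} identifies $\gP\mapsto\gP^\perp$ with $W\mapsto W^\perp$. Your chain of steps is sound: $\ca\perp\cb\iff\la M,N\ra=0$ for ordinary cycles, $\gQ$ is the set of cycles with matrices in $W^\perp$, $\dim W^\perp=2$, and mutual orthogonality follows by bilinearity. One small fix in the orthogonality step: the "center of $\cb$" criterion needs $\cb$ to be a circle, so choose the point of $\ca$ sent to $\infty$ to lie off $\cb$.

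However, your signature paragraph has elliptic and hyperbolic swapped.

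\emph{Intersecting $\ca,\cb$.} Here $|\la\ca,\cb\ra|<1$, so the Gram determinant $1-\la\ca,\cb\ra^2$ is positive and $W$ is positive definite. For lines through $0$, every nonzero element of $W$ is $\left(\begin{smallmatrix}0&l\\ \oline{l}&0\end{smallmatrix}\right)$ with determinant $-|l|^2<0$, so $W$ contains no timelike vector at all.

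\emph{Disjoint $\ca,\cb$.} This is the case where $W$ is Lorentzian. For concentric circles about $0$, $W=\{\mathrm{diag}(k,n)\}$ contains the null vectors $\mathrm{diag}(1,0)$ and $\mathrm{diag}(0,1)$, which are the limit points $0$ and $\infty$. It also contains timelike vectors such as $\mathrm{diag}(1,1)$.

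Relatedly, timelike vectors are definite matrices, i.e.\ virtual cycles, whereas points are the null (rank one) vectors. So ``point-like definite matrix'' conflates two different types.

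None of this is fatal. Since $W$ contains the spacelike vector $M$, it has signature $(2,0)$ or $(1,1)$, or is degenerate. What you say about $W^\perp$ in each of these three signature cases is correct, so the argument stands once the labels are exchanged.

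Your explicit normalized computations are labeled correctly and settle all three cases independently. One caveat there: bringing two disjoint cycles to concentric position is not a consequence of three-point transitivity alone. It uses the existence of the limit points, i.e.\ the proposition at the start of the section on annuli. In that case the abstract argument ($W$ Lorentzian, hence $W^\perp$ positive definite) already suffices.
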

Such pencils are \textit{orthogonal}. We use notation $\gQ=\gP^\perp$ in this case. Clearly, we have $\gP=\gP^{\perp\perp}$ for every pencil $\gP$.

\begin{prop}
The pencils $\gP$ and $\gP^\perp$ share the same distinguished points. If $\gP$ is elliptic then $\gP^\perp$ is hyperbolic and vice versa; $\gP$ is parabolic iff $\gP^\perp$ is so too.
\end{prop}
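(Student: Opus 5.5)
The plan is to normalize the pencil by a M\"obius isometry and then read off $\gP^\perp$ explicitly. Pencils map to pencils, and orthogonality is preserved because isometries are conformal. Distinguished points and types are preserved as well, since they are defined purely in terms of incidence. So we may use the previous proposition ($\gQ=\gP^\perp$ is the pencil of cycles orthogonal to two generators of $\gP$) in a convenient normal form. In each case we send one distinguished point of $\gP$ to $\infty$, which makes $\gP$ a special pencil as in Figure~\ref{fig:special}.

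\emph{Elliptic case.} Let $\gP$ have common points $A,B$, and move $A$ to $\infty$ and $B$ to $0$. Then $\gP$ is the family of all straight lines through $0$. A cycle orthogonal to two distinct lines through $0$ must be a circle centered at $0$, because a circle orthogonal to a line has its center on that line. The circles centered at $0$ are indeed orthogonal to every line through $0$. Hence $\gP^\perp$ is the family of concentric circles about $0$. No cycle of this family passes through $0$ or $\infty$, and any two of them are disjoint, so $\gP^\perp$ is hyperbolic with asymptotic points $0$ and $\infty$. These are exactly the distinguished points of $\gP$.

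\emph{Hyperbolic case.} Here we do not need a separate computation. Since $\gP^{\perp\perp}=\gP$, if $\gP$ is hyperbolic then $\gP^\perp$ cannot be elliptic: otherwise $\gP=(\gP^\perp)^\perp$ would be hyperbolic-then-elliptic consistently only if we check it directly. More cleanly, normalize the asymptotic points to $0$ and $\infty$. The main obstacle is to show that a hyperbolic pencil with these asymptotic points is exactly the concentric family about $0$. To see this, move two disjoint generating cycles to concentric circles; this is possible since their midcycle setup, or a direct matrix computation, shows that a pencil spanned by $\mathrm{diag}(1,-r_1^2)$ and $\mathrm{diag}(1,-r_2^2)$ consists of $\mathrm{diag}(x+y,-xr_1^2-yr_2^2)$. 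These are circles centered at $0$ when the matrix is indefinite, and they miss $0$ and $\infty$. Conversely, any hyperbolic pencil can be brought to this form by sending its two asymptotic points to $0$ and $\infty$. Then the elliptic computation, read in reverse, gives $\gP^\perp$ as the lines through $0$, which is elliptic with common points $0,\infty$.

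\emph{Parabolic case.} Send the tangency point to $\infty$, so $\gP$ is a family of parallel lines, say horizontal ones. A cycle orthogonal to two parallel lines is a line perpendicular to them, since a circle cannot be orthogonal to two parallel lines. Hence $\gP^\perp$ is the family of vertical lines. This is parabolic with the same distinguished point $\infty$.

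Together the three cases give the proposition. The only delicate step is identifying a normalized hyperbolic pencil as the concentric family, which I would settle by the explicit diagonal-matrix calculation above. The other cases use only the elementary fact that a circle orthogonal to a line has its center on the line.
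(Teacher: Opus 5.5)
\textbf{There is a gap in the hyperbolic case.} Your elliptic and parabolic cases are correct. The hyperbolic case, however, is where the real content lies, and it is not established.

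Your diagonal computation only shows that the pencil spanned by two concentric circles is the whole concentric family about $0$. In other words, that one family is a hyperbolic pencil with asymptotic points $0,\infty$. What you need is the converse: every hyperbolic pencil is congruent to that family. None of your justifications supplies it:
\begin{itemize}
\item the computation presupposes that the generators are already concentric;
\item the midcycle proposition says nothing about concentricity;
\item ``send the asymptotic points to $0$ and $\infty$'' presupposes that a hyperbolic pencil has exactly two asymptotic points and is determined by them. The paper only asserts this descriptively, and it is essentially the claim itself.
\end{itemize}
Also delete the sentence saying that for hyperbolic $\gP$ the pencil $\gP^\perp$ cannot be elliptic, since it states the opposite of the conclusion. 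What $\gP^{\perp\perp}=\gP$ and your other two cases actually give is only that $\gP^\perp$ cannot be parabolic. Ruling out that $\gP$ and $\gP^\perp$ are both hyperbolic is exactly the missing step.

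\textbf{How to close it.} One option is to cite the proposition in the section on annuli: two disjoint cycles can be inverted into concentric circles. Bear in mind that its usual proof constructs the limiting points, which are precisely the common points of $\gP^\perp$.

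A self-contained fix in your geometric style:
\begin{itemize}
\item Make $\ca,\cb$ circles by an isometry. If they are concentric, your argument already finishes.
\item Otherwise, let $H$ be the foot of the radical axis on the line of centers. Then $\mathrm{pow}(H)>0$, because the radical axis misses both circles.
\item Every circle centered at a point $P$ of the radical axis with radius $\sqrt{\mathrm{pow}(P)}$ is orthogonal to both circles.
\item Since $\mathrm{pow}(P)=|PH|^2+\mathrm{pow}(H)$, all these circles cut the line of centers in the same two points, at distance $\sqrt{\mathrm{pow}(H)}$ from $H$.
\item Hence $\gP^\perp$ contains two intersecting cycles and is elliptic. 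Your elliptic case applied to $\gP^\perp$ then gives the distinguished points.
\end{itemize}

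\textbf{Comparison with the paper.} The paper states this proposition without proof. In its own framework the cleanest argument goes through Proposition~\ref{W-type} and works in the signature-$(3,1)$ space $\Herm(2)$:
\begin{itemize}
\item the orthogonal complement of a positive-definite plane has signature $(1,1)$, and conversely;
\item a degenerate plane $W$ shares its radical $W\cap W^\perp$ with $W^\perp$;
\item the pairing of the rank-one matrix of a point $z_0$ with $C$ is a nonzero multiple of the defining form of $C$ at $z_0$. So the null lines of $W$ are exactly the points lying on every cycle of $W^\perp$.
\end{itemize}
This handles all three types and the distinguished points at once, with no normalization needed.
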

 
\begin{prop}[\cite{J}]
Every standard pencil contains exactly one straight line, and it is a radical axis of any two circles in it. In a special pencil all cycles are either straight lines or circles.
\end{prop}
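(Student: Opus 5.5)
We would work entirely in the matrix model, where a pencil is the set of cycles whose matrices lie in a two-dimensional subspace $W\subset\Herm(2)$ spanned by the normalized matrices $M,N$ of two of its cycles. A cycle is a straight line exactly when its upper-left entry $k$ vanishes. The map $\phi:W\to\R$, $xM+yN\mapsto xk_a+yk_b$, is linear, so its kernel is either all of $W$ or a line through the origin.

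\emph{Case analysis.} If $\ker\phi=W$, every cycle of the pencil is a line. If $\ker\phi$ is one-dimensional, the candidate line is $\ell\sim k_bM-k_aN$. We must check that it is actually a cycle, i.e.\ that this matrix is indefinite and nonzero. Nonzero is automatic for distinct non-oriented cycles. Writing its entries, it has $k=0$, hence determinant $-|l|^2\le0$. This is negative unless $l=0$, in which case the matrix is $\mathrm{diag}(0,n)$ and defines the "cycle" $\{\infty\}$, which is degenerate. In that degenerate subcase, where $k_bm_a=k_am_b$, the two circles have the same center $-m/k$, so the pencil consists of concentric circles. Thus the pencil is either all lines, or contains exactly one line, or (concentric case) contains no lines.

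\emph{Matching with standard and special.} Lines are exactly the cycles through $\infty$. If the pencil is elliptic or parabolic with $\infty$ as a distinguished point, all its cycles pass through $\infty$, so all are lines. Conversely, if all cycles are lines, every cycle passes through $\infty$, so $\infty$ is a common point, i.e.\ distinguished. For hyperbolic special pencils, we apply a similarity moving the other asymptotic point to $0$; the pencil is then concentric circles, and the concentric case above shows no line occurs. Conversely, in the concentric case the pencil is hyperbolic with asymptotic points the center and $\infty$, which is special. It remains to show that a standard pencil contains exactly one line. By the above, it is not all lines (that would make $\infty$ a common point) and not concentric (that would make it special), so exactly one line occurs. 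We also need that a standard pencil always contains a circle, so that the $\phi\neq0$ case applies; this holds because otherwise all cycles would be lines.

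\emph{Radical axis.} For two circles $xM+yN$ and $x'M+y'N$ in the pencil, the power of $z$ with respect to a circle with matrix $P$ is $P(z)/k_P$, where $P(z)=kz\oline z+lz+m\oline z+n$. The difference of powers, $\frac{x P_M+yP_N}{xk_a+yk_b}-\frac{x'P_M+y'P_N}{x'k_a+y'k_b}$, is after clearing denominators proportional to $k_bP_M(z)-k_aP_N(z)$, which is the equation of $\ell$. So the radical axis is $\ell$.

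The main obstacle is the bookkeeping in this last computation and the care needed with the degenerate matrix $\mathrm{diag}(0,n)$. The cleaner route is to use linearity: the equation of the radical axis of $P,Q$ is $k_QP(z)-k_PQ(z)$, which lies in $W$ and has $k=0$, so it is the unique line of $W$ up to scale.
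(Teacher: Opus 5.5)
Your proof is correct. The paper gives no argument of its own for this proposition; it is simply quoted from the cited reference. So there is no in-text proof to match, and what you supply is a self-contained proof in the paper's Hermitian-matrix model.

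One observation does all the work: the entry $k$ is a linear functional on the plane $W\subset\Herm(2)$ of the pencil. Its kernel is either all of $W$, in which case every member is a line and $\infty$ is a common point. Otherwise it is spanned by $k_bM-k_aN$, which is a genuine cycle unless its off-diagonal entry vanishes, and that degenerate case is exactly the concentric pencil. Your radical-axis computation is also right: with power $P(z)/k_P$, clearing denominators gives $(xy'-x'y)\bigl(k_bP_M(z)-k_aP_N(z)\bigr)$.

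Compared with the classical Euclidean treatment through powers of points and coaxal systems, this route buys uniformity. The trichotomy ``all lines / exactly one line / concentric circles'' comes out of one line of linear algebra. The degenerate case is visibly the parabolic cycle $\mathrm{diag}(0,1)\leftrightarrow\infty$ lying in the extended pencil, which fits the paper's later picture of extended pencils.

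Two small tightenings:
\begin{enumerate}
\item In the degenerate subcase you tacitly take both generators to be circles. This is automatic: $k_b=0$ together with $k_bm_a=k_am_b$ would force $m_b=0$ and hence $\det N=0$.
\item In the matching step you can drop the similarity, and the unproved background fact that a hyperbolic pencil is determined by its asymptotic points. Note directly that $\infty$ is an asymptotic point exactly when $\mathrm{diag}(0,1)\in W$, i.e.\ exactly when the kernel generator $k_bM-k_aN$ has $l=0$. The kernel is one-dimensional here, because two lines always meet at $\infty$ and so cannot both generate a hyperbolic pencil.
\end{enumerate}
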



\begin{prop}
Let $\ca,\cb,\cc$ be collinear. Then
$$
\la\ca,\cb\ra^2 + \la\cb,\cc \ra^2 + \la\ca,\cc \ra^2 -
2 \la\ca,\cb\ra\la\cb,\cc\ra\la\ca,\cc\ra=1. 
$$
\end{prop}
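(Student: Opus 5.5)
We prove the identity by reading it as the vanishing of a Gram determinant in the Minkowski space $(\Herm(2),\la\cdot,\cdot\ra)$. Let $\ca,\cb,\cc\approx M,N,P$ be the normalized matrices, so that $\la M,M\ra=-\det M=1$, and likewise for $N$ and $P$. Collinearity means that $M,N,P$ lie in a common subspace $W$ of dimension at most $2$, since all three cycles belong to one pencil $\CC(\cdot,\cdot)$ spanned by two matrices.

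Consider the Gram matrix
$$
\Gamma=\begin{pmatrix} 1 & \la\ca,\cb\ra & \la\ca,\cc\ra\\ \la\ca,\cb\ra & 1 & \la\cb,\cc\ra\\ \la\ca,\cc\ra & \la\cb,\cc\ra & 1\end{pmatrix}.
$$
Expanding $\det\Gamma$ directly gives
$$
\det\Gamma=1-\la\ca,\cb\ra^2-\la\cb,\cc\ra^2-\la\ca,\cc\ra^2+2\la\ca,\cb\ra\la\cb,\cc\ra\la\ca,\cc\ra .
$$
So the claim is exactly $\det\Gamma=0$.

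This follows from a general fact. Since $M,N,P$ are linearly dependent, there is a nontrivial real vector $(x,y,z)$ with $xM+yN+zP=0$. Pairing this relation with each of $M,N,P$ shows that $(x,y,z)^\top$ lies in the kernel of $\Gamma$. Hence $\Gamma$ is singular, whatever the signature of the restricted form. The argument does not require nondegeneracy of $\la\cdot,\cdot\ra$ on $W$, so parabolic pencils, where the form restricted to $W$ is degenerate, are covered as well.

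The only points needing care are the following.

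\emph{Orientation.} Each $\la\cdot,\cdot\ra$ is odd in each argument, and every term of the identity changes sign an even number of times under reversing the orientation of any one cycle. The statement is therefore independent of the chosen orientations, as it must be for non-oriented pencil members.

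\emph{Degenerate cases.} If two of the cycles coincide as sets, say $\ca\cong\cb$, then $\la\ca,\cb\ra=\pm1$ and $\la\ca,\cc\ra=\pm\la\cb,\cc\ra$ with matching signs. The identity then checks directly, and it is also covered by the general dependence argument.

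The main (mild) obstacle is simply verifying the determinant expansion and noting that linear dependence of $M,N,P$ is precisely what collinearity gives. No case analysis by pencil type is needed.
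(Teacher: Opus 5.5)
Your proposal is correct and matches the paper's proof: both read the identity as $\det\Gamma=0$ for the Gram matrix of three linearly dependent normalized matrices. You also spell out why $\Gamma$ is singular, via the kernel vector $(x,y,z)^\top$ coming from $xM+yN+zP=0$, which the paper simply asserts.
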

\begin{proof}
If $\ca,\cb,\cc$ are collinear then the Gram matrix representing them is degenerate. We have then
$$
0=\left|
\begin{matrix}
1 & \la\ca,\cb\ra & \la\ca,\cc\ra \\
\la\ca,\cb\ra & 1 & \la\cb,\cc\ra \\
\la\ca,\cc\ra & \la\cb,\cc\ra & 1\\
\end{matrix}
\right|=1+2\la\ca,\cb\ra\la\cb,\cc\ra\la\ca,\cc\ra
-\la\ca,\cb\ra^2 - \la\cb,\cc \ra^2 - \la\ca,\cc \ra^2.
\qedhere
$$
\end{proof}
Note, that the opposite is not true. If $\ca,\cb,\cc$ are not collinear, but generate a degenerate subspace in $\Herm(2)$ the identity will also be true.

\medskip
For the standard geometries we consider \textit{linear} pencils, i.e. those that contain straight lines of that geometry only. We can see that a pencil is linear iff it contains at least two straight lines. Let $\LL$ be the space of linear pencils. We will see that in all geometries $\LL$ is homeomorphic to $\R P^2$.

\begin{itemize}
\item in spherical geometry every linear pencil $\gP$ is elliptic, and is uniquely defined by a pair of antipodal points; it consists of all meridians passing through these points when viewed as poles. The orthogonal pencil $\gP^\perp$ is hyperbolic and contains all parallels related to the same poles. 
After gluing antipodal points we obtain a homeomorphism of $\LL$ and $\R P^2$.

\item in Euclidean geometry linear pencils are either elliptic or parabolic. All of them are special. The elliptic ones are in bijection with the points. And parabolic are the pencils consisting of parallel lines. These are in bijection with the line at infinity attached to the plane. Together we obtain the projective plane again.

\item in hyperbolic geometry all types are present. An elliptic pencil $\gP$ 
consists of lines passing through some point $O\in\hh$; the orthogonal pencil $\gP^\perp$ contains all circles centered in $O$. The space of elliptic pencils is, therefore, homeomorphic to $\hh$. Parabolic pencils are identified with the points at infinity. Orthogonal pencils contain horocycles sharing a common center. Finally, every hyperbolic pencil $\gP$ is uniquely defined by a pair of points at infinity. A line $\cl$ joining them is the unique line belonging to $\gP^\perp$; the rest of $\gP^\perp$ are the equidistant curves for $\cl$. Thus, all hyperbolic pencils are in one-to-one correspondence with the lines in hyperbolic geometry. The line at infinity is homeomorphic to the circle $S^1$. Therefore, the space of lines is homeomorphic to
the quotient space  $(S^1\times S^1\setminus\{(x,x)|x\in S^1\})/\sim$,
where $(x,y)\sim (y,x)$. That is, we take a torus, cut it along the diagonal embedding, and take one of the two equal parts.
We obtain a triangle in which two sides are identified (Figure~\ref{fig:gluing}). After gluing back the elliptic pencils we obtain $\R P^2$ once again.

\begin{figure}[h]
\[
\begin{tikzpicture}
\draw [thick][-{Stealth[length=3mm, width=2mm]}](0,0) -- (3,0);
\draw [thick][-{Stealth[length=3mm, width=2mm]}](0,3) -- (0,0);
\draw[thick][dashed] (0,3)--(3,0);
\draw[thick][dashed] (5.5,1.5) circle (1.5cm);
\end{tikzpicture}
\]
\caption{The space $\LL$ of linear pencils on the hyperbolic plane $\hh$. After we glue a half-torus to a disk along the dashed line (parabolic pencils) we get the projective space $\R P^2$.}
\label{fig:gluing}
\end{figure}

\end{itemize}

\section{Generalized cycles and extended pencils}
Let $\ca,\cb\approx M, N$. In the hyperbolic case not every combination $xM+yN$ defines a cycle, as the corresponding matrix may be positive or negative definite or even degenerate. Thus, the hyperbolic pencils (and parabolic as well) are not complete lines in the space $\CC$. Hence, in order to fully embrace projective geometry we need to extend the pencils. And for this, in turn, we need to generalize cycles a bit.

A \textit{generalized cycle} is an equation of the form
$$
k z\oline{z} + l z + m\oline{z} + n=0,
$$
where the matrix $M=\left(\begin{smallmatrix}k & l\\ m & n\end{smallmatrix}\right)\in\Herm(2)$ is nonzero and Hermitian, considered up to a nonzero real factor. We use the same notation $\ca\sim M$ for generalized cycles. We keep notation $\ca\approx M$ only for ordinary cycles. However, we naturally extend the orthogonality condition $\ca\perp\cb$ for generalized $\ca,\cb$.

For generalized cycles we have types just as we did for pencils.
When $M$ is non-degenerate and indefinite this is ordinary or \textit{elliptic} cycle. When $M$ has rank $1$ then $M$ uniquely corresponds to the point  $-\frac{m}{k}\in\C$; additionally, the matrix $\left(\begin{smallmatrix}0 & 0\\ 0 & 1\end{smallmatrix}\right)$ corresponds to $\infty$. These are the \textit{parabolic} cycles.  Finally, when $M$ is positive or negative definite it corresponds to a \textit{virtual} or \textit{hyperbolic} cycle. Virtual cycles could be represented by circles of pure imaginary radii; centers are still defined by the same formula. The space of all generalized cycles $\GG$ is naturally homeomorphic to the projective space $\R P^3$. Its universal cover is the space $\widetilde{\GG}$ of oriented generalized cycles, and it is homeomorphic to $S^3$. 
The isometry group $\Iso\M$ acts in the same way on $\GG$ and $\widetilde{\GG}$ and preserves the cycle types. The type is easily recovered from the corresponding 
matrix.

\begin{prop}\label{cycle-type}
Let $\ca\sim M$ be a generalized cycle. Then $\ca$ is elliptic iff $\det M<0$, parabolic iff $\det M=0$, and hyperbolic iff $\det M > 0$.
\end{prop}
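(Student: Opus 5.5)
The plan is to reduce everything to the sign pattern of the eigenvalues of the Hermitian matrix $M$, which is what the definitions of the three types really refer to. Since $M\in\Herm(2)$ is nonzero, it has two real eigenvalues $\lambda_1,\lambda_2$, not both zero, and $\det M=\lambda_1\lambda_2$. By definition, $\ca$ is elliptic when $M$ is non-degenerate and indefinite, parabolic when $\mathrm{rank}\,M=1$, and hyperbolic when $M$ is positive or negative definite. Rescaling $M$ by a nonzero real factor $t$ multiplies $\det M$ by $t^2>0$. So the sign of $\det M$ is well defined on the generalized cycle $\ca$, and the statement is meaningful.

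The key steps, in order:
\begin{enumerate}
\item[(a)] If $\det M<0$, the eigenvalues have opposite signs and are nonzero. Hence $M$ is non-degenerate and indefinite, and $\ca$ is elliptic.
\item[(b)] If $\det M=0$, one eigenvalue vanishes and the other does not, since $M\neq 0$. Hence $\mathrm{rank}\,M=1$ and $\ca$ is parabolic.
\item[(c)] If $\det M>0$, both eigenvalues are nonzero and of the same sign. Hence $M$ is definite and $\ca$ is hyperbolic.
\end{enumerate}
The three conditions on $\det M$ are mutually exclusive and exhaustive, and so are the three types. The implications therefore upgrade to equivalences.

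To stay within the paper's stated toolkit (quadratic equations rather than the spectral theorem), I would replace the eigenvalue argument with the characteristic polynomial $\lambda^2-(k+n)\lambda+(kn-|m|^2)$. Its discriminant is $(k-n)^2+4|m|^2\ge 0$, so both roots are real. Alternatively, I would argue directly with the Hermitian form $f(z,w)=k|z|^2+lz\oline{w}+m\oline{z}w+n|w|^2$. If $k\neq 0$, completing the square gives $k f = |kz+m w|^2 + (kn-|m|^2)|w|^2$, with $kn-|m|^2=\det M$. From this, $f$ takes both signs iff $\det M<0$, is semidefinite with a nontrivial kernel iff $\det M=0$, and is definite iff $\det M>0$. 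The case $k=0$ is handled separately. Then $\det M=-|m|^2\le 0$. If $m\neq 0$ the form is indefinite, which matches lines. If $m=0$ then $M=\mathrm{diag}(0,n)$ has rank $1$, which is the point $\infty$.

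The only mild obstacle is this bookkeeping of the degenerate case $k=0$ in the completing-the-square route, together with checking that rank $1$ is exactly the vanishing of $\det M$ for nonzero $M$. Both are immediate, so the proof is essentially a short computation.
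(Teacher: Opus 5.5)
Your proof is correct. The paper gives no separate proof of this proposition: it states it as immediate from the definitions of the three types in terms of the signature of $M$ (``the type is easily recovered from the corresponding matrix''), so your eigenvalue argument with $\det M=\lambda_1\lambda_2$ (well defined up to the factor $t^2>0$) is exactly the reasoning being taken for granted, and your completing-the-square variant is a valid but optional elementary substitute.
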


All cycles in the sequel will always be assumed to be elliptic, unless otherwise directly stated.

\medskip

An \textit{extended pencil} 
$\GG(\ca,\cb)$ defined by two generalized cycles 
$\ca,\cb\sim M, N$ is a projective line in $\GG$ defined by $\ca,\cb$, i.e.
$$
\GG(\ca,\cb) = \{\cc\sim xM+yN|x,y\in\R\}.
$$
Every ordinary pencil is uniquely completed to an extended one. For example, 
a parabolic pencil is completed by adding a unique parabolic cycle, and a hyperbolic pencil is completed by adding two parabolic cycles at its distinguished points and an open interval of virtual cycles between them. 

Every extended pencil is completed from a uniquely defined ordinary pencil. Indeed, let $\ca,\cb\sim M, N$ be two virtual cycles. We can assume that both $M$ and $N$ are positive definite. Let $x=\trr N$, $y=\trr M$. Then the matrix $xM-yN$ is nonzero since $M$, $N$ are not proportional, and has zero trace. 
Since all Hermitian rank $1$ matrices have nonzero trace it is nondegenerate and indefinite, so it defines an elliptic cycle $\cc$. Moreover, all cycles in some neighborhood of $\cc$ are also elliptic. Therefore, $\GG(\ca,\cb)\supset\gP$ where $\gP$ is an ordinary hyperbolic pencil.

Therefore, extended pencils $\gP$ are in one to one correspondence with the real linear subspaces $W\subset\Herm(2)$ of dimension $2$. The space of pencils $\PP$ has its own topology that transforms it into a compact four-dimensional manifold. We can now identify it.
\begin{theorem}
The space of pencils $\PP$ on the M\"obius plane is homeomorphic to the real Grassmannian $\Gr_2(4)$. 
\end{theorem}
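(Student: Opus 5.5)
We will build the homeomorphism directly from the correspondence established just before the statement: extended pencils $\gP$ correspond one to one to real $2$-dimensional linear subspaces $W\subset\Herm(2)$. Since $\Herm(2)$ is a real vector space of dimension $4$, the set of such subspaces is, by definition, $\Gr_2(\Herm(2))\cong\Gr_2(4)$. The proof therefore has three steps: check that the map $\Phi:\PP\to\Gr_2(4)$, $\gP\mapsto W$, is a well-defined bijection; check that the topology on $\PP$ is the one transported by $\Phi$, or at least that $\Phi$ is continuous; and conclude with a compactness argument.

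For the bijection, well-definedness is immediate. $\GG(\ca,\cb)$ is by definition the projectivization of $W=\mathrm{span}(M,N)$, and any two distinct generalized cycles of the pencil span the same $W$. So $\Phi(\gP)=W$ depends only on $\gP$, and $\gP=\mathbb{P}(W)$. Injectivity follows because $\mathbb{P}(W)$ determines $W$ as the union of the lines it contains. For surjectivity, take any $2$-plane $W$ and pick a basis $M,N$. These are nonzero Hermitian matrices, hence generalized cycles, and $\GG(\ca,\cb)=\mathbb{P}(W)$. We also want each extended pencil to arise from an ordinary pencil. The preceding paragraph shows this when $W$ contains two virtual cycles. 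If $W$ contains an elliptic matrix, openness of $\{\det<0\}$ gives an ordinary pencil inside. The remaining case is a plane consisting entirely of semidefinite matrices; it contains rank-one ones, and there we apply the trace argument again, or perturb within $W$. This case check is mostly bookkeeping.

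For the topology, the natural choice (and the one I would make explicit, since the paper only says $\PP$ "has its own topology") is to topologize $\PP$ as a subset of the space of projective lines in $\GG\cong\R P^3$. Equivalently, a pencil is close to another when it has generating pairs of cycles that are close. Under this choice $\Phi$ is continuous. A neighbourhood of $W$ in $\Gr_2(4)$ is described by perturbing a basis $(M,N)$, and a perturbation of generating cycles $\ca,\cb$ of $\gP$ is exactly a perturbation of $(M,N)$ up to scaling. The local charts of $\Gr_2(4)$, namely graphs of linear maps $W\to W'$ for a complementary $W'$, then give local coordinates on $\PP$ as well.

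To finish, note that $\Gr_2(4)$ is compact Hausdorff and, as stated, $\PP$ is compact. A continuous bijection from a compact space onto a Hausdorff space is a homeomorphism. This applies to $\Phi$ directly, or to $\Phi^{-1}$ if one prefers to prove continuity in the other direction. The main obstacle is not algebraic but foundational: the topology on $\PP$ is not pinned down in the paper. The crux is to fix it in a way compatible with the pencil-generation description and to verify continuity of $\Phi$ in that topology. I would handle this through the identification with projective lines in $\R P^3$, where the statement becomes the standard fact that lines in $\R P^3$ form $\Gr_2(4)$.
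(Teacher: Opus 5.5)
Your argument is the paper's: the paper simply observes that extended pencils correspond bijectively to $2$-dimensional subspaces of the $4$-dimensional real space $\Herm(2)$ and reads off $\Gr_2(4)$, leaving the topology implicit. You make that topology explicit by viewing pencils as projective lines in $\GG\cong\R P^3$, at which point $\Phi$ is a homeomorphism by construction, so your compactness step (which leans on the paper's unproved claim that $\PP$ is compact) is not actually needed; also, your ``all semidefinite'' case is empty, since every $2$-plane $W$ meets the $3$-dimensional trace-zero subspace in a nonzero matrix, which automatically has $\det<0$, so every $W$ contains an elliptic cycle and hence an ordinary pencil.
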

The pencil space $\PP$ is not simply connected and its universal cover $\widetilde{\PP}$ is the space of oriented pencils homeomorphic to the oriented Grassmannian $\Gr^+_2(4)$. The orientation-forgetful map $\widetilde{\PP}\rightarrow\PP$ is a universal two-fold covering. However, we will never need this space, and consider further only non-oriented pencils.

\medskip
The type of a pencil can be easily identified from the corresponding subspace.
\begin{prop}\label{W-type}
Let $W\subset\Herm(2)$ be a subspace of dimension 2. Then the corresponding pencil $\gP$ is elliptic iff $W$ is positive-definite; it is hyperbolic iff $W$ is indefinite, and parabolic iff $W$ is degenerate. 
Moreover, the map $\gP\mapsto \gP^\perp$ is exactly the orthogonal complement map $W\mapsto W^\perp$.
\end{prop}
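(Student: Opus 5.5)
Since $\la M,M\ra=-\det M$, elliptic cycles are spacelike vectors ($\la M,M\ra>0$), parabolic cycles are null, and virtual cycles are timelike in the Minkowski space $(\Herm(2),\la\cdot,\cdot\ra)$ of signature $(3,1)$. This follows directly from Proposition~\ref{cycle-type}. The plan is to read the type of $\gP$ off the restriction of $\la\cdot,\cdot\ra$ to $W$, and to identify orthogonality of cycles with Minkowski orthogonality.

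The first step is to compare $W$ with the light cone. A plane $W$ in a space of signature $(3,1)$ falls into exactly one of three cases. It is positive definite when it meets the cone only at $0$. It is indefinite, of signature $(1,1)$, when it meets the cone in two null lines. It is degenerate when it is tangent to the cone along one null line. No other case occurs, because a plane cannot contain two independent null vectors without containing a timelike one, and a plane has at most one negative direction. Null vectors in $W$ are exactly the parabolic cycles (points) of the extended pencil.

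The second step matches these cases with the pencil types as described before the statement. For an elliptic pencil through $A,B$, every cycle of the pencil is elliptic, and the extended pencil contains no point cycle. The point cycles $A,B$ are not linear combinations of the pencil's matrices, which can be checked after normalizing $A=0$, $B=\infty$, where the pencil consists of the lines through $0$: matrices $\left(\begin{smallmatrix}0&l\\ \oline{l}&0\end{smallmatrix}\right)$. This family is positive definite, since $\la M,M\ra=|l|^2$. For a parabolic pencil, normalize the point to $\infty$, so the pencil is the family of parallel lines. It is completed by exactly one point, $\infty$, which gives a degenerate plane. For a hyperbolic pencil, normalize to the concentric circles about $0$, i.e. $\mathrm{span}\{\mathrm{diag}(1,0),\mathrm{diag}(0,1)\}$. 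This contains the two null vectors $0$ and $\infty$, and $\mathrm{diag}(1,1)$ is timelike, so the plane is indefinite. Because every extended pencil comes from a unique ordinary pencil, as shown above, and isometries preserve $\la\cdot,\cdot\ra$ and hence signatures, the three cases are exhausted and the equivalences hold.

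The third step is the orthogonal complement. I need $\ca\perp\cb$ iff $\la\ca,\cb\ra=0$, which follows from $\la\ca,\cb\ra=\cos\angle(\ca,\cb)$ for intersecting cycles. If the cycles do not intersect, Proposition~\ref{cs} gives $\la\ca,\cb\ra\neq0$: disjoint circles have $d>|r_a|+|r_b|$ or $d<\big||r_a|-|r_b|\big|$, so $r_a^2+r_b^2-d^2\neq 0$ strictly, since $|r_a^2+r_b^2-d^2|>2|r_ar_b|$. Hence $\gP^\perp$ is the set of cycles whose matrices lie in $W^\perp$, which is again a $2$-plane. Its extended pencil is therefore the one defined by $W^\perp$, and the identification $\gP\mapsto\gP^\perp$ with $W\mapsto W^\perp$ follows. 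As a consistency check, the complement of a positive definite plane has signature $(1,1)$, matching the elliptic/hyperbolic swap stated earlier.

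The main obstacle is the third step. One must make sure orthogonality is genuinely equivalent to $\la M,N\ra=0$, including the case where one of the cycles is a line. One must also make sure $W^\perp$ contains enough elliptic cycles to span it. For that I would again use the normal forms: for $\mathrm{span}\{\mathrm{diag}(1,0),\mathrm{diag}(0,1)\}$, the complement consists of the lines through $0$, and similarly in the other two cases.
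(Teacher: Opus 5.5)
Your proof is correct, but it reaches the type classification by a different route from the paper.

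The paper takes two normalized cycles $\ca,\cb\approx M,N$ spanning $W$. In this basis the Gram matrix has positive diagonal and determinant $1-\la\ca,\cb\ra^2$, so the sign of the determinant gives signature $(2,0)$, degenerate $(1,0)$, or $(1,1)$. The paper then matches these with the pencil types through the trichotomy $|\la\ca,\cb\ra|<1$, $=1$, $>1$ for intersecting, tangent and disjoint cycles, which comes from Proposition~\ref{cs}. That is a single basis-free computation with no normalization. However, it presupposes the trichotomy, and it leaves the claim about $W\mapsto W^\perp$ entirely implicit.

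You instead move each pencil by an isometry to a normal form (lines through $0$, parallel lines, concentric circles) and compute the quadratic form there. This costs you the normal-form facts: transitivity on points, and the concentric normalization of two disjoint cycles. In return it shows concretely that the null lines of $W$ are exactly the point cycles of the extended pencil: none, the tangency point, or the two limit points.

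Your Step 3 supplies what the paper omits, namely $\ca\perp\cb$ iff $\la\ca,\cb\ra=0$. This holds because disjoint cycles have $|\la\ca,\cb\ra|>1$ and tangent ones have $\pm1$. That inequality is exactly the ingredient the paper's argument needs, so once you have it, the Gram-determinant computation could replace your Step 2 outright.

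The two loose ends you flag are easy:
\begin{itemize}
\item \emph{Lines.} Both $\perp$ and $\la\cdot,\cdot\ra$ are Möbius invariant, so you may send a point lying on neither cycle to $\infty$.
\item \emph{Spanning $W^\perp$.} $W^\perp$ is spanned by elliptic cycles by the same fact you already cite: every extended pencil is completed from an ordinary one. No further normal forms are needed.
\end{itemize}
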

\begin{proof}
Let $\ca,\cb\approx M, N$ be two distinct cycles. Then $\gP=\CC(\ca,\cb)$ is elliptic iff $|\la\ca,\cb\ra|<1$, parabolic iff $\la\ca,\cb\ra=\pm1$, and hyperbolic iff $|\la\ca,\cb\ra|>1$. On the other hand, the determinant of the Gram matrix
$$
\left|
\begin{array}{cc}
\la M,M\ra & \la M,N\ra\\
\la N,M\ra & \la N,N\ra
\end{array}
\right|=
\left|
\begin{array}{cc}
1 & \la\ca,\cb\ra\\
\la\ca,\cb\ra & 1
\end{array}
\right|= 1 - \la\ca,\cb\ra^2
$$ 
identifies the signature of the corresponding subspace $W$. We have $\sig W=(2,0)$ (positive-definite $W$) when the determinant is positive, $\sig W=(1,0)$ (degenerate $W$) when it is zero, and $\sig W=(1,1)$ (indefinite $W$) otherwise. These cases uniquely correspond to the pencil types.
\end{proof}

\medskip
Consider generalized cycles in the standard geometries.

\begin{itemize}
\item in spherical geometry all linear pencils are elliptic, and nothing needs to be added.

\item in Euclidean geometry linear pencils are either elliptic or parabolic.
Parabolic pencils are completed by adding one point at infinity for each direction. The line at infinity is the only non-elliptic (parabolic) cycle that is added.

\item in hyperbolic geometry all types are present. Parabolic cycles once again are in bijection with the line at infinity $\oline{\R}$. And we also have hyperbolic cycles. A real symmetric matrix $M=\left(\begin{smallmatrix}k & l\\ l & n\end{smallmatrix}\right)$ defines a hyperbolic cycle iff $\det M>0$. It is convenient to assume that this cycle is a point $z=-\frac{l}{k}+i\frac{\sqrt{\det M}}{k}\in\mathcal{H}$. Conversely, a point $z=x+iy$ corresponds to a unique hyperbolic cycle $N=\left(\begin{smallmatrix}-1 & x\\ x & -x^2-y^2\end{smallmatrix}\right)$. Moreover, for $\ca\sim M$ we have
\begin{gather*}
\la M,N\ra = (-k+2lx-n(x^2+y^2)) - (k+n)(-1-x^2-y^2)=\\
k(x^2+y^2) + 2lx + n=kz\oline{z} + lz+l\oline{z} + n.
\end{gather*}
Therefore, $z\perp \ca$ iff $z\in \ca$. Thus, a hyperbolic pencil $\gP$ is completed by adding two points at infinity and all points from the unique line $\cl\in\gP^\perp$.

Hence, as in spherical case, we have a bijection between linear pencils and generalized cycles.
\end{itemize}

\section{The splitting factor}
Every extended pencil $\gP$ is homeomorphic to a projective line $\R P^1$.
We introduce an invariant coordinate on it. Let $\ca,\cb\approx M, N$ be two oriented elliptic cycles. Then for every $\cc\in \GG(\ca,\cb)$ there exists a unique $\lambda\in\oline{\R}$ such that $\cc\sim M-\lambda N$. We take $\lambda=\infty$ when $\cc\cong\cb$. The \textit{splitting factor} $(\ca,\cb;\cc)$ is this number $\lambda$. It is uniquely defined and invariant. Moreover, $(\ca,\cb;\cc)$ is odd in the first two arguments and even in the third. The map
$\cc\mapsto (\ca,\cb;\cc)$
is a homeomorphism between $\GG(\ca,\cb)$ and $\oline{\R}$.

\begin{prop}\label{splitting-prop}
Let $\ca, \cb$ be two distinct cycles. Let $\cc\in\GG(\ca,\cb)$. Then
$$
(\cb,\ca;\cc) = (\ca,\cb;\cc)^{-1}.
$$
\end{prop}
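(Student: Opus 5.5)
The plan is to unwind the definition of the splitting factor directly. Fix normalized matrices $\ca\approx M$ and $\cb\approx N$. Since $\ca$ and $\cb$ are distinct, $\ca\not\cong\cb$, so $M$ and $N$ are linearly independent in $\Herm(2)$. Consequently every $\cc\in\GG(\ca,\cb)$ is represented, up to a nonzero real factor, by a unique combination $xM+yN$ with $(x:y)\in\R P^1$.

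By definition, $\lambda=(\ca,\cb;\cc)$ is the unique $\lambda\in\oline{\R}$ with $\cc\sim M-\lambda N$, with the convention $\lambda=\infty$ when $\cc\cong\cb$. Likewise, $\mu=(\cb,\ca;\cc)$ is the unique $\mu$ with $\cc\sim N-\mu M$, with $\mu=\infty$ when $\cc\cong\ca$. The goal is to show $\mu=\lambda^{-1}$, where $0^{-1}=\infty$ and $\infty^{-1}=0$.

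The argument splits into three cases.
\begin{enumerate}
\item Suppose $\lambda\neq0,\infty$. Then $M-\lambda N=-\lambda\,(N-\lambda^{-1}M)$, and since $-\lambda$ is a nonzero real factor, $\cc\sim N-\lambda^{-1}M$. Uniqueness of the coefficient, which follows from the linear independence of $M$ and $N$, gives $\mu=\lambda^{-1}$.
\item Suppose $\lambda=0$. Then $\cc\sim M$, so $\cc\cong\ca$, and by convention $\mu=\infty=0^{-1}$.
\item Suppose $\lambda=\infty$. Then $\cc\cong\cb$, so $\cc\sim N=N-0\cdot M$, which gives $\mu=0$.
\end{enumerate}
In the language of the homeomorphism $\cc\mapsto(\ca,\cb;\cc)$, the statement says that interchanging $\ca$ and $\cb$ acts on the coordinate by the map $\lambda\mapsto1/\lambda$ on $\oline{\R}$.

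There is no real obstacle here. The one point needing care is that the coefficient is well defined, that is, uniqueness of $\lambda$. This rests on the non-proportionality of $M$ and $N$: if $M-\lambda N$ and $M-\lambda' N$ were proportional with $\lambda\ne\lambda'$, then $M$ and $N$ would be dependent. The degenerate values $0$ and $\infty$ must also be matched consistently with the paper's convention. Orientation plays no role, since the splitting factor is even in $\cc$, and reversing the orientation of $\ca$ or $\cb$ changes signs the same way on both sides.
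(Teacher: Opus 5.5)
Your proposal is correct and follows the paper's own argument: the paper simply notes that $\cc\sim M-\lambda N$ implies $\cc\sim N-\lambda^{-1}M$ (multiply by $-\lambda^{-1}$), and concludes from uniqueness of the coefficient. Your separate treatment of $\lambda=0$ and $\lambda=\infty$, and your use of non-proportionality of $M$ and $N$ for uniqueness, just spell out conventions the paper leaves implicit.
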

\begin{proof}
Let $\ca,\cb\approx M,N$. Let $\lambda=(\ca,\cb;\cc)$. Then $\cc\sim M-\lambda N$,
and also $\cc\sim N-\lambda^{-1}M$.
\end{proof}

\begin{prop}\label{(c,a;b)}
Let $(\ca,\cb;\cc)=\lambda$ and $\ca,\cb\approx M, N$.
If $\cc$ is elliptic then
$\cc\approx(\cc,\ca;\cb)(M-\lambda N)$.
\end{prop}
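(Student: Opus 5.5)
Let $\cc\approx P$ be the normalized matrix of $\cc$, so $\det P=-1$. The plan is to identify $P$ as a real multiple of $M-\lambda N$ and then show that this multiple is exactly $\mu=(\cc,\ca;\cb)$. Since $\cc\sim M-\lambda N$ by the definition of the splitting factor, there is a nonzero real $t$ with $P=t(M-\lambda N)$. It therefore suffices to prove $t=\mu$.

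First, the degenerate cases. If $\lambda=0$, then $\cc\cong\ca$. If $\lambda=\infty$, then $\cc\cong\cb$. These either follow by the conventions ($\lambda=\infty$ should be excluded, since $M-\lambda N$ is then meaningless) or are handled separately. Assume from now on that $\lambda\neq 0,\infty$, so that $\ca,\cb,\cc$ are pairwise non-congruent and $\cb$ lies in $\GG(\cc,\ca)$.

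Next, compute $\mu=(\cc,\ca;\cb)$ from its definition. By definition it is the unique $\mu\in\oline{\R}$ with $\cb\sim P-\mu M$. Substituting $P=t(M-\lambda N)$ gives
$$
P-\mu M=(t-\mu)M-t\lambda N.
$$
This matrix is proportional to $N$ exactly when its $M$-coefficient vanishes, because $M$ and $N$ are linearly independent (as $\ca\not\cong\cb$). Hence $\mu=t$. The condition that the remaining matrix $-t\lambda N$ is nonzero holds because $t\neq0$ and $\lambda\neq0$. Therefore $P=\mu(M-\lambda N)$, which is the claim.

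The argument is essentially bookkeeping. The only delicate point, and the main thing to check, is that the splitting factor uses the \emph{normalized} matrices of its first two arguments. That is why the oriented $\cc$ enters through $P$ itself rather than through an arbitrary representative, and so the scale $t$ is fixed. This is also exactly where ellipticity of $\cc$ is used: it guarantees that $\cc$ has a normalized matrix $P$ at all, since $\det(M-\lambda N)<0$ allows rescaling to determinant $-1$.
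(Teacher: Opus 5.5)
Your proof is correct and is essentially the paper's argument: both write the normalized matrix of $\cc$ as a real multiple of $M-\lambda N$ and identify the scalar by reading off the defining relation of a splitting factor. The only difference is in which splitting factor gets read off. The paper writes $xL=M-\lambda N$, obtains $x=(\ca,\cc;\cb)$ from $\cb\sim M-xL$, and then implicitly inverts via Proposition~\ref{splitting-prop}, whereas you compute $(\cc,\ca;\cb)$ directly from $\cb\sim P-\mu M$ and so skip that inversion. Like the paper, you need $\lambda\neq 0,\infty$ tacitly, since otherwise $(\cc,\ca;\cb)$ is undefined; these cases are excluded rather than handled separately.
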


\begin{proof}
We have that $\cc\sim M-\lambda N$. Let $\cc\approx L$. Then $xL = M-\lambda N$ for some $x\in\R$. Then $\cb\sim M-xL$ implying $x=(\ca,\cc;\cb)$.
\end{proof}

\begin{prop}
Let $\ca,\cb,\cc$ be three distinct oriented collinear elliptic cycles. Then
$$
(\ca,\cb;\cc)(\cb,\cc;\ca)(\cc,\ca;\cb) = -1.
$$
\end{prop}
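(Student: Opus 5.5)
Let $\ca,\cb,\cc\approx M,N,L$ be the normalized matrices. Since the three cycles are distinct and collinear, any two of $M,N,L$ are linearly independent and all three lie in a two-dimensional subspace $W\subset\Herm(2)$. Hence there is a linear relation
$$
\alpha M+\beta N+\gamma L=0
$$
with $\alpha,\beta,\gamma$ all nonzero. Indeed, if one coefficient vanished, the other two matrices would be proportional, and the corresponding cycles would then coincide as sets, contradicting distinctness. The relation is unique up to a common real factor.

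The plan is to read all three splitting factors off this single relation. By definition $(\ca,\cb;\cc)=\lambda$ means $\cc\sim M-\lambda N$. Dividing the relation by $\alpha$ gives $M+\frac{\beta}{\alpha}N=-\frac{\gamma}{\alpha}L$, so $L\sim M-\left(-\frac{\beta}{\alpha}\right)N$ and
$$
(\ca,\cb;\cc)=-\frac{\beta}{\alpha}.
$$
The same argument applied cyclically gives $(\cb,\cc;\ca)=-\frac{\gamma}{\beta}$ and $(\cc,\ca;\cb)=-\frac{\alpha}{\gamma}$. Multiplying the three values yields $(-1)^3\frac{\beta}{\alpha}\frac{\gamma}{\beta}\frac{\alpha}{\gamma}=-1$.

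The only real point to check is that the splitting factor is well defined and nonzero, finite, for distinct cycles. Nonzero and finite hold because $\cc\not\cong\ca$ and $\cc\not\cong\cb$. Well-definedness holds because the relation is unique up to scaling, and the ratios $\beta/\alpha$ and so on are insensitive to that scaling. One should also note that the oriented normalization of $M,N,L$ is used only to fix the signs of the matrices entering the definition. Proposition~\ref{(c,a;b)} offers an alternative route, which is consistent with this but unnecessary. No step is a genuine obstacle; the only care needed is the sign convention $M-\lambda N$ versus $M+\lambda N$.
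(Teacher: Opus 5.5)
Your proof is correct, and it organizes the computation differently from the paper. The paper fixes $\lambda=(\ca,\cb;\cc)$ and writes the normalized matrix of $\cc$ in two ways: $\cc\approx(\cc,\ca;\cb)(M-\lambda N)$ and $\cc\approx(\cc,\cb;\ca)(N-\lambda^{-1}M)$, the second using $(\cb,\ca;\cc)=\lambda^{-1}$ from Proposition~\ref{splitting-prop}. Uniqueness of the normalized matrix then gives $(\cc,\ca;\cb)=-\lambda^{-1}(\cc,\cb;\ca)$, and a second use of Proposition~\ref{splitting-prop}, in the form $(\cb,\cc;\ca)=(\cc,\cb;\ca)^{-1}$, finishes.

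You instead take the single linear relation $\alpha M+\beta N+\gamma L=0$ and read all three factors directly off the definition as $-\beta/\alpha$, $-\gamma/\beta$, $-\alpha/\gamma$, so the product telescopes. The underlying fact is the same in both proofs: three pairwise independent vectors lying in a $2$-dimensional subspace.

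Your version is self-contained and symmetric in the three cycles. It also shows that the normalization plays no real role. Replacing $M,N,L$ by $sM,tN,uL$ changes the individual factors but not their product. So the identity holds for any fixed choice of representatives, and ellipticity is needed only so that the factors, as the paper defines them, make sense.

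The paper's version is shorter given its earlier propositions. Moreover, the formula $\cc\approx(\cc,\ca;\cb)(M-\lambda N)$ on which it rests is exactly the tool the paper reuses in its proofs of Ceva's and Menelaus's theorems.
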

\begin{proof}
Let $\ca,\cb\approx M, N$ and $\lambda=(\ca,\cb;\cc)$. Then $\cc\approx(\cc,\ca;\cb)(M-\lambda N)$
and $\cc\approx(\cc,\cb;\ca)(N-\lambda^{-1}M)$. Since the normalized matrix is uniquely defined we obtain
$$
(\cc,\ca;\cb) = -  \lambda^{-1}(\cc,\cb;\ca),
$$
and the rest follows.
\end{proof}

\begin{prop}
Let $\ca,\cb,\cc$ be elliptic and collinear. Then
$$
\la \ca,\cc\ra - \la\cb,\cc\ra  (\ca,\cb;\cc) = (\ca,\cc;\cb).
$$
\end{prop}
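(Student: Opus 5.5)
We fix normalized representatives $\ca,\cb,\cc\approx M,N,L$ and write $\lambda=(\ca,\cb;\cc)$. By Proposition~\ref{(c,a;b)} we have
$$
L=(\cc,\ca;\cb)\,(M-\lambda N).
$$
The plan is to pair this identity with $L$ itself using the bilinear form $\la\cdot,\cdot\ra$. We use two facts: $\la L,L\ra=-\det L=1$ because $L$ is normalized, and the Möbius cosines are exactly the pairings of the normalized matrices. This gives
$$
1=\la L,L\ra=(\cc,\ca;\cb)\big(\la\ca,\cc\ra-\lambda\la\cb,\cc\ra\big).
$$
Hence
$$
\la\ca,\cc\ra-\la\cb,\cc\ra(\ca,\cb;\cc)=(\cc,\ca;\cb)^{-1}.
$$

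By Proposition~\ref{splitting-prop}, $(\cc,\ca;\cb)^{-1}=(\ca,\cc;\cb)$, which is exactly the claimed identity. Collinearity is used only to guarantee that $\cc\in\GG(\ca,\cb)$, so that $\lambda$ is defined.

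The main point needing care is the degenerate cases. If $\cc\cong\cb$, then $\lambda=\infty$ and the statement must be read with the appropriate convention. If $\cc\cong\ca$, then $\lambda=0$ and $(\cc,\ca;\cb)=\infty$. The proof above assumes the three cycles are pairwise distinct as unoriented cycles, so all splitting factors involved are finite and nonzero; I would state this assumption explicitly.

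We also need $(\cc,\ca;\cb)\neq 0$ to invert it. This holds because otherwise $L=0$ by Proposition~\ref{(c,a;b)}, which is impossible.
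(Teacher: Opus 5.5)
Your proof is correct. It differs from the paper's in which vector you pair against, and that choice shortens the argument.

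\textbf{The paper's route.} The paper also starts from $\cc\approx(\cc,\ca;\cb)(M-\lambda N)$, but pairs it with $M$ (that is, with $\ca$) rather than with $\cc$ itself. This gives $\la\ca,\cc\ra=(\cc,\ca;\cb)-(\cc,\ca;\cb)(\ca,\cb;\cc)\la\ca,\cb\ra$. The paper rewrites this as $(\cc,\ca;\cb)+(\cc,\cb;\ca)\la\ca,\cb\ra$ using the cyclic identity $(\ca,\cb;\cc)(\cb,\cc;\ca)(\cc,\ca;\cb)=-1$. It then reaches the stated formula by interchanging the roles of $\ca$ and $\cc$.

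\textbf{Your route.} Your self-pairing uses the normalization $\la L,L\ra=1$ of the matrix $L$ of $\cc$ and reaches the target in one step. It needs only the reciprocity rule $(\cc,\ca;\cb)^{-1}=(\ca,\cc;\cb)$, so it avoids both the cyclic product identity and the relabeling.

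\textbf{What the paper's route buys.} It produces an expansion of $\la\ca,\cc\ra$ against the generators of the pencil. This is the same computation the paper later uses to express $\la\ca,\cn\ra$ for a cevian $\cn\in\CC(\cb,\cc)$ when $\ca$ lies outside that pencil, where pairing a cycle with itself is not an option.

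\textbf{A small correction.} In your discussion of degenerate cases, when $\cc\cong\ca$ the factor $(\cc,\ca;\cb)$ is undefined, because the two cycles do not span a pencil; it is not equal to $\infty$. Since you exclude that case anyway, the proof is unaffected.
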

\begin{proof}
Let $\ca,\cb\approx M, N$. Then $\cc\approx(\cc,\ca;\cb)(M-\lambda N)$. Therefore
\begin{gather*}
\la\ca,\cc\ra = \la M, (\cc,\ca;\cb)(M-\lambda N)\ra =
(\cc,\ca;\cb)(\la M,M\ra - \lambda\la M,N\ra)=\\
(\cc,\ca;\cb)-(\cc,\ca;\cb)(\ca,\cb;\cc)\la\ca,\cb \ra=
(\cc,\ca;\cb)+(\cc,\cb;\ca)\la\ca,\cb \ra.
\end{gather*}
Then just swap $\ca$ and $\cc$.
\end{proof}

\begin{corollary}\label{lambda-explicit}
Let $\ca,\cb,\cc$ be elliptic and lie on a non-parabolic pencil. Then
$$
(\ca,\cb;\cc) = \frac{\la\ca,\cb\ra-\la\ca,\cc\ra\la\cb,\cc\ra}{1-\la\cb,\cc \ra^2}.
$$
\end{corollary}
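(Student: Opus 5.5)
We derive the formula from the identity in the preceding proposition, applied twice with the roles of the cycles permuted, and then eliminate the unwanted splitting factor.

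Write $\lambda=(\ca,\cb;\cc)$ and $\mu=(\ca,\cc;\cb)$. The preceding proposition gives
$$
\la\ca,\cc\ra-\la\cb,\cc\ra\,\lambda=\mu .
$$
Exchanging the roles of $\cb$ and $\cc$ (the hypotheses are symmetric, since all three cycles are elliptic and collinear) gives
$$
\la\ca,\cb\ra-\la\cc,\cb\ra\,\mu=\lambda .
$$
Substituting $\mu$ from the first relation into the second yields
$$
\la\ca,\cb\ra-\la\cb,\cc\ra\la\ca,\cc\ra+\la\cb,\cc\ra^2\lambda=\lambda,
$$
that is,
$$
\lambda\bigl(1-\la\cb,\cc\ra^2\bigr)=\la\ca,\cb\ra-\la\ca,\cc\ra\la\cb,\cc\ra .
$$

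It remains to divide by $1-\la\cb,\cc\ra^2$, which requires $\la\cb,\cc\ra\neq\pm1$. This is exactly where the non-parabolic hypothesis enters. By the proof of Proposition~\ref{W-type}, $\la\cb,\cc\ra=\pm1$ holds precisely when the pencil $\CC(\cb,\cc)$ is parabolic, assuming $\cb\not\cong\cc$. Since $\ca,\cb,\cc$ lie on one non-parabolic pencil, which coincides with $\CC(\cb,\cc)$, the denominator is nonzero. Dividing gives the claimed formula.

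The main obstacle is the degenerate configurations. If $\cb\cong\cc$, then $\la\cb,\cc\ra=\pm1$, so the denominator vanishes; at the same time $\lambda=\infty$, which is consistent with the formula provided its numerator is nonzero. We either exclude this case by reading the cycles as distinct, as in the preceding propositions, or treat it separately. We must also make sure the previous proposition is valid when $\cc\cong\ca$: there $\lambda=0$, and the identities should still hold. We will check these edge cases directly from the definition of the splitting factor.
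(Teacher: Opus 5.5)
Your proof is correct and is essentially the paper's argument: you apply the preceding identity once as stated and once with $\cb$ and $\cc$ exchanged, substitute one into the other, and solve for $(\ca,\cb;\cc)$, and your explicit check that $\la\cb,\cc\ra\neq\pm1$ for distinct $\cb,\cc$ on a non-parabolic pencil justifies the division that the paper performs without comment. One small correction to your edge-case remark: if $\cb\cong\cc$ then $\la\cb,\cc\ra=\pm1$ and $\la\ca,\cc\ra=\pm\la\ca,\cb\ra$ with the same sign, so the numerator vanishes as well and the formula reads $0/0$; that case must simply be excluded rather than being ``consistent'' with $\lambda=\infty$.
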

\begin{proof}
We have that 
$$
(\ca,\cb;\cc) = \la \ca,\cb\ra - \la\cb,\cc\ra  (\ca,\cc;\cb) = 
 \la \ca,\cb\ra - \la\cb,\cc\ra(\la \ca,\cc\ra - \la\cb,\cc\ra  (\ca,\cb;\cc)).
$$
\end{proof}

\begin{prop}\label{lambda-line}
Let three circles $\ca,\cb,\cc$ be collinear and have distinct centers $O_a, O_b, O_c$ and signed radii $r_a, r_b, r_c$. Then
$$
(\ca,\cb;\cc)= \frac{O_c-O_a}{O_c-O_b}\frac{r_b}{r_a}.
$$
\end{prop}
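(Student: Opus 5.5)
Work directly with the normalized matrices. Since $\ca,\cb,\cc$ are circles, write $\ca\approx M$ with $M=\left(\begin{smallmatrix}k_a & l_a\\ m_a & n_a\end{smallmatrix}\right)$, so that $k_a=1/r_a$ and $m_a=-k_aO_a=-O_a/r_a$. Do the same for $\cb\approx N$ and $\cc\approx L$. By the definition of the splitting factor, $\lambda=(\ca,\cb;\cc)$ is the unique number with $\cc\sim M-\lambda N$. Hence there is a real $x\neq 0$ with $xL=M-\lambda N$; by Proposition~\ref{(c,a;b)} this $x$ is in fact $(\cc,\ca;\cb)$, though we will not need its value. The plan is to compare two entries of this matrix identity and eliminate $x$.

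The comparison runs as follows.

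1. The $(1,1)$ entry gives $x/r_c=1/r_a-\lambda/r_b$.

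2. The $(2,1)$ entry gives $-xO_c/r_c=-O_a/r_a+\lambda O_b/r_b$.

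3. Substituting $x/r_c$ from step 1 into step 2 yields
$$
O_c\left(\frac{1}{r_a}-\frac{\lambda}{r_b}\right)=\frac{O_a}{r_a}-\lambda\frac{O_b}{r_b},
$$
that is, $\lambda(O_c-O_b)/r_b=(O_c-O_a)/r_a$.

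4. Since the centers are distinct, $O_c\neq O_b$, so we can solve: $\lambda=\frac{O_c-O_a}{O_c-O_b}\cdot\frac{r_b}{r_a}$, which is the claimed formula.

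The only care needed is with degenerate cases. If $\lambda=\infty$, then $\cc\cong\cb$ and $O_c=O_b$, which the hypothesis excludes. If instead $1/r_a-\lambda/r_b=0$, step 1 forces $x/r_c=0$, so $x=0$. Then $M=\lambda N$, and since both are normalized this gives $\cb\cong\ca$, contradicting distinct centers. So the division in step 4 is legitimate.

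As a sanity check, the right-hand side must be real, because $\lambda$ is real. This matches the geometry: the centers of circles in a standard pencil are collinear, lying on the line perpendicular to the radical axis. The real-ness is not needed for the derivation, since the matrix identity already forces it.

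The main obstacle is only bookkeeping: sign conventions for the signed radius ($r=1/k$ under normalization) and the center formula $O=-m/k$ must be applied consistently. No deeper step is expected.
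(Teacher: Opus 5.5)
Your proof is correct and is essentially the paper's argument: both start from $\cc\sim M-\lambda N$ and read off centers via $O=-m/k$. The paper substitutes $O_c=-\frac{m_a-\lambda m_b}{k_a-\lambda k_b}$ into the right-hand side and simplifies it to $\lambda$, whereas you introduce the scalar $x$ with $xL=M-\lambda N$ and solve for $\lambda$ directly. (Your check that $1/r_a-\lambda/r_b\neq 0$ is harmless but not needed, since step 4 only divides by $O_c-O_b$.)
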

\begin{proof}
Let $\lambda=(\ca,\cb;\cc)$.
Let $\ca\approx M=\left(\begin{smallmatrix}k_a & l_a\\ m_a & n_a\end{smallmatrix}\right)$
and $\cb\approx N=\left(\begin{smallmatrix}k_b & l_b\\ m_b & n_b\end{smallmatrix}\right)$. Then $O_a=-\frac{m_a}{k_a}$, $O_b=-\frac{m_b}{k_b}$, $r_a=\frac{1}{k_a}$, $r_b=\frac{1}{k_b}$. Furthermore, $\cc\sim M-\lambda N$ and $O_c=-\frac{m_a-\lambda m_b}{k_a-\lambda k_b}$. Therefore,
$$
\frac{O_c-O_a}{O_c-O_b}\frac{r_b}{r_a}=
\frac{\frac{m_a-\lambda m_b}{k_a-\lambda k_b}-\frac{m_a}{k_a}}{\frac{m_a-\lambda m_b}{k_a-\lambda k_b}-\frac{m_b}{k_b}}\frac{k_a}{k_b}=
\frac{k_a(m_a-\lambda m_b)-(k_a-\lambda k_b)m_a}{k_b(m_a-\lambda m_b)-(k_a-\lambda k_b)m_b}=
\frac{-\lambda k_a m_b+\lambda k_b m_a}{k_b m_a-k_am_b}=\lambda.
$$

\end{proof}


\section{Digons}
A \textit{digon} is a part of the plane bounded by two
cycle segments sharing common endpoints. A digon, therefore, has two vertices and two sides (Figure~\ref{fig:digons}).

\begin{figure}[h]
\[
\begin{tikzpicture}
\draw [thick](0,0.5) arc (120:60:2);
\draw [thick](0,0.5) arc (-120:-60:2);
\draw [thick](3,0) arc (180:0:1);
\draw [thick](3,0) arc (120:60:2);
\draw [thick](6,0) arc (210:-30:0.8);
\draw [thick](6,0) -- (7.39, 0);
\draw [thick](9,0.5) arc (230:-50:0.7);
\draw [thick](9,0.5) arc (-230:50:0.7);
\draw [thick](12,0.5) circle (0.8cm);
\end{tikzpicture}
\]
\caption{Various kinds of M\"obius digons.}
\label{fig:digons}
\end{figure}

Moreover, the angles between the sides at the vertices are equal. 
Therefore, for every digon its \textit{inner angle} $\alpha$ is uniquely defined.
The inner angle $\alpha$ can be any number from the interval $(0,2\pi)$.
The complement of a digon with an inner angle $\alpha$ is a digon of angle $2\pi-\alpha$.
 When $\alpha=\pi$ then the corresponding digon is a half-plane. This is the only case when the sides of a digon belong to a single cycle and vertices cannot be identified unless explicitly given. Such digon is \textit{degenerate}.
 We call a digon \textit{proper} when $\alpha<\pi$. The sides of a proper digon are always non-collinear.

\begin{theorem}
Two digons are congruent iff their inner angles are the equal.
\end{theorem}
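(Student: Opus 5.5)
The plan is to normalize both digons by an isometry that sends one vertex to $\infty$, and then compare them as Euclidean angular sectors.

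\emph{Necessity.} Every isometry in $\Iso\M$ is conformal and maps cycles to cycles, so it carries the sides of a digon to sides and its vertices to vertices. Unsigned angles between the sides are therefore preserved. One subtlety is that an orientation-reversing isometry negates oriented angles, but the inner angle is an unsigned quantity, so it is unaffected. The other subtlety is that the isometry sends the region to the image region, not to its complement. Hence the inner angle, measured inside the region, is preserved, and congruent digons have equal inner angles.

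\emph{Sufficiency.} Let $D$ and $D'$ be digons with vertices $A,B$ and $A',B'$ and equal inner angles $\alpha$.
\begin{enumerate}
\item By the Proposition on triples of points, choose $\sigma\in\Iso^+\M$ with $\sigma(A)=0$ and $\sigma(B)=\infty$. Do the same for $D'$.
\item Each side of $D$ is a cycle segment from $A$ to $B$. After applying $\sigma$, it lies on a cycle through $0$ and $\infty$, that is, on a straight line through $0$. The segment itself is a ray from $0$ to $\infty$.
\item The image of $D$ is therefore a region bounded by two rays from the origin, namely a Euclidean angular sector.
\item Conformality shows that the opening angle of this sector, measured inside the region, equals $\alpha$. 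This holds for any $\alpha\in(0,2\pi)$, including reflex sectors. In the degenerate case $\alpha=\pi$, the sector is a half-plane, and we are free to choose the marked vertices.
\item Two sectors of the same opening angle are congruent by a rotation about $0$, which is an isometry fixing $0$ and $\infty$.
\end{enumerate}
Composing the maps gives the required isometry from $D$ to $D'$; it moreover sends $A$ to $A'$ and $B$ to $B'$.

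The main obstacle is justifying the identification of the region in steps 3 and 4. The two rays split $\C\setminus\{0\}$ into two sectors, of angles $\alpha$ and $2\pi-\alpha$. We must check that the image of $D$ is the sector of angle $\alpha$ rather than its complement. This follows because $\sigma$ is a homeomorphism of the sphere that maps the region bounded by the two sides onto one of the two complementary regions. The inner angle is measured at the vertex inside the region, so conformality at $A$ fixes which sector it is.
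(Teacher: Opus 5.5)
Your proof is correct and follows essentially the same route as the paper. The paper also moves a vertex to $\infty$ so that the digon becomes a Euclidean angle, then matches the two digons by a similarity. You differ only in minor ways: you send the second vertex to $0$ so a rotation suffices, and you spell out the necessity direction and the angle-$\alpha$-versus-$2\pi-\alpha$ sector check, both of which the paper leaves implicit.
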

\begin{proof}
Without loss of generality we can assume that both digons have $\infty$ as one of the vertices.
Then both of them are ordinary Euclidean angles. Then there exists a Euclidean similarity taking one to another iff their numeric values are the same.
\end{proof}

Any two intersecting cycles $\ca,\cb$ split the plane into four proper digons having angles $\alpha,\pi-\alpha, \alpha, \pi-\alpha$ for some $\alpha$. When the cycles are oriented exactly one of these digons appears on the left half-plane for both $\ca$ and $\cb$, and we will always consider it as the one bounded by $\ca$ and $\cb$. Conversely, when considering a digon we will always assume its boundary is oriented and it is located on the left for both boundary cycles. Note that we have then $\angle(\ca,\cb)=\pm(\pi-\alpha)$ and $\la\ca,\cb\ra=-\cos\alpha$ where $\alpha$ is the inner angle.

A \textit{bisector} of a digon is a cycle $\cl$ passing through its vertices that splits the digon into two equal halves and such that the inversion $\sigma_{\cl}$ about it swaps them.
\begin{prop}
Every digon has a unique bisector.
\end{prop}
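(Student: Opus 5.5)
Since every digon has $\infty$-free congruent models, I reduce to a standard picture: by an isometry send one vertex of the digon to $\infty$ (Proposition on triples of points allows arbitrary placement of the two vertices, say $0$ and $\infty$). Then the digon becomes a Euclidean angular sector $\{z=\rho e^{i\theta}: \rho>0,\ \theta_0<\theta<\theta_0+\alpha\}$ with vertex $0$, bounded by two rays, where $\alpha\in(0,2\pi)$ is the inner angle. Since isometries map cycles to cycles and conjugate inversions ($\tau\sigma_\cl\tau^{-1}=\sigma_{\tau(\cl)}$), both existence and uniqueness of a bisector are invariant under this normalization, so it suffices to treat the sector.

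Existence: take $\cl$ to be the straight line through $0$ in direction $\theta_0+\alpha/2$ (with $\infty$ attached). It passes through both vertices $0$ and $\infty$. The inversion $\sigma_\cl$ is the Euclidean reflection across this line, which maps the sector to itself, swapping the rays $\theta_0$ and $\theta_0+\alpha$. It cuts the sector into the two subsectors $\theta_0<\theta<\theta_0+\alpha/2$ and $\theta_0+\alpha/2<\theta<\theta_0+\alpha$, and the reflection exchanges them. (When $\alpha>\pi$ the bisecting ray lies inside the reflex sector; the same formula works, and the opposite ray of $\cl$ lies outside the digon, which is harmless.)

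Uniqueness: a bisector $\cl$ passes through $0$ and $\infty$, hence is a straight line through $0$, say at angle $\varphi$ (mod $\pi$). Since $\sigma_\cl$ must map the digon onto itself (it swaps the two halves, whose union together with $\cl\cap$digon is the digon), it is a reflection through the origin preserving the sector, so it must permute the two boundary rays. It cannot fix each boundary ray unless both rays lie on $\cl$ or are perpendicular to it, which forces $\alpha=\pi$ (degenerate) or would not swap the halves; so it swaps the rays, giving $2\varphi\equiv 2\theta_0+\alpha \pmod{2\pi}$, i.e. $\varphi\equiv\theta_0+\alpha/2\pmod\pi$, and $\cl$ is the line found above. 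The main obstacle is this small case analysis: carefully justifying that $\sigma_\cl$ preserves the digon and excluding the alternative where it fixes both sides; for the degenerate half-plane ($\alpha=\pi$) the vertices are given explicitly, and the argument above goes through unchanged since the two rays are then distinct opposite rays and the only line swapping them through $0$ is the perpendicular one.
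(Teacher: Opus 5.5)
Your proof is correct and is essentially the paper's own argument: send one vertex to $\infty$, note that any bisector must then be a straight line through the other vertex, and check that only the Euclidean angle bisector qualifies, with your ray-swapping computation $2\varphi\equiv 2\theta_0+\alpha \pmod{2\pi}$ filling in the detail the paper leaves as ``clearly''. One harmless slip: a reflection never fixes a ray perpendicular to its axis (it sends it to the opposite ray), so the ``or are perpendicular to it'' clause is spurious; the only way to fix both rays is for both to lie on $\cl$, i.e.\ $\alpha=\pi$ with $\cl$ the boundary line, and then $\sigma_\cl$ sends the half-plane to its complement, so this case is excluded anyway.
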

\begin{proof}
Again, we can assume that one vertex of a digon is $\infty$.
The bisector, if it exists must pass through both vertices, so it is also a straight line. Clearly, only the Euclidean bisector satisfies the required conditions.
\end{proof}
Therefore the bisector splits a digon of angle $\alpha$ into two equal digons of angle $\alpha/2$. The bisector is, of course, one of the midcycles. The other midcycle is the \textit{external bisector}.

More generally, a \textit{cevian} of a digon is any cycle $\cc$ passing through its vertices, i.e. an element of the pencil formed by its sides.  
Every cevian $\cc$ is uniquely defined by the angles it forms with the sides of digon. Moreover, for every cevian $\cc$ there exists a unique cevian $\cc^\perp$ orthogonal to it. 
 
\begin{prop} 
Let $\cc\in\CC(\ca,\cb)$ be a cevian. Then
$$
(\ca,\cb;\cc) = \frac{\sin\angle(\ca,\cc)}{\sin\angle(\cb,\cc)}
$$
where the oriented angles are both measured at the same vertex of the digon.
\end{prop}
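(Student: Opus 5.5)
The plan is to reduce to the case where one vertex of the digon is $\infty$, as in the earlier proofs in this section, and then compute directly with normalized matrices of lines. Both sides of the identity are invariant under $\Iso^+\M$. The splitting factor is invariant because the action on $\Herm(2)$ is linear and preserves the normalization, and oriented angles are preserved by orientation-preserving conformal maps. So we may choose $\sigma\in\Iso^+\M$ sending the vertex at which the angles are measured to $0$ and the other vertex to $\infty$. Then $\ca,\cb,\cc$ become oriented straight lines through the origin, and the angles are measured at $0$.

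Next I would write down the normalized matrix of an oriented line through $0$ with unit direction $e^{i\theta}$. For such a line $k=n=0$, and $\det M=-|l|^2=-1$ forces $|l|=1$. The condition that $te^{i\theta}$ lies on the line gives $\mathrm{Re}(le^{i\theta})=0$, so $l=\pm ie^{-i\theta}$. The convention that the left half-plane is $2\,\mathrm{Re}(lz)<0$ selects $l=ie^{-i\theta}$, since then this inequality reads $\mathrm{Im}(e^{-i\theta}z)>0$. Thus $\ca,\cb,\cc$, with directions $e^{i\theta_a},e^{i\theta_b},e^{i\theta_c}$, have matrices whose only nonzero entries are $l=ie^{-i\theta_{a,b,c}}$ and $m=\oline{l}$.

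The relation $\cc\sim M-\lambda N$ then amounts to requiring that $e^{-i\theta_a}-\lambda e^{-i\theta_b}$ be a real multiple of $e^{-i\theta_c}$. Multiplying by $e^{i\theta_c}$ and taking imaginary parts gives
$$\sin(\theta_c-\theta_a)-\lambda\sin(\theta_c-\theta_b)=0.$$
The oriented angle at $0$ between the tangent vectors is $\angle(\ca,\cc)\equiv\theta_c-\theta_a$, and similarly for $\angle(\cb,\cc)$, so $\lambda=\sin\angle(\ca,\cc)/\sin\angle(\cb,\cc)$. Here $\sin\angle(\cb,\cc)\neq0$ whenever $\cc\not\cong\cb$, and the case $\cc\cong\cb$ gives $\lambda=\infty$ consistently.

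The main obstacle is keeping the orientation and sign conventions straight. I would check three things.
\begin{enumerate}
\item At the other vertex ($\infty$, or after swapping which vertex goes to $0$), all tangent directions reverse relative to each other's rotation, so both oriented angles change sign and the ratio is unchanged. This justifies the remark that the formula does not depend on the chosen vertex, provided both angles are taken at the same vertex.
\item Reversing the orientation of $\cc$ shifts both angles by $\pi$, flipping both sines. This matches the evenness of the splitting factor in its third argument.
\item Reversing $\ca$ or $\cb$ flips exactly one sine, matching oddness in the first two arguments.
\end{enumerate}
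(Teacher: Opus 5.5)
Your proof is correct, but it takes a different route from the paper's. The paper never normalizes coordinates. It substitutes $\la\ca,\cb\ra=\cos\alpha$, $\la\ca,\cc\ra=\cos\alpha_1$ and $\la\cb,\cc\ra=\cos\alpha_2$, with $\alpha\equiv\alpha_1-\alpha_2$ at the chosen vertex, into the formula of Corollary~\ref{lambda-explicit},
\[
(\ca,\cb;\cc)=\frac{\la\ca,\cb\ra-\la\ca,\cc\ra\la\cb,\cc\ra}{1-\la\cb,\cc\ra^2}.
\]
The identity $\cos(\alpha_1-\alpha_2)-\cos\alpha_1\cos\alpha_2=\sin\alpha_1\sin\alpha_2$ then gives the result in one line.

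The paper's approach is coordinate-free and reuses the machinery relating splitting factors to M\"obius cosines. It also works at either vertex without extra argument: the cosines and the relation $\alpha\equiv\alpha_1-\alpha_2$ hold at both vertices, and the angles are negated together at the other one.

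Your approach goes back to the definition of the splitting factor instead. You send the vertices to $0$ and $\infty$, and use the left-half-plane convention to fix the normalized entry $l=ie^{-i\theta}$ for oriented lines through the origin. You then read off $\lambda$ from a single reality condition. This is more self-contained and makes the orientation bookkeeping explicit, whereas the paper's argument hides it inside $\la\ca,\cc\ra=\cos\angle(\ca,\cc)$ and the angle-difference relation. Your version also covers the case $\cc\cong\cb$ ($\lambda=\infty$), where the paper's denominator vanishes. The price is that you must justify invariance of both sides under $\Iso^+\M$, which you do correctly.

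Your checks (1)--(3) are sound sanity checks, but the proof does not need them, since you already send whichever vertex is used to $0$.
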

\begin{proof}
Let $\alpha=\angle(\ca,\cb)$, $\alpha_1=\angle(\ca,\cc)$, $\alpha_2=\angle(\cb,\cc)$ all be measured at one of the vertices of the digon. We have $\alpha = \alpha_1-\alpha_2$ modulo $2\pi$, and according to Corollary~\ref{lambda-explicit} we have
$$
(\ca,\cb;\cc) = \frac{\la\ca,\cb\ra-\la\ca,\cc\ra\la\cb,\cc\ra}{1-\la\cb,\cc \ra^2}=
\frac{\cos\alpha-\cos\alpha_1\cos\alpha_2}{1-\cos^2\alpha_2} = 
\frac{\sin\alpha_1\sin\alpha_2}{\sin^2\alpha_2}
=\frac{\sin\alpha_1}{\sin\alpha_2}.
$$
Note that $\alpha$ is not the inner angle in this argument.
\end{proof}
 
Moreover, we have $(\ca,\cb;\cc)>0$ iff $\cc$ splits the digon in two, and $(\ca,\cb;\cc)<0$ iff $\cc$ intersects the digon only in its vertices. Moreover, two cevians $\cc,\cd$ are symmetric with respect to $\cl$ iff $(\ca,\cb;\cd)=(\ca,\cb;\cc)^{-1}$.

\begin{prop}
The map $\cc\mapsto(\ca,\cb;\cc)$ is a bijection between the (non-oriented) cevians and $\oline{\R}$. Let $\lambda = (\ca,\cb;\cc)$. Then $\cc\cong\ca$ iff $\lambda=0$, $\cc\cong\cb$ iff $\lambda=\infty$, $\cc\cong\cl$ is the bisector iff $\lambda=1$, and
$\cc\cong\cl^\perp$ is the external bisector iff $\lambda=-1$.
\end{prop}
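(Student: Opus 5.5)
The bijection itself comes almost directly from the earlier discussion, so the plan is to reduce it to that and then identify the four special values. Cevians are exactly the elements of the ordinary pencil $\CC(\ca,\cb)$. Since $\ca,\cb$ intersect, this pencil is elliptic. By Proposition~\ref{W-type} the subspace $W$ spanned by $M,N$ (with $\ca,\cb\approx M,N$) is positive definite. Consequently every nonzero matrix $M-\lambda N$, and also $N$ itself, has $\la X,X\ra>0$, i.e. $\det X<0$, and so defines an elliptic cycle by Proposition~\ref{cycle-type}. Thus the extended pencil $\GG(\ca,\cb)$ contains no parabolic or virtual cycles and coincides with $\CC(\ca,\cb)$. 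The splitting-factor section already states that $\cc\mapsto(\ca,\cb;\cc)$ is a homeomorphism $\GG(\ca,\cb)\to\oline{\R}$, so it is a bijection from cevians onto $\oline{\R}$. The values $\lambda=0$ and $\lambda=\infty$ are immediate from the definition: $\cc\sim M$ iff $\cc\cong\ca$, and $\lambda=\infty$ was assigned precisely to $\cc\cong\cb$.

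For the bisector I will use the preceding proposition $(\ca,\cb;\cc)=\sin\angle(\ca,\cc)/\sin\angle(\cb,\cc)$, with angles measured at one vertex. Let the digon have inner angle $\alpha$ and orient everything as the paper prescribes: the sides $\ca,\cb$ are oriented so that the digon lies on the left of both. Orient the bisector $\cl$ so that its tangent at the vertex points into the digon along the Euclidean bisecting ray, after sending the other vertex to $\infty$ (we may do this because splitting factors and angles are invariant). Then $\angle(\ca,\cl)$ and $\angle(\cb,\cl)$ are supplementary-type angles $\theta$ and $\theta\pm(\pi-\alpha)$ that have equal sines. Concretely, place the digon as the Euclidean sector between rays at angles $0$ and $\alpha$. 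Then $\ca$ is the ray at angle $0$ traversed outward, $\cb$ is the ray at angle $\alpha$ traversed inward, and $\cl$ is the ray at angle $\alpha/2$. A direct computation gives the two oriented angles as $\alpha/2$ and $\alpha/2-\pi$ or their analogues, and the ratio of sines is $\pm1$. Since $\cl$ splits the digon, the remark that $(\ca,\cb;\cc)>0$ iff $\cc$ splits the digon gives the sign $+$, so $\lambda=1$.

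The external bisector $\cl^\perp$ is handled the same way. Its Euclidean direction is $\alpha/2+\pi/2$, so its sine ratio again has modulus $1$. Because it meets the digon only at the vertices, the sign is negative and $\lambda=-1$. Alternatively, I can check both special values algebraically. By Corollary~\ref{lambda-explicit}, $\lambda=\pm1$ means $\cc\sim M\mp N$. The symmetric combinations $M\pm N$ are interchanged with each other under the inversion swapping $\ca$ and $\cb$, and they are the two midcycles. Moreover $\la M+N,M-N\ra=0$, which is consistent with their orthogonality.

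Since the map is a bijection, each of these identifications is an iff. I expect the main obstacle to be purely bookkeeping: tracking orientation conventions so that the bisector gets $+1$ rather than $-1$. I would settle this with the explicit sector picture rather than with general arguments.
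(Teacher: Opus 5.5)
Your argument is correct. For the bijection it takes a different route from the paper.

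The paper fixes a vertex and orients each cevian so that $\theta=\angle(\cb,\cc)\in[0,\pi)$. The sine formula then gives $\lambda=\sin(\alpha-\theta)/\sin\theta=\sin\alpha\cot\theta-\cos\alpha$. This one formula does three jobs:
\begin{itemize}
\item it gives the bijection, since it is monotone in $\cot\theta$;
\item it gives the four special values by substituting $\theta=0,\alpha/2,\alpha,(\pi+\alpha)/2$;
\item it proves the earlier unproved remark that $\lambda>0$ exactly when $\theta\in(0,\alpha)$, i.e.\ for splitting cevians.
\end{itemize}
You instead get the bijection structurally. Positive-definiteness of $W$ forces every nonzero $M-\lambda N$ to be elliptic, so $\GG(\ca,\cb)=\CC(\ca,\cb)$, and the general homeomorphism $\GG(\ca,\cb)\to\oline{\R}$ restricts to the cevians. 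This is cleaner, and it makes clear why no value of $\lambda$ is ``lost'' to parabolic or virtual members, unlike the monogon and annulus cases. The cost is that the special values need a separate computation.

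That computation contains a slip. Direct $\ca$, $\cb$, $\cl$ at the vertex $0$ by $1$, $-e^{i\alpha}$, $e^{i\alpha/2}$. Then $\angle(\ca,\cl)=\alpha/2$ and $\angle(\cb,\cl)=\pi-\alpha/2$, not $\alpha/2-\pi$; your value comes from reversing the order in one of the two angles. The sines are equal, so $\lambda=1$ directly. Taken literally, your angles would give $-1$ and contradict the sign remark you then invoke.

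Computing correctly removes your dependence on that remark. This matters because the paper states it without proof, and its natural proof is exactly the $\theta$-parametrization above. Likewise, directing $\cl^\perp$ by $e^{i(\alpha+\pi)/2}$ gives angles $(\alpha+\pi)/2$ and $-(\alpha+\pi)/2$, so $\lambda=-1$ at once.

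Two smaller points:
\begin{itemize}
\item $\lambda=\pm1\iff\cc\sim M\mp N$ is just the definition of the splitting factor, not Corollary~\ref{lambda-explicit}.
\item $M+N$ and $M-N$ are not interchanged by an inversion swapping $\ca,\cb$. Being orthogonal, each is preserved by inversion in the other. So your ``alternative'' only shows they are the two midcycles; it does not decide which is the internal bisector, and cannot replace the sign argument.
\end{itemize}
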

\begin{proof}
Let $A$ be one of the vertices. Let $\alpha$ be the inner angle.
Select orientation on every cevian $\cc$
such that the angle $\theta= \angle(\cb, \cc)$ measured in $A$ is nonnegative. Then $\angle(\ca,\cc)=\pi-\alpha+\theta$ and every cevian uniquely corresponds to some $\theta\in[0,\pi)$. We have then
$$
\lambda = \frac{\sin(\alpha-\theta)}{\sin\theta} = 
\sin\alpha\cot\theta -\cos\alpha.
$$
But this is a bijection between $[0,\pi)$ and $\oline{\R}$. Moreover, $\cb,\cl,\ca,\cl^\perp$ correspond to $\theta=0$, $\alpha/2$, $\alpha$, $(\pi+\alpha)/2$ that, in turn, correspond to $\lambda=\infty$, $1$, $0$, $-1$ respectively.
\end{proof}

\begin{prop}\label{sin-alpha}
Assume that $\frac{\sin\alpha_1}{\sin\alpha_2}=\lambda$, and $\alpha_1+\alpha_2=\alpha$. Then
$$
\sin\alpha_1 = \varepsilon\frac{\lambda\sin\alpha}{\sqrt{1+2\lambda\cos\alpha+\lambda^2}},
\quad
\sin\alpha_2 = \varepsilon\frac{\sin\alpha}{\sqrt{1+2\lambda\cos\alpha+\lambda^2}}.
$$
Where $\varepsilon=\pm 1$.
\end{prop}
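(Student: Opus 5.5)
We prove the statement by eliminating one angle and solving a linear trigonometric equation. Substituting $\alpha_1=\alpha-\alpha_2$ into $\sin\alpha_1=\lambda\sin\alpha_2$ and expanding gives
$$
\sin\alpha\cos\alpha_2-\cos\alpha\sin\alpha_2=\lambda\sin\alpha_2,
\qquad\text{i.e.}\qquad
\sin\alpha\cos\alpha_2=(\lambda+\cos\alpha)\sin\alpha_2 .
$$

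We then use $\sin^2\alpha_2+\cos^2\alpha_2=1$. Multiply this identity by $\sin^2\alpha$ and replace $\sin^2\alpha\cos^2\alpha_2$ by $(\lambda+\cos\alpha)^2\sin^2\alpha_2$. This yields
$$
\sin^2\alpha_2\left(\sin^2\alpha+(\lambda+\cos\alpha)^2\right)=\sin^2\alpha .
$$
The bracket equals $\sin^2\alpha+\cos^2\alpha+2\lambda\cos\alpha+\lambda^2=1+2\lambda\cos\alpha+\lambda^2$. Hence
$$
\sin^2\alpha_2=\frac{\sin^2\alpha}{1+2\lambda\cos\alpha+\lambda^2}.
$$
Taking square roots gives the formula for $\sin\alpha_2$ with a sign $\varepsilon=\pm1$. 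Multiplying by $\lambda$ gives $\sin\alpha_1=\lambda\sin\alpha_2$ with the same $\varepsilon$, which is the first formula.

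The only step needing care is the denominator. Write $1+2\lambda\cos\alpha+\lambda^2=(\lambda+\cos\alpha)^2+\sin^2\alpha\ge0$. It vanishes only when $\sin\alpha=0$ and $\lambda=-\cos\alpha=\mp1$. In that degenerate case the pencil is not proper ($\alpha\in\{0,\pi\}$) and the formula does not apply. So we assume $\sin\alpha\neq0$, which makes the denominator positive and the square root legitimate.

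Finally, the identity assumes $\sin\alpha_2\neq0$, since it is implicit in writing $\lambda=\sin\alpha_1/\sin\alpha_2$. With this assumption no division by zero occurs in the steps above, and the argument is complete.
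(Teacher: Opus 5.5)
Your proof is correct and follows essentially the same route as the paper: apply an angle-addition identity, square, and use $\sin^2+\cos^2=1$ to isolate $\sin^2\alpha_2$. The paper squares the cosine relation $\cos\alpha_1\cos\alpha_2=\cos\alpha+\lambda\sin^2\alpha_2$, where quartic terms then cancel; your expansion of $\sin(\alpha-\alpha_2)$ gives a relation linear in $\sin\alpha_2,\cos\alpha_2$, which is slightly cleaner. Your observation that the denominator equals $(\lambda+\cos\alpha)^2+\sin^2\alpha$, and so vanishes only when $\sin\alpha=0$ (where the hypotheses force the formula to read $0/0$), is a useful addition that the paper omits. One small wording point: ``proper'' is the paper's term for digons and triangles, not pencils.
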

\begin{proof}
We have that
$$
\cos\alpha_1\cos\alpha_2 = \cos\alpha + \sin\alpha_1\sin\alpha_2.
$$
Let $x=\sin\alpha_2$. Then after squaring both sides we obtain
$$
(1-\lambda^2x^2)(1-x^2) = \cos^2\alpha + 2\lambda\cos\alpha x^2 + \lambda^2 x^4.
$$
This is equivalent to
$$
\sin^2\alpha = (1+2\lambda\cos\alpha + \lambda^2) x^2.
$$
\end{proof}

\begin{corollary}\label{acb}
Let $\ca$ and $\cb$ intersect, and $\lambda=(\ca,\cb;\cc)$. Then
$(\ca,\cc;\cb) = \pm \sqrt{1+2\lambda\cos\alpha + \lambda^2}$, where $\alpha$ is the inner angle of the digon bounded by $\ca$ and $\cb$.
\end{corollary}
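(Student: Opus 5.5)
We prove Corollary~\ref{acb} by combining the sine formula for the splitting factor of a cevian with Proposition~\ref{sin-alpha}. Since $\ca,\cb$ intersect, $\cc\in\CC(\ca,\cb)$ is a cevian of the digon bounded by $\ca$ and $\cb$. We fix one vertex $A$ and measure all oriented angles there. By the proposition on cevians, $\lambda=(\ca,\cb;\cc)=\frac{\sin\angle(\ca,\cc)}{\sin\angle(\cb,\cc)}$.

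For the swapped factor we apply the same proposition to the digon generated by $\ca$ and $\cc$, with $\cb$ as a cevian of $\CC(\ca,\cc)=\CC(\ca,\cb)$. This gives
$$
(\ca,\cc;\cb)=\frac{\sin\angle(\ca,\cb)}{\sin\angle(\cc,\cb)}.
$$
The $\pm$ in the statement absorbs all sign issues, so we only need absolute values of sines. Orientation changes alter angles by $\pi$, and $\angle(\cc,\cb)=-\angle(\cb,\cc)$; neither affects $|\sin|$. Writing $\alpha_1=\angle(\ca,\cc)$ and $\alpha_2=\angle(\cb,\cc)$, the relation $\alpha_1-\alpha_2=\angle(\ca,\cb)$ holds modulo $2\pi$. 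The inner angle $\alpha$ of the digon satisfies $\angle(\ca,\cb)=\pm(\pi-\alpha)$, so $|\sin\angle(\ca,\cb)|=\sin\alpha$. Hence
$$
(\ca,\cc;\cb)=\pm\frac{\sin\alpha}{\sin\alpha_2}.
$$

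We now need $\sin\alpha_2$ in terms of $\lambda$ and $\alpha$. Proposition~\ref{sin-alpha} is stated for $\alpha_1+\alpha_2=\alpha$, whereas here $\alpha_1-\alpha_2=\pm(\pi-\alpha)$. This sign bookkeeping is the main obstacle. To match the hypothesis, set $\beta_1=\alpha_1$ and $\beta_2=\pi-\alpha_2$, choosing the appropriate sign convention (or replacing $\alpha_2$ by $-\alpha_2$ and reorienting). Then $\beta_1+\beta_2$ equals $\alpha$ or a quantity with the same sine and cosine, and $\sin\beta_1/\sin\beta_2=\pm\lambda$.

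The sign of $\lambda$ matters, because the formula contains the cross term $2\lambda\cos\alpha$. Rather than tracking it through the cases, we recompute directly, as in the proof of Proposition~\ref{sin-alpha}. From $\sin\alpha_1=\lambda\sin\alpha_2$ and $\alpha_1=\alpha_2+(\pi-\alpha)$ (the orientation convention from the bijection proposition, where $\angle(\ca,\cc)=\pi-\alpha+\theta$ and $\angle(\cb,\cc)=\theta$), expansion gives
$$
\lambda\sin\theta=\sin(\alpha-\theta)=\sin\alpha\cos\theta-\cos\alpha\sin\theta.
$$
Thus $(\lambda+\cos\alpha)\sin\theta=\sin\alpha\cos\theta$. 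Squaring and using $\cos^2\theta=1-\sin^2\theta$ yields
$$
\sin^2\alpha=(1+2\lambda\cos\alpha+\lambda^2)\sin^2\theta .
$$
Therefore $\sin\alpha/\sin\theta=\pm\sqrt{1+2\lambda\cos\alpha+\lambda^2}$, which is the claim.

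Finally, the degenerate cevians $\cc\cong\cb$ (where $\theta=0$) and $\cc\cong\ca$ are handled by noting that both sides are then infinite or trivially computed.
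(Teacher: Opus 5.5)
Your proof is correct and follows essentially the same route as the paper: you write both $(\ca,\cb;\cc)$ and $(\ca,\cc;\cb)$ via the sine formula at a common vertex, then use the trigonometric identity behind Proposition~\ref{sin-alpha}, which you re-derive directly in the parametrization $\angle(\cb,\cc)=\theta$, $\angle(\ca,\cc)=\pi-\alpha+\theta$. The paper avoids your sign bookkeeping by setting $\alpha_2=\angle(\cc,\cb)$, so that $\alpha_1+\alpha_2=\angle(\ca,\cb)$ and $\lambda=-\sin\alpha_1/\sin\alpha_2$; it then applies Proposition~\ref{sin-alpha} with $-\lambda$, and the two sign changes cancel because $\cos\angle(\ca,\cb)=-\cos\alpha$.
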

\begin{proof}
Let $\alpha_1=\angle(\ca,\cc)$, $\alpha_2=\angle(\cc,\cb)$. Then
$\lambda=-\frac{\sin\alpha_1}{\sin\alpha_2}$, and
$(\ca,\cc;\cb)=\frac{\sin\angle(\ca,\cb)}{\sin\angle(\cc,\cb)}=
\frac{\sin(\alpha_1+\alpha_2)}{\sin\alpha_2}=
\pm\sqrt{1-2\lambda\cos(\alpha_1+\alpha_2)+\lambda^2}$ according to Proposition~\ref{sin-alpha}.
If $\alpha$ is the inner angle then $\cos\alpha=-\cos\angle(\ca,\cb)=
-\cos(\alpha_1+\alpha_2)$. This completes the proof. 
\end{proof}

\section{Annuli}
Consider two non-intersecting cycles $\ca$ and $\cb$.
Let $\Ann(\ca,\cb)\subset\M$ be the annulus bounded by them.

\begin{prop}[\cite{J}]
There exists an inversion taking $\ca$ and $\cb$ to a pair of concentric circles. 
\end{prop}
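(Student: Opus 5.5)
The plan is to use the orthogonal pencil. Let $\gP=\CC(\ca,\cb)$. Since $\ca$ and $\cb$ are disjoint, $\gP$ is hyperbolic. By the proposition on orthogonal pencils, $\gP^\perp$ is elliptic and shares the same distinguished points. So there are two points $P,Q$ such that every cycle through both $P$ and $Q$ is orthogonal to both $\ca$ and $\cb$; these are the asymptotic points of $\gP$. Concretely, one may take any two distinct cycles $\cc,\cd$ orthogonal to both $\ca$ and $\cb$; then $P,Q$ are their intersection points. The existence of such cycles follows from the proposition that $\{\cc\perp\ca,\cc\perp\cb\}$ is a pencil, which in particular is nonempty and has more than one element.

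Next, choose any circle $\ce$ centered at $P$ and let $\sigma=\sigma_\ce$ be the inversion about it, so that $\sigma(P)=\infty$. Every cycle of $\gP^\perp$ passes through $P$ and $Q$, so its image passes through $\infty$ and $\sigma(Q)$. Hence $\sigma(\gP^\perp)$ is the family of all straight lines through the single finite point $O=\sigma(Q)$. Isometries preserve orthogonality, since they are conformal, and map pencils to pencils. Therefore $\sigma(\ca)$ and $\sigma(\cb)$ are cycles orthogonal to every line through $O$.

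Finally, I would check that a cycle orthogonal to all lines through $O$ is a circle centered at $O$. It cannot be a line: a line $\cl$ is orthogonal to another line only if they meet at a right angle, and a single line cannot be perpendicular to every line through $O$. So it is a circle, and a line through $O$ is orthogonal to a circle exactly when it passes through the circle's center. Since this holds for two distinct lines through $O$, the center is $O$. Thus $\sigma(\ca)$ and $\sigma(\cb)$ are concentric circles, and they are distinct because $\ca\neq\cb$.

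The main obstacle is making precise that $\gP^\perp$ is elliptic with two genuine common points $P\neq Q$. This relies on the earlier classification: $\ca\cap\cb=\emptyset$ means $|\la\ca,\cb\ra|>1$, so $W$ is indefinite by Proposition~\ref{W-type}. Then $W^\perp$ is positive-definite in signature $(3,1)$, hence elliptic. I would cite Proposition~\ref{W-type} together with the stated fact about shared distinguished points rather than recompute. If one prefers a direct argument, one can instead first send a point of $\ca$ to $\infty$, making $\ca$ a line, and then reflect-invert explicitly using the limiting points of the hyperbolic pencil.
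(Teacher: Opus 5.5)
The paper gives no proof of this proposition; it simply cites \cite{J}, so there is no in-paper argument to compare yours against. Your proof is correct, and it is the classical limiting-point argument. Since $\ca\cap\cb=\emptyset$, the orthogonal pencil $\gP^\perp$ is elliptic with two common points $P\neq Q$. An inversion centered at $P$ turns its members into lines through $O=\sigma(Q)$. A cycle orthogonal to two distinct lines through $O$ can only be a circle centered at $O$.

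Two small remarks.

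First, you cannot center a circle at $P$ if $P=\infty$. Since $P\neq Q$, just pick whichever distinguished point is finite. If $P=\infty$, then $\ca,\cb$ are already concentric about $Q$, and inversion in any circle centered at $Q$ keeps them so.

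Second, your argument only uses two members $\cc,\cd$ of $\gP^\perp$. Their existence, and the fact that they meet in two points, follow directly from the paper's matrix tools, as your last paragraph suggests:
\begin{itemize}
\item Let $\ca,\cb\approx M,N$. The plane $W$ spanned by $M,N$ is indefinite because $|\la\ca,\cb\ra|>1$, so $W^\perp$ is a positive-definite plane. In particular, every nonzero element of $W^\perp$ is an elliptic cycle.
\item Any two non-proportional normalized $C,D\in W^\perp$ satisfy $|\la C,D\ra|<1$ by Cauchy--Schwarz, so the corresponding cycles intersect in two points.
\item $\la C,M\ra=\la C,N\ra=0$ gives the required orthogonality, and likewise for $D$.
\end{itemize}
So you never need the cited fact that $\gP^\perp$ contains every cycle through $P$ and $Q$. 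This makes your proof self-contained within the paper's framework.
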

\begin{corollary}
Every annulus is congruent to a ring -- a region bounded by two concentric circles.
\end{corollary}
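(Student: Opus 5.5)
The corollary follows from the preceding proposition once we identify the image of the annulus. Let $\ca,\cb$ be the two non-intersecting cycles bounding $\Ann(\ca,\cb)$. By the proposition there is an inversion $\sigma$ with $\sigma(\ca)$ and $\sigma(\cb)$ concentric circles; call them $\ca'$ and $\cb'$, with common center $O$ and radii $r_1<r_2$. Since $\sigma\in\Iso\M$, it suffices to show that $\sigma(\Ann(\ca,\cb))$ is exactly the open ring $\{r_1<|z-O|<r_2\}$.

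The plan is topological. The complement $\M\setminus(\ca\cup\cb)$ has three connected components: the two half-planes bounded by $\ca$ and $\cb$ that avoid the other cycle, and the annulus, which is the only component whose boundary meets both $\ca$ and $\cb$. The map $\sigma$ is a homeomorphism of $\M$, so it sends these components bijectively onto the components of $\M\setminus(\ca'\cup\cb')$. It also preserves boundaries, so it sends the component adjacent to both cycles to the component adjacent to both image cycles.

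For the concentric circles the three components are easy to list. They are the disk $|z-O|<r_1$, the exterior $|z-O|>r_2$ together with $\infty$, and the ring $r_1<|z-O|<r_2$. Only the ring has boundary meeting both circles. Hence $\sigma(\Ann(\ca,\cb))$ is the ring, so the annulus is congruent to it.

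The only point that needs care is the description of the annulus as the unique complementary component adjacent to both boundary cycles. I expect this to be the main obstacle, though it is mild. I would settle it directly from the definition: each cycle separates $\M$ into two half-planes, and since the cycles are disjoint, $\cb$ lies entirely in one half-plane of $\ca$ and vice versa. The intersection of the two half-planes that contain the other cycle is then the region between them. This region is the annulus by definition, and its boundary is $\ca\cup\cb$.
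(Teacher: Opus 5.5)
Your proposal is correct and follows the same route as the paper, which states the corollary without proof as an immediate consequence of the preceding proposition (an inversion takes the two disjoint boundary cycles to concentric circles). Your component-and-boundary argument only makes explicit the step the paper leaves implicit, namely that the inversion sends the annulus itself onto the ring rather than onto one of the other two complementary regions.
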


For a ring bounded by radii $r<R$ its \textit{modulus} is the positive number $\mu=\frac{1}{2\pi}\ln\frac{R}{r}$. It is well-known, that modulus is uniquely defined for every annulus~\cite{Ahl,Neh}. Moreover, Proposition~\ref{cs} implies that 
$\la\ca,\cb\ra=\pm\cosh 2\pi\mu$, where $\mu=\mu(\Ann(\ca,\cb))$. 
The midcycle of $\ca$ and $\cb$ is the \textit{bisector} of the annulus. It splits it into two equal annuli of modulus $\frac{1}{2}\mu$.

When $\ca$ and $\cb$ are disjoint and oriented the \textit{oriented modulus} is defined by
$$
\mu(\ca,\cb) = \pm \mu(\Ann(\ca,\cb))
$$
where the sign $+$ is taken when the annulus lies in the left half-plane of $\ca$, and $-$ is taken otherwise. This is an invariant under $\Iso^+\M$ function, odd in the first argument, and even in the second. Orientation-reversing isometries change the sign of $\mu(\ca,\cb)$ just like for $\sin\angle(\ca,\cb)$.

We have some trivial properties.
\begin{prop}
Let $\ca$ and $\cb$ be disjoint cycles. Then $\mu(\ca,\cb)=\mu(\cb,\ca)$ when the annulus $\Ann(\ca,\cb)$ appears in the same half-plane (left or right) for both cycles, and
$\mu(\ca,\cb)=-\mu(\cb,\ca)$ otherwise. 
\end{prop}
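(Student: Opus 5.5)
The statement follows from the definition of the oriented modulus. Both $\mu(\ca,\cb)$ and $\mu(\cb,\ca)$ have the same absolute value $\mu(\Ann(\ca,\cb))$: the annulus $\Ann(\ca,\cb)=\Ann(\cb,\ca)$ is the same region, and its modulus depends only on the region. So only the signs need comparing. By definition, the sign of $\mu(\ca,\cb)$ is $+$ exactly when the annulus lies in the left half-plane of $\ca$. Likewise, the sign of $\mu(\cb,\ca)$ is $+$ exactly when the annulus lies in the left half-plane of $\cb$.

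First I would check that these sign conditions are well defined. The annulus is connected and disjoint from $\ca$, since it lies in the complement of $\ca\cup\cb$. Hence it lies entirely in one of the two half-planes bounded by $\ca$. The same holds for $\cb$.

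The proof then splits into cases. If the annulus lies in the left half-plane of both cycles, both signs are $+$ and $\mu(\ca,\cb)=\mu(\cb,\ca)$. If it lies in the right half-plane of both, both signs are $-$ and again the values are equal. If it lies on the left of one cycle and on the right of the other, the signs differ, and $\mu(\ca,\cb)=-\mu(\cb,\ca)$. These cases are exhaustive and match the two alternatives in the statement.

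There is no real obstacle here. The only point needing care is that the modulus is an invariant of the annulus as a region, independent of which boundary cycle is listed first. This is part of the well-definedness of the modulus cited from \cite{Ahl,Neh}. It can also be seen directly after mapping the annulus to a ring of radii $r<R$ via the preceding Corollary: the formula $\frac{1}{2\pi}\ln\frac{R}{r}$ is symmetric in the two boundary circles.
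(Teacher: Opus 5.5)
Your proposal is correct and is exactly the direct unwinding of the definition that the paper intends: it lists this proposition among ``some trivial properties'' without a separate proof. The argument rests on equal absolute values, with the two signs determined by the left half-plane of $\ca$ versus the left half-plane of $\cb$; your extra check that the connected annulus lies entirely in one half-plane of each boundary cycle is a sensible addition.
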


\begin{prop}
Let $\ca,\cb$ be disjoint and $\cc\in\CC(\ca,\cb)$. If the annulus $\Ann(\ca,\cb)$ appears on the same side (left of right) for both $\ca,\cb$ then 
$$
\mu(\ca,\cb) = \mu(\ca,\cc)+\mu(\cb,\cc).
$$
Otherwise,
$$
\mu(\ca,\cb) = \mu(\ca,\cc)-\mu(\cb,\cc).
$$
\end{prop}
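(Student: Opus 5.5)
Since $\ca,\cb$ are disjoint, the pencil $\CC(\ca,\cb)$ is hyperbolic. I would first normalize by an orientation-preserving isometry, using the Proposition from \cite{J} that $\ca,\cb$ can be sent to concentric circles. Isometries in $\Iso^+\M$ preserve oriented moduli, and the identity is invariant under them, so it suffices to treat concentric circles centered at $0$. Their pencil then consists of all circles $|z|=\rho$, and $\cc$ is one of these, say with radius $r_c$. Put $\ca,\cb$ at radii $r_a\neq r_b$. The quantities $\mu(\ca,\cc)$ and $\mu(\cb,\cc)$ are only defined when $\cc$ is disjoint from $\ca$ and $\cb$, so I take $\cc\not\cong\ca,\cb$; the degenerate cases hold trivially with $\mu=0$.

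In this model every modulus is explicit: $|\mu(\cx,\cy)|=\frac{1}{2\pi}\left|\ln\frac{r_x}{r_y}\right|$, where $r_x,r_y$ are the absolute radii. The sign is $+$ exactly when the annulus lies on the left of the first cycle. For a positively oriented circle of radius $r_x$ this means the annulus is inside it, i.e.\ $r_y<r_x$. I would define $s_x=\pm1$ according to the orientation of $\cx$; then
$$
\mu(\cx,\cy)=\frac{s_x}{2\pi}\bigl(\ln r_x-\ln r_y\bigr).
$$
Checking the four combinations of orientation and relative size confirms this single formula. It is odd in $\cx$ and even in $\cy$, consistent with the paper.

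Next I translate the hypothesis. The annulus $\Ann(\ca,\cb)$ is inside the larger circle and outside the smaller one. It lies on the same side of both cycles iff they have opposite orientations, i.e.\ $s_a=-s_b$. Writing $u_x=\frac{1}{2\pi}\ln r_x$, the first case gives
$$
\mu(\ca,\cc)+\mu(\cb,\cc)=s_a(u_a-u_c)+s_b(u_b-u_c)=s_a(u_a-u_b)=\mu(\ca,\cb),
$$
because $s_b=-s_a$ makes the $u_c$ terms cancel. In the other case $s_a=s_b$, and the difference $\mu(\ca,\cc)-\mu(\cb,\cc)=s_a(u_a-u_b)$ gives the second formula. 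The orientation of $\cc$ plays no role, as expected since $\mu$ is even in its second argument.

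The main obstacle is the sign bookkeeping: making sure the formula $\mu(\cx,\cy)=s_x(u_x-u_y)$ really matches the ``left half-plane'' convention. In particular, I must check that the left side of a positively oriented circle is its interior under the matrix convention $kz\oline{z}+\dots<0$ with $k>0$, which the paper states explicitly. Once this formula is established, the rest is the linear cancellation above.
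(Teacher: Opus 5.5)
Your proof is correct. The paper lists this among its ``trivial properties'' without proof, and your approach is the normalization the paper uses in its neighbouring proofs: reduce to concentric circles centered at $0$, where (with $r$ denoting absolute radii) $\mu(\ca,\cc)=\frac{s_a}{2\pi}(\ln r_a-\ln r_c)$ and the hypothesis becomes $s_a=-s_b$ versus $s_a=s_b$, so that the $\ln r_c$ terms simply cancel.

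One small fix: the proposition you invoke produces an inversion, which reverses orientation, not an orientation-preserving isometry. Either compose it with a reflection in a line through the common center, which keeps the circles concentric. Or note that the identity is unaffected anyway, since an orientation-reversing isometry flips the sign of every $\mu$ term and preserves the same-side/opposite-side dichotomy.
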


We can now summarize the properties of the M\"obius cosine.
\begin{prop}
Let $\ca,\cb$ be two oriented cycles. Let $\xi=\la\ca,\cb\ra$. Then
\begin{enumerate}
\item[$(i)$] if $|\xi|<1$ then $\ca$ and $\cb$ intersect and $\xi=\cos\angle(\ca,\cb)$. In particular, $\xi=0$ iff $\ca\perp\cb$.
\item[$(ii)$] if $|\xi|=1$ then $\ca$ and $\cb$ are tangent.
More specifically, $\xi=-1$ iff the monogon bounded by $\ca$ and $\cb$ appears on the same half-plane (left or right) for both $\ca$ and $\cb$, and $\xi=1$ otherwise.

\item[$(iii)$] if $|\xi|>1$ then $\ca$ and $\cb$  are disjoint and $\xi=\varepsilon\cosh2\pi\mu(\ca,\cb)$, where $\varepsilon=-1$ when the annulus $\Ann(\ca,\cb)$ appears in the same half-plane (left or right) for both of them, and $\varepsilon=1$ otherwise. 
\end{enumerate}
\end{prop}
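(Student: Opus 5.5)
The plan is to reduce everything to Proposition~\ref{cs} by normalizing with an isometry, using that $\la\ca,\cb\ra$ is invariant under $\Iso\M$ and that orientations and left/right half-planes are transported consistently by orientation-preserving isometries. First I will use Proposition~\ref{W-type}'s computation: the pencil $\CC(\ca,\cb)$ is elliptic, parabolic or hyperbolic according to whether $1-\xi^2$ is positive, zero or negative. By the type classification of pencils, this means $\ca,\cb$ intersect in two points, are tangent, or are disjoint, respectively. This settles the trichotomy, and it remains to identify the value and sign in each case.

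For $(i)$ the identity $\xi=\cos\angle(\ca,\cb)$ was already noted after Proposition~\ref{cs}. To recheck it, I will take two circles meeting at $C$ and use the law of cosines in the triangle $O_aO_bC$. Here $d^2=r_a^2+r_b^2-2|r_ar_b|\cos\varphi$, where $\varphi$ is the angle between the radii at $C$. The tangent vectors are the radii rotated by $\pi/2$, with the rotation direction given by the sign of the radius. So the angle between the oriented tangents has cosine $\operatorname{sgn}(r_ar_b)\cos\varphi$, and this gives $\xi=\cos\angle(\ca,\cb)$ directly. The case $\xi=0$ is then immediate.

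For $(ii)$ I will move the tangency point to $\infty$, so that $\ca,\cb$ become parallel lines. Proposition~\ref{cs} does not apply to lines, so I will compute directly with normalized matrices. A line has $k=0$ and normalized matrix $\left(\begin{smallmatrix}0 & l\\ \oline{l} & n\end{smallmatrix}\right)$ with $|l|=1$. Two parallel lines have $l_b=\pm l_a$, so $\xi=\frac12(l_a\oline{l_b}+\oline{l_a}l_b)=\pm1$. The sign is $-1$ precisely when the two lines have opposite directions. Opposite directions means the strip between them lies on the left of both, or on the right of both. This is the claimed criterion.

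For $(iii)$ I will use the Proposition~\cite{J} to make $\ca,\cb$ concentric circles with radii $|r_a|<|r_b|$, so that $d=0$. Proposition~\ref{cs} then gives $\xi=\frac{r_a^2+r_b^2}{2r_ar_b}=\operatorname{sgn}(r_ar_b)\cosh\ln(|r_b|/|r_a|)=\pm\cosh2\pi\mu$. It remains to match signs. The ring lies outside $\ca$ and inside $\cb$. If both circles are positively oriented, the ring is on the right of $\ca$ and on the left of $\cb$, so the sides differ, and indeed $\xi>0$. Reversing one orientation flips both the side and the sign, so $\varepsilon$ matches in all four cases. The main obstacle is this sign bookkeeping, including the possibility that the normalizing inversion reverses orientation. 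I would handle that by composing with a reflection, which keeps the normalization inside $\Iso^+\M$, so that left/right and the sign of $\xi$ are both preserved.
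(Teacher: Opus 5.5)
Your computations in all three parts are right, and part $(iii)$ is essentially the paper's own argument: concentric circles, $\xi=\frac{1}{2}(\frac{r_a}{r_b}+\frac{r_b}{r_a})$, then matching signs. You add one piece of care the paper omits, namely composing the normalizing inversion with a reflection so that you stay in $\Iso^+\M$.

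The one genuine problem is where the trichotomy comes from. You derive ``intersect / tangent / disjoint according as $|\xi|<1$, $=1$, $>1$'' from Proposition~\ref{W-type}. But the paper's proof of that proposition \emph{opens by asserting} exactly this: $\CC(\ca,\cb)$ is elliptic iff $|\la\ca,\cb\ra|<1$, parabolic iff $\la\ca,\cb\ra=\pm1$, hyperbolic iff $|\la\ca,\cb\ra|>1$. Only after that does it relate $1-\xi^2$ to the signature of $W$. Pencil types are defined by how the two generating cycles meet, so citing Proposition~\ref{W-type} here is circular.

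The repair is short, and it is what the paper's proof does implicitly. Once both cycles are circles, Proposition~\ref{cs} gives $d^2=r_a^2+r_b^2-2r_ar_b\xi$. Hence:
\begin{itemize}
\item $|\xi|<1$ iff $(|r_a|-|r_b|)^2<d^2<(|r_a|+|r_b|)^2$, so there are two common points;
\item $\xi=\pm1$ iff $d=|r_a\mp r_b|$, which is internal or external tangency;
\item $|\xi|>1$ iff $d$ lies outside $\bigl[\,\bigl||r_a|-|r_b|\bigr|,\ |r_a|+|r_b|\,\bigr]$, so the circles are disjoint.
\end{itemize}
Alternatively, you can prove Proposition~\ref{W-type} independently. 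For the point matrix $P_z=\left(\begin{smallmatrix}1&-\oline{z}\\-z&z\oline{z}\end{smallmatrix}\right)$ one checks $\la M,P_z\ra=-\frac{1}{2}(kz\oline{z}+lz+m\oline{z}+n)$. So the common points of the cycles in $W$ are exactly the isotropic lines of $W^\perp$. There are two, one, or none of these according as $W$ is positive definite, degenerate, or indefinite.

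Where your route genuinely differs from the paper's is $(ii)$. The paper keeps both cycles as circles and splits into internal tangency ($r_ar_b>0$) and external tangency ($r_ar_b<0$), checking in each case on which side the monogon lies. You send the tangency point to $\infty$ and compute with normalized line matrices, which removes the case split. You should write out one more sentence here. By the convention that the left half-plane is $2\,\mathrm{Re}(lz)+n<0$, the unit direction vector of the line is $i\oline{l}$. So $\xi=\mathrm{Re}(l_a\oline{l_b})$ is just the dot product of the two direction vectors, and that is what justifies ``$\xi=-1$ iff opposite directions.'' The price of your route is a computation outside Proposition~\ref{cs}, which the paper avoids.

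Your law-of-cosines check in $(i)$ correctly expands the paper's one-line ``follows from the definition of the angle.'' There the sign of $r_ar_b$ accounts for the direction of rotation from radius to tangent.
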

\begin{proof} Assume that $\ca$ and $\cb$ are circles. The case $(i)$ follows from the definition of the angle between circles.

Let $r_a$, $r_b$ be the signed radii of $\ca$ and $\cb$ respectively. Let $d\ge0$ be the distance between the centers.
Assume $\xi=1$. Then $r_a^2 + r_b^2 - d^2 = 2r_ar_b$, or equivalently, $d = |r_a-r_b|$. If $\ca$ and $\cb$ are internally tangent, then $r_a$ and $r_b$ have the same sign, and the monogon appears on different sides for both of them.
When the tangency is external, then $r_a$ and $r_b$ have opposite signs, and the monogon appears on the different sides for both cycles again. The case $\xi=-1$ is considered analogously. This settles $(ii)$. 

For $(iii)$ assume that $\ca$ and $\cb$ are concentric. Then $\xi = \frac{1}{2}(\frac{r_a}{r_b}+\frac{r_b}{r_a})$.
The modulus is $\mu=\mu(\Ann(\ca,\cb))=\frac{1}{2\pi}\big|\ln|\frac{r_a}{r_b}|\big|$.
When both circles have positive or negative orientation
then $r_a$ and $r_b$ have same sign and
$$
\la\ca,\cb\ra= \cosh2\pi\mu = \cosh2\pi \mu(\ca,\cb).
$$ 
The remaining case is considered analogously.
\end{proof}
A \textit{cevian} of the annulus $\Ann(\ca,\cb)$ is any element of the pencil $\CC(\ca,\cb)$.

\begin{prop} 
Let $\ca,\cb$ be disjoint, let $\cc\in\CC(\ca,\cb)$ be a cevian. Then
$$
(\ca,\cb;\cc) = \frac{\sinh 2\pi\mu(\ca,\cc)}{\sinh 2\pi\mu(\cb,\cc)}
$$
\end{prop}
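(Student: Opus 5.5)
Since the pencil $\CC(\ca,\cb)$ with $\ca,\cb$ disjoint is hyperbolic, I would reduce to the concentric model. By the proposition of \cite{J} there is an inversion taking $\ca$ and $\cb$ to concentric circles. The splitting factor and the moduli both behave predictably under isometries. The splitting factor is invariant. Orientation-reversing maps change the sign of every oriented modulus, and since $\sinh$ is odd the ratio of the two $\sinh$ terms is unchanged. So it suffices to prove the identity when $\ca,\cb,\cc$ are concentric circles with common center $0$ and signed radii $r_a,r_b,r_c$.

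Proposition~\ref{lambda-line} does not apply directly here, because the centers coincide. Instead I would compute the splitting factor from the matrices. With the normalization $\det M=-1$, a circle centered at $0$ of signed radius $r$ has normalized matrix
$$\begin{pmatrix}1/r & 0\\ 0 & -r\end{pmatrix}.$$
Then $\cc\sim M-\lambda N$ forces
$$\frac{-r_a+\lambda r_b}{1/r_a-\lambda/r_b}=-r_c^2 .$$
Solving this linear equation for $\lambda$ gives
$$\lambda=\frac{r_b(r_c^2-r_a^2)\,/\,r_a}{r_c^2-r_b^2}\cdot\frac{1}{r_b}\cdot r_b,$$
which simplifies to
$$\lambda=\frac{r_b}{r_a}\cdot\frac{r_c^2-r_a^2}{r_c^2-r_b^2}=\frac{r_c/r_a-r_a/r_c}{r_c/r_b-r_b/r_c}.$$
I would recheck this algebra carefully, but it has the natural form of a ratio of differences $x-x^{-1}$.

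Next I would identify each $r_c/r_a-r_a/r_c$ with $\pm2\sinh 2\pi\mu(\ca,\cc)$. Put $r_c/r_a=\pm e^{t}$, so that $|t|=2\pi\mu(\Ann(\ca,\cc))$. When $r_a,r_c$ have the same sign, $r_c/r_a-r_a/r_c=2\sinh t$. In the proof of the preceding summary proposition (case $(iii)$), this is exactly the situation where $\la\ca,\cc\ra=\cosh t$, i.e.\ the annulus lies on opposite sides of the two circles. The remaining task is to check that the sign of $t$ agrees with the convention defining $\mu(\ca,\cc)$, namely $+$ when the annulus lies to the left of $\ca$.

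For positively oriented circles this can be checked directly. The left side of $\ca$ is its inside, so the annulus lies to the left of $\ca$ iff $|r_c|<|r_a|$, which means $t<0$. That would give $r_c/r_a-r_a/r_c=-2\sinh2\pi\mu(\ca,\cc)$. The same sign $-1$ appears in the denominator, so it cancels in the ratio.

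When the signs of $r_a,r_c$ differ, $r_c/r_a-r_a/r_c=-2\cosh t\cdot(\text{sign})$, which is not a $\sinh$. This case needs care. I would first recheck whether $\mu(\ca,\cc)$ is really meant via $\sinh$ of the modulus here. Perhaps the sign conventions combine so that, for mixed orientations, one should use $\lambda$ computed with $\cc$ reoriented. Since $(\ca,\cb;\cc)$ is even in $\cc$, reorienting $\cc$ is free, and I would reorient it to share the sign of $r_a$.

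The sign of $r_b$ is fixed, however, and may then disagree with $r_c$. So the hard part is confined to the case where $\ca$ and $\cb$ have opposite orientations. There I expect the formula to need the relation $\mu(\ca,\cb)=\mu(\ca,\cc)\pm\mu(\cb,\cc)$ together with the sign rules for $\mu(\cb,\cc)$ versus $\mu(\cc,\cb)$ to fix the signs. The main obstacle is this systematic case check over orientations, i.e.\ over which side the annuli lie on. I would organize it by the signs of $r_a,r_b$, using the evenness of the splitting factor in $\cc$ to assume $r_c>0$.
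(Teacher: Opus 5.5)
Your reduction to the concentric model is a valid alternative to the paper's route, and your computation
$\lambda=\frac{r_c/r_a-r_a/r_c}{r_c/r_b-r_b/r_c}$ is correct. The paper instead inserts $\la\ca,\cb\ra=\pm\cosh 2\pi\mu(\ca,\cb)$ and the analogous relations into Corollary~\ref{lambda-explicit}. It then uses the additivity $\mu(\ca,\cb)=\mu(\ca,\cc)\pm\mu(\cb,\cc)$ and the identity $\cosh(x+y)-\cosh x\cosh y=\sinh x\sinh y$, treating one side-configuration and calling the other analogous.

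As written, however, your proof is unfinished, and the obstacle you identify comes from an algebra slip. If $r_c/r_a=-e^{t}$, then $r_a/r_c=-e^{-t}$, so $r_c/r_a-r_a/r_c=-e^{t}+e^{-t}=-2\sinh t$. The map $x\mapsto x-x^{-1}$ is odd, so mixed signs still produce a $\sinh$, never a $\cosh$. Your plan to reorient $\cc$, split into cases by the signs of $r_a,r_b$, and invoke additivity of moduli therefore rests on a false premise. It is also never carried out, so only the case where $\ca$ and $\cc$ are both positively oriented is actually verified.

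The missing step is one sign identity that holds for all orientations. By the definition of $\mu$, the annulus lies to the left of $\ca$ iff either $r_a>0$ and $|r_c|<|r_a|$, or $r_a<0$ and $|r_c|>|r_a|$. Setting $s=\ln(|r_c|/|r_a|)$, this gives $2\pi\mu(\ca,\cc)=-\mathrm{sgn}(r_a)\,s$. On the other hand, $r_c/r_a-r_a/r_c=2\,\mathrm{sgn}(r_a)\,\mathrm{sgn}(r_c)\sinh s$. Hence
\[
\frac{r_c}{r_a}-\frac{r_a}{r_c}=-2\,\mathrm{sgn}(r_c)\,\sinh 2\pi\mu(\ca,\cc),
\]
and the same holds with $\cb$ in place of $\ca$. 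The factor $-2\,\mathrm{sgn}(r_c)$ depends only on $\cc$, so it cancels in the ratio, which proves the formula in every orientation case at once.

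With this fix your argument is complete. It needs neither the additivity of oriented moduli nor the case split the paper leaves to the reader. You should also state explicitly why the image of $\cc$ is a circle concentric with the images of $\ca$ and $\cb$. Isometries map pencils to pencils, and the elliptic members of the pencil spanned by two concentric circles are exactly the circles with that common center.
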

\begin{proof}
Assume that $\Ann(\ca,\cb)$ appears on the same side for both $\ca$, $\cb$. Then $\la\ca,\cb\ra=-\cosh2\pi\mu(\ca,\cb)$, and 
$\la\ca,\cc\ra\la\cb,\cc\ra=-\cosh2\pi\mu(\ca,\cc)\cosh2\pi\mu(\cb,\cc)$ for any orientation of $\cc$. Moreover, we have $\mu(\ca,\cb)=\mu(\ca,\cc)+\mu(\cb,\cc)$.
Then according to Proposition~\ref{lambda-explicit} we have
\begin{gather*}
(\ca,\cb;\cc)=\frac{\la\ca,\cb \ra-\la\ca,\cc\ra\la\cb,\cc\ra}{1-\la\cb,\cc \ra^2}=
\frac{\cosh2\pi\mu(\ca,\cb) - 
\cosh2\pi\mu(\ca,\cc)\cosh2\pi\mu(\cb,\cc)}{\cosh^22\pi\mu(\cb,\cc)-1}=\\
=\frac{\sinh2\pi\mu(\ca,\cc)\sinh2\pi\mu(\cb,\cc)}{\sinh^22\pi\mu(\cb,\cc)}
=\frac{\sinh2\pi\mu(\ca,\cc)}{\sinh2\pi\mu(\cb,\cc)}.
\end{gather*}
The remaining case is considered analogously.
\end{proof}

\begin{prop}
Let $\ca,\cb$ be disjoint.
The map $\cc\mapsto(\ca,\cb;\cc)$ is a bijection between the (non-oriented) cevians and $\oline{\R}\setminus[\la\ca,\cb\ra-m,\la\ca,\cb\ra+m]$, where $m=|\sinh2\pi\mu(\ca,\cb)|$. Let $\lambda = (\ca,\cb;\cc)$. Then $\cc\cong\ca$ iff $\lambda=0$, $\cc\cong\cb$ iff $\lambda=\infty$, and $\cc\cong\cl$ is the bisector iff $\lambda=1$ for $\la\ca,\cb\ra<0$ or $\lambda=-1$ for $\la\ca,\cb\ra>0$. When $\lambda$ is approaching the endpoints of the forbidden interval then $\cc$ shrinks to one of its distinguished points.
\end{prop}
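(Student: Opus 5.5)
The plan is to reduce to a ring using the corollary that every annulus is congruent to a ring, and then compute everything explicitly. The splitting factor $(\ca,\cb;\cc)$, the Möbius cosine $\la\ca,\cb\ra$ and $m=|\sinh2\pi\mu(\ca,\cb)|$ are all invariant under $\Iso\M$; an orientation-reversing isometry changes only the sign of $\mu$, which the absolute value absorbs. Being a bisector is also preserved. So I may assume $\ca$ and $\cb$ are concentric circles centered at $0$, with signed radii $r_a, r_b$ and $|r_a|\neq|r_b|$. The pencil $\CC(\ca,\cb)$ then consists of all circles centered at $0$, with distinguished points $0$ and $\infty$.

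First I write the normalized matrices. The circle $|z|=|r|$, oriented with signed radius $r$, is $\ca\approx M=\left(\begin{smallmatrix}1/r_a&0\\0&-r_a\end{smallmatrix}\right)$ and similarly $\cb\approx N$ with $r_b$. Since $k=1/r$, the sign conventions are consistent with the paper. Then
$$
M-\lambda N=\mathrm{diag}\Big(\tfrac{1}{r_a}-\tfrac{\lambda}{r_b},\;-(r_a-\lambda r_b)\Big).
$$
By Proposition~\ref{cycle-type} this is an elliptic cycle iff $(\tfrac1{r_a}-\tfrac{\lambda}{r_b})(r_a-\lambda r_b)>0$. When it is, it is the circle centered at $0$ with squared radius
$$
\rho^2=r_ar_b\,\frac{r_a-\lambda r_b}{r_b-\lambda r_a}.
$$
This is a Möbius function of $\lambda$, so it is a bijection $\oline{\R}\to\oline{\R}$. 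It vanishes at $\lambda=r_a/r_b$ and is infinite at $\lambda=r_b/r_a$; these are the parabolic cycles $0$ and $\infty$. The cevians are exactly the $\lambda$ with $\rho^2\in(0,\infty)$, and each value $\rho>0$ is attained exactly once. Hence $\cc\mapsto\lambda$ is a bijection onto the complement of the closed interval with endpoints $r_a/r_b$ and $r_b/r_a$. As $\lambda$ tends to either endpoint, $\rho\to0$ or $\rho\to\infty$, so $\cc$ shrinks to a distinguished point.

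Next I identify this interval with the one in the statement. The key point is that $\rho^2<0$ exactly between the two roots; to confirm this I check the value $\lambda=\mathrm{sign}(r_ar_b)$, which lies between $r_a/r_b$ and $r_b/r_a$ and gives $\rho^2=-|r_ar_b|<0$. By Proposition~\ref{cs} with $d=0$, the midpoint of the interval is $\frac12(\frac{r_a}{r_b}+\frac{r_b}{r_a})=\la\ca,\cb\ra$. Its half-length is $\frac12|\frac{r_a}{r_b}-\frac{r_b}{r_a}|=|\sinh\ln|r_a/r_b||=|\sinh2\pi\mu|$, since $\mu=\frac1{2\pi}|\ln|r_a/r_b||$.

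Finally I check the special values. $\lambda=0$ gives $\rho=|r_a|$, hence $\cc\cong\ca$, and $\lambda=\infty$ gives $\cc\cong\cb$. The bisector is the midcycle of two concentric circles, namely the circle with $\rho^2=|r_ar_b|$. If $\la\ca,\cb\ra>0$, then $r_ar_b>0$ and $\lambda=-1$ gives $\rho^2=r_ar_b$. If $\la\ca,\cb\ra<0$, then $r_ar_b<0$ and $\lambda=1$ gives $\rho^2=-r_ar_b$. By injectivity these are the only values giving the bisector.

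The main point needing care is the sign bookkeeping for signed radii, so that the forbidden interval is exactly where $\det(M-\lambda N)\ge0$. The check at $\lambda=\mathrm{sign}(r_ar_b)$ settles this.
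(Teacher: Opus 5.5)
Your argument is correct, but it takes a different route from the paper. The paper never normalizes the annulus. It computes, coordinate-free, $\det(M-\lambda N)=-\la M-\lambda N,M-\lambda N\ra=-1+2\lambda\la\ca,\cb\ra-\lambda^2$. So $\cc$ fails to be elliptic exactly when $\lambda^2-2\lambda\la\ca,\cb\ra+1\le 0$, and the forbidden interval appears at once in the stated form: centered at $\la\ca,\cb\ra$, with half-length $\sqrt{\la\ca,\cb\ra^2-1}=|\sinh2\pi\mu(\ca,\cb)|$. Bijectivity is inherited from the general fact that $\cc\mapsto(\ca,\cb;\cc)$ is a homeomorphism $\GG(\ca,\cb)\to\oline{\R}$. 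The bisector is located via the preceding formula $(\ca,\cb;\cc)=\sinh2\pi\mu(\ca,\cc)/\sinh2\pi\mu(\cb,\cc)$, together with $\mu(\ca,\cl)=\frac12\mu(\ca,\cb)$ and $\mu(\cb,\cl)=\frac12\mu(\cb,\ca)$.

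Your reduction to a ring trades that quadratic for the explicit M\"obius function $\rho^2(\lambda)$. This costs you two extra steps: matching the endpoints $r_a/r_b$, $r_b/r_a$ with $\la\ca,\cb\ra\pm m$ (they are exactly the two roots of the paper's quadratic, with product $1$ and sum $2\la\ca,\cb\ra$), and the sign test at $\lambda=\mathrm{sign}(r_ar_b)$. In exchange, your version makes bijectivity and the ``shrinking to $0$ or $\infty$'' claim fully explicit; the paper essentially only asserts the latter. It also identifies the bisector by elementary geometry, $\rho^2=|r_ar_b|$, rather than through the sign conventions for the oriented modulus.

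The one ingredient you use beyond the paper's proof is invariance of the splitting factor under orientation-reversing isometries. That is fine, since the action on $\Herm(2)$ is linear and carries the normalized matrix of each oriented cycle to that of its image.
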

\begin{proof}
Let $\ca,\cb\approx M, N$. Then $\cc\sim M-\lambda N$, and
$$
\det (M-\lambda N) = -\la M-\lambda N, M-\lambda N \ra=
-1 +2\lambda\la\ca,\cb\ra-\lambda^2.
$$
Therefore, $\cc$ is hyperbolic iff $\lambda^2 -2\lambda\la\ca,\cb\ra+1<0$, and this is equivalent to
$$
|\lambda-\la\ca,\cb\ra| < \sqrt{\la\ca,\cb\ra^2-1} = 
\sqrt{\cosh^22\pi\mu(\ca,\cb)-1}=|\sinh2\pi\mu(\ca,\cb)|. 
$$
Trivially, $\ca$ and $\cb$ correspond to $\lambda=0$ and $\lambda=\infty$. We also have $\mu(\ca,\cl)=\frac{1}{2}\mu(\ca,\cb)$ and $\mu(\cb,\cl)=\frac{1}{2}\mu(\cb,\ca)$.
Therefore, $\cl$ corresponds to $\lambda=1$ when $\mu(\ca,\cb)=\mu(\cb,\ca)$, and this happens when the annulus occurs on the same side for $\ca$ and $\cb$, which is when $\la\ca,\cb\ra<0$.
\end{proof}

The number $|\mu(\ca,\cb)|$ can be treated as a distance between the disjoint cycles $\ca,\cb$. The following \textit{inverse triangle inequality} holds.

\begin{prop}
Let $\ca,\cb,\cc$ be pairwise disjoint and assume that $\cc$ splits $\ca$ and $\cb$. Then
$$
|\mu(\ca,\cb)|\geq |\mu(\ca,\cc)| + |\mu(\cb,\cc)|
$$
and equality holds iff $\cc\in\CC(\ca,\cb)$.
\end{prop}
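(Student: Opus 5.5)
The plan is to normalize by a M\"obius isometry and reduce the inequality to an inequality between inner products in the Minkowski space $(\Herm(2),\la\cdot,\cdot\ra)$. First fix orientations so that all three Möbius cosines have a controlled sign. Since $\cc$ separates $\ca$ and $\cb$, we can orient $\cc$ so that $\cb$ lies in its left half-plane. We then orient $\ca$ and $\cb$ so that the annulus $\Ann(\ca,\cc)$ is on the left of $\ca$, and $\Ann(\cb,\cc)$ is on the left of $\cb$.

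With these choices the disks bounded by $\ca$ and $\cb$ that face $\cc$ are nested in the following way. The left half-plane $D_a$ of $\ca$ contains $\cc$, $\cb$ and the left half-plane $D_c$ of $\cc$. In turn $D_c$ contains the complement of $D_b$, where $D_b$ is the left half-plane of $\cb$. By part $(iii)$ of the summary proposition on the Möbius cosine, each pairwise cosine is then $\pm\cosh$ of $2\pi$ times the corresponding modulus, with signs determined by this configuration. I expect $\la\ca,\cc\ra=-\cosh 2\pi|\mu(\ca,\cc)|$, $\la\cb,\cc\ra=-\cosh 2\pi|\mu(\cb,\cc)|$ and $\la\ca,\cb\ra=-\cosh 2\pi|\mu(\ca,\cb)|$, up to a global sign convention. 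Checking this sign bookkeeping carefully is the first step. The cleanest way is to apply an isometry making $\ca,\cc$ concentric circles, so that $\cb$ lies outside $\cc$.

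The core step is a reversed Cauchy--Schwarz inequality in signature $(3,1)$. Take normalized matrices $A,B,C$, each of norm $\la X,X\ra=1$. The cycle space consists of the spacelike vectors, and the previous proposition gives
\[
(\ca,\cb;\cc)=\frac{\la\ca,\cb\ra-\la\ca,\cc\ra\la\cb,\cc\ra}{1-\la\cb,\cc\ra^2}.
\]
Set $x=2\pi|\mu(\ca,\cc)|$ and $y=2\pi|\mu(\cb,\cc)|$. The desired inequality is $|\la\ca,\cb\ra|\ge\cosh(x+y)=\cosh x\cosh y+\sinh x\sinh y$.

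To prove it, decompose $A=\la A,C\ra C+A'$ and $B=\la B,C\ra C+B'$ with $A',B'\perp C$. Since $\la C,C\ra=1$, we get $\la A',A'\ra=1-\cosh^2x=-\sinh^2x$ and $\la B',B'\ra=-\sinh^2y$. So $A'$ and $B'$ are timelike vectors in $C^\perp$, which has signature $(2,1)$. Then
\[
\la A,B\ra=\cosh x\cosh y+\la A',B'\ra,
\]
in the normalized sign convention. The reversed Cauchy--Schwarz inequality for timelike vectors gives $|\la A',B'\ra|\ge\sinh x\sinh y$. Equality holds iff $A'$ and $B'$ are proportional, that is, iff $A,B,C$ span only a $2$-plane, i.e.\ $\cc\in\CC(\ca,\cb)$.

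The main obstacle is the sign. We need $\la A',B'\ra$ to have the same sign as $\cosh x\cosh y$, which means $A'$ and $B'$ must lie in the same time-cone; otherwise we would get the wrong inequality $\cosh(x-y)$. This is exactly where the hypothesis that $\cc$ splits $\ca$ and $\cb$ enters. I would verify it in the concentric normal form. Take $\cc$ to be the unit circle and $\ca$ a concentric circle of radius $R>1$. The sign of $\la A',B'\ra$ then varies continuously over all $\cb$ lying inside the unit disk, and it never vanishes there since $A',B'$ are timelike. So it suffices to compute it for one concentric $\cb$, where the signs are immediate and equality recovers the additivity of the oriented modulus stated above.
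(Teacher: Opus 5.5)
Your argument is correct, but one step is mis-stated. The sign pattern you expect in the second paragraph cannot occur. Reorienting a cycle flips exactly two of the three M\"obius cosines, so the sign of $\la\ca,\cb\ra\la\cb,\cc\ra\la\ca,\cc\ra$ does not depend on orientation, and $-,-,-$ would make it negative. With your orientations, take $\cc$ the unit circle and $\ca,\cb$ concentric of radii $e^{x}$ and $e^{-y}$. Then $\la\ca,\cc\ra=\cosh x$, $\la\cb,\cc\ra=-\cosh y$ and $\la\ca,\cb\ra=-\cosh(x+y)$. The all-negative pattern is the one produced by three mutually exterior circles, where no cycle separates the other two. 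So the identity should read $\la A,B\ra=\la A,C\ra\la B,C\ra+\la A',B'\ra$, and what you need is that $\la A',B'\ra$ has the sign of $\la A,C\ra\la B,C\ra$. This condition is unchanged by reorienting any of the three cycles, and it is exactly what your connectedness argument checks. You should add that $\la B,C\ra$ also keeps a constant sign on the family, since $|\la B,C\ra|>1$. With that, nothing breaks.

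Your route differs from the paper's. The paper makes $\ca,\cb$ concentric with radii $r<R$ and gives $\cc$ a center at distance $d$ and radius $\rho$. From the splitting hypothesis it reads off $r<\rho-d$ and $\rho+d<R$. It then uses the formula of Proposition~\ref{cs} with $d^2$ dropped to get $|\mu(\ca,\cc)|\le\frac{1}{2\pi}\ln\frac{\rho}{r}$ and $|\mu(\cb,\cc)|\le\frac{1}{2\pi}\ln\frac{R}{\rho}$. These add up to $|\mu(\ca,\cb)|$, with equality iff $d=0$. That is a few lines of explicit computation, and the hypothesis is used visibly: it gives $r<\rho<R$, so the logarithms telescope. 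You instead recognize the statement as the reversed Cauchy--Schwarz inequality for timelike vectors in $C^\perp$. This fits the paper's Minkowski framework and makes the equality case structural: equality means $A'\parallel B'$, i.e.\ $A,B,C$ are linearly dependent, which is literally $\cc\in\CC(\ca,\cb)$. There is no need to translate $d=0$ into pencil membership. The cost is that the time-cone condition, where the splitting hypothesis enters, is invisible in the algebra. You have to supply it by continuity plus one explicit configuration, which is exactly what the paper's coordinates give for free.
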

\begin{proof}
Assume that $\ca,\cb$ are centered at $0$ and have positive radii $r<R$ respectively. Assume that $\cc$ has center $d>0$ and radius $\rho>0$. Since $\cc$ splits $\ca$, $\cb$ we have $d+\rho<R$ and $\rho-d>r$. Then
$$
|\mu(\ca,\cb)| = \frac{1}{2\pi}\ln\frac{R}{r},
$$

$$
|\mu(\ca,\cc)| =  \frac{1}{2\pi}\arcosh\frac{r^2+\rho^2-d^2}{2r\rho}
\leq  \frac{1}{2\pi}\arcosh\frac{r^2+\rho^2}{2r\rho} = 
 \frac{1}{2\pi}\ln\frac{\rho}{r}, 
$$
and
$$
|\mu(\cb,\cc)| =  \frac{1}{2\pi}\arcosh\frac{R^2+\rho^2-d^2}{2R\rho}
\leq  \frac{1}{2\pi}\arcosh\frac{R^2+\rho^2}{2R\rho} 
=  \frac{1}{2\pi}\ln\frac{R}{\rho}. 
$$
The rest follows. An equality holds iff $d=0$ which is equivalent to $\cc\in\CC(\ca,\cb)$.
\end{proof}

In the hyperbolic geometry $|\mu(\ca,\cb)|$ is equal to the distance  $d(\ca,\cb)$ between the disjoint lines $\ca$ and $\cb$. The inverse triangle inequality makes perfect sense: $d(\ca,\cb)$ is the length of the common perpendicular for $\ca$ and $\cb$; it is unique, but may be not orthogonal to $\cc$ too.

We also summarize the analogous results for monogons.
Let $\ca,\cb$ be tangent. A \textit{cevian} of the monogon bounded by $\ca$ and $\cb$ is an element of $\CC(\ca,\cb)$. In this case, unlike elliptic or hyperbolic, we cannot represent $(\ca,\cb;\cc)$ as a ratio of two independent invariant values. This is the reason why the splitting factor was introduced as a separate concept in first place. We can only use the following construction. Take the distinguished point to $\infty$ and place $\ca,\cb,\cc$ to be vertical lines.

\begin{prop}
Let $\ca,\cb,\cc$ be vertical lines intersecting $\R$ in the points $a, b, c$ respectively. Let $\xi = \la\ca,\cb\ra\in\{\pm 1\}$. Then
$$
(\ca,\cb;\cc) = \xi\frac{c-a}{c-b}.
$$
\end{prop}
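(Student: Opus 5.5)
We compute directly with normalized matrices. A vertical line through the real point $a$ with some orientation has an equation of the form $l z + \oline{l}\,\oline{z} + n = 0$ with $k=0$. For the line $\mathrm{Re}\, z = a$ we can take $l=m=\pm\frac12$ and $n=\mp a$. Normalization requires $\det M = -lm = -1$, so we need $lm=1$, i.e. $l=m=\pm1$. Hence
\[
\ca\approx \varepsilon_a\begin{pmatrix}0&1\\1&-2a\end{pmatrix},\qquad
\cb\approx \varepsilon_b\begin{pmatrix}0&1\\1&-2b\end{pmatrix},
\]
with signs $\varepsilon_a,\varepsilon_b\in\{\pm1\}$ determined by the orientations. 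Similarly $\cc\sim\begin{pmatrix}0&1\\1&-2c\end{pmatrix}$, and for $\cc$ only the line matters, since the splitting factor is even in its third argument.

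The first step is to compute $\xi=\la\ca,\cb\ra$ from the formula in the proof of Proposition~\ref{cs}:
\[
\la\ca,\cb\ra=\tfrac12(-k_an_b-k_bn_a+l_am_b+l_bm_a).
\]
With $k_a=k_b=0$ and $l=m=\varepsilon$ this gives $\xi=\varepsilon_a\varepsilon_b$. So $\xi$ records exactly whether the chosen orientations agree.

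Next we solve $\cc\sim M-\lambda N$. We have
\[
M-\lambda N=\begin{pmatrix}0&\varepsilon_a-\lambda\varepsilon_b\\ \varepsilon_a-\lambda\varepsilon_b& -2(\varepsilon_a a-\lambda\varepsilon_b b)\end{pmatrix}.
\]
Proportionality to the matrix of $\cc$ means
\[
\frac{\varepsilon_a a-\lambda\varepsilon_b b}{\varepsilon_a-\lambda\varepsilon_b}=c.
\]
Solving for $\lambda$ gives $\varepsilon_a(a-c)=\lambda\varepsilon_b(b-c)$, so
\[
\lambda=\varepsilon_a\varepsilon_b\,\frac{c-a}{c-b}=\xi\,\frac{c-a}{c-b},
\]
using $\varepsilon_a\varepsilon_b=\varepsilon_a/\varepsilon_b$.

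The degenerate cases are consistent with this formula. For $c=a$ we get $\lambda=0$; for $c=b$ we get $\lambda=\infty$; and $c=\infty$, the parabolic point, gives $\lambda=\xi$, which matches $\varepsilon_a-\lambda\varepsilon_b=0$.

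The only real obstacle is getting the normalization and sign conventions right. I would double-check that $lm=1$ is the correct condition for $\det M=-1$ with $k=0$, and that orientation reversal corresponds to negating $M$. Neither affects the final identity, because only the product $\varepsilon_a\varepsilon_b$ appears and it coincides with $\xi$.
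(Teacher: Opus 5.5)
Your proof is correct and matches the paper's computation: you write the normalized matrices $\left(\begin{smallmatrix}0&1\\1&-2a\end{smallmatrix}\right)$ of the vertical lines and solve $\cc\sim M-\lambda N$ for $\lambda$. The only difference is that you carry the orientation signs $\varepsilon_a,\varepsilon_b$ explicitly and show $\xi=\varepsilon_a\varepsilon_b$. This covers in one computation the cases the paper leaves as ``analogous'' after treating $\xi=1$.
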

\begin{proof}
We have that $\ca$ is defined by the equation
$z + \oline{z} - 2a=0$. Assume $\ca\approx
\left(
\begin{smallmatrix}
 0 & 1\\
1 & -2a
\end{smallmatrix}\right)$, 
$\cb\approx
\left(
\begin{smallmatrix}
 0 & 1\\
1 & -2b
\end{smallmatrix}\right)$, 
$\cc\sim
\left(
\begin{smallmatrix}
 0 & 1\\
1 & -2c
\end{smallmatrix}\right)$.
In this case $\xi=1$. We have that
$
\left(
\begin{smallmatrix}
 0 & 1\\
1 & -2a
\end{smallmatrix}
\right)-
\lambda
\left(
\begin{smallmatrix}
 0 & 1\\
1 & -2b
\end{smallmatrix}
\right)
=
\left(
\begin{smallmatrix} 0 & 1-\lambda\\
1-\lambda & -2(a-\lambda b)
\end{smallmatrix}
\right)
$.
Then $(a-\lambda b) = (1-\lambda)c$ is equivalent to $\lambda =\frac{c-a}{c-b}$.

The remaining cases are considered analogously.
\end{proof}

\begin{prop}
Let $\ca,\cb$ be tangent cycles. Let $\xi=\la\ca,\cb\ra\in\{\pm1\}$. The map $\cc\mapsto(\ca,\cb;\cc)$ is a bijection between the (non-oriented) cevians and $\oline{\R}\setminus\{\xi\}$. Let $\lambda = (\ca,\cb;\cc)$. Then $\cc\cong\ca$ iff $\lambda=0$, $\cc\cong\cb$ iff $\lambda=\infty$, and $\cc\cong\cl$ is the bisector of the monogon iff $\lambda=-\xi$. When $\lambda\rightarrow\xi$ the cycle $\cc$ shrinks to the distinguished point.
\end{prop}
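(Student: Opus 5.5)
We reduce to the normal form used in the previous proposition. Since the splitting factor and the M\"obius cosine are invariant under $\Iso^+\M$ (and an orientation-reversing isometry changes neither of them, because both are defined through the linear action on $\Herm(2)$), we apply a M\"obius transformation that sends the common point of $\ca$ and $\cb$ to $\infty$. Then $\ca,\cb$ and every cevian $\cc\in\CC(\ca,\cb)$ become parallel lines; after a rotation we may assume they are all vertical, meeting $\R$ at $a$, $b$, $c$ respectively.

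By the previous proposition, $\lambda=(\ca,\cb;\cc)=\xi\frac{c-a}{c-b}$. As $c$ runs over $\R$, the cevians run over all vertical lines, and $c\mapsto\frac{c-a}{c-b}$ is a M\"obius map of $\oline{\R}$ sending $\infty\mapsto 1$. Hence $c\mapsto\lambda$ is a bijection from $\R\cup\{b\}$ onto $\oline{\R}\setminus\{\xi\}$: the value $c=b$ corresponds to $\lambda=\infty$ and to $\cc\cong\cb$. This proves the bijectivity claim. Substituting $c=a$ gives $\lambda=0$, i.e. $\cc\cong\ca$. The excluded value $\lambda=\xi$ corresponds to $c=\infty$, where the vertical line escapes to infinity. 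In the extended pencil this limit is the parabolic cycle at $\infty$, i.e. the distinguished point. We can confirm this algebraically: $\det(M-\lambda N)=-1+2\lambda\xi-\lambda^2=-(\lambda-\xi)^2$ vanishes exactly at $\lambda=\xi$, so by Proposition~\ref{cycle-type} only this value gives a non-elliptic cycle. This proves the shrinking claim.

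For the bisector, in the normal form the midcycle of two parallel lines is the parallel line halfway between them, $c=\frac{a+b}{2}$. This gives $\frac{c-a}{c-b}=-1$, hence $\lambda=-\xi$. By the midcycle proposition (case $(ii)$) the midcycle is unique and lies in the pencil, so it is the bisector. The main obstacle is the sign bookkeeping: the previous proposition was computed only for one choice of orientations (the one with $\xi=1$). Reversing the orientation of $\cb$ replaces $N$ by $-N$, which flips both $\xi$ and $\lambda$ (the splitting factor is odd in its second argument). Reversing $\ca$ similarly flips both. Consequently the relations $\lambda\ne\xi$ and $\lambda=-\xi$ for the bisector are orientation-consistent, and it suffices to check them in the case $\xi=1$ treated above.
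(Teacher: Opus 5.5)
Your proof is correct and is essentially the paper's argument: you reduce to parallel vertical lines, use $\lambda=\xi\frac{c-a}{c-b}$ from the preceding proposition, and read off the bijection onto $\oline{\R}\setminus\{\xi\}$ together with the values at $c=a$, $c=b$, and $c=\frac{a+b}{2}$. Your identity $\det(M-\lambda N)=-(\lambda-\xi)^2$ and your orientation bookkeeping (flipping $M$ or $N$ multiplies $\lambda$ and $\xi$ by the same sign) make explicit the shrinking claim and the ``remaining cases'' that the paper leaves to the reader, and one small slip is that $\R\cup\{b\}$ is simply $\R$.
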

\begin{proof}
Once again, assume $\ca,\cb,\cc$ are vertical lines intersecting $\R$ in the points $a, b, c$. The map $c\mapsto\xi\frac{c-a}{c-b}$ is a bijection between $\CC(\ca,\cb)$ and $\oline{\R}\setminus\{\xi\}$. Clearly, $\ca$ and $\cb$ correspond to $c=a$ and $c=b$ that are mapped to $\lambda=0$ and $\lambda=\infty$ respectively. The bisector $\cl$ has coordinate $\frac{1}{2}(a+b)$ and is mapped to
$$
\xi\frac{\frac{a+b}{2}-a}{\frac{a+b}{2}-b}=\xi\frac{b-a}{a-b}=-\xi.
$$
\end{proof}

\section{Triangles}
A \textit{triangle} $\Delta=\triangle ABC$  is a part of the plane bounded by three cycle segments that pairwise join three distinct points $A, B, C$ and share no other common points. We write $BC, AC, AB$ for these segments and $\ca, \cb, \cc$ for the cycles containing them. The points $A$, $B$, $C$ are the \textit{vertices} of the triangle.

Unlike Euclidean geometry the triangles on the M\"obius plane appear in extremely wide variety of shapes and forms (Figure~\ref{fig:triags}). For example, Reuleaux triangle, Ying-Yang curve, or an arbelos are all M\"obius triangles.
 
\begin{figure}[h]
\[
\begin{tikzpicture}
\draw [thick](0,0) arc (-30:30:2);
\draw [thick](0,0) arc (150:90:2);
\draw [thick](0,2) arc (-150:-90:2);
\draw [thick](0,4) arc (30:330:0.5);
\draw [thick](0,4) arc (210:-90:0.5);
\draw [thick](0,3.5) arc (-210:90:0.5);
\draw [thick](3,0) arc (180:0:0.7);
\draw [thick](4.4,0) arc (180:0:0.7);
\draw [thick](3,0) arc (180:0:1.4);
\draw [thick](3,3) arc (180:0:0.7);
\draw [thick](4.4,3) arc (180:360:0.7);
\draw [thick](3,3) arc (180:0:1.4);
\draw [thick](8,0) arc (210:150:2);
\draw [thick](8,0) arc (-90:-30:2);
\draw [thick](8,2) arc (90:30:2);
\draw [thick](7,3) arc (130:50:2);
\draw [thick](7,3) -- (8,4.4) -- (9.56,3);
\draw [thick](12,1) circle (1);
\draw [thick](11,4) -- (12,3) -- (13,4) -- (11,4);
\end{tikzpicture}
\]

\caption{Various kinds ot M\"obius triangles.}
\label{fig:triags}
\end{figure} 

Since isometries are conformal the inner angles of a triangle are uniquely defined and are equal for congruent triangles. There are no limitations for a single angle value in this case. 
If $\alpha,\beta,\gamma$ are angles of a M\"obius triangle, then $\alpha, \beta, \gamma\in[0, 2\pi]$. However, not all triples are feasible. The angles must satisfy the \textit{triangle inequalities} that hold for all triangles.
\begin{theorem}[\cite{Epp}]
The angles $\alpha$, $\beta$, $\gamma$ of any M\"obius triangle satisfy the  inequalities
\begin{align*}
-\pi <\beta+\gamma-\alpha<3\pi,\\
-\pi <\alpha+\gamma-\beta<3\pi,\\
-\pi <\alpha+\beta-\gamma<3\pi.
\end{align*}
Any triple of angles satisfying the inequalities  above
can be realized as internal angles of some M\"obius triangle. 
\end{theorem}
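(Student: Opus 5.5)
The plan is to prove necessity by a turning-angle (Gauss--Bonnet) computation after normalizing one vertex to $\infty$, and sufficiency by explicit construction. Two reductions cut the work down first.

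\emph{Reductions.} The three inequalities are permuted by relabelling the vertices, so for necessity it suffices to prove
$$-\pi<\beta+\gamma-\alpha<3\pi$$
for the vertex $A$. Next, the closure of the complement of a triangle $\triangle ABC$ is again a triangle with the same vertices and sides, and its angles are $2\pi-\alpha,2\pi-\beta,2\pi-\gamma$. This substitution sends $\beta+\gamma-\alpha$ to $2\pi-(\beta+\gamma-\alpha)$, which maps the interval $(-\pi,3\pi)$ onto itself and exchanges the two bounds. So it is enough to prove the lower bound $\beta+\gamma-\alpha>-\pi$ for every triangle; applied to the complement, it gives the upper bound.

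\emph{Necessity.} By Proposition 2.1 (three points can be sent anywhere) I apply an isometry sending $A$ to $\infty$. The sides $\cb\ni A,C$ and $\cc\ni A,B$ become straight lines, meeting at a finite point $P$ or parallel, and $BC$ becomes an arc of a circle (or a segment) $\ca$. The triangle is then an unbounded Euclidean region bounded by a ray from $\infty$ to $B$, the arc from $B$ to $C$, and a ray from $C$ to $\infty$. Its angle at $\infty$ is the angle $\alpha$ formed at infinity by the two rays. Inverting in a small circle about a finite point outside the region makes the region bounded, with a corner of angle $\alpha$ at the image of $\infty$, so the turning-number formula applies:
$$\text{(exterior turns at corners)}+\text{(total geodesic turning of sides)}=2\pi.$$
Rather than invert, I would compute directly with the rays and the arc. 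Straight sides contribute no turning, and the arc contributes its signed central angle $\theta$. The unbounded corner at $\infty$ contributes $-(\pi+\alpha)$ in place of $\pi-\alpha$, because a neighbourhood of $\infty$ is turned inside out. This should give the identity
$$\beta+\gamma-\alpha=\pi-\theta\quad\text{or}\quad\beta+\gamma-\alpha=\theta-\pi,$$
depending on whether the region lies on the left or the right of the arc. The lower bound then reduces to $|\theta|<2\pi$, which holds because $BC$ is a proper arc of a circle, with $\theta=0$ for a segment.

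\emph{Sufficiency.} Given a triple satisfying the inequalities, I again reduce by complementation if needed. I place $A=\infty$, choose two lines making the required configuration for $\alpha$ at $\infty$, and solve for the arc. Since $\theta$ is determined by $\beta+\gamma-\alpha$ via the identity above, and the split between $\beta$ and $\gamma$ is adjusted by rotating the arc's chord direction relative to the two rays, one circle through suitably chosen $B\in\cc$ and $C\in\cb$ realises the triple. The degenerate cases $\alpha\in\{0,2\pi\}$ (parallel lines) and $\beta$ or $\gamma$ in $\{0,2\pi\}$ (tangencies) are handled by the same picture.

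\emph{Main obstacle.} The difficult step is the case analysis in the turning computation. I expect trouble in particular where the arc $BC$ winds around so that the region near $\infty$ is not a simple sector, or where a ray meets the arc's circle a second time. Here I must check that the triangle hypothesis (the sides share no other points) forces exactly the constraint $|\theta|<2\pi$ with the correct sign conventions, and that every such configuration is actually realised for sufficiency. I would first settle the case $\alpha\le\pi$ with $P$ finite and the region inside the angle at $P$, then obtain the remaining cases by complementation and reflection.
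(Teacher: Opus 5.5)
\paragraph{Necessity.} Your turning-number argument in the chart $A=\infty$ is a legitimate alternative to the paper's route. The paper (in the proof of Theorem~\ref{triangle-location}) uses the circumcircle $\cs$ and the signed digon angles $\alpha_0,\beta_0,\gamma_0$, which are forced by $\alpha+\beta_0+\gamma_0=\pi$ etc.; the inequalities are then exactly $|\alpha_0|,|\beta_0|,|\gamma_0|<\pi$.

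However, your bookkeeping at $\infty$ is wrong as written. With a contribution of $-(\pi+\alpha)$, the Umlaufsatz gives $\alpha+\beta+\gamma=\theta-\pi$. This already fails for a wedge of angle $\omega$ cut off by a segment, where $\alpha=\omega$, $\beta=\gamma=(\pi+\omega)/2$ and $\theta=0$. Truncate instead by a large circle $|z|=\rho$. This replaces the vertex at $\infty$ by two corners tending to right angles plus an arc turning by $\alpha$, so the vertex contributes $+(\pi+\alpha)$. You then get the single identity $\beta+\gamma-\alpha=\pi+\theta$, where $\theta$ is the signed turning of $BC$ traversed with the triangle on its left. There is no left/right case split, since the triangle is on the left by convention. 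Your branch $\theta-\pi$ could not yield the lower bound from $|\theta|<2\pi$ anyway. With the corrected identity, $|\theta|<2\pi$ gives both bounds at once, so the complementation step is unnecessary. In the paper's notation, $\theta=-2\alpha_0$.

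\paragraph{Sufficiency: the genuine gap.} Your identity only fixes $\theta$ from the inequality at $A$. The other two inequalities never enter your construction. Yet they are exactly what keeps the rays $AB$ and $AC$ from meeting each other, or the arc $BC$, away from the vertices, and this is the step you leave as ``suitably chosen $B$, $C$''.

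The missing idea is to measure everything from the line $BC$, which is the circumcircle in this chart:
\begin{itemize}
\item the arc makes signed angle $\alpha_0=-\theta/2$ with the chord;
\item the rays $AB$ and $AC$ make signed angles $\gamma_0$ and $\beta_0$ with the two outer rays of the line $BC$ at $B$ and $C$;
\item the local angle relations at $A$, $B$, $C$ force the paper's formulas for $\alpha_0,\beta_0,\gamma_0$;
\item the three inequalities say precisely that $|\alpha_0|,|\beta_0|,|\gamma_0|<\pi$.
\end{itemize}

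Embeddedness is then a short check. Take $B=0$, $C=1$ and $\alpha_0,\gamma_0>0$. The arc lies in the open sector of directions $(0,\alpha_0)$ at $B$, while the ray $AB$ leaves $B$ in direction $\pi-\gamma_0$. So they meet only if $\beta=\pi-\alpha_0-\gamma_0<0$. Symmetrically, the ray $AC$ meets the arc only if $\gamma<0$, and the two rays meet only if $\alpha=\pi-\beta_0-\gamma_0<0$. With mixed signs the pieces lie in opposite half-planes of the line $BC$. The all-negative cases use $\alpha,\beta,\gamma\le 2\pi$ in the same way. This is the paper's three-digon construction viewed from $A=\infty$.
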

\begin{corollary}
For any $\alpha,\beta,\gamma\in[0,\pi)$ or any $\alpha,\beta,\gamma\in(\pi,2\pi]$ 
there exists a M\"obius triangle having $\alpha$, $\beta$, $\gamma$ as its internal angles. 
\end{corollary}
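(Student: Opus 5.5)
We deduce the corollary directly from the theorem of \cite{Epp} by checking that each of the two listed ranges satisfies the three triangle inequalities. By symmetry it suffices to verify the single pair
$$
-\pi<\beta+\gamma-\alpha<3\pi
$$
for each range. The other two pairs follow by permuting the roles of $\alpha,\beta,\gamma$.

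\emph{Case $\alpha,\beta,\gamma\in[0,\pi)$.}
\begin{itemize}
\item Lower bound: $\beta+\gamma\ge 0$ and $\alpha<\pi$, so $\beta+\gamma-\alpha>-\pi$.
\item Upper bound: $\beta+\gamma<2\pi$ and $\alpha\ge0$, so $\beta+\gamma-\alpha<2\pi<3\pi$.
\end{itemize}

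\emph{Case $\alpha,\beta,\gamma\in(\pi,2\pi]$.}
\begin{itemize}
\item Lower bound: $\beta+\gamma>2\pi$ and $\alpha\le2\pi$, so $\beta+\gamma-\alpha>0>-\pi$.
\item Upper bound: $\beta+\gamma\le4\pi$ and $\alpha>\pi$, so $\beta+\gamma-\alpha<3\pi$.
\end{itemize}
The open endpoints are placed exactly so that the strict inequalities survive. At the extreme $\beta=\gamma=2\pi$ we get $\beta+\gamma-\alpha=4\pi-\alpha$, and this is $<3\pi$ precisely because $\alpha>\pi$.

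All inequalities hold in both cases, so the existence part of the theorem of \cite{Epp} yields a M\"obius triangle with internal angles $\alpha,\beta,\gamma$. The only step needing care is this bookkeeping of strict versus non-strict endpoints. As an optional sanity check, the second range can be obtained from the first by replacing each angle $\theta$ by $2\pi-\theta$ (passing to the complementary region), but the direct verification above is the proof.
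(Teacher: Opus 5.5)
Your proof is correct and follows the paper's intended route: the paper states this corollary without proof, as an immediate consequence of the cited existence theorem, and your verification that both ranges satisfy $-\pi<\beta+\gamma-\alpha<3\pi$ (and its permutations) is exactly what that deduction requires. Your endpoint bookkeeping is right: strictness comes from excluding $\pi$ in each range, which also explains why triples such as $(0,0,\pi)$ are ruled out.
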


Thus, the feasible triples of angles form a polyhedron filling exactly three quarters of the cube $[0,2\pi]^3$. For example, there is no triangle with angles $0$ and $2\pi$ at the same time, and no triangle with angles $0$, $0$, $\pi$. The polyhedron is central symmetric, as the complement of every triangle is also a triangle sharing same vertices and having angles $2\pi-\alpha$, $2\pi-\beta$, $2\pi-\gamma$.

Note that $\alpha=\pi$ does not mean that $AB$ and $AC$ lie on the same cycle; it only means that $\cb$ and $\cc$ are tangent in $A$ (Figure~\ref{fig:triags}). A triangle degenerates into a digon when two angles are the same and the third is $\pi$. When all angles are $\pi$ the triangle becomes a half-plane. Such triangles are \textit{degenerate}. For non-degenerate triangles $\ca,\cb,\cc$ are non-collinear. 

When considering triangles we will always assume its boundary is oriented and the triangle appears on the left for every bounding segment. If we walk the boundary in this orientation then every vertex uniquely defines the next one; this creates a map which is a permutation of the vertices. This permutation (it is one of the two cycles of order $3$) is the \textit{orientation} of the triangle.

\begin{theorem}\label{triangle-location}
For any three distinct points $A, B, C\in\M$, any angles $\alpha,\beta,\gamma\in[0,2\pi]$ satisfying the triangle inequalities, and any prescribed orientation, there exists a unique triangle with vertices $A, B, C$, internal angles $\alpha, \beta,\gamma$ respectively, and the prescribed orientation.
\end{theorem}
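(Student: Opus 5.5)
Throughout, I reduce to a normal form and use angle bookkeeping; existence itself is taken from the theorem of \cite{Epp}.

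\emph{Normalization.} By the first proposition of Section~2 there is a unique $\sigma\in\Iso^+\M$ with $\sigma(A)=0$, $\sigma(B)=1$, $\sigma(C)=\infty$. Orientation-preserving isometries preserve inner angles and the cyclic orientation of the boundary, so it suffices to treat $A=0$, $B=1$, $C=\infty$ for one prescribed orientation. The other orientation then comes from complex conjugation, which fixes $0,1,\infty$, preserves inner angles and reverses orientation.

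\emph{Parametrization of the sides.} In this normal form $\cb$ and $\ca$ are straight lines through $0$ and through $1$ respectively. They form the sides $AC$ and $BC$, which run from the finite vertex to $\infty$. The cycle $\cc$ is any cycle through $0$ and $1$, and the side $AB$ is one of its two arcs. Each of these cycles is determined by the tangent direction at its finite vertex:
\begin{itemize}
\item $\cb$ by a direction $\phi_A$ at $0$;
\item $\ca$ by a direction $\phi_B$ at $1$;
\item $\cc$ by its direction $\psi$ at $0$, which also fixes its direction at $1$ (the reflection of $\psi$ across the real axis, by symmetry of circles through $0,1$).
\end{itemize}
The three inner angles are then explicit affine functions of $(\phi_A,\phi_B,\psi)$ modulo $2\pi$:
\begin{itemize}
\item the angle at $A$ is the rotation from $\psi$ to $\phi_A$;
\item the angle at $B$ is the rotation from $\phi_B$ to $\bar\psi$;
\item the angle at $C=\infty$ equals minus the angle between the lines at their finite intersection, i.e.\ essentially $\phi_A-\phi_B$, up to $\pi$.
\end{itemize}
Given $(\alpha,\beta,\gamma)$, solving these linear relations shows that $\psi$ determines $\phi_A$ and $\phi_B$. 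Substituting into the relation at $C$ then yields a linear equation for $\psi$ of the form $2\psi\equiv c(\alpha,\beta,\gamma)\pmod{2\pi}$. This gives exactly two candidates $\psi$ and $\psi+\pi$, which correspond to the same circle $\cc$ traversed with opposite tangent sense, i.e.\ to the two complementary arcs of $\cc$.

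\emph{Uniqueness.} The main obstacle is showing that exactly one candidate actually bounds a triangle with the prescribed orientation and with the inner angles measured on the correct side. It also has to be checked that the segments share no points other than $A,B,C$. This is not automatic once angles exceed $\pi$, because the straight sides may cross the arc $AB$ at extra points.

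I would first handle the case $\alpha,\beta,\gamma<\pi$, where the triangle lies on one side of the arc and the orientation selects the arc directly. I would then extend to the general case by a continuity argument over the feasible polyhedron, which is connected (three quarters of the cube). Along any path the two candidates never coincide, and the property ``bounds a genuine triangle of the given orientation'' is open and closed away from the boundary of the polyhedron. Hence the admissible branch is globally well defined, which gives uniqueness.

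Complementation $(\alpha,\beta,\gamma)\mapsto(2\pi-\alpha,2\pi-\beta,2\pi-\gamma)$ swaps the orientation. This reduces the bookkeeping to half the polyhedron.
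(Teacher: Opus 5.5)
Your normalization and reduction to two candidates are correct, and they are essentially the paper's construction in coordinates. With $A,B,C$ at $0,1,\infty$, the circumcircle $\cs$ is $\oline{\R}$. Your $\psi$, $\phi_A$, $\phi_B$ encode, up to normalization, the paper's signed digon angles between the sides and $\cs$. Your congruence for $\psi$ reproduces the paper's $\gamma_0=\tfrac{1}{2}(\pi-\alpha-\beta+\gamma)$, but only modulo $\pi$.

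The genuine gap is the step you flag yourself. Choosing between $\psi$ and $\psi+\pi$ is the entire uniqueness content once existence is taken from \cite{Epp}, and your continuity argument does not supply it.
\begin{itemize}
\item That the two candidates never coincide is beside the point. What must be excluded is that both bound genuine triangles with the prescribed angles, and that is the statement itself.
\item The claim that validity of a candidate is open and closed on the interior of the feasible region is asserted, not proved, and closedness is where the difficulty lies. A limit of simple boundaries can only lose simplicity when a side runs through the opposite vertex or two sides become tangent at a vertex. The wrong branch does exactly this at interior parameters: at $\gamma_0=0$ its side $AB$ is $\oline{\R}\setminus(0,1)$, which contains $C$, and this occurs for instance at $\alpha=\beta=3\pi/4$, $\gamma=\pi/2$. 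So proving its validity set closed already requires knowing it is invalid near such parameters, which is uniqueness in disguise.
\item You also need a base point where exactly one candidate works, and the proper case is only gestured at. If you mean that a proper triangle lies on one side of $\cc$, this fails in general: by Theorem~\ref{a=b+c-Pi}, $\cc$ cuts a proper triangle when $\gamma<\alpha+\beta-\pi$.
\item The values $0$ and $2\pi$ are part of the statement but lie on the boundary you excluded, and congruences modulo $2\pi$ cannot tell them apart at all.
\end{itemize}

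The missing idea is to do the angle bookkeeping exactly rather than modulo $2\pi$, as the paper does.
\begin{itemize}
\item For each side, take the digon it bounds with the arc of $\cs$ avoiding the third vertex, and record its angle as a signed number in $(-\pi,\pi)$.
\item The triangle is the half-plane of $\cs$ with these three digons glued on or cut away. Hence each inner angle is $\pi$ corrected exactly by the two adjacent signed digon angles, which gives the nonsingular system $\alpha+\beta_0+\gamma_0=\pi$ and its cyclic versions.
\item The unique solution forces $\gamma_0$ to be the real number $\tfrac{1}{2}(\pi-\alpha-\beta+\gamma)$. Your second candidate has digon angle $\gamma_0\pm\pi$, because the complementary arc of $\cc$ lies in the other half-plane of $\cs$. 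So even if it bounded a triangle, it would have different inner angles, and it is excluded without any continuity argument.
\item The triangle inequalities then become the properness conditions $|\alpha_0|,|\beta_0|,|\gamma_0|<\pi$, which the paper, again via \cite{Epp}, ties to the absence of self-intersections.
\end{itemize}
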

\begin{proof}
Let $\cs$ be the \textit{circumcircle} of $\triag ABC$, i.e. the unique cycle passing through $A$, $B$, $C$. For the given triangle orientation select the opposite orientation on $\cs$. We show
the triangle $\triag ABC$ is uniquely obtained from the right half-plane of $\cs$ by attaching or cutting away three digons (Figure~\ref{fig:triangle}).

\begin{figure}[h]
\[
\begin{tikzpicture}
\tikzstyle{vertex}=[circle, fill, inner sep=1.5pt]
\node [vertex] at (4,5) {};
\node [vertex] at (8.8,3.24) {};
\node [vertex] at (10.75,9.25) {};

\draw (4,5) arc (210:580:4);
\draw [thick] (4,5) arc (110:30:4);
\draw [thick] (8.8,3.24) arc (-100:65:3.2);
\draw [thick] (4,5) arc (170:75:5.40);

\end{tikzpicture}
\]
\caption{Every M\"obius triangle is sculpted from a half-plane by cutting away or gluing on three proper digons.}
\label{fig:triangle}
\end{figure}

Indeed, $\cs$ must form three digons with $\ca$, $\cb$, $\cc$. Let $\alpha_0$, $\beta_0$, $\gamma_0$ be the signed inner angles of these digons; the sign is chosen to be $+$ for the digon lying inside the left half-plane of $\cs$, and $-$ for the right half-plane. Hence, the values $\alpha_0$, $\beta_0$, $\gamma_0$ uniquely define the location of the segments $AB$, $BC$, $AC$, and, therefore, uniquely define the whole triangle.

The inner angles satisfy the system of linear equations
$$
\begin{cases}
\alpha+\beta_0+\gamma_0=\pi,\\
\alpha_0+\beta+\gamma_0=\pi,\\
\alpha_0+\beta_0+\gamma=\pi,
\end{cases}
$$    
that has a unique solution
\begin{gather*}
\alpha_0 = \frac{\pi+\alpha-\beta-\gamma}{2},\\
\beta_0 = \frac{\pi-\alpha+\beta-\gamma}{2},\\
\gamma_0 = \frac{\pi-\alpha-\beta+\gamma}{2}.
\end{gather*}
The triangle inequalities are exactly the conditions for the digons to be proper
$$
-\pi< \alpha_0,\beta_0, \gamma_0< \pi
$$
as otherwise we will encounter self-intersection for the bounding segments \cite{Epp}. When the conditions are met the triangle is uniquely reconstructed.

To get the unique triangle of the opposite direction we apply the similar procedure 
to the left half-plane of $\cs$, or simply execute the inversion $\sigma_\cs$ about $\cs$ to the first triangle. 
\end{proof}

Therefore, all the theorems and properties of the triangles can be formulated 
in the terms of its angles. Furthermore, there is only one triangle congruence condition for the M\"obius triangles.
\begin{corollary}
Two triangles $\Delta$ and $\Delta^\pr$ are congruent iff their corresponding angles are equal.
\end{corollary}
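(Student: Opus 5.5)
The plan is to prove the two directions separately. The forward direction is immediate, and the reverse direction should follow from Theorem~\ref{triangle-location} by first matching vertices and then invoking uniqueness.

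\emph{Congruent implies equal angles.} Suppose $\sigma\in\Iso\M$ maps $\Delta$ onto $\Delta^\pr$. Then $\sigma$ takes vertices to vertices and bounding segments to bounding segments. Every isometry is conformal, so the unoriented inner angle at each vertex of $\Delta$ equals the inner angle at its image vertex. This holds even when $\sigma$ reverses orientation, because inner angles are unoriented quantities in $[0,2\pi]$.

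\emph{Equal angles implies congruent.} Let $\Delta=\triag ABC$ have angles $\alpha,\beta,\gamma$, and let $\Delta^\pr=\triag A^\pr B^\pr C^\pr$ have the same angles at the corresponding vertices.

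First, by the Proposition on triples of points, there is a unique $\sigma\in\Iso^+\M$ with $\sigma(A)=A^\pr$, $\sigma(B)=B^\pr$, $\sigma(C)=C^\pr$. The image $\sigma(\Delta)$ is then a triangle with vertices $A^\pr,B^\pr,C^\pr$ and angles $\alpha,\beta,\gamma$ at those vertices.

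Next, compare orientations, viewed as cyclic permutations of the vertices.
\begin{itemize}
\item If $\sigma(\Delta)$ and $\Delta^\pr$ have the same orientation, the uniqueness part of Theorem~\ref{triangle-location} gives $\sigma(\Delta)=\Delta^\pr$.
\item If the orientations differ, compose $\sigma$ with the inversion $\sigma_\cs$ in the circumcircle $\cs$ of $A^\pr B^\pr C^\pr$. This inversion fixes $A^\pr,B^\pr,C^\pr$ pointwise, preserves the angles by conformality, and reverses orientation. As the proof of Theorem~\ref{triangle-location} itself notes, it carries one triangle to the one with opposite orientation. Uniqueness then gives $\sigma_\cs\sigma(\Delta)=\Delta^\pr$.
\end{itemize}

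The one point needing care is that an orientation-reversing isometry genuinely swaps the two cyclic orders while keeping the triangle on the left of its re-oriented boundary. Since $\sigma_\cs$ fixes the vertices and reverses the orientation of $\M$, the induced cyclic order of traversal of the boundary is reversed, so this holds. The angle triple automatically satisfies the triangle inequalities, since it is realized by $\Delta$, so Theorem~\ref{triangle-location} applies in both cases.
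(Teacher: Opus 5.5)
Your proof is correct and follows the paper's own argument. You match vertices with the unique orientation-preserving M\"obius map, correct any orientation mismatch by inverting in the circumcircle, and conclude by the uniqueness part of Theorem~\ref{triangle-location}. The differences are cosmetic: you map $\Delta$ onto $\Delta^\pr$ and invert in the circumcircle of $\Delta^\pr$ (the paper maps $\Delta^\pr$ to $\Delta$), and you also write out the necessity direction via conformality, which the paper takes as evident.
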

\begin{proof}
We need to prove sufficiency only. Let $\Delta$ and $\Delta^\pr$ have equal angles.
Let $\sigma\in\Iso M$ be a M\"obius transformation that takes vertices of $\Delta^\pr$ to the corresponding vertices of $\Delta$. Then $\sigma(\Delta^\pr)$ and $\Delta$ share the same vertices and angles. If their orientations are different then we additionally execute inversion about the circumcircle of $\Delta$.
In the end we obtain two triangles sharing the same vertices, angles, and orientation. According to Theorem~\ref{triangle-location}
they coincide.
\end{proof}

We call a triangle $\Delta$ \textit{proper} if its angles $\alpha,\beta,\gamma$ lie in the interval $[0,\pi)$. Proper triangles are always non-degenerate.
When $\alpha+\beta+\gamma<\pi$ then $\Delta$ can be realized as a hyperbolic triangle on $\hh$. If $\alpha,\beta,\gamma$ are positive and $\alpha+\beta+\gamma=\pi$ then there exists a congruent Euclidean triangle. However, if $\alpha+\beta+\gamma>\pi$ this alone does not imply that the triangle is spherical, as there are additional conditions.

\begin{prop}
Let $\triag ABC$ be a proper triangle on the sphere. Then its angles $\alpha$, $\beta$, $\gamma$ satisfy the triangle inequalities
\begin{align*}
\alpha &> \beta + \gamma-\pi,\\
\beta &> \alpha + \gamma-\pi,\\
\gamma &> \alpha + \beta-\pi.
\end{align*}
\end{prop}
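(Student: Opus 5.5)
Although the inequalities are metric-free, the quickest proof goes through the polar triangle, a spherical construction. The plan is to translate the statement about $\triag ABC$ into the ordinary triangle inequality for its polar triangle $\triag A'B'C'$.

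First recall the setup. A proper triangle on the sphere here means a spherical triangle: its sides $BC$, $AC$, $AB$ lie on great circles (the straight lines of spherical geometry, with $\trr M=0$), and $\alpha,\beta,\gamma\in(0,\pi)$. The vertex set has no antipodal pairs, and the interior is contained in an open hemisphere on the same side as each side. To each side's great circle we attach the unit normal $a'$, $b'$, $c'$ pointing into the hemisphere containing the triangle. This gives the polar triangle $A'B'C'$. Its side lengths are $\pi-\alpha$, $\pi-\beta$, $\pi-\gamma$, because the dihedral angle between two great-circle planes, measured on the triangle's side, equals the inner angle at the common vertex. For example, the spherical distance from $b'$ to $c'$ is $\pi-\alpha$.

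Next, apply the ordinary triangle inequality for spherical distance on $S^2$ to the three points $a'$, $b'$, $c'$:
$$
d(b',c')< d(a',b')+d(a',c').
$$
This reads $\pi-\alpha<(\pi-\gamma)+(\pi-\beta)$, which is exactly $\alpha>\beta+\gamma-\pi$. The other two inequalities follow by symmetry.

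The one step needing real care is that the inequality is strict. Equality in the spherical triangle inequality forces $a'$, $b'$, $c'$ onto a common great circle, i.e.\ the three normals are linearly dependent. Then the three great circles $\ca$, $\cb$, $\cc$ pass through a common pair of antipodal points, so they all lie in one elliptic pencil and are collinear. This contradicts the earlier remark that non-degenerate (in particular proper) triangles have non-collinear sides.

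I also need the angle–distance identification to hold with the orientation conventions; I would check it directly at a vertex using the tangent vectors. The approach, then, is to relate each $d(b',c')$ to the corresponding angle, apply the strict spherical triangle inequality, and finish by algebra.

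Alternatively, one could use the Möbius cosine directly. Since all three sides lie in the spherical family ($\trr=0$), their matrices span a $3$-dimensional subspace on which $\la\cdot,\cdot\ra$ is positive definite. Its Gram matrix $\begin{pmatrix}1&-\cos\gamma&-\cos\beta\\-\cos\gamma&1&-\cos\alpha\\-\cos\beta&-\cos\alpha&1\end{pmatrix}$ is therefore positive definite. Positive definiteness of such a cosine Gram matrix is equivalent to $\pi-\alpha$, $\pi-\beta$, $\pi-\gamma$ satisfying the strict triangle inequalities. I would keep this as the backup argument if the polar-triangle orientation bookkeeping becomes messy.
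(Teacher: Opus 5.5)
Your proposal is correct and follows the paper's own argument. The paper likewise passes to the dual (polar) triangle, whose side lengths are $\pi-\alpha$, $\pi-\beta$, $\pi-\gamma$, and applies the ordinary spherical triangle inequality to it; your linear-independence argument for strictness makes explicit what the paper takes for granted. One small caveat on your backup route: positive definiteness of the cosine Gram matrix is not equivalent to the triangle inequalities for $\pi-\alpha$, $\pi-\beta$, $\pi-\gamma$ alone, since it also forces $\alpha+\beta+\gamma>\pi$ (e.g.\ $\alpha=\beta=\gamma=\pi/4$ satisfies the inequalities but gives a Gram matrix with negative determinant), though the implication you actually need does hold.
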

\begin{proof}
Assume that the sphere has unit radius.
Let $a, b, c$ be the triangle's sides. Then the standard triangle inequalities hold: $a<b+c$, $b<a+c$, $c<a+b$. Consider now the dual triangle, i.e. a triangle whose vertices are the poles of the sides of the original triangle, and whose sides are great circle arcs connecting these poles. Its angles are $\pi-a$, $\pi-b$, $\pi-c$ and sides are $\pi-\alpha$, $\pi-\beta$, $\pi-\gamma$. Apply the standard triangle inequalities to the dual triangle.
\end{proof}
What do these conditions mean on the M\"obius plane? As we show below a triangle is spherical when $\alpha+\beta+\gamma>\pi$ and it appears completely on the same side for every bounding cycle $\ca,\cb,\cc$, so that the extension of every side does not intersect it.
\begin{theorem}\label{a=b+c-Pi}
Let $\Delta=\triag ABC$ be a proper triangle. The cycle $\ca$ does not split the triangle, i.e. $\ca\cap\Delta = BC$ iff $\alpha>\beta+\gamma-\pi$.
The cycle $\ca$ passes through the vertex $A$ (i.e. $\ca$ is the circumcircle of $\Delta$) iff $\alpha=\beta+\gamma-\pi$. Otherwise $\ca$ intersects and cuts the triangle into two regions.
\end{theorem}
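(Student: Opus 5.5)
The plan is to normalize so that the vertex $A$ goes to $\infty$ and then reduce the statement to an elementary Euclidean comparison between the arc $BC$ and the chord $BC$.

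\textbf{Normalization.} By the first Proposition of Section~2 we may apply an orientation-preserving isometry sending $A$ to $\infty$. Angles are preserved, and so is the property of $\ca$ meeting $\Delta$ only in $BC$. Then $\cb$ and $\cc$ pass through $\infty$, so they are straight lines. The sides $AC$ and $AB$ become two rays, starting at $C$ and at $B$ and running off to $\infty$. The triangle $\Delta$ is the unbounded region bounded by these two rays and the arc $BC$ of $\ca$. The inner angle at $\infty$ equals the Euclidean angle between the lines $\cb,\cc$ measured at their finite intersection point $P$, or $\alpha=0$ if they are parallel. The cycle $\ca$ passes through $A=\infty$ iff it is a straight line. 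So the middle claim becomes: $\ca$ is the line $BC$ iff $\alpha=\beta+\gamma-\pi$.

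\textbf{Chord case.} First treat the case where $\ca$ is the line $BC$. Let $\beta',\gamma'$ be the angles at $B$ and $C$ of the Euclidean triangle $PBC$ formed by the two lines and the chord. Since $\Delta$ is unbounded, it lies on the far side of the chord from $P$. Hence its angles at $B$ and $C$ are $\pi-\beta'$ and $\pi-\gamma'$, and its angle at $\infty$ is the angle at $P$, namely $\pi-\beta'-\gamma'$. This gives exactly $\alpha=\beta+\gamma-\pi$. The parallel case $\alpha=0$ and the case where $P$ lies on the other side of the chord are handled the same way using signed angles, and this uses properness to exclude angles $\ge\pi$.

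\textbf{Circle case.} In general $\ca$ is a circle through $B$ and $C$. A circular arc makes equal angles $\delta$ with its chord at both endpoints. Take $\delta$ signed: $\delta>0$ when the arc $BC$ bulges away from $\Delta$, i.e.\ toward $P$, and $\delta<0$ when it bulges into the region beyond the chord. The rays are unchanged, so $\alpha$ is unchanged, while $\beta=\pi-\beta'+\delta$ and $\gamma=\pi-\gamma'+\delta$. Hence
$$
\beta+\gamma-\pi-\alpha=2\delta .
$$
This reduces the theorem to checking that the remaining arc of the full circle $\ca$ misses $\Delta$ iff $\delta<0$, and that it cuts $\Delta$ iff $\delta>0$.
\begin{itemize}
\item If $\delta<0$, the side $BC$ lies beyond the chord inside the wedge. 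The complementary arc of $\ca$ then lies on the $P$-side of the line $BC$, while $\Delta$ lies on the other side together with the bulge of the side $BC$. So $\ca\cap\Delta=BC$.
\item If $\delta>0$, the side $BC$ bulges toward $P$. The complementary arc of $\ca$ lies beyond the chord, in the unbounded part. Being a bounded closed curve that starts at $B$ and $C$ going into $\Delta$, it must enter $\Delta$ and cut it into two regions.
\end{itemize}

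\textbf{Main obstacle.} The hard part is making the first bullet rigorous, and in particular showing that in the case $\delta<0$ the circle $\ca$ does not poke out through the rays $AB$ or $AC$. I would handle this with the tangent-direction argument: at $B$ the circle leaves the ray's line at angle $\beta<\pi$. Properness, together with the fact that a circle meets a line in at most two points, then forces the circle's second intersection with $\cc$ to lie off the ray $BA$. The same argument applies at $C$. A further care point is the degenerate configurations where $\cb\parallel\cc$ or where $P$ lies on the other side of the chord. There I would verify that the signed-angle bookkeeping in the display above still holds, rather than redo the argument separately in each case.
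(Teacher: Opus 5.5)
\textbf{Verdict: genuine gap.} Sending $A$ to $\infty$ and using the tangent--chord angle $\delta$ is a legitimate route, different from the paper's. The paper keeps $A$ finite, makes $\cb,\cc$ lines by sending their \emph{other} common point to $\infty$, and uses inscribed angles according as $A$ lies inside, on, or outside $\ca$. In your main configuration ($P$ behind both $B$ and $C$) the identity $\beta+\gamma-\pi-\alpha=2\delta$ is right.

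The gap is in the splitting case $\delta>0$. You claim the remaining arc of $\ca$ starts at $B$ and $C$ ``going into $\Delta$''. This is false for every proper triangle. At $B$ that arc leaves in the direction opposite to the tangent of the side $BC$, and the angle of $\Delta$ at $B$, of opening $\beta<\pi$, cannot contain that opposite direction; likewise at $C$. So the complementary arc starts and ends \emph{outside} $\Delta$. Since it can neither cross $BC$ nor pass through $A=\infty$, it meets $\Delta$ only by crossing the rays $AB$ and $AC$. What you must prove is that for $\delta>0$ the second intersections of $\ca$ with $\cc$ and $\cb$ lie on these rays; this is the paper's case (c), with $B',C'$ on the sides. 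In your main configuration the fix is short. The complementary arc lies in the open half-plane of line $BC$ not containing $P$, which the two rays cut into three components. It starts in the component beyond ray $AB$ and ends in the one beyond ray $AC$, so it crosses each ray once (a line meets a circle at most twice) and cuts $\Delta$ in two. Conversely, the ``main obstacle'' you name for $\delta<0$ is not one there. The side $BC$ meets the rays only at its endpoints, so it stays in the sector; hence $\Delta$ lies in the closed far half-plane and the complementary arc in the open near one.

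The configurations you defer to ``signed-angle bookkeeping'' also need more than the identity. For a proper triangle, $P$ can lie on one of the rays. Take $A=\infty$, $B=-1$, $C=2i$, $\ca$ the circle through $-1,i,2i$, and sides $[-1,+\infty)$, $\{it:t\ge 2\}$ and the arc of $\ca$ through $i$. The proper triangle has angles $\frac{\pi}{2}$, $\arctan\frac13$, $\pi-\arctan\frac13$, and $P=0$ lies on side $AB$. Here:
\begin{itemize}
\item the side $BC$ bulges toward $P$ yet makes the angles at $B,C$ \emph{smaller} than the chord would, so ``away from $\Delta$'' and ``toward $P$'' are different conditions, and only the former gives the right sign;
\item the chord version is not proper;
\item $\Delta$ meets both sides of line $BC$, so the half-plane separation in your first bullet fails.
\end{itemize}
Also, both complementary regions of the boundary are unbounded; it is $\alpha<\pi$, not unboundedness, that selects $\Delta$.

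A uniform argument is the one above. The complementary arc begins and ends outside $\Delta$ and crosses each ray at most once, so it either misses $\Delta$ or crosses both rays and cuts it. The second intersection of $\ca$ with $\cc$ lies on ray $AB$ exactly when the ray's direction at $B$ points into the disk of $\ca$, i.e.\ exactly when $\Delta$ lies on the disk side of the arc $BC$. That is the correct definition of $\delta>0$.

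The paper's figure-based case analysis skips this configuration too. In its normalization a side passes through $\infty$, and $A$ lies outside $\ca$ although $\ca$ does not split $\Delta$.
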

\begin{proof}
Assume that all angles are positive. Let $\cb$ and $\cc$ be straight lines.
Obviously, Euclidean triangles  satisfy the theorem. Therefore, assume further that $\ca$ is a circle. Then the cases are distinguished by the relative position of $A$: if $A$ lies inside $\ca$ then it does not split the triangle (Figure~\ref{fig:proper}(a)), if $A$ lies on $\ca$ then $\ca$ is the circumcircle (Figure~\ref{fig:proper}(b)), and if $A$ lies outside $\ca$ then it splits the triangle (Figure~\ref{fig:proper}(c)).  
Let $B^\pr,C^\pr$ be the other intersection points of $\ca$ with $\cb$ and $\cc$.
Then $\beta=\frac{1}{2}\widearc{BB^\prime}$, $\gamma=\frac{1}{2}\widearc{C^\pr C}$ (we measure the arclength $\widearc{XY}$ while moving from $X$ to $Y$ in positive direction).

In case $(a)$ we have 
$\alpha=\frac{1}{2}(\widearc{BC}+\widearc{B^\pr C^\pr})>
\frac{1}{2}(\widearc{BC}-\widearc{B^\pr C^\pr}) 
= \frac{1}{2}(\widearc{BB^\pr}+\widearc{C^\pr C}-2\pi)=\beta+\gamma-\pi$.

In case $(b)$ we have
$\alpha = \frac{1}{2}\widearc{BC} =\frac{1}{2}(\widearc{BA}+\widearc{A C}-2\pi)=\beta+\gamma-\pi$.

In case $(c)$ we have
$\alpha=\frac{1}{2}(\widearc{BC}-\widearc{C^\pr B^\pr})<
\frac{1}{2}(\widearc{BC}+\widearc{C^\pr B^\pr}) 
= \frac{1}{2}(\widearc{BB^\pr}+\widearc{C^\pr C}-2\pi)=\beta+\gamma-\pi$.

Finally, a similar consideration works when some of the angles are zero.
\end{proof}

\begin{figure}[h]
\[
\begin{tikzpicture}
\tikzstyle{vertex}=[circle, fill, inner sep=1.5pt]
\draw [thick](2,5) arc (210:150:2);
\draw (2,5) arc (210:580:2);

\node [vertex] at (2,5) {};
\node [vertex] at (2,7) {};
\node [vertex] at (5,6) {};
\node [vertex] at (5.72,6.24) {};
\node [vertex] at (5.72,5.76) {};

\node at (1.7,5) {$C$};
\node at (1.7,7) {$B$};
\node at (5,5.7) {$A$};
\node at (6,6.3) {$C^\pr$};
\node at (6,5.7) {$B^\pr$};

\draw [thick] (2,5) -- (5,6) -- (2,7);
\draw (2,5) -- (5.72,6.24);
\draw (2,7) -- (5.72,5.76);

\draw [thick](7,5) arc (210:150:2);
\draw (7,5) arc (210:580:2);

\node [vertex] at (7,5) {};
\node [vertex] at (7,7) {};
\node [vertex] at (10.73,6) {};

\node at (6.7,5) {$C$};
\node at (6.7,7) {$B$};
\node at (11,6) {$A$};

\draw [thick] (7,5) -- (10.73,6) -- (7,7);

\draw [thick](12,5) arc (210:150:2);
\draw (12,5) arc (210:580:2);

\node [vertex] at (12,5) {};
\node [vertex] at (12,7) {};
\node [vertex] at (16.5,6) {};

\draw [thick] (12,5) -- (16.5,6) -- (12,7);
\node [vertex] at (15.72,6.18) {};
\node [vertex] at (15.72,5.82) {};

\node at (11.7,5) {$C$};
\node at (11.7,7) {$B$};
\node at (16.8,6) {$A$};
\node at (16,6.4) {$B^\pr$};
\node at (16,5.6) {$C^\pr$};

\node at (3.7,3.5) {$(a)$};
\node at (8.7,3.5) {$(b)$};
\node at (13.7,3.5) {$(c)$};
\end{tikzpicture}
\]
\caption{Various cases of relative position of triangle $\triag ABC$ and the cycle $\ca$.}
\label{fig:proper}
\end{figure}

Therefore, hyperbolic, Euclidean, and spherical triangles do not exhaust all M\"obius triangles; the remaining ones are \textit{pure M\"obius} triangles (Figure~\ref{fig:pure}).

\begin{figure}[h]
\[
\begin{tikzpicture}
\tikzstyle{vertex}=[circle, fill, inner sep=1.5pt]
\draw [thick](6,0) arc (270:90:2);
\node [vertex] at (6,0) {};
\node [vertex] at (6,4) {};
\draw [thick](6, 0) -- (10, 2) -- (6, 4);
\node [vertex] at (10,2) {};
\end{tikzpicture}
\]
\caption{A pure M\"obius triangle.}
\label{fig:pure}
\end{figure}

\section{Trilinear coordinates}
In our most general considerations we will not need triangles. A \textit{generalized triangle} $\Delta$ is an ordered triple $(\ca,\cb,\cc)$ of non-collinear oriented cycles. For $\ca,\cb,\cc\approx L, M, N$ let
$$
\GG(\ca,\cb,\cc) = \{\cn\sim uL+vM+wN|u, v, w\in\R\}
$$
be the two-dimensional plane in $\GG$ generated by $\ca,\cb,\cc$. The numbers $u,v,w$ are the \textit{trilinear coordinates} of the cycle $\cn$ if $\cn\sim uL+vM+wN$. Clearly, $u,v,w$ are defined up to a common nonzero factor, i.e. these are homogeneous coordinates on $\GG(\ca,\cb,\cc)$.

We use notation $\cn=[u:v:w]$. For example, we have $\ca=[1:0:0]$, or, more generally, $\cn=[1:-\lambda:0]$ when $(\ca,\cb;\cn)=\lambda$.
The trilinear coordinates establish a homeomorphism between $\GG(\ca,\cb,\cc)$ and the projective plane $\R P^2$.

Next we introduce pencil planes. Let 
$$
\PP(\ca,\cb,\cc)=\{\gP\in \PP|
\gP\subset\GG(\ca,\cb,\cc)
\}
$$
be the collection of pencils containing the cycles of $\GG(\ca,\cb,\cc)$ only.
Using the projective duality we introduce trilinear coordinates on $\PP(\ca,\cb,\cc)$ as well. We say that $\gP$ has coordinates $x,y,z$ when $ux+vy+wz=0$ for every cycle $\ca=[u:v:w]\in\gP$. We use notation $\gP=(x:y:z)$. For example, $\GG(\cb,\cc)=(1:0:0)$. The coordinates similarly establish a homeomorphism between $\PP(\ca,\cb,\cc)$ and $\R P^2$.

We list then the standard projective coordinate properties.

\begin{prop}
Three cycles $\cn_1=[u_1:v_1:w_1]$, $\cn_2=[u_2:v_2:w_2]$, $\cn_3=[u_3:w_3:w_3]$ 
are collinear iff
$$
\left|
\begin{array}{ccc}
u_1 & v_1 & w_1 \\
u_2 & v_2 & w_2 \\
u_3 & v_3 & w_3 
\end{array}
\right|=0.
$$
\end{prop}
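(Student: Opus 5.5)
The plan is to reduce the statement to linear algebra in $\Herm(2)$. Let $\ca,\cb,\cc\approx L,M,N$. Since the generalized triangle consists of non-collinear cycles, $L,M,N$ are linearly independent in the four-dimensional real space $\Herm(2)$: collinearity of three cycles means their matrices lie in a two-dimensional subspace $W$. Hence the linear map
$$
\phi:\R^3\to\Herm(2),\qquad (u,v,w)\mapsto uL+vM+wN,
$$
is injective, and its image is the three-dimensional subspace $V$ spanned by $L,M,N$. Each $\cn_i=[u_i:v_i:w_i]$ corresponds to the nonzero matrix $\phi(u_i,v_i,w_i)$, determined up to a nonzero real factor.

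By the definition of pencils and extended pencils, the cycles $\cn_1,\cn_2,\cn_3$ are collinear exactly when their representing matrices lie in a common two-dimensional subspace of $\Herm(2)$. Equivalently, the three matrices $\phi(u_i,v_i,w_i)$ span a subspace of dimension at most $2$, i.e. they are linearly dependent. Here I would remark on the degenerate case where two of the $\cn_i$ coincide: any two cycles lie on some pencil, and in that case the determinant also vanishes, so the convention is consistent.

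Since $\phi$ is injective, linear dependence of the three matrices is equivalent to linear dependence of the coordinate vectors $(u_i,v_i,w_i)\in\R^3$. That in turn holds iff the $3\times3$ determinant vanishes. Rescaling any row by a nonzero factor does not affect whether the determinant is zero, so the criterion is well defined on homogeneous coordinates.

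The only step requiring care is the first one: that non-collinearity of $\ca,\cb,\cc$ gives linear independence of $L,M,N$. The contrapositive works directly. If $L,M,N$ were dependent, they would span a subspace of dimension at most $2$. Because $\ca$ and $\cb$ are distinct cycles, $L$ and $M$ are not proportional, so that subspace is exactly $\mathrm{span}(L,M)$, and $\cc$ would lie in $\GG(\ca,\cb)$, a contradiction. The remaining steps are routine.
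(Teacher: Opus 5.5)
Your proof is correct and follows the paper's reasoning. The paper gives no separate proof; it lists this among the ``standard projective coordinate properties.'' These rest on exactly what you verify: non-collinear $\ca,\cb,\cc$ span a three-dimensional $V\subset\Herm(2)$, so $(u,v,w)\mapsto uL+vM+wN$ identifies $\GG(\ca,\cb,\cc)$ with $\R P^2$ and extended pencils with two-dimensional subspaces. You also correctly read the typo $\cn_3=[u_3:w_3:w_3]$ as $[u_3:v_3:w_3]$.
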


\begin{prop}
Three pencils $\gP_1=(x_1:y_1:z_1)$, $\gP_2=(x_2:y_2:z_2)$, $\gP_3=(x_3:y_3:z_3)$ 
are concurrent iff
$$
\left|
\begin{array}{ccc}
x_1 & y_1 & z_1 \\
x_2 & y_2 & z_2 \\
x_3 & y_3 & z_3 
\end{array}
\right|=0.
$$
\end{prop}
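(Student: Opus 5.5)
The plan is to reduce the statement to the collinearity criterion proved just before it, using projective duality inside the plane $\GG(\ca,\cb,\cc)$. By definition, a pencil $\gP=(x:y:z)$ in $\PP(\ca,\cb,\cc)$ is exactly the set of cycles $[u:v:w]$ satisfying the linear equation $ux+vy+wz=0$. Since $\GG(\ca,\cb,\cc)$ is the projectivization of a $3$-dimensional subspace $V\subset\Herm(2)$ spanned by $L,M,N$, extended pencils inside it are projective lines. Each such line is the projectivization of a $2$-dimensional subspace $W\subset V$, and hence the kernel of a nonzero linear functional on $V\cong\R^3$. The triple $(x,y,z)$ is precisely this functional written in the basis $L,M,N$, and it is well defined up to a nonzero factor.

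First I would note that two distinct pencils in the plane are distinct lines of a projective plane, so they meet in exactly one generalized cycle. For three distinct pencils, being concurrent means there is a nonzero vector $(u,v,w)$ annihilated by all three functionals simultaneously. This means the homogeneous linear system
$$
x_iu+y_iv+z_iw=0,\quad i=1,2,3,
$$
has a nontrivial solution. That holds iff the coefficient matrix with rows $(x_i,y_i,z_i)$ is singular, i.e. iff the determinant vanishes.

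Two small points need care. The common element must be a cycle of $\GG(\ca,\cb,\cc)$, which holds automatically since every nonzero $(u,v,w)$ defines a generalized cycle $uL+vM+wN\neq 0$ by non-collinearity of $\ca,\cb,\cc$. The degenerate case where two of the pencils coincide is trivially concurrent, and the determinant then vanishes because two rows are proportional. Alternatively, the whole statement follows by applying the previous proposition in the dual plane, since the map $\gP\mapsto(x:y:z)$ identifies $\PP(\ca,\cb,\cc)$ with the dual projective plane.

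I expect no real obstacle. The only subtle step is to justify that every pencil in $\PP(\ca,\cb,\cc)$ is cut out by a single linear equation in trilinear coordinates, so that $(x:y:z)$ is well defined. This follows from $\dim W=2$ inside $\dim V=3$.
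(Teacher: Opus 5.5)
Your proof is correct and matches the paper's approach: the paper gives no separate proof, listing this among the ``standard projective coordinate properties'' dual to the collinearity criterion. Your identification of $(x:y:z)$ with a linear functional on $V$ whose kernel is $W$, followed by the singular-coefficient-matrix criterion, is exactly that standard fact written out. Your remark that the common element may be a generalized (parabolic or virtual) cycle also agrees with how the paper uses concurrency, e.g.\ in Menelaus's Theorem.
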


\begin{prop}
Let $\cn_1=[u_1:v_1:w_1]$, $\cn_2=[u_2:v_2:w_2]$ then 
$$
\GG(\cn_1,\cn_2)=(v_1w_2-v_2 w_1: u_2w_1-u_1w_2: u_1v_2-u_2v_1).
$$
\end{prop}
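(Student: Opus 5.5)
The pencil $\GG(\cn_1,\cn_2)$ is the set of cycles $\cn=[u:v:w]$ with $\cn\sim x_1\cn_1+x_2\cn_2$. By the definition of pencil coordinates, the triple $(x:y:z)$ is characterized up to a nonzero factor by the requirement $ux+vy+wz=0$ for every $[u:v:w]$ in this pencil. I would therefore find a nonzero vector $(x,y,z)$ orthogonal, in the ordinary Euclidean sense on $\R^3$, to both $(u_1,v_1,w_1)$ and $(u_2,v_2,w_2)$, and then show it is unique up to scaling.

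First I would check that trilinear coordinates are linear in the underlying matrices. If $\ca,\cb,\cc\approx L,M,N$, then $\cn_i\sim u_iL+v_iM+w_iN$. Any cycle of $\GG(\cn_1,\cn_2)$ is represented by
\[
s(u_1L+v_1M+w_1N)+t(u_2L+v_2M+w_2N),
\]
so its coordinates are $[su_1+tu_2:sv_1+tv_2:sw_1+tw_2]$. The representation is well defined because $L,M,N$ are linearly independent, which is exactly the non-collinearity of $\ca,\cb,\cc$. Hence the coordinate vectors of the pencil's cycles fill the span of $(u_1,v_1,w_1)$ and $(u_2,v_2,w_2)$, with the zero vector removed. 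This span is two-dimensional because $\cn_1\neq\cn_2$.

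Next I take $(x,y,z)$ to be the cross product $(u_1,v_1,w_1)\times(u_2,v_2,w_2)$. In components this is $(v_1w_2-v_2w_1,\ w_1u_2-w_2u_1,\ u_1v_2-u_2v_1)$, which matches the claimed triple; the middle entry $u_2w_1-u_1w_2$ is the same expression. The linear form $ux+vy+wz$ applied to $(u_i,v_i,w_i)$ is the determinant with two equal rows, so it vanishes. By linearity it vanishes on the whole span, and thus on every cycle of the pencil.

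It remains to check that this triple is nonzero and unique up to scaling. It is nonzero because the two coordinate vectors are independent. It is unique because the annihilator of a two-dimensional subspace of $\R^3$ is one-dimensional. So $\GG(\cn_1,\cn_2)$ has exactly the stated coordinates. I expect no real obstacle here; the only point needing care is the identification of the pencil with the span of the two coordinate vectors, and that follows from the non-collinearity of $\ca,\cb,\cc$.
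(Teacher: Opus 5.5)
Your proof is correct. The paper gives no proof of this proposition and lists it among the standard projective-coordinate facts. Your cross-product argument is exactly that standard justification: the pencil's coordinate vectors span a 2-dimensional subspace of $\R^3$, using the linear independence of $L,M,N$ and $\cn_1\neq\cn_2$, and its one-dimensional annihilator is spanned by $(u_1,v_1,w_1)\times(u_2,v_2,w_2)$.
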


\begin{prop}
Let $\gP_1=(x_1:y_1:z_1)$, $\gP_2=(x_2:y_2:z_2)$ be two distinct pencils. Then $\gP_1\cap\gP_2$ contains the only cycle
$\cn=[y_1z_2-y_2 z_1: x_2z_1-x_1z_2: x_1y_2-x_2y_1]$.
\end{prop}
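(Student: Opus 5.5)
The cycle $\cn=[y_1z_2-y_2z_1: x_2z_1-x_1z_2: x_1y_2-x_2y_1]$ is exactly the cross product $(x_1,y_1,z_1)\times(x_2,y_2,z_2)$. The plan is to check that it lies on both pencils and that no other cycle does.

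By the definition of the dual coordinates, a cycle $[u:v:w]$ belongs to $\gP_i=(x_i:y_i:z_i)$ precisely when $ux_i+vy_i+wz_i=0$. Here I use that a pencil in $\PP(\ca,\cb,\cc)$ is a projective line in the plane $\GG(\ca,\cb,\cc)\cong\R P^2$, so it is cut out by a single linear equation in trilinear coordinates and is determined by that equation up to scale. A cycle lies in $\gP_1\cap\gP_2$ iff its coordinate vector $(u,v,w)$ is orthogonal, in the standard Euclidean sense on $\R^3$, to both $(x_1,y_1,z_1)$ and $(x_2,y_2,z_2)$.

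Since $\gP_1\neq\gP_2$, these two coordinate vectors are not proportional. Their common orthogonal complement in $\R^3$ is therefore one-dimensional. The cross product spans it: it is nonzero because the vectors are independent, and it is orthogonal to each factor by the usual triple-product expansion. For instance,
$$x_1(y_1z_2-y_2z_1)+y_1(x_2z_1-x_1z_2)+z_1(x_1y_2-x_2y_1)=0,$$
which is the vanishing of a determinant with a repeated row. Projectivizing, the intersection consists of exactly one point $\cn$ of $\GG(\ca,\cb,\cc)$ with the stated coordinates.

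The only delicate point is what "cycle" means here. The intersection point is a generalized cycle, which may be parabolic or virtual. This matches the paper's convention that extended pencils are projective lines in $\GG$, so uniqueness holds in $\GG$. Alternatively, the statement follows directly from the earlier Proposition that two pencils share at most one common cycle, combined with the existence argument above. No real obstacle is expected; the result is the projective dual of the preceding Proposition on $\GG(\cn_1,\cn_2)$, and the proof is the same cross-product computation.
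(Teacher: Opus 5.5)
Your proof is correct. The paper gives no argument of its own here: it lists this among the ``standard projective coordinate properties'' of the plane $\GG(\ca,\cb,\cc)\cong\R P^2$, and your cross-product verification (a common cycle's coordinate vector must be annihilated by both non-proportional vectors $(x_i,y_i,z_i)$, whose common kernel in $\R^3$ is the line spanned by their cross product) is exactly the standard argument being invoked, including your correct reading that the intersection may be a parabolic or virtual generalized cycle.
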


Non-collinear cycles $\ca,\cb,\cc$ generate a three-dimensional subspace $V\subset\Herm(2)$. For every pencil $\gP$ generating a subspace $W$ we have either $W\subset V$ and this happens iff $\gP\subset\GG(\ca,\cb,\cc)$, or
$\dim(V\cap W)=1$, which means that $\gP$ intersects $\GG(\ca,\cb,\cc)$ in exactly one cycle. This applies in particular to all pencils $\gP^\perp$, where $\gP\in\PP(\ca,\cb,\cc)$ is non-parabolic.

For the planes $\GG(\ca,\cb,\cc)$ we also have types depending on the signature of the space $V$. Only three cases are possible.
\begin{itemize}
\item[$(i)$] If $\sig V=(3,0)$ then $V$ is \textit{elliptic} or \textit{spherical}. All pencils and all cycles are also elliptic.  
\item[$(ii)$] If $\sig V=(2,0)$ then $V$ is \textit{parabolic} or \textit{Euclidean}. All pencils and cycles are elliptic or parabolic. In this case the inner product is degenerate on $V$. We later identify all such cases. 
\item[$(iii)$] If $\sig V=(2,1)$ then $V$ is \textit{hyperbolic}. In this case all types of cycles and pencils appear. 
\end{itemize}

The type of the plane is simply recovered from the Gram matrix $\Gamma$.
\begin{prop}
Let $$\Gamma=\left(
\begin{matrix}
1 & \la\ca,\cb\ra & \la\ca,\cc\ra \\
\la\ca,\cb\ra & 1 & \la\cb,\cc\ra \\
\la\ca,\cc\ra & \la\cb,\cc\ra & 1\\
\end{matrix}
\right)$$
 be the Gram matrix for three non-collinear cycles $\ca,\cb,\cc$.
Then the corresponding hyperplane $V\subset\Herm(2)$ is elliptic iff $\det\Gamma>0$, parabolic iff $\det\Gamma=0$, and hyperbolic iff $\det\Gamma<0$. 
\end{prop}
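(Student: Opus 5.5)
The plan is to use Sylvester's law of inertia. Since $\ca,\cb,\cc$ are non-collinear, their normalized matrices $L,M,N$ are linearly independent and form a basis of the three-dimensional subspace $V\subset\Herm(2)$. In this basis the restriction of $\la\cdot,\cdot\ra$ to $V$ has Gram matrix exactly $\Gamma$, because $\la L,L\ra=-\det L=1$ and similarly for $M$ and $N$. So the signature of $V$ is the signature of the symmetric matrix $\Gamma$. It remains to match the three allowed signatures $(3,0)$, $(2,0)$ and $(2,1)$ with the sign of $\det\Gamma$.

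First I will establish that these are the only possible signatures. The ambient form on $\Herm(2)$ has signature $(3,1)$, so any subspace has at most one negative direction. A three-dimensional subspace also has at most one null direction in its radical. The reason is that the radical of the restriction is a totally isotropic subspace, and a totally isotropic subspace of a $(3,1)$ Minkowski space has dimension at most $1$. This leaves exactly the listed cases: $(3,0)$ when the form is nondegenerate with no negative direction, $(2,1)$ when it is nondegenerate with one negative direction, and $(2,0)$ with a one-dimensional radical. The signature $(1,1)$ with a radical would need $n_+ + n_- + \dim\mathrm{rad}=3$, i.e. $(1,1)$ plus a one-dimensional radical. I must rule this out.

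Suppose the radical is spanned by a null vector $R$. Then $R^\perp$ in $\Herm(2)$ is three-dimensional and contains $V$, hence equals $V$. For a null vector $R$ in Minkowski $(3,1)$ space, $R^\perp$ is degenerate and positive semidefinite with signature $(2,0)$. This can be checked directly: take $R=\left(\begin{smallmatrix}0&0\\0&1\end{smallmatrix}\right)$ up to isometry, whose orthogonal complement consists of the matrices with $k=0$, on which the form is $lm=|l|^2\ge0$. This also matches the paper's remark that the parabolic case corresponds to the Euclidean, $\infty$-based situation. Hence $(1,1)$ plus a radical cannot occur.

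Finally, I compute the sign of $\det\Gamma$ in each case by diagonalizing: $\det\Gamma$ is a positive multiple of the product of the eigenvalues. For $(3,0)$ the product is positive, for $(2,1)$ it is negative, and in the degenerate case $(2,0)$ there is a zero eigenvalue, so $\det\Gamma=0$. Since the three cases are exhaustive and have distinct determinant signs, each equivalence follows. The only nontrivial step, and the one I expect to be the main obstacle, is excluding the degenerate $(1,1)$ case. I will handle it by the explicit null-vector computation above, using transitivity of $\Iso\M$ on parabolic cycles (points of $\M$), or equivalently the standard fact that the orthogonal complement of a null vector in Lorentzian space is positive semidefinite.
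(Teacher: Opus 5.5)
Your proof is correct and follows the paper's route: $\Gamma$ is the Gram matrix of the restricted form in the basis $L,M,N$, so $\det\Gamma$ has a definite sign in each of the three admissible signatures. You also justify, as the paper does not, that only $(3,0)$, $(2,0)$, $(2,1)$ can occur, and your exclusion of the degenerate $(1,1)$ case via $V=R^\perp$ for a null $R$ is sound.
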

\begin{proof}
Indeed, the signature $\sig V$ gives the numbers of positive and negative eigenvalues of the matrix $\Gamma$. There are exactly three possible cases that give the different signs of the determinant.
\end{proof}

When $\Delta$ is a standard triangle trilinear coordinates agree with the
existing well-known concept.
\begin{itemize}
\item in the spherical case any three non-collinear oriented lines uniquely define a triangle that lies on the left for all of them. All generalized triangles are ordinary, and nothing new is added.

\item in the Euclidean case two sides can be parallel, but not all three. This means that one of the vertices (and only one) of a generalized triangle can lie  at infinity, and the corresponding inner angle will be zero. Trilinear coordinates are still well-defined.

\begin{figure}[h]
\[
\begin{tikzpicture}
\draw [thick](8,2) -- (2,2) -- (1, 1.2) -- (8, 1.2) {};
\end{tikzpicture}
\]
\caption{A generalized Euclidean triangle with a zero angle.}
\label{fig:generalized-euclid}
\end{figure}

\item in hyperbolic geometry everything, as usual, is more complicated. Generalized triangles may have no vertices at all. Moreover, we cannot in general identify a triangle with some part of the plane bounded by the lines, as one of the lines may split the other two (Figure~\ref{fig:generalized-hyperbolic}).

\begin{figure}[h]
\[
\begin{tikzpicture}
\draw (2,0) -- (17,0) {};
\draw [thick] (3, 0) arc (180: 0: 3.5);
\draw [thick] (4, 0) arc (180: 0: 2.3);
\draw [thick] (6, 0) arc (180: 0: 1);
\draw [thick] (12,0) arc (180:0: 2);
\draw [thick] (12.5,0) arc (180:0: 1);
\draw [thick] (13.5,0) arc (180:0: 1);
\end{tikzpicture}
\]
\caption{Generalized hyperbolic triangles.}
\label{fig:generalized-hyperbolic}
\end{figure}
\end{itemize}

\section{Ceva's Theorem}
Every triangle defines three digons or monogons by selecting two of the three triangle's bounding cycles. Thus, we can consider cevians of these digons -- cevians of the triangle. Let $\gA=\GG(\cb,\cc)$, $\gB=\GG(\ca,\cc)$, $\gC=\GG(\ca,\cb)$. Consider three arbitrary cevians $\cn_a\in\gA$, $\cn_b\in\gB$, 
$\cn_c\in\gC$.
In Euclidean case the Ceva's Theorem in trigonometric form gives a criterion when $\cn_a,\cn_b,\cn_c$ intersect in a single point, or are parallel \cite{Cox}. In other words, when the conditions of the theorem are met 
$\cn_a,\cn_b,\cn_c$ are collinear, i.e. belong to a single pencil. This is exactly what we achieve in the M\"obius case. The equivalent of the Ceva's Theorem is the following.
\begin{theorem}
Let $\Delta=\triag ABC$ be a proper triangle with positive angles. 
Let $\cn_a, \cn_b, \cn_c$ be its cevians that are distinct from the sides of the triangle. Then $\cn_a, \cn_b, \cn_c$ are collinear iff
$$
\frac{\sin\angle(\cb, \cn_a)}{\sin\angle(\cc, \cn_a)}
\frac{\sin\angle(\cc, \cn_b)}{\sin\angle(\ca, \cn_b)}
\frac{\sin\angle(\ca, \cn_c)}{\sin\angle(\cb, \cn_c)}
=1.
$$
\end{theorem}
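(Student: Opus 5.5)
Since the triangle is proper with positive angles, each pair of sides intersects, so $\gA,\gB,\gC$ are elliptic pencils and every cevian has a splitting factor given by the digon proposition: for $\cn_a\in\gA=\GG(\cb,\cc)$ we have $(\cb,\cc;\cn_a)=\frac{\sin\angle(\cb,\cn_a)}{\sin\angle(\cc,\cn_a)}$, and similarly $(\cc,\ca;\cn_b)$ and $(\ca,\cb;\cn_c)$ are the other two factors. The product in the statement is therefore
$$\lambda_a\lambda_b\lambda_c,\qquad \lambda_a=(\cb,\cc;\cn_a),\ \lambda_b=(\cc,\ca;\cn_b),\ \lambda_c=(\ca,\cb;\cn_c).$$
We need to take care that both angles in each ratio are measured at the same vertex. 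As that proposition notes, the ratio does not depend on the vertex chosen or on the orientation of the cevian. The plan is to translate the condition into trilinear coordinates relative to the generalized triangle $(\ca,\cb,\cc)$, after which the statement becomes the projective Ceva criterion.

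Let $\ca,\cb,\cc\approx L,M,N$. The triangle is non-degenerate, so these matrices are linearly independent and the trilinear coordinates on $\GG(\ca,\cb,\cc)$ are well defined. By definition of the splitting factor, $\cn_a\sim M-\lambda_a N$, so $\cn_a=[0:1:-\lambda_a]$. Likewise $\cn_b\sim N-\lambda_b L$ gives $\cn_b=[-\lambda_b:0:1]$, and $\cn_c\sim L-\lambda_c M$ gives $\cn_c=[1:-\lambda_c:0]$. The hypothesis that the cevians differ from the sides means each $\lambda$ lies in $\R\setminus\{0\}$: it is neither $0$ nor $\infty$. So these coordinate expressions are legitimate.

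By the collinearity criterion in trilinear coordinates, $\cn_a,\cn_b,\cn_c$ are collinear iff
$$\left|\begin{array}{ccc}0&1&-\lambda_a\\ -\lambda_b&0&1\\ 1&-\lambda_c&0\end{array}\right|=0.$$
Expanding, the determinant equals $1-\lambda_a\lambda_b\lambda_c$. Hence the cevians are collinear iff $\lambda_a\lambda_b\lambda_c=1$, which is exactly the stated identity.

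The main obstacle is the bookkeeping of signs and orientations. We must check that the given sine ratios agree with the splitting factors in the cyclic order $(\cb,\cc)$, $(\cc,\ca)$, $(\ca,\cb)$, and not with their inverses or negatives. The inverse issue is settled by Proposition~\ref{splitting-prop}: the ratios in the statement are set up so that each has the first side of its pair in the numerator. The orientation issue is settled by the oddness and evenness properties: the splitting factor is even in the cevian argument, and the sine ratio is unchanged when $\cn$ is reversed because both sines change sign. One further point is that "collinear" should mean lying in an ordinary pencil. Because the cevians are elliptic cycles, the extended pencil through them contains them as ordinary cycles, so no further case analysis is needed.
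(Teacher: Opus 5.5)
Your proof is correct and follows the paper's route. The paper proves the general splitting-factor form of Ceva's Theorem by writing $\cn_a=[0:1:-\lambda]$, $\cn_b=[-\mu:0:1]$, $\cn_c=[1:-\nu:0]$ and computing the determinant $1-\lambda\mu\nu$; the sine version you were asked about then follows, exactly as you do it, from the digon identity $(\ca,\cb;\cc)=\sin\angle(\ca,\cc)/\sin\angle(\cb,\cc)$, and your remarks on measuring at a common vertex, orientation independence, and $\lambda\neq 0,\infty$ cover the bookkeeping the paper leaves implicit.
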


Unfortunately, we need some extra conditions of positivity of the angles since otherwise we will encounter division by zero. However, we can rewrite the formulas using the splitting factors introduced earlier. But then we do not need the sides to intersect. Thus, in the most general formulation of the Ceva's Theorem we don't need a triangle at all. We only need three non-collinear oriented cycles and three cevians from the corresponding pencils. We prove the theorem in this most general form.
\begin{theorem}[Ceva's Theorem]
Let $\ca, \cb,\cc$ be three distinct non-collinear cycles. 
Let $\cn_a\in\gA$, $\cn_b\in\gB$, $\cn_c\in\gC$ be three cycles distinct from $\ca$, $\cb$, $\cc$. Then $\cn_a$, $\cn_b$, $\cn_c$ are collinear iff the following identity holds
$$
(\cb,\cc;\cn_a)(\cc,\ca;\cn_b)(\ca,\cb;\cn_c) = 1.
$$
Moreover, if $\cn_a$, $\cn_b$ are elliptic, then
$$
(\cn_a,\cn_b;\cn_c) = (\cn_a,\cc;\cb)(\cc,\cn_b;\ca).
$$
\end{theorem}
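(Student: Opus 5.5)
Let $\ca,\cb,\cc\approx L,M,N$ and work in the trilinear coordinates on $\GG(\ca,\cb,\cc)$ introduced above. Since $\ca,\cb,\cc$ are non-collinear, $L,M,N$ are linearly independent, so every cycle in $\GG(\ca,\cb,\cc)$ has well-defined homogeneous coordinates. The first step is to write the three cevians explicitly. Put $\lambda_a=(\cb,\cc;\cn_a)$, $\lambda_b=(\cc,\ca;\cn_b)$ and $\lambda_c=(\ca,\cb;\cn_c)$. By the definition of the splitting factor,
$$
\cn_a\sim M-\lambda_a N,\qquad \cn_b\sim N-\lambda_b L,\qquad \cn_c\sim L-\lambda_c M .
$$
In coordinates this reads
$$
\cn_a=[0:1:-\lambda_a],\qquad \cn_b=[-\lambda_b:0:1],\qquad \cn_c=[1:-\lambda_c:0].
$$
Since the cevians are distinct from the sides, $\lambda_a,\lambda_b,\lambda_c\notin\{0,\infty\}$, so all three parameters are finite and nonzero.

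For the first claim we apply the collinearity criterion for three cycles in trilinear coordinates. The determinant
$$
\left|\begin{array}{ccc} 0&1&-\lambda_a\\ -\lambda_b&0&1\\ 1&-\lambda_c&0\end{array}\right|
$$
expands to $1-\lambda_a\lambda_b\lambda_c$. Hence the cevians are collinear iff $\lambda_a\lambda_b\lambda_c=1$, which is the stated identity.

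For the second claim assume collinearity and that $\cn_a,\cn_b$ are elliptic. Then $\cn_c\in\GG(\cn_a,\cn_b)$, and $\cn_a\ne\cn_b$ because $\cn_a$ has zero first coordinate and $\cn_b$ does not. Normalize the two cycles by Proposition~\ref{(c,a;b)}. Applied to the triple $\cb,\cc,\cn_a$ it gives
$$
\cn_a\approx(\cn_a,\cb;\cc)(M-\lambda_a N),
$$
and applied to $\cc,\ca,\cn_b$ it gives
$$
\cn_b\approx(\cn_b,\cc;\ca)(N-\lambda_b L).
$$
Write $\cn_c\sim P_a-\mu P_b$, where $P_a,P_b$ are these normalized matrices and $\mu=(\cn_a,\cn_b;\cn_c)$. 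Since $\cn_c\sim L-\lambda_c M$ has no $N$-component, the $N$-component of $P_a-\mu P_b$ must vanish:
$$
-\lambda_a(\cn_a,\cb;\cc)-\mu(\cn_b,\cc;\ca)=0,
\qquad\text{so}\qquad
\mu=-\lambda_a\frac{(\cn_a,\cb;\cc)}{(\cn_b,\cc;\ca)}.
$$

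The remaining step, which I expect to be the main obstacle, is to bring this to the form $(\cn_a,\cc;\cb)(\cc,\cn_b;\ca)$. This is only sign and reciprocal bookkeeping, using three earlier results. First, Proposition~\ref{splitting-prop} gives $(\cc,\cn_b;\ca)=(\cn_b,\cc;\ca)^{-1}$, so the denominator becomes the factor $(\cc,\cn_b;\ca)$. Second, apply the cyclic identity $(\cn_a,\cb;\cc)(\cb,\cc;\cn_a)(\cc,\cn_a;\cb)=-1$ to the collinear triple $\cn_a,\cb,\cc$, which is valid since all three are elliptic and distinct. It gives
$$
-\lambda_a(\cn_a,\cb;\cc)=(\cc,\cn_a;\cb)^{-1}=(\cn_a,\cc;\cb),
$$
where the last equality is again Proposition~\ref{splitting-prop}. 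Substituting both into the expression for $\mu$ yields $\mu=(\cn_a,\cc;\cb)(\cc,\cn_b;\ca)$, as required. The point to check carefully is that the odd/even behaviour under orientation is consistent: the factors are odd in their first two arguments and even in the third, and both sides transform the same way when $\cn_a$ or $\cn_b$ is reoriented, so fixing the orientation implicit in the normalizations costs no generality.
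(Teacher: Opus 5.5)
Your proof is correct and is essentially the paper's argument: you use the same trilinear coordinates for $\cn_a,\cn_b,\cn_c$, the same determinant $1-\lambda_a\lambda_b\lambda_c$, and the same normalizations $\cn_a\approx(\cn_a,\cb;\cc)(M-\lambda_a N)$ and $\cn_b\approx(\cn_b,\cc;\ca)(N-\lambda_b L)$. The only differences are cosmetic: you extract $(\cn_a,\cn_b;\cn_c)$ by comparing $N$-coefficients, whereas the paper uses $\lambda_a\lambda_b\lambda_c=1$ to write $L-\lambda_c M=-\lambda_c(M-\lambda_a N)-\lambda_a\lambda_c(N-\lambda_b L)$ explicitly, and you spell out the reciprocal and cyclic-product identities for the final step, which the paper leaves implicit.
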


\begin{proof}
Let $\lambda=(\cb,\cc;\cn_a)$, $\mu = (\cc,\ca;\cn_b)$, $\nu = (\ca,\cb;\cn_c)$. Then $\cn_a=[0:1:-\lambda]$, $\cn_b=[-\mu:0:1]$, $\cn_c=[1:-\nu:0]$. Therefore, $\cn_a, \cn_b,\cn_c$ are collinear iff
$$
\left|
\begin{array}{ccc}
0 & 1 & -\lambda\\
-\mu & 0 & 1\\
1 & -\nu & 0
\end{array}
\right| = 1-\lambda\mu\nu=0.
$$ 
Assume now $\lambda\mu\nu=1$. Let $\ca,\cb,\cc\approx L, M, N$ respectively.
Then $\cn_a\approx (\cn_a,\cb;\cc)(M-\lambda N)$, 
$\cn_b\approx(\cn_b,\cc;\ca)(N-\mu L)$, and $\cn_c\sim L-\nu M = -\nu(M-\lambda N) -\lambda \nu (N-\mu L)$. Therefore,
$$
(\cn_a,\cn_b;\cn_c)=-\lambda (\cn_a,\cb;\cc)(\cn_b,\cc;\ca)^{-1}=
(\cn_a,\cc;\cb)(\cc,\cn_b;\ca).
$$
\end{proof}

When the conditions of Ceva's Theorem are met $\cn_a,\cn_b,\cn_c$ define a pencil $\gP=(x:y:z)$, where 
$$
\frac{y}{z} = (\cb,\cc;\cn_a),\quad
\frac{z}{x} = (\cc,\ca;\cn_b),\quad
\frac{x}{y} = (\ca,\cb;\cn_c).
$$
Next we identify the type of $\gP$. 
\begin{theorem}\label{type1}
Let $\gP=(x:y:z)$, let 
\begin{gather*}
X = x^2(1-\la\cb,\cc\ra^2) + y^2(1 - \la\ca,\cc \ra^2) + z^2(1-\la\ca,\cb\ra^2)+\\
+2xy(\la\ca,\cc\ra\la\cb,\cc\ra-\la\ca,\cb\ra )
+2yz(\la\ca,\cb\ra\la\ca,\cc\ra-\la\cb,\cc\ra )
+2xz(\la\ca,\cb\ra\la\cb,\cc\ra-\la\ca,\cc\ra ).
\end{gather*}
Then $\gP$ is elliptic iff $X>0$, parabolic iff $X=0$, and hyperbolic iff $X<0$.
\end{theorem}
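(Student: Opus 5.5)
The plan is to show that $X$ is, up to a positive factor, the Gram determinant of the two-dimensional subspace $W\subset\Herm(2)$ spanned by the pencil $\gP$. Once that is done, the classification follows from Proposition~\ref{W-type}.

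\textbf{Step 1: recognize $X$ as a quadratic form in $\operatorname{adj}\Gamma$.} Let $\Gamma$ be the Gram matrix of $\ca,\cb,\cc\approx L,M,N$, and let $p=(x,y,z)^\top$. A direct comparison of entries shows that the adjugate of $\Gamma$ has diagonal entries $1-\la\cb,\cc\ra^2$, $1-\la\ca,\cc\ra^2$, $1-\la\ca,\cb\ra^2$. Its off-diagonal entries are $\la\ca,\cc\ra\la\cb,\cc\ra-\la\ca,\cb\ra$, and so on. Hence
$$
X=p^\top \operatorname{adj}(\Gamma)\, p .
$$

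\textbf{Step 2: a basis of $W$ and its Gram matrix.} A cycle $[u:v:w]$ lies in $\gP$ iff $ux+vy+wz=0$. So choose two linearly independent coordinate vectors $b_1,b_2\in\R^3$ orthogonal (in the Euclidean sense) to $p$. The matrices $b_{i,1}L+b_{i,2}M+b_{i,3}N$ then form a basis of $W$. Let $B$ be the $3\times 2$ matrix with columns $b_1,b_2$. The Gram matrix of this basis is $B^\top\Gamma B$.

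\textbf{Step 3: the compound-matrix identity.} I would invoke the standard identity
$$
\det(B^\top \Gamma B)=(b_1\times b_2)^\top \operatorname{adj}(\Gamma)\,(b_1\times b_2),
$$
which is Cauchy--Binet together with the fact that the second compound of a $3\times3$ matrix is its adjugate written in the basis of cross products. If preferred, this can be checked by brute-force expansion, and that is the only computational step. Since $b_1\times b_2$ is a nonzero vector orthogonal to $b_1,b_2$, it equals $t\,p$ for some $t\neq 0$. Therefore $\det(B^\top\Gamma B)=t^2X$, which has the same sign as $X$.

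\textbf{Step 4: conclude via Proposition~\ref{W-type}.} Because $\Herm(2)$ has signature $(3,1)$, a two-dimensional subspace cannot be negative definite. So the sign of its Gram determinant distinguishes three cases: positive means $W$ is positive-definite, zero means $W$ is degenerate, and negative means $W$ is indefinite. Proposition~\ref{W-type} translates these into $\gP$ being elliptic, parabolic and hyperbolic respectively.

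The main obstacle is justifying the identity in Step 3 cleanly. I would either cite it as the Binet--Cauchy formula for $2\times2$ minors, or verify it directly on the standard choice $b_1=(y,-x,0)$, $b_2=(z,0,-x)$ (when $x\neq0$), whose cross product is $x\,p$.
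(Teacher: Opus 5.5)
Your proof is correct. It shares the paper's skeleton (Gram determinant of a basis of $W$, then Proposition~\ref{W-type}) but gets the key identity by a different route. The paper fixes the particular basis $M-\lambda N$, $N-\mu L$ of $W$, with $\lambda=y/z$ and $\mu=z/x$ the splitting factors of the Ceva cevians. It expands that $2\times 2$ Gram determinant by hand and substitutes. In your notation this is $b_1=(0,1,-\lambda)$, $b_2=(-\mu,0,1)$ with $b_1\times b_2=p/x$, so the paper's determinant is $X/x^2$.

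The paper's approach needs only elementary algebra, but it depends on the chosen basis. The paper explicitly omits the cases $\lambda=\infty$ or $\mu=\infty$, where the basis must be changed: for instance, when $z=0$ both chosen cevians collapse to $\cc$.

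Your Cauchy--Binet identity $\det(B^\top\Gamma B)=(b_1\times b_2)^\top\operatorname{adj}(\Gamma)\,(b_1\times b_2)$ holds for every basis, so it treats all $(x:y:z)$ uniformly. It also explains the formula: $X$ is the quadratic form of $\operatorname{adj}\Gamma$. This is the matrix $T$ with $T\Gamma=\det\Gamma\, I$ that the paper only introduces later, in the Duality section.

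One small caveat. If you verify the identity by brute force on $b_1=(y,-x,0)$, $b_2=(z,0,-x)$ instead of citing Cauchy--Binet, that covers only $x\neq 0$. You would then need the analogous choice built on a nonzero $y$ or $z$. Your Step 4 observation, that a $2$-dimensional subspace of a signature-$(3,1)$ space cannot be negative definite, is what makes the sign of the Gram determinant decisive. The paper uses this only implicitly in proving Proposition~\ref{W-type}.
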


\begin{proof}
Let $\ca,\cb,\cc\approx L, M, N$.
The pencil $\gP$ corresponds to the subspace $W$ generated by $M-\lambda N$, $N-\mu L$. Therefore, according to Proposition~\ref{W-type} we simply need to compute the determinant of the Gram matrix
\begin{gather*}
\left|
\begin{array}{cc}
1-2\lambda\la\cb,\cc\ra+\lambda^2 & -\lambda+\la\cb,\cc\ra +  \lambda\mu\la\ca,\cc\ra -\mu\la\ca,\cb\ra \\
-\lambda+\la\cb,\cc\ra +  \lambda\mu\la\ca,\cc\ra -\mu\la\ca,\cb\ra &
1-2\mu\la\ca,\cc\ra+\mu^2
\end{array}
\right|=\\
=(1-2\lambda\la\cb,\cc\ra+\lambda^2)(1-2\mu\la\ca,\cc\ra+\mu^2) - 
(-\lambda+\la\cb,\cc\ra +  \lambda\mu\la\ca,\cc\ra -\mu\la\ca,\cb\ra)^2=\\
=1+\lambda^2\mu^2+\mu^2-\la\cb,\cc\ra^2 - \lambda^2\mu^2\la\ca,\cc\ra-\mu^2\la\ca,\cb\ra^2+2\lambda\mu\la\ca,\cc\ra\la\cb,\cc\ra+\\
+2\mu\la\ca,\cb\ra\la\cb,\cc\ra
+2\lambda\mu^2\la\ca,\cb\ra\la\ca,\cc\ra
-2\lambda\mu\la\ca,\cb\ra-2\lambda\mu^2\la\cb,\cc\ra-2\mu\la\ca,\cc\ra.
\end{gather*}
Plug the values for $\lambda$ and $\mu$. Cases when $\lambda$ or $\mu$ are equal to $\infty$ need some special attention that we omit. 
\end{proof}
When $\ca,\cb,\cc$ form a triangle we have $\la\ca,\cb\ra=-\cos\gamma$,
$\la\ca,\cc\ra=-\cos\beta$, $\la\cb,\cc\ra=-\cos\alpha$ and the expression becomes
\begin{gather*}
X = x^2\sin^2\alpha+y^2\sin^2\beta+z^2\sin^2\gamma+
2xy(\cos\alpha\cos\beta+\cos\gamma)+\\
2yz(\cos\alpha\cos\gamma+\cos\beta)+
2xz(\cos\beta\cos\gamma+\cos\alpha)=\\
=(x\sin\alpha+y\sin\beta+z\sin\gamma)^2+
2xy\big(\cos(\alpha+\beta)+\cos\gamma\big)+\\
2yz\big(\cos(\alpha+\gamma)+\cos\beta\big)+
2xz\big(\cos(\beta+\gamma)+\cos\alpha\big).
\end{gather*}
For an Euclidean triangle we have $\cos\gamma +\cos(\alpha+\beta)=0$ and etc. Therefore, in this case
$$
X = (x\sin\alpha+y\sin\beta+z\sin\gamma)^2\ge 0.
$$
This implies what we already know that all linear pencils on the Euclidean plane are elliptic or parabolic, and the parabolic lie on the line
$$
x\sin\alpha+y\sin\beta+z\sin\gamma=0.
$$ 
This is the line at infinity $[\sin\alpha:\sin\beta:\sin\gamma]$.
\medskip

Finally, there is one more piece of information we would like to recover. If $\gP$ is parabolic or hyperbolic then exactly one of the cevians $\cn_a, \cn_b,\cn_c$ is the splitting one. We identify which one.

\begin{theorem}\label{type2}
Assume the conditions of the Ceva's Theorem are met, and $\cn_a, \cn_b,\cn_c$ are elliptic and belong to a unique pencil $\gP=(x:y:z)$ with $xyz\neq 0$. Let 
$$
a=\left|\frac{1}{y}(\cb,\cn_a;\cc)\right|,\quad
b=\left|\frac{1}{z}(\cc,\cn_b;\ca)\right|,\quad
c=\left|\frac{1}{x}(\ca,\cn_c;\cb)\right|.
$$
Then $\gP$ is elliptic iff $a,b,c$ satisfy the triangle inequalities. It is parabolic if one of the numbers is the sum of the remaining two (and the greatest number corresponds to the splitting cevian), and hyperbolic iff one of the numbers is greater than the sum of the others (and it corresponds to the splitting cevian again).
\end{theorem}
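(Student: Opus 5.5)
The plan is to turn the three cevians into three unit vectors of the two-dimensional subspace $W\subset\Herm(2)$ of $\gP$, linked by one linear relation with coefficients $\pm a,\pm b,\pm c$. The type of $W$ (Proposition~\ref{W-type}) then decides which "triangle inequality" regime $a,b,c$ fall into.

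\emph{Step 1: rescale each side combination.} Let $\ca,\cb,\cc\approx L,M,N$ and let $\cn_a,\cn_b,\cn_c\approx A,B,C$, which are unit vectors since the cevians are elliptic. Put $\lambda=(\cb,\cc;\cn_a)=y/z$. Proposition~\ref{(c,a;b)} gives $A=(\cn_a,\cb;\cc)(M-\lambda N)$. Proposition~\ref{splitting-prop} gives $(\cn_a,\cb;\cc)^{-1}=(\cb,\cn_a;\cc)$. Hence
$$
zM-yN=z(\cb,\cn_a;\cc)A .
$$
In the same way,
$$
xN-zL=x(\cc,\cn_b;\ca)B,\qquad yL-xM=y(\ca,\cn_c;\cb)C .
$$

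\emph{Step 2: the linear relation.} The trivial identity $x(zM-yN)+y(xN-zL)+z(yL-xM)=0$ becomes, after dividing by $xyz\neq0$,
$$
\frac{(\cb,\cn_a;\cc)}{y}A+\frac{(\cc,\cn_b;\ca)}{z}B+\frac{(\ca,\cn_c;\cb)}{x}C=0 .
$$
So $\varepsilon_a aA+\varepsilon_b bB+\varepsilon_c cC=0$ for some signs $\varepsilon_\bullet=\pm1$. Here $A,B,C$ are unit vectors in $W$, pairwise non-proportional because the cevians are distinct. In particular $a,b,c>0$. Squaring gives, for instance,
$$
c^2=a^2+b^2+2\varepsilon_a\varepsilon_b ab\la A,B\ra ,
$$
and similarly for the other two pairs.

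\emph{Step 3: read off the type.} By Proposition~\ref{W-type}, the pencil is elliptic, parabolic or hyperbolic according as $|\la A,B\ra|<1$, $=1$ or $>1$.
\begin{itemize}
\item If $|\la A,B\ra|<1$, then $(a-b)^2<c^2<(a+b)^2$. Applying this to each pair yields the strict triangle inequalities.
\item If $|\la A,B\ra|=1$, then $c=a+b$ or $c=|a-b|$, so one number is the sum of the other two.
\item If $|\la A,B\ra|>1$, then $c>a+b$ or $c<|a-b|$, so some number exceeds the sum of the other two.
\end{itemize}
These three outcomes are mutually exclusive and exhaust the possibilities, so each implication is in fact an equivalence.

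\emph{Step 4: identify the splitting cevian (main obstacle).} We must show that the largest coefficient belongs to the cycle lying between the other two. Suppose $c$ is largest. Rewrite the relation as $C=\alpha A+\beta B$ with $|\alpha|=a/c$, $|\beta|=b/c$. Then $(A,B;\cn_c)=-\beta/\alpha$ up to the orientation conventions.

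The splitting criterion should be expressed through the sign of the splitting factor relative to $\la A,B\ra$. In the hyperbolic case, the cycle $\cn_c$ lies in the annulus between $\cn_a$ and $\cn_b$ (i.e.\ splits them) exactly when $\lambda$ lies on the appropriate side of the forbidden interval from the proposition on annulus cevians. The parabolic case follows from the vertical-lines model. I would check this sign condition against the condition that $c$ is largest, using the explicit formula in Corollary~\ref{lambda-explicit} or the concentric-circle model, where everything is computable.

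Tracking the orientation signs $\varepsilon_\bullet$ consistently is the delicate point. I would fix orientations so that $\varepsilon_a=\varepsilon_b=\varepsilon_c=1$ before comparing.
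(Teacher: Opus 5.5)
\paragraph{Steps 1--3.} These are correct and prove the type trichotomy by a different route from the paper. The paper first derives $|(\cn_a,\cn_b;\cn_c)|=b/a$ (and its cyclic versions) from the second formula in Ceva's Theorem. It then realizes the pencil geometrically in each case, so that $a,b,c$ become, up to a common positive factor, one of the following: $|\sin|$ of the angles between the cevians, distances between parallel lines, or $\sinh 2\pi|\mu|$ of the moduli between them. It concludes with the addition formulas. Your route is different: you derive the relation $\pm aA\pm bB\pm cC=0$ among the normalized matrices, square it to get $c^2=a^2+b^2\pm2ab\la A,B\ra$, and apply Proposition~\ref{W-type}. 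This treats all three types at once and needs no trigonometric or hyperbolic realization. Your exclusivity/exhaustiveness remark correctly upgrades the implications to equivalences.

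\paragraph{The gap: Step 4.} This step is part of the statement. In the parabolic and hyperbolic cases you must prove that the largest of $a,b,c$ belongs to the cevian lying between the other two, but you only describe what you would check. In the paper this is immediate, because $a,b,c$ measure separations along the pencil, and additivity of distance or modulus singles out the middle cycle. Your linear relation says nothing about betweenness until you link its sign pattern to the order of the cycles in the pencil. This can be done briefly.

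First reorient the cevians so that $aA+bB+cC=0$ with $a,b,c>0$. This is allowed: reorienting $\cn_a$ replaces $A$ by $-A$ and leaves $a$ unchanged, since $(\cb,\cn_a;\cc)$ is odd in $\cn_a$.

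\emph{Hyperbolic case.} The unit vectors of $W$ form two branches of a hyperbola. Each branch contains exactly one orientation of every cycle of the pencil; in the concentric model these are $\pm\mathrm{diag}(e^{-s},-e^{s})$ for the circle of radius $e^{s}$, and along a branch $s$ orders the cycles. A linear functional positive on one branch shows that $A,B,C$ cannot all lie on one branch. So exactly one of them, say $C$, lies on the other branch. Then $-cC=aA+bB$ places $-C$ strictly between $A$ and $B$ on their branch, so $\cn_c$ splits $\cn_a,\cn_b$. Moreover $\la A,B\ra=\cosh(s_a-s_b)>1$, which gives $c>a+b$.

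\emph{Parabolic case.} Write $W=\R e\oplus\R n$ with $n$ null. In the vertical-lines model the unit vectors are $\pm(e+pn)$, corresponding to the lines $\mathrm{Re}\,z=p$. The $e$-component of the relation forces $c=a+b$ for the vector carrying the odd sign. The $n$-component then gives $p_c=(ap_a+bp_b)/(a+b)$, which lies strictly between $p_a$ and $p_b$.

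\emph{Your own suggested route also works.} From the propositions on cevians of annuli and monogons, a connectedness argument shows that $\cn_c$ splits $\cn_a,\cn_b$ iff $(\cn_a,\cn_b;\cn_c)\la A,B\ra<0$. Here $(\cn_a,\cn_b;\cn_c)=-b/a$, so splitting occurs iff $\la A,B\ra>0$. Since $|\la A,B\ra|\ge1$ in these cases, this is iff $c\ge a+b$. However, you must derive that criterion; it is not stated in the paper.
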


\begin{proof}
The numbers $a,b,c$ have the property that
\begin{align*}
|(\cn_a,\cn_b;\cn_c)| = \frac{b}{a},\quad
|(\cn_b,\cn_c;\cn_a)| = \frac{c}{b},\quad
|(\cn_c,\cn_a;\cn_b)| = \frac{a}{c}.
\end{align*}
Indeed, we have that
$$
|(\cn_a,\cn_b;\cn_c)| = |(\cn_a,\cc;\cb)(\cc,\cn_b;\ca)|.
$$
On the other hand,
$$
\frac{b}{a} = \left|\frac{y}{z}\frac{(\cc,\cn_b;\ca)}{(\cb,\cn_a;\cc)}
\right| = |(\cb,\cc;\cn_a)(\cn_a,\cb;\cc) (\cc,\cn_b;\ca)|
=|(\cn_a,\cc;\cb) (\cc,\cn_b;\ca)|.
$$
And similarly for the other two expressions. Now consider the cases

If $\gP$ is elliptic then $a, b, c$ are the values $|\sin\angle(\cn_b,\cn_c)|$, 
$|\sin\angle(\cn_a,\cn_c)|$, $|\sin\angle(\cn_a,\cn_b)|$ up to a positive constant. We have
\begin{gather*}
|\sin\angle(\cn_a,\cn_c)| = |\sin(\angle(\cn_a,\cn_b)+\angle(\cn_b,\cn_c))|
\le\\
 |\sin\angle(\cn_a,\cn_b)\cos\angle(\cn_b,\cn_c)| + 
|\sin\angle(\cn_b,\cn_c)\cos\angle(\cn_a,\cn_b)|<\\
<|\sin\angle(\cn_a,\cn_b)| + |\sin\angle(\cn_b,\cn_c)|.
\end{gather*}
This proves $b<a+c$. Absolutely identically we show that $c<a+b$ and $a<b+c$.

Assume now $\gP$ is parabolic. Then $a,b,c$ are the distances between $\cn_a$, $\cn_b$, $\cn_c$ when presented as parallel straight lines, up to a positive constant.
Clearly, $\cn_a$ splits the other two cevians iff $a=b+c$.

Finally, assume $\gP$ is hyperbolic. Then $a, b, c$ are the values 
$\sinh 2\pi|\mu(\cn_b,\cn_c)|$, $\sinh 2\pi|\mu(\cn_a,\cn_c)|$, 
$\sinh 2\pi|\mu(\cn_a,\cn_b)|$ up to a positive constant.
Assume $\cn_b$ is the splitting cevian. Then 
$|\mu(\cn_a,\cn_c)|=|\mu(\cn_a,\cn_b)|+|\mu(\cn_b,\cn_c)|$ and
\begin{gather*}
\sinh2\pi|\mu(\cn_a,\cn_c)| = \sinh2\pi(|\mu(\cn_a,\cn_b)|+|\mu(\cn_b,\cn_c)|)
\ge\\
\sinh2\pi|\mu(\cn_a,\cn_b)|\cosh2\pi|\mu(\cn_b,\cn_c)| + 
\sinh2\pi|\mu(\cn_b,\cn_c)|\cosh2\pi|\mu(\cn_a,\cn_b)|>\\
>\sinh2\pi|\mu(\cn_a,\cn_b)| + \sinh2\pi|\mu(\cn_b,\cn_c)|.
\end{gather*}
This implies $b>a+c$.
\end{proof}
\begin{corollary} \label{type3}
For a proper triangle with angles $\alpha,\beta,\gamma$ we have
$$
a=\frac{\sqrt{y^2 + 2yz\cos\alpha+z^2}}{|yz|},\quad 
b=\frac{\sqrt{x^2 + 2xz\cos\beta+z^2}}{|xz|},\quad
c=\frac{\sqrt{x^2 + 2xy\cos\gamma+y^2}}{|xy|}.
$$
\end{corollary}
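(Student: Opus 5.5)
The plan is to compute each of $a,b,c$ directly from Corollary~\ref{acb}, applied to the appropriate pair of sides of the triangle. I will treat $a$ in detail; $b$ and $c$ then follow by cyclically permuting the roles of $\ca,\cb,\cc$.

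For $a$, apply Corollary~\ref{acb} with the pair $(\cb,\cc)$ in place of $(\ca,\cb)$ and with $\cn_a$ in place of $\cc$. The cycles $\cb,\cc$ intersect, since they meet at the vertex $A$. With our orientation convention the triangle lies on the left of both $\cb$ and $\cc$. Hence the digon bounded by $\cb$ and $\cc$ is the one with vertex $A$ containing the triangle near $A$, and its inner angle is the triangle's angle $\alpha\in[0,\pi)$. This agrees with the relation $\la\cb,\cc\ra=-\cos\alpha$ used after Theorem~\ref{type1}. Put $\lambda=(\cb,\cc;\cn_a)$. By the remark following the proof of Ceva's Theorem, $\lambda=y/z$, which is finite and nonzero because $xyz\neq 0$. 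Corollary~\ref{acb} then gives
$$
(\cb,\cn_a;\cc)=\pm\sqrt{1+2\tfrac{y}{z}\cos\alpha+\tfrac{y^2}{z^2}}
=\pm\frac{\sqrt{y^2+2yz\cos\alpha+z^2}}{|z|}.
$$
Dividing by $|y|$ yields the stated formula for $a$.

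The same argument applies to the other two cevians. For $b$, use the pair $(\cc,\ca)$, which meets at $B$ with inner angle $\beta$, together with $(\cc,\ca;\cn_b)=z/x$. For $c$, use the pair $(\ca,\cb)$, which meets at $C$ with inner angle $\gamma$, together with $(\ca,\cb;\cn_c)=x/y$. The absolute values in the definitions of $a,b,c$ absorb the sign ambiguity $\pm$ coming from Corollary~\ref{acb}.

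The only point needing care is the identification of the inner angle. Corollary~\ref{acb} refers to the digon that lies on the left of both oriented cycles, and I must confirm that this digon is the one at the triangle's vertex with angle $\alpha$ (respectively $\beta$, $\gamma$), rather than its supplement $\pi-\alpha$. This follows from the convention that the triangle lies to the left of every side. The remaining small check is that the hypotheses of Corollary~\ref{acb} hold, namely that the two sides genuinely intersect; this holds for a proper triangle.
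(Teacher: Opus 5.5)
Your proposal is correct and follows essentially the paper's own argument: set $\lambda=(\cb,\cc;\cn_a)=y/z$, apply Corollary~\ref{acb} to get $|(\cb,\cn_a;\cc)|=\sqrt{1+2\lambda\cos\alpha+\lambda^2}$, divide by $|y|$, and treat $b,c$ cyclically. One point needs tightening, and the paper passes over it too: a proper triangle may have a zero angle, where the two sides are tangent and Corollary~\ref{acb} (proved via sines of transversal angles) does not literally apply; but writing $\cb,\cc\approx M,N$ and $\cn_a\approx(\cn_a,\cb;\cc)(M-\lambda N)$, the normalization $\la M-\lambda N,M-\lambda N\ra=1-2\lambda\la\cb,\cc\ra+\lambda^2$ gives $(\cb,\cn_a;\cc)=\pm\sqrt{1-2\lambda\la\cb,\cc\ra+\lambda^2}$ directly, which with $\la\cb,\cc\ra=-\cos\alpha$ covers every case.
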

\begin{proof}
Let $\lambda=(\cb,\cc;\cn_a)=\frac{y}{z}$. Then according to Corollary~\ref{acb} 
$$
|(\cb,\cn_a;\cc)|=
\sqrt{1+2\lambda\cos\alpha+\lambda^2}.
$$
Therefore,
$$
\left|\frac{1}{y}(\cb,\cn_a;\cc)\right|=
\left|
\frac{\sqrt{y^2 + 2yz\cos\alpha+z^2}}{yz}
\right|.
$$

\end{proof}

\begin{prop}
Let $\ca,\cb,\cc$ be three non-collinear cycles. Let $\cn_a\in\CC(\cb,\cc)$,
$\cn_b\in\CC(\ca,\cc)$, $\cn_c\in\CC(\cb,\cc)$ distinct from $\ca,\cb,\cc$. Assume that $\cn_a,\cn_b,\cn_c$
have a common point. Then they are collinear.
\end{prop}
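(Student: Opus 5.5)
The plan is to turn "passing through a common point" into a linear condition in $\Herm(2)$, in the same way the paper handled hyperbolic cycles in the hyperbolic model. A point $P\in\M$ is a parabolic generalized cycle $\cp\sim P_0$, where $P_0$ is a Hermitian matrix of rank $1$. I would first check that a cycle $\cn\sim N$ passes through $P$ iff $\la N,P_0\ra=0$.

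For finite $P=z$, take $P_0=\left(\begin{smallmatrix}1 & -\oline{z}\\ -z & z\oline{z}\end{smallmatrix}\right)$. The formula from the proof of Proposition~\ref{cs} then gives $\la N,P_0\ra=-\tfrac12\big(kz\oline{z}+lz+m\oline{z}+n\big)$, which is, up to a factor, the defining equation of $\cn$ evaluated at $z$. For $P=\infty$, take $P_0=\left(\begin{smallmatrix}0&0\\0&1\end{smallmatrix}\right)$. Then $\la N,P_0\ra=-\tfrac12 k$, which vanishes exactly when $\cn$ is a line. So incidence of points and cycles is orthogonality with respect to the Minkowski form.

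Next, let $\ca,\cb,\cc\approx L,M,N$ and let $V=\mathrm{span}(L,M,N)$. This is three-dimensional because the cycles are non-collinear. The cevians $\cn_a,\cn_b,\cn_c$ all lie in $\GG(\ca,\cb,\cc)$, so their matrices lie in $V$. If all three pass through $P$, their matrices lie in $V\cap P_0^\perp$. Since $P_0\neq 0$ and the form is non-degenerate, $P_0^\perp$ is a hyperplane. Hence $V\cap P_0^\perp$ has dimension $2$, unless $V\subseteq P_0^\perp$. In the two-dimensional case, $V\cap P_0^\perp$ is a subspace $W$ defining a single (extended) pencil. Then $\cn_a,\cn_b,\cn_c$ all belong to it and are collinear. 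In trilinear language, the cycles of $\GG(\ca,\cb,\cc)$ through $P$ satisfy one linear equation $u\la L,P_0\ra+v\la M,P_0\ra+w\la N,P_0\ra=0$, so they form the pencil $\big(\la L,P_0\ra:\la M,P_0\ra:\la N,P_0\ra\big)$.

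The main obstacle is the degenerate case $V\subseteq P_0^\perp$, which means $\ca,\cb,\cc$ themselves all pass through $P$. Then every cycle of $\GG(\ca,\cb,\cc)$ passes through $P$, and the common point carries no information. Indeed the conclusion fails there. Take three lines forming a Euclidean triangle, with $P=\infty$: the cevians are lines through the vertices, which always share $\infty$ but need not be concurrent. So the proof must exclude this case. I would either note that $P$ is assumed not to be a common point of $\ca,\cb,\cc$, or observe that this is the only exception. Concretely, I would show that the trilinear coordinates $\big(\la L,P_0\ra:\la M,P_0\ra:\la N,P_0\ra\big)$ are not all zero exactly when $P\notin\ca\cap\cb\cap\cc$. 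With that exclusion stated, the argument above is complete.
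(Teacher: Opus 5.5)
Your proof is correct, and the extra hypothesis you add is genuinely needed. Your Euclidean-triangle example (three lines $\ca,\cb,\cc$, cevians through the vertices, all sharing $\infty$ but in general not concurrent) shows that the proposition as printed fails when the common point lies on all of $\ca,\cb,\cc$.

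The paper argues differently. It sends the common point to $\infty$, so the cevians become straight lines. It identifies each of them as the radical axis of the corresponding pair among $\ca,\cb,\cc$, and invokes the classical fact that the three radical axes of three circles are concurrent or parallel. That identification tacitly requires $\ca,\cb,\cc$ to be circles after the normalization, i.e.\ that none of them passes through the common point. This is exactly the configuration your counterexample isolates. When the point lies on only one or two of them, a cevian is forced to coincide with a side, so those cases are vacuous. The paper does not address them; in your argument they are covered automatically, because the vector $(\la L,P_0\ra:\la M,P_0\ra:\la N,P_0\ra)$ is still nonzero.

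Your route replaces the radical-axis theorem by the observation that passing through $P$ is the linear condition of orthogonality to a rank-one matrix $P_0$. This has several advantages:
\begin{itemize}
\item it stays inside the paper's own linear-algebraic framework and needs no normalization;
\item it gives the resulting pencil explicitly in trilinear coordinates;
\item it proves more: any three cycles of $\GG(\ca,\cb,\cc)$ through such a point are collinear.
\end{itemize}
In particular, your argument does not depend on which pencils the cevians come from, which makes it immune to the misprint $\cn_c\in\CC(\cb,\cc)$ in the statement.

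The paper's version is shorter and more geometric, but it hides the hypothesis that your approach makes explicit.
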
 


\begin{proof}
Let $\infty$ be a common point of $\cn_a$, $\cn_b$, $\cn_c$. Then $\cn_a$, $\cn_b$, $\cn_c$ are straight lines. On the other hand, $\cn_a$, $\cn_b$, $\cn_c$ are the radical axes of every pair of $\ca,\cb,\cc$ -- so they must either intersect in a common point, or be parallel. In both cases $\cn_a, \cn_b, \cn_c$ are collinear.
\end{proof}

Consider now the standard geometries.
Assume we have a triangle $\triag ABC$ and three cevians $\cn_a, \cn_b, \cn_c$ that satisfy the condition on the Ceva's Theorem. Then
\begin{itemize}
\item for a spherical triangle $\cn_a, \cn_b, \cn_c$ intersect in two antipodal points that are uniquely defined.

\item for a Euclidean triangle $\cn_a, \cn_b, \cn_c$ either intersect in a point or are parallel.

\item for a hyperbolic triangle we have that either $\cn_a, \cn_b, \cn_c$ intersect in a point, are parallel (that is, approach the same point at infinity), or are pairwise disjoint and share a unique common perpendicular.
\end{itemize}

\section{Bisectors, Incenter and Excenters}
Consider the first example. For any proper triangle consider the bisectors $\cl_a, \cl_b, \cl_c$ of its angles. Unlike the Euclidean case things become more complicated as the bisectors may not intersect. However, the conditions of Ceva's Theorem are always met and the \textit{incenter} $\gI=(1:1:1)$ always exists, but incenter  \textit{is a pencil containing the bisectors}, not a point. And as a pencil it can have any type in general (Figure~\ref{fig:incenter}).

\begin{figure}[h]
\[
\begin{tikzpicture}
\tikzstyle{vertex}=[circle, fill, inner sep=1.5pt]
\draw [thick](4.500000,1.500000) arc (269.880135:90.119865:3.000007);
\node [vertex] at (4.500000,1.500000) {};
\node [vertex] at (4.500000,7.500000) {};
\draw [thick](9.000000,4.500000) arc (-112.500000:-134.880135:13.934369);
\draw [thick](4.500000,1.500000) arc (134.880135:112.500000:13.934369);
\node [vertex] at (9.000000,4.500000) {};
\draw (4.500000,1.500000) arc (-157.619865:202.380135:1.889339);
\draw (4.500000,7.500000) arc (157.619865:517.619865:1.889339);
\draw (0, 4.5) -- (10, 4.5);
\end{tikzpicture}
\]
\caption{An isosceles M\"obius triangle with angles $\frac{\pi}{4}$, $\frac{3\pi}{4}$, $\frac{3\pi}{4}$ and a hyperbolic incenter.}
\label{fig:incenter}
\end{figure}

\begin{theorem}
Let $\alpha\leq\beta\leq\gamma$ be the angles of a triangle. Then the incenter $\gI$ is hyperbolic, and $\cl_a$ splits $\cl_b$, $\cl_c$ iff 
$\cos\frac{\alpha}{2}>\cos\frac{\beta}{2}+\cos\frac{\gamma}{2}$.
\end{theorem}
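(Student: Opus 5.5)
The plan is to apply Theorem~\ref{type2} together with Corollary~\ref{type3} to the incenter $\gI=(1:1:1)$. The bisectors $\cl_a,\cl_b,\cl_c$ are elliptic cycles, and we have $(\cb,\cc;\cl_a)=(\cc,\ca;\cl_b)=(\ca,\cb;\cl_c)=1$. So the Ceva condition holds, and the common pencil has coordinates $x=y=z=1$, which gives $xyz\neq0$.

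Substituting $x=y=z=1$ into Corollary~\ref{type3} gives
$$
a=\sqrt{2+2\cos\alpha}=2\cos\tfrac{\alpha}{2},\quad
b=2\cos\tfrac{\beta}{2},\quad
c=2\cos\tfrac{\gamma}{2}.
$$
Here we use that the angles lie in $[0,\pi)$ for a proper triangle, so the half-angle cosines are positive and no absolute values are lost. (The statement presumably concerns proper triangles, as in the surrounding discussion, and I will assume this.)

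Theorem~\ref{type2} says $\gI$ is hyperbolic iff one of $a,b,c$ exceeds the sum of the other two, and that the largest one then corresponds to the splitting cevian. With $\alpha\le\beta\le\gamma$ in $[0,\pi)$, cosine is decreasing on $[0,\pi/2)$, so $\cos\frac{\alpha}{2}\ge\cos\frac{\beta}{2}\ge\cos\frac{\gamma}{2}$ and $a$ is the largest. Hence $b>a+c$ and $c>a+b$ are impossible: for instance $b>a+c$ would force $b>a$. The only hyperbolic case is therefore $a>b+c$, which is exactly $\cos\frac{\alpha}{2}>\cos\frac{\beta}{2}+\cos\frac{\gamma}{2}$, and then $\cl_a$ is the splitting cevian, i.e.\ $\cl_a$ splits $\cl_b$ and $\cl_c$. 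Conversely, if the inequality holds, Theorem~\ref{type2} gives that $\gI$ is hyperbolic with $\cl_a$ splitting.

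The main obstacle is checking the hypotheses of Theorem~\ref{type2} and Corollary~\ref{type3} rather than the arithmetic. First, the bisectors must be distinct from the sides and elliptic, which holds since they are genuine cycles through the vertices. Second, the bisectors must determine a \emph{unique} pencil, which could fail if two of them coincide. Third, the corollary must be valid when some angle is $0$ and the corresponding sides are tangent rather than intersecting. For the zero-angle case I would use the monogon proposition, under which the bisector still has splitting factor $1$ because $\la\cb,\cc\ra=-1$ there. The formula $\sqrt{1+2\lambda\cos\alpha+\lambda^2}$ at $\alpha=0$ can be checked directly from $\det$ considerations, or obtained by continuity in $\alpha$. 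I would handle both points by a short remark and a continuity argument rather than a separate computation.
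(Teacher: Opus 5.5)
Your proposal is correct and is essentially the paper's argument. The paper's proof is just ``plug $x=y=z=1$ into Corollary~\ref{type3}'' with Theorem~\ref{type2} used implicitly, and you supply the missing details: $a=2\cos\frac{\alpha}{2}$, $b=2\cos\frac{\beta}{2}$, $c=2\cos\frac{\gamma}{2}$, and the observation that $\alpha\le\beta\le\gamma$ makes $a$ the largest, so only $a>b+c$ can occur. Your hypothesis checks are also sound, though distinctness of the bisectors needs no continuity argument: $\GG(\cb,\cc)\cap\GG(\ca,\cc)=\{\cc\}$ by non-collinearity, while $(\cb,\cc;\cl_a)=1\neq\infty$ shows $\cl_a\not\cong\cc$.
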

\begin{proof}
This follows from Corollary~\ref{type3} after plugging $x=y=z=1$.
\end{proof}

However, for all standard geometries $\gI$ is elliptic.
Moreover, the \textit{excenters} $\gI_a, \gI_b, \gI_c$ are also well-defined and are also pencils; e.g. $\gI_a$ is the unique pencil containing $\cl_a, \cl_b^\perp, \cl_c^\perp$; furthermore, $\gI_a=(1:-1:-1)$. 

\begin{theorem}
The excenter $\gI_a$ is hyperbolic iff one of the following cases is true:
\begin{enumerate}
\item[$(i)$]
$\cos\frac{\alpha}{2}>\sin\frac{\beta}{2}+\sin\frac{\gamma}{2}$; 
 in this case $\cl_a$ splits $\cl_b^\perp$ and $\cl_c^\perp$.
\item[$(ii)$]
$\sin\frac{\beta}{2}>\cos\frac{\alpha}{2}+\sin\frac{\gamma}{2}$;
 in this case $\cl_b^\perp$ splits $\cl_a$ and $\cl_c^\perp$.
\item[$(iii)$]
$\sin\frac{\gamma}{2}>\cos\frac{\alpha}{2}+\sin\frac{\beta}{2}$;
 in this case $\cl_c^\perp$ splits $\cl_a$ and $\cl_b^\perp$.
\end{enumerate}
\end{theorem}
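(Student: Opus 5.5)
The plan is to apply Theorem~\ref{type2} in the explicit form of Corollary~\ref{type3} with $(x:y:z)=(1:-1:-1)$, exactly as the incenter theorem uses $x=y=z=1$.

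\textbf{Step 1: the excenter pencil.} First I check that $\gI_a=(1:-1:-1)$ really contains $\cl_a,\cl_b^\perp,\cl_c^\perp$. The splitting factors are
\[
(\cb,\cc;\cl_a)=1,\qquad (\cc,\ca;\cl_b^\perp)=-1,\qquad (\ca,\cb;\cl_c^\perp)=-1,
\]
using the proposition listing $\lambda=\pm1$ for the bisector and the external bisector. Their product is $1$, so Ceva's Theorem gives collinearity. The relations $y/z=1$, $z/x=-1$, $x/y=-1$ are then satisfied by $(1:-1:-1)$. All three cevians are elliptic and $xyz\neq0$, so Theorem~\ref{type2} applies.

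\textbf{Step 2: the three numbers.} Corollary~\ref{type3} with $x=1$, $y=z=-1$ gives
\[
a=\sqrt{2+2\cos\alpha}=2\cos\tfrac{\alpha}{2},\qquad
b=\sqrt{2-2\cos\beta}=2\sin\tfrac{\beta}{2},\qquad
c=\sqrt{2-2\cos\gamma}=2\sin\tfrac{\gamma}{2}.
\]
Since $\alpha,\beta,\gamma\in[0,\pi)$, the half-angles lie in $[0,\pi/2)$. Hence the cosines and sines are nonnegative and no absolute values are lost.

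\textbf{Step 3: read off the cases.} By Theorem~\ref{type2}, $\gI_a$ is hyperbolic iff one of $a,b,c$ exceeds the sum of the other two. After cancelling the common factor $2$, this gives exactly the three inequalities $(i)$--$(iii)$. Moreover, the largest number corresponds to the splitting cevian, and in Theorem~\ref{type2} the numbers $a,b,c$ are attached to $\cn_a,\cn_b,\cn_c$. Here these are $\cl_a$, $\cl_b^\perp$, $\cl_c^\perp$, which yields the stated splitting assertions. The three cases are mutually exclusive, since at most one of three numbers can exceed the sum of the other two.

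\textbf{Expected obstacle.} The main difficulty is bookkeeping in Theorem~\ref{type2}: I must confirm that $a$ is indeed tied to $\cn_a$ (through $|(\cn_a,\cn_b;\cn_c)|=b/a$ and the subsequent case analysis). I also need to check that Corollary~\ref{type3}, derived via Corollary~\ref{acb}, holds with the signs $y=z=-1$. In the corollary, $\lambda=y/z=1$ enters $a$, while the ratios $z/x$ and $x/y$, both equal to $-1$, enter $b$ and $c$. This produces the minus sign in front of the cosine, and I will verify it by redoing the computation with $\mu=z/x$ and $\nu=x/y$.
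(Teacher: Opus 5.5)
Your proposal is correct and follows the paper's argument: the paper's proof is the single line that the result follows from Corollary~\ref{type3}, i.e.\ substituting $(x:y:z)=(1:-1:-1)$ to get $a=2\cos\frac{\alpha}{2}$, $b=2\sin\frac{\beta}{2}$, $c=2\sin\frac{\gamma}{2}$ and reading the three cases off Theorem~\ref{type2}, exactly as you do. One caveat, which the paper also skips: if $\beta=0$ (or $\gamma=0$), the cevian with splitting factor $-1$ in the monogon at $B$ (or $C$) is the parabolic cycle at the tangency point, not an elliptic external bisector, so your claim that all three cevians are elliptic fails there and this boundary case needs a separate check.
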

\begin{proof}
This also follows from Corollary~\ref{type3}.
\end{proof}

For a hyperbolic triangle this has some interesting consequences. It is impossible for $\cl_b^\perp$ or $\cl_c^\perp$ to split $\gI_a$. Indeed, we have that
$$
\sin\frac{\beta}{2}=
\cos\frac{\pi-\beta}{2}
 <\cos\frac{\alpha+\gamma}{2}=
\cos\frac{\alpha}{2}\cos\frac{\gamma}{2} - \sin\frac{\alpha}{2}\sin\frac{\gamma}{2}\le\cos\frac{\alpha}{2}+\sin\frac{\gamma}{2}.
$$
Therefore, case $(ii)$ is impossible. Similarly, $(iii)$ is impossible as well.
But the excenter $\gI_a$ can still be hyperbolic (Figure~\ref{fig:excenter}) and its type defines the type of the corresponding excircle, i.e. whether it is a circle, a horocycle, or an equidistant curve. 

\begin{figure}[h]
\[
\begin{tikzpicture}
\tikzstyle{vertex}=[circle, fill, inner sep=1.5pt]
\draw [thick](3.435000,2.580000) arc (100.027395:41.614589:2.620022);
\node [vertex] at (3.435000,2.580000) {};
\node [vertex] at (5.850000,1.740000) {};
\draw [thick](5.850000,1.740000) arc (144.400539:79.600858:2.989098);
\node [vertex] at (8.820000,2.940000) {};
\draw [thick](3.435000,2.580000) arc (138.115402:49.533933:3.864402);

\draw (5.291356,0.000000) arc (180.000000:0.000000:2.989098);
\draw (2.447611,0.000000) arc (180.000000:0.000000:3.864402);
\draw (1.271174,0.000000) arc (180.000000:0.000000:2.620022);

\draw [thick](4.631158,0.000000) arc (0.000000:180.000000:0.503400);
\draw (3.624358,0.000000) arc (360-111.933320:-68.066680:1.347692);

\draw [thick](0.985739,0.000000) arc (180.000000:0.000000:5.336572);
\draw (11.658882,0.000000) arc (18.180299:161.819701:5.616974);

\draw [thick](9.246719,0.000000) arc (0.000000:180.000000:1.193393);
\draw (6.859932,0.000000) arc (360-133.801587:-46.198413:1.724150);

\draw (0, 0) -- (12.5,0);
\end{tikzpicture}
\]
\caption{A hyperbolic triangle with all excircles being equidistant curves. The excenters in this case are lines, and this makes perfect sense as an excircle remains the set of points equidistant from the corresponding excenter.}
\label{fig:excenter}
\end{figure}

\begin{theorem}[\cite{Bib}]\label{excircle}
Let $\alpha,\beta,\gamma$ be the angles of a hyperbolic triangle.
Let $\ce_a$ be the excircle tangent to the side $\ca$ from outside. Then
\begin{enumerate}
\item[$(i)$] if $\cos\frac{\alpha}{2} < \sin\frac{\beta}{2}+\sin\frac{\gamma}{2}$
then $\ce_a$ is a hyperbolic circle.
\item[$(ii)$] if $\cos\frac{\alpha}{2} = \sin\frac{\beta}{2}+\sin\frac{\gamma}{2}$
then $\ce_a$ is a horocycle.
\item[$(iii)$] if $\cos\frac{\alpha}{2} > \sin\frac{\beta}{2}+\sin\frac{\gamma}{2}$
then $\ce_a$ is an equidistant curve.
\end{enumerate}
\end{theorem}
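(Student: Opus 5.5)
The plan is to reduce the statement to the excenter classification proved just above. First we identify the excircle $\ce_a$ as a member of the pencil orthogonal to the excenter $\gI_a$. In hyperbolic geometry the centre of the excircle tangent to $\ca$ from outside is where the internal bisector $\cl_a$ meets the external bisectors $\cl_b^\perp$ and $\cl_c^\perp$. By the discussion of generalized cycles in the hyperbolic model, this "centre" is the generalized cycle orthogonal to every line of the pencil $\gI_a$. In other words, it is the unique generalized cycle of the linear pencil $\gI_a$'s partner, coming from the correspondence between linear pencils and generalized cycles.

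Concretely, the centre is one of the following, according to the type of $\gI_a$:
\begin{itemize}
\item a point of $\hh$, if $\gI_a$ is elliptic;
\item a point at infinity, if $\gI_a$ is parabolic;
\item a line $\cl\in\gI_a^\perp$, if $\gI_a$ is hyperbolic.
\end{itemize}
The excircle $\ce_a$ is a cycle orthogonal to all of $\cl_a,\cl_b^\perp,\cl_c^\perp$, so $\ce_a\in\gI_a^\perp$. We justify this by symmetry: each inversion about one of these bisectors preserves the tangency configuration with $\ca,\cb,\cc$, and hence fixes the unique excircle. The geometric type of $\ce_a$ is then determined by $\gI_a^\perp$:
\begin{itemize}
\item if $\gI_a$ is elliptic, $\gI_a^\perp$ is hyperbolic and consists of circles about the centre, so $\ce_a$ is a hyperbolic circle;
\item if $\gI_a$ is parabolic, $\gI_a^\perp$ consists of horocycles, so $\ce_a$ is a horocycle;
\item if $\gI_a$ is hyperbolic, $\gI_a^\perp$ consists of equidistant curves to $\cl$, so $\ce_a$ is an equidistant curve.
\end{itemize}
All three cases are covered by the earlier description of linear pencils in hyperbolic geometry.

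It then remains to decide the type of $\gI_a=(1:-1:-1)$, which we do with Theorem~\ref{type2} and Corollary~\ref{type3}. Plugging $x=1$, $y=z=-1$ into Corollary~\ref{type3} gives
$$a=\sqrt{2+2\cos\alpha}=2\cos\tfrac{\alpha}{2},\qquad b=\sqrt{2-2\cos\beta}=2\sin\tfrac{\beta}{2},\qquad c=2\sin\tfrac{\gamma}{2}.$$
By Theorem~\ref{type2}, $\gI_a$ is elliptic iff these three numbers satisfy the strict triangle inequalities. The previous theorem, together with the remark after it, shows that for a hyperbolic triangle the inequalities $b<a+c$ and $c<a+b$ always hold strictly. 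Therefore only the comparison of $a$ with $b+c$ matters:
\begin{itemize}
\item $\cos\frac{\alpha}{2}<\sin\frac{\beta}{2}+\sin\frac{\gamma}{2}$ gives an elliptic $\gI_a$;
\item equality gives a parabolic $\gI_a$;
\item strict $>$ gives a hyperbolic $\gI_a$.
\end{itemize}
These three cases yield $(i)$, $(ii)$ and $(iii)$ respectively.

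Theorem~\ref{type2} requires $\cl_a,\cl_b^\perp,\cl_c^\perp$ to be elliptic and the trilinear coordinates to satisfy $xyz\neq0$. Both conditions hold here, since the angles of a hyperbolic triangle are positive and below $\pi$.

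The main obstacle is the first step, namely that $\ce_a$ lies in $\gI_a^\perp$. I would argue it directly. The excircle is tangent to the three lines $\ca,\cb,\cc$. The inversion $\sigma_{\cl_a}$ swaps $\cb$ and $\cc$ and fixes $\ca$ setwise, since $\cl_a\perp\ca$ in the hyperbolic setting where bisectors are lines. Hence $\sigma_{\cl_a}$ maps the excircle opposite $A$ to a cycle tangent to the same three lines, on the same sides. Uniqueness of that excircle then gives $\sigma_{\cl_a}(\ce_a)=\ce_a$, so $\ce_a\perp\cl_a$. The same argument applies to $\cl_b^\perp$ and $\cl_c^\perp$. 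Uniqueness itself can be taken as the classical existence and uniqueness of tangent circles or horocycles or equidistants in the cited reference.
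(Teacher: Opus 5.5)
Your overall plan matches the paper's: show $\ce_a\in\gI_a^\perp$, read off the type of $\ce_a$ from the type of $\gI_a$ via the description of linear pencils in $\hh$, and classify $\gI_a=(1:-1:-1)$ with Corollary~\ref{type3} and Theorem~\ref{type2}. Your values $a=2\cos\frac{\alpha}{2}$, $b=2\sin\frac{\beta}{2}$, $c=2\sin\frac{\gamma}{2}$ are correct. So is your use of the remark that rules out $b\ge a+c$ and $c\ge a+b$.

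The gap is in the step you flag as the main obstacle. You claim $\sigma_{\cl_a}$ fixes $\ca$ setwise because $\cl_a\perp\ca$, and this is false. In the upper half-plane every hyperbolic line, $\cl_a$ included, is orthogonal to $\oline{\R}$. But the bisector from $A$ is orthogonal to the opposite side $\ca$ only when $\beta=\gamma$, since perpendicularity would make the two halves of the triangle congruent by ASA. In general $\sigma_{\cl_a}(\ca)\neq\ca$. Then $\sigma_{\cl_a}(\ce_a)$ is tangent to $\cb$, $\cc$ and the new line $\sigma_{\cl_a}(\ca)$, so uniqueness of the excircle tells you nothing. The same objection applies to $\cl_b^\perp$ and $\cl_c^\perp$, which are not orthogonal to $\cb$ and $\cc$. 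As written, $\ce_a\in\gI_a^\perp$ is not proved, and everything after it depends on it.

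The fix is that orthogonality to a bisector needs only tangency to the two cycles bounding that digon, not a symmetry of the whole configuration; this is the paper's Lemma~\ref{tang}. Write $\ca,\cb,\cc,\ce_a\approx L,M,N,E$.
\begin{itemize}
\item $\ce_a$ touches $\cb$ and $\cc$ from the same (left) side, so $\la E,M\ra=\la E,N\ra=\pm1$. Hence $\la E,M-N\ra=0$, and $\cl_a\sim M-N$.
\item $\ce_a$ touches $\ca$ from the right and $\cc$ from the left, so $\la E,L\ra=-\la E,N\ra$. Hence $\ce_a\perp\cl_b^\perp\sim L+N$.
\item In the same way, $\ce_a\perp\cl_c^\perp\sim L+M$.
\end{itemize}
A geometric alternative is to send the relevant vertex to $\infty$: a circle tangent to two lines has its center on one of their bisectors and is therefore orthogonal to it. With this substitution the rest of your argument goes through.
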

We need an additional result first.

\begin{lemma}\label{tang}
Let $\ca$ be a cycle tangent to $\cb$ and $\cc$ from the left. Then $\ca\perp\cl$, where $\cl$ is the bisector of the digon, monogon, or annulus bounded by $\cb$ and $\cc$.
\end{lemma}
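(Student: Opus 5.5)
The plan is to use the inversion $\sigma_\cl$ about the bisector and the uniqueness of tangency configurations. First I will make precise what ``tangent from the left'' means: $\ca$ is tangent to $\cb$ and to $\cc$, and the region between them (digon, monogon or annulus) lies on the appropriate side, so that in Möbius-cosine terms $\la\ca,\cb\ra=\la\ca,\cc\ra=\varepsilon$ with the same sign $\varepsilon\in\{\pm1\}$. By part $(ii)$ of the proposition summarizing the Möbius cosine, tangency gives $\la\ca,\cb\ra=\pm1$, and the side condition fixes the same sign for both. This equality of cosines is the key input.

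Next I express the bisector in matrix form. Let $\cb,\cc\approx M,N$. By the propositions on cevians of digons, monogons and annuli, the bisector is the cevian with $(\cb,\cc;\cl)=\pm1$. The sign is chosen so that $\cl\sim M\mp N$ is the bisector of the region bounded by $\cb$ and $\cc$, with the region lying on the left of both. Concretely, I would check in each of the three cases that, for the orientations in which the region lies on the left of both cycles, the bisector is $\cl\sim M-N$. For a digon the bisector is $\lambda=1$. For an annulus on the same side we have $\la\cb,\cc\ra<0$, so again $\lambda=1$. For a monogon on the same side $\xi=-1$, and the bisector is $\lambda=-\xi=1$. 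The orthogonal external object then corresponds to $M+N$.

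Then the conclusion is immediate: if $\ca\approx L$, then
\[
\la L,M-N\ra=\la\ca,\cb\ra-\la\ca,\cc\ra=\varepsilon-\varepsilon=0,
\]
and by part $(i)$ of the Möbius cosine proposition, $\la\ca,\cl\ra=0$ means $\ca\perp\cl$. (If $\cl$ is the normalized multiple of $M-N$, the scalar factor does not affect vanishing.)

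The main obstacle is the orientation and sign bookkeeping. I must verify that ``tangent from the left'' forces $\la\ca,\cb\ra$ and $\la\ca,\cc\ra$ to be equal rather than opposite, with the orientation of $\ca$ chosen so the relevant monogons lie consistently. I must also verify that in all three cases the bisector is $M-N$ rather than $M+N$. I would settle both by normalizing with an isometry: for a digon, send a vertex to $\infty$ so that $\cb,\cc$ are lines through a point; for a monogon, send the tangency point to $\infty$ so they are parallel lines; for an annulus, make them concentric. Then I would check directly, with $\ca$ a circle tangent to both, that the tangency monogons sit on the same side. As a fallback, a direct geometric argument works too: $\sigma_\cl$ swaps $\cb,\cc$ and preserves the left region, so $\sigma_\cl(\ca)$ is tangent to both from the left. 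Combined with the orthogonality computation, this is consistent with $\ca$ being symmetric.
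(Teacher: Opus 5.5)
Your core argument is the paper's proof. With $\ca,\cb,\cc\approx L,M,N$ and the region on the left of both $\cb$ and $\cc$, one has $\cl\sim M-N$, which the paper simply asserts and you justify from the cevian propositions. Then $\la L,M\ra=\la L,N\ra=\pm1$ gives $\la L,M-N\ra=0$. The paper takes the equality $\la\ca,\cb\ra=\la\ca,\cc\ra$ as the content of ``tangent from the left'' and checks nothing further.

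\textbf{Caution on the check you plan.} Your last paragraph proposes to derive equal cosines from a geometric reading of the hypothesis. Under the literal reading ``$\ca$ is tangent to both and lies in the closed left half-planes of both,'' that derivation fails in the annulus case.

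Take $\cb,\cc$ concentric with radii $r<R$, oriented so the ring is on the left of both, i.e.\ signed radii $-r$ and $R$. Let $\ca$ be the positively oriented circle of radius $\rho=\frac{R+r}{2}$ whose center lies at distance $t=\frac{R-r}{2}$ from the common center. It encloses the inner circle and is internally tangent to both, so it lies on the left of both. Since $\rho^2-t^2=rR$, Proposition~\ref{cs} gives $\la\ca,\cb\ra=-1$ and $\la\ca,\cc\ra=1$. The bisector is the concentric circle of radius $\sqrt{rR}$, and $\la\ca,\cl\ra=\frac{2\sqrt{rR}}{R+r}\neq0$, so $\ca\not\perp\cl$.

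So the equal-sign condition is a genuine hypothesis. Equivalently, $\ca$ must not separate $\cb$ from $\cc$, which the paper assumes implicitly. You should state it as such rather than try to prove it. It is automatic for a digon, since a cycle separating $\cb$ from $\cc$ while touching each once forces $|\cb\cap\cc|\le1$. For a monogon, separation forces $\ca\in\CC(\cb,\cc)$.

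Your fallback via $\sigma_\cl$ also proves nothing by itself. The cycles tangent to $\cb$ and $\cc$ from the left form a one-parameter family, so nothing forces $\sigma_\cl(\ca)=\ca$.
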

\begin{proof}
Let $\ca,\cb,\cc\approx L, M, N$. Then $\cl\sim M-N$, and $\la L,M\ra=\la L,N\ra=\pm1$. Then $\la L,M-N\ra=0$ implying $\ca\perp\cl$.
\end{proof}

\begin{proof}[Proof of Theorem~\ref{excircle}]
The excircle $\ce_a$ is tangent to $\cb,\cc$ from the left and to $\ca$ from the right. Therefore, Lemma~\ref{tang} implies $\ce_a\perp\cl_a$. Analogously, we show that $\ce_a\perp\cl_b^\perp$ and $\ce_a\perp\cl_c^\perp$. Therefore, $\ce_a\in\gI_a^\perp$.
Hence, the type of $\gI_a$ defines whether $\ce_a$ is a circle, horocycle, or an equidistant curve.
\end{proof}

\section{Altitudes and Orthocenter}
An \textit{altitude} of a M\"obius triangle is a unique cevian that is orthogonal to the opposite triangle side. We will see that even for proper triangles altitudes may not exist in general.

\begin{prop}\label{ca_perp_gP}
Let $\gP$ be an extended pencil and $\cc$ be a cycle. If $\cc\not\in\gP^\perp$ then there exists a unique $\cn\in\gP$ such that $\cn\perp\cc$,
\end{prop}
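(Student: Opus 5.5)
Let $\gP=\GG(\cd,\ce)$ be spanned by two generalized cycles $\cd,\ce\sim D,E$, and let $\cc\approx N$. The plan is to turn the orthogonality condition into a single linear equation on the projective line $\gP$. Orthogonality of (generalized) cycles is the vanishing of the Minkowski inner product, so for $\cn\sim xD+yE$ we have $\cn\perp\cc$ iff
$$
x\la D,N\ra + y\la E,N\ra = 0.
$$
This is one homogeneous linear equation in $(x:y)\in\R P^1$. It has a unique projective solution precisely when its coefficients $\la D,N\ra,\la E,N\ra$ are not both zero. In that case the solution is $\cn\sim\la E,N\ra D-\la D,N\ra E$, which is a nonzero matrix because $D,E$ are linearly independent.

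It remains to check that the degenerate case, where both coefficients vanish, is exactly $\cc\in\gP^\perp$. If both vanish, then $N$ is orthogonal to the whole subspace $W=\mathrm{span}(D,E)$. By Proposition~\ref{W-type}, $\gP^\perp$ corresponds to $W^\perp$, so $N\in W^\perp$ and $\cc$ lies in the extended pencil $\gP^\perp$. Conversely, $\cc\in\gP^\perp$ forces both coefficients to vanish, and then every member of $\gP$ is orthogonal to $\cc$, so uniqueness fails. Hence under the hypothesis $\cc\not\in\gP^\perp$ the coefficients are not both zero, and there is exactly one $\cn\in\gP$ with $\cn\perp\cc$.

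The main obstacle is a subtlety in the degenerate (parabolic) case. There $W\cap W^\perp\neq 0$, so $W^\perp$ need not be a complement of $W$, and one must read $\gP^\perp$ as the extended pencil of $W^\perp$ rather than as the pencil of cycles orthogonal to two chosen generators. With this reading the argument above is purely linear-algebraic and does not depend on the signature of $W$. A second point to state is that the $\cn$ produced may be a generalized cycle (parabolic or virtual) rather than an elliptic one. The proposition only asserts $\cn\in\gP$ for the extended pencil, so this costs nothing, but it explains the paper's remark that altitudes need not exist as genuine cycles.
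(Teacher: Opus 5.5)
Your proof is correct and is essentially the paper's. The paper also writes members of $\gP=\GG(\ca,\cb)$ as $L-\lambda M$ and notes that $\la L-\lambda M,N\ra=0$ has a unique solution $\lambda\in\oline{\R}$ unless $\cc$ is orthogonal to both generators, i.e.\ unless $\cc\in\gP^\perp$; this is exactly your homogeneous linear equation in $(x:y)$.

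Your closing remarks are not needed, and two of them are slightly off, though neither affects the argument. First, there is no separate parabolic subtlety: being orthogonal to both generators is literally the condition $N\in W^\perp$. Second, in the paper an altitude fails to exist only in the degenerate case $\ca\in\gA^\perp$, not because the solution is a virtual cycle.
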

\begin{proof}
Let $\gP=\GG(\ca,\cb)$  for some $\ca,\cb\sim L, M$. Let $\cc\sim N$. Then unless $\ca\perp\cc$ and $\cb\perp\cc$, which is equivalent to $\cc\in\gP^\perp$, the equation $\la L-\lambda M,N\ra =0$ has a unique solution for $\lambda\in\oline{\R}$ (with $\lambda=\infty$ iff $\la M,N\ra=0$) which uniquely corresponds to $\cn\in\gP$ satisfying the condition.
\end{proof}

Consider a triangle $\triag ABC$. Let $\ch_a$, $\ch_b$, $\ch_c$ be the altitudes passing through $A$, $B$, $C$ respectively. The \textit{orthocenter} $\gH$ of a triangle is a unique pencil containing its altitudes. We investigate now when the altitudes and the orthocenter exist.

\begin{prop}
The altitude $\ch_a$ exists unless $\beta=\gamma=\frac{\pi}{2}$.
\end{prop}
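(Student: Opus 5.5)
The plan is to apply Proposition~\ref{ca_perp_gP} to the pencil $\gA=\GG(\cb,\cc)$ of cevians through $A$ and the cycle $\ca$. By that proposition a unique cevian $\ch_a\in\gA$ with $\ch_a\perp\ca$ exists unless $\ca\in\gA^\perp$, i.e.\ unless $\ca\perp\cb$ and $\ca\perp\cc$ simultaneously. So the proof reduces to translating this exceptional condition into the angles of the triangle.

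Next, I use the identities recorded for triangles: $\la\ca,\cb\ra=-\cos\gamma$ and $\la\ca,\cc\ra=-\cos\beta$. These come from the digon convention $\la\ca,\cb\ra=-\cos\alpha$ for the inner angle, which holds because the triangle lies on the left of every side. The orthogonality conditions $\la\ca,\cb\ra=0$ and $\la\ca,\cc\ra=0$ then become $\cos\gamma=0$ and $\cos\beta=0$. For angles in $[0,2\pi]$ this gives $\beta,\gamma\in\{\frac{\pi}{2},\frac{3\pi}{2}\}$; for a proper triangle, where angles lie in $[0,\pi)$, it gives exactly $\beta=\gamma=\frac{\pi}{2}$.

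I also need to check that the cycle $\ch_a$ produced is genuine, that is, elliptic and distinct from the sides. If $\ca$ is orthogonal to exactly one of $\cb,\cc$, then that side is itself the altitude. This is consistent with the definition, and $\lambda=\infty$ is allowed in Proposition~\ref{ca_perp_gP}. Ellipticity should be automatic for a digon pencil $\gA$ with $\cb,\cc$ meeting at $A$, because it is an elliptic pencil. When $\cb,\cc$ are tangent at $A$ (i.e.\ $\alpha=0$ or $\pi$), the extended pencil $\gA$ contains one parabolic member, the point $A$ itself. I will argue it cannot be orthogonal to $\ca$ unless $A\in\ca$, which would contradict the triangle being non-degenerate.

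The main obstacle is this last point: making sure the solution in $\GG(\cb,\cc)$ is a genuine cycle, and being careful about angles equal to $\frac{3\pi}{2}$ if non-proper triangles are intended. I would restrict to the setting where the statement is meant, reading the condition as $\cos\beta=\cos\gamma=0$, which gives $\beta=\gamma=\frac{\pi}{2}$ under the paper's working assumptions.
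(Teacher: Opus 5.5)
Your first two paragraphs are exactly the paper's proof. Proposition~\ref{ca_perp_gP}, applied to $\gA=\GG(\cb,\cc)$ and $\ca$, leaves only the exceptional case $\ca\in\gA^\perp$, i.e.\ $\ca\perp\cb$ and $\ca\perp\cc$. Via $\la\ca,\cb\ra=-\cos\gamma$ and $\la\ca,\cc\ra=-\cos\beta$ this means $\cos\beta=\cos\gamma=0$. Your remark that non-proper triangles also allow $\frac{3\pi}{2}$ is a correct refinement that the paper passes over silently.

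The third paragraph, which you call the main obstacle, should be dropped: it is unnecessary and its argument is false. In the paper the cevians of a triangle are members of the \emph{extended} pencil $\GG(\cb,\cc)$, and Proposition~\ref{ca_perp_gP} is stated for extended pencils. So no ellipticity check is required.

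Your claim that $A\in\ca$ would contradict non-degeneracy is also wrong. By Theorem~\ref{a=b+c-Pi}, $\ca$ passes through $A$ exactly when $\alpha=\beta+\gamma-\pi$, and this occurs for proper, non-degenerate triangles. For example, take $A=\infty$, let $\cc$ and $\cb$ be the lines $\mathrm{Im}\,z=0$ and $\mathrm{Im}\,z=1$, and let $\ca$ be the line through $B=0$ and $C=1+i$. The resulting half-strip is a proper triangle with angles $0,\frac{\pi}{4},\frac{3\pi}{4}$.

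In this example the only member of $\gA$ orthogonal to $\ca$ is the parabolic cycle $A=\infty$ itself. This agrees with $(\cb,\cc;\ch_a)=\cos\gamma/\cos\beta=-1$: with $\cb,\cc\approx M,N$ one gets $\ch_a\sim M+N$ and $\det(M+N)=-(2+2\la\cb,\cc\ra)=0$, since $\la\cb,\cc\ra=-1$. So if you insisted that $\ch_a$ be an elliptic cycle, the statement would be false. Under the paper's convention, your first two paragraphs already form a complete proof.
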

\begin{proof}
According to the definition and Proposition~\ref{ca_perp_gP} $\ch_a$ exists unless $\ca\in\gA^\perp$. Since $\gA=\GG(\cb,\cc)$ the latter is equivalent to $\ca\perp\cb$ and $\ca\perp\cc$ holding simultaneously.
\end{proof}
 
Consider now a cevian $\cn\in\gA$. What is the angle 
$\angle(\ca,\cn)$?

\begin{theorem}
Let $\ca,\cb,\cc$ be three non-collinear oriented cycles.
Let $\cn\in\CC(\cb,\cc)$. Then
$$
\la\ca,\cn\ra = (\cn,\cb;\cc)\la\ca,\cb\ra + (\cn,\cc;\cb)\la\ca,\cc\ra.
$$
\end{theorem}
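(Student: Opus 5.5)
The plan is to express the normalized matrix of $\cn$ as a linear combination of the matrices of $\cb$ and $\cc$, with coefficients given by splitting factors, and then pair with $\ca$ using bilinearity of $\la\cdot,\cdot\ra$.

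Let $\ca,\cb,\cc\approx L,M,N$, and let $\cn\approx K$; this assumes $\cn$ is an elliptic, oriented cycle, as is implicit in the statement. Since $\cn\in\CC(\cb,\cc)$, we may write $K=xM+yN$ for unique reals $x,y$. The coefficients are identified as follows.

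\emph{Coefficient of $M$.} From $xM=K-yN$ we get $M\sim K-(y/x)\cdot\ldots$, which is not quite the right form. It is cleaner to write $M=\frac{1}{x}K-\frac{y}{x}N$ and recall the definition of the splitting factor: $(\cn,\cc;\cb)$ is the $\lambda$ with $\cb\sim K-\lambda N$. Comparing $xM=K-yN$ gives $(\cn,\cc;\cb)=y$.

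\emph{Coefficient of $N$.} Symmetrically, $yN=K-xM$, so $\cc\sim K-xM$ and $(\cn,\cb;\cc)=x$.

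Hence
$$K=(\cn,\cb;\cc)\,M+(\cn,\cc;\cb)\,N.$$
This is the same device used in Proposition~\ref{(c,a;b)}, with the roles relabeled. Applying $\la L,\cdot\ra$ and using linearity then gives
$$\la\ca,\cn\ra=(\cn,\cb;\cc)\la\ca,\cb\ra+(\cn,\cc;\cb)\la\ca,\cc\ra.$$

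The only delicate point is bookkeeping of signs and orientation. The splitting factor $(\cn,\cb;\cc)$ is defined via the normalized matrices of its first two arguments, namely $K$ and $M$, and is even in the third argument. So the orientation of $\cb,\cc$ as third arguments is irrelevant, while the orientation of $\cn$ is fixed by $K$. I would check that $\cc\sim K-xM$ really uses the normalized $K$ and $M$, so that the identification $x=(\cn,\cb;\cc)$ holds without a hidden factor. Degenerate cases also need a check: if $\cn\cong\cb$, then $y=0$ and $(\cn,\cc;\cb)=0$ consistently, and similarly for $\cn\cong\cc$. Here the convention $\lambda=\infty$ never arises, because $K$ is a genuine combination with finite coefficients. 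Non-collinearity of $\ca,\cb,\cc$ is not needed beyond guaranteeing $\cb\not\cong\cc$, so that $M$ and $N$ form a basis of the pencil's subspace.
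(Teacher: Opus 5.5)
Your proposal is correct and is essentially the paper's argument: write the normalized matrix of $\cn$ as $(\cn,\cb;\cc)M+(\cn,\cc;\cb)N$ and pair with $L$ using bilinearity. The one difference is that you read both coefficients directly off the definition of the splitting factor. The paper instead starts from $\cn\approx(\cn,\cb;\cc)(M-\lambda N)$ with $\lambda=(\cb,\cc;\cn)$, and then needs the cyclic product identity to rewrite $-\lambda(\cn,\cb;\cc)$ as $(\cn,\cc;\cb)$, so your route is slightly more direct. In both versions you should tacitly assume $\cn\not\cong\cb,\cc$: otherwise one splitting factor in the statement is undefined, not merely ``consistent''.
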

\begin{proof}
Let $\ca,\cb,\cc\approx L,M, N$. Let $\lambda = (\cb,\cc;\cn)$. Then 
$\cn\approx (\cn,\cb;\cc)(M-\lambda N)$. Therefore
$$
\la\ca,\cn\ra = (\cn,\cb;\cc)\la L, M-\lambda N\ra = 
 (\cn,\cb;\cc)\la \ca,\cb\ra - \lambda (\cn,\cb;\cc)\la\ca,\cc \ra.
$$
On the other hand, $- \lambda (\cn,\cb;\cc) = (\cc,\cn;\cb)^{-1}=(\cn,\cc;\cb)$. 
\end{proof}

\begin{corollary}
If the altitude $\ch_a$ exists then
$$
(\cb,\cc;\ch_a) = \frac{\la\ca,\cb\ra}{\la\ca,\cc\ra} = \frac{\cos\gamma}{\cos\beta}.
$$
\end{corollary}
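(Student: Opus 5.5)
The natural route is to feed the defining condition $\ch_a\perp\ca$ into the theorem just proved, which expresses $\la\ca,\cn\ra$ for any cevian $\cn\in\CC(\cb,\cc)$. Put $\cn=\ch_a$ and $\lambda=(\cb,\cc;\ch_a)$. The altitude is characterized by $\la\ca,\ch_a\ra=0$, so the theorem gives
$$
0=(\ch_a,\cb;\cc)\la\ca,\cb\ra+(\ch_a,\cc;\cb)\la\ca,\cc\ra .
$$
The task is then to turn this into a statement about $\lambda$. Rather than going through the splitting-factor identities, I would return to the normalized matrices. Write $\ca,\cb,\cc\approx L,M,N$. By Proposition~\ref{(c,a;b)}, $\ch_a\approx(\ch_a,\cb;\cc)(M-\lambda N)$, and this factor is nonzero. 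Orthogonality therefore amounts to $\la L,M-\lambda N\ra=0$, that is,
$$
\la\ca,\cb\ra-\lambda\la\ca,\cc\ra=0 .
$$
This is the same computation used in the proof of Proposition~\ref{ca_perp_gP}.

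Assuming $\la\ca,\cc\ra\neq0$, this gives $\lambda=\la\ca,\cb\ra/\la\ca,\cc\ra$. If instead $\la\ca,\cc\ra=0$, the altitude still exists provided $\la\ca,\cb\ra\neq0$. In that case the equation forces $\lambda=\infty$, so $\ch_a\cong\cc$, which agrees with the formula read in $\oline{\R}$. The case where both inner products vanish is exactly the one excluded by the preceding proposition.

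The last step converts the inner products into angles. For a triangle, the convention in the section on digons gives $\la\ca,\cb\ra=-\cos\gamma$ and $\la\ca,\cc\ra=-\cos\beta$, the same identities used after Theorem~\ref{type1}. The two minus signs cancel, leaving $\cos\gamma/\cos\beta$.

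I don't expect a real obstacle here. The one point needing care is that $\ch_a$ may be elliptic, so the normalized-matrix representation applies; if it were not, I would simply use $\ch_a\sim M-\lambda N$, which involves only the defining relation of $\lambda$. The degenerate cases $\lambda=0$ or $\lambda=\infty$ are handled by reading the quotient in $\oline{\R}$.
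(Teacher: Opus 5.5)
Your argument is correct, but its decisive step differs from the paper's. The paper stays inside the splitting-factor calculus. From $0=(\ch_a,\cb;\cc)\la\ca,\cb\ra+(\ch_a,\cc;\cb)\la\ca,\cc\ra$ it writes $\la\ca,\cb\ra/\la\ca,\cc\ra=-(\ch_a,\cc;\cb)/(\ch_a,\cb;\cc)$. It then converts this into $(\cb,\cc;\ch_a)$ using the reciprocity of Proposition~\ref{splitting-prop} and the cyclic identity $(\ca,\cb;\cc)(\cb,\cc;\ca)(\cc,\ca;\cb)=-1$.

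You instead go back to $\ch_a\sim M-\lambda N$ and read off $\la\ca,\cb\ra-\lambda\la\ca,\cc\ra=0$ at once. This is really the intermediate line in the proof of the preceding theorem, so your first display and the normalization factor are not needed.

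What your route buys is uniformity. It never divides by $(\ch_a,\cb;\cc)$ and never uses $(\cc,\ch_a;\cb)$, which is undefined when $\ch_a\cong\cb$. It also does not require $\ch_a$ to be elliptic. So the same homogeneous equation covers the cases $\lambda\in\{0,\infty\}$. It also covers the case where $\cb,\cc$ are tangent at $A$ and the orthogonal member of $\GG(\cb,\cc)$ is the parabolic cycle $A$ itself. The paper's manipulation tacitly excludes all of these. The paper's route, in exchange, stays entirely within the splitting-factor identities and never reopens the matrix model.

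Your sentence ``$\ch_a$ may be elliptic'' presumably means ``$\ch_a$ need not be elliptic''. Read that way, your fallback to $\sim$ is exactly the right fix.
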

\begin{proof}
Since $\ch_a\perp\ca$ we have that
$$
0=\la\ca,\ch_a\ra = 
(\ch_a,\cb;\cc)\la\ca,\cb\ra + (\ch_a,\cc;\cb)\la\ca,\cc\ra
$$ 
implying that
$$
\frac{\la\ca,\cb\ra}{\la\ca,\cc\ra} = 
-\frac{(\ch_a,\cc;\cb)}{(\ch_a,\cb;\cc)}=
-\frac{1}{(\cc,\ch_a;\cb)(\ch_a,\cb;\cc)}=
(\cb,\cc;\ch_a)
$$
according to Proposition~\ref{splitting-prop}.
\end{proof}
Hence, from the Ceva's Theorem it follows that the orthocenter $\gH$ exists unless at least two angles are right. If, for example, $\alpha=\beta=\frac{\pi}{2}\neq\gamma$ then $\ch_a=\ch_b=\cc$ and $\ch_c$ does not
exist; in this case we have only one altitude and the orthocenter is undefined.
This may occur to a spherical triangle. When all angles are right then the triangle has no altitudes at all.

On the other hand, for Euclidean (except the generalized triangle with angles $0$, $\frac{\pi}{2}$, $\frac{\pi}{2}$) or hyperbolic triangles the orthocenter always exists. However, for a hyperbolic triangle it may be a point at infinity, or a line (Figure~\ref{fig:orthocenter}). 

\begin{figure}[h]
\[
\begin{tikzpicture}
\tikzstyle{vertex}=[circle, fill, inner sep=1.5pt]
\draw [thick](3.000000,3.000000) arc (125.753887:76.865978:3.696701);
\node [vertex] at (3.000000,3.000000) {};
\node [vertex] at (6.000000,3.600000) {};
\draw [thick](6.000000,3.600000) arc (103.134022:54.246113:3.696701);
\draw [thick](3.000000,3.000000) arc (135.000000:45.000000:4.242641);
\node [vertex] at (9.000000,3.000000) {};
\draw (1.968750-3.172298,0.000000) arc (180:0:3.172298);
\draw (10.031250+3.172298,-0.000000) arc (0:180:3.172298);
\draw (6, 0) -- (6, 5);
\draw (-1.5, 0) -- (13.5, 0);
\draw [thick](6.0-2.487468,0.000000) arc (180:0:2.487468);
\end{tikzpicture}
\]
\caption{An isosceles hyperbolic triangle with a hyperbolic orthocenter. The othocenter is a unique line orthogonal to all altitudes.}
\label{fig:orthocenter}
\end{figure}

In all cases we have $\ch_a=[0:\cos\beta:-\cos\gamma]$, $\ch_b=[\cos\alpha:0:-\cos\gamma]$, $\ch_c=[\cos\alpha:-\cos\beta:0]$, and $\gH=(\frac{1}{\cos\alpha}:\frac{1}{\cos\beta}:\frac{1}{\cos\gamma})$.
Theorem~\ref{type1} gives the expression
$$
X = \tan^2\alpha+\tan^2\beta+\tan^2\gamma + 6 + 
2\frac{\cos^2\alpha+\cos^2\beta+\cos^2\gamma}{\cos\alpha\cos\beta\cos\gamma}.
$$
for the orthocenter. Thus, for an acute triangle $\gH$ is elliptic, but this is not a criterion.

\section{Menelaus's Theorem}
We prove Menelaus's Theorem in its most general form.
\begin{theorem}[Menelaus's Theorem]
Let $\ca, \cb,\cc$ be three distinct non-collinear cycles. Let $\cn_a\in\gA$, 
$\cn_b\in\gB$, $\cn_c\in\gC$ be three cycles distinct from $\ca$, $\cb$, $\cc$. Then 
the pencils $\GG(\ca,\cn_a)$, $\GG(\cb,\cn_b)$, $\GG(\cc, \cn_c)$
are concurrent iff
$$
(\cb,\cc;\cn_a)(\cc,\ca;\cn_b)(\ca,\cb;\cn_c) = -1.
$$
Moreover, if $\cn$ is the common cycle and $\cn_a$ is cyclic, then
$$
(\ca,\cn_a;\cn) = (\cb,\cn_a;\cc)(\ca,\cb;\cn_c).
$$

\end{theorem}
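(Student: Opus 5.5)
The plan is to mirror the proof of Ceva's Theorem, working with trilinear coordinates on $\GG(\ca,\cb,\cc)$ and using projective duality. As in that proof, set $\lambda=(\cb,\cc;\cn_a)$, $\mu=(\cc,\ca;\cn_b)$, $\nu=(\ca,\cb;\cn_c)$. Then
$$
\cn_a=[0:1:-\lambda],\quad \cn_b=[-\mu:0:1],\quad \cn_c=[1:-\nu:0],
$$
while $\ca=[1:0:0]$, $\cb=[0:1:0]$, $\cc=[0:0:1]$.

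First, compute the pencil coordinates of the three pencils with the proposition giving $\GG(\cn_1,\cn_2)$ as a cross product:
$$
\GG(\ca,\cn_a)=(0:\lambda:1),\quad \GG(\cb,\cn_b)=(1:0:\mu),\quad \GG(\cc,\cn_c)=(\nu:1:0).
$$
Each of these should be double-checked by requiring $ux+vy+wz=0$ on both generators. The concurrency criterion (vanishing of the $3\times3$ determinant of pencil coordinates) then gives
$$
\left|\begin{array}{ccc}0&\lambda&1\\1&0&\mu\\ \nu&1&0\end{array}\right|=\lambda\mu\nu+1,
$$
so the pencils are concurrent iff $\lambda\mu\nu=-1$. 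The hypothesis that $\cn_a,\cn_b,\cn_c$ are distinct from $\ca,\cb,\cc$ guarantees that $\lambda,\mu,\nu\notin\{0,\infty\}$ and that the three pencils are genuine lines. That settles the first claim.

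For the second claim, assume $\lambda\mu\nu=-1$ and let $\ca,\cb,\cc\approx L,M,N$. Using Proposition~\ref{(c,a;b)}, write $\cn_a\approx(\cn_a,\cb;\cc)(M-\lambda N)$ (this is where ellipticity of $\cn_a$ is needed). The common cycle lies on $\GG(\cb,\cn_b)$ and $\GG(\cc,\cn_c)$; intersecting these dual lines gives
$$
\cn=[1:-\nu:\lambda\nu]=[1:-\nu:-\mu^{-1}].
$$
Hence $\cn\sim L-\nu M+\lambda\nu N=L-\nu(M-\lambda N)$. Substituting the normalized form of $\cn_a$ gives
$$
\cn\sim L-\nu(\cn_a,\cb;\cc)^{-1}\,(\text{matrix of }\cn_a),
$$
so by the definition of the splitting factor
$$
(\ca,\cn_a;\cn)=\nu\,(\cn_a,\cb;\cc)^{-1}=(\ca,\cb;\cn_c)(\cb,\cn_a;\cc),
$$
where the last step uses Proposition~\ref{splitting-prop}.

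The main obstacle is sign and orientation bookkeeping: confirming that the normalization from Proposition~\ref{(c,a;b)} carries the factor $(\cn_a,\cb;\cc)$ and not its inverse or negative, and that the inversion in Proposition~\ref{splitting-prop} applies with the argument order used. I would verify this by re-deriving the coefficient relation $xL'=M-\lambda N$ explicitly, exactly as in the proof of Proposition~\ref{(c,a;b)}.
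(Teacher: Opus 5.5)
Your proposal is correct and is essentially the paper's proof. The ingredients match: the pencil coordinates $(0:\lambda:1)$, $(1:0:\mu)$, $(\nu:1:0)$, the determinant $\lambda\mu\nu+1$, the common cycle $\cn\sim L-\nu M+\lambda\nu N$, and the normalization of $\cn_a$ from Proposition~\ref{(c,a;b)}. Your explicit appeal to Proposition~\ref{splitting-prop}, converting $(\cn_a,\cb;\cc)^{-1}$ into $(\cb,\cn_a;\cc)$, spells out a step the paper leaves implicit, and your signs check out.
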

When $\ca$, $\cb$, $\cc$ form a M\"obius triangle with positive inner angles then 
the condition in the Menelaus's Theorem has its usual trigonometric form
$$
\frac{\sin\angle(\cb, \cn_a)}{\sin\angle(\cc, \cn_a)}
\frac{\sin\angle(\cc, \cn_b)}{\sin\angle(\ca, \cn_b)}
\frac{\sin\angle(\ca, \cn_c)}{\sin\angle(\cb, \cn_c)}
=-1.
$$

\begin{proof}
Let $\lambda=(\cb,\cc;\cn_a)$, $\mu=(\cc,\ca;\cn_b)$, $\nu=(\ca,\cb;\cn_c)$.
Then 
$\GG(\ca,\cn_a)=(0:\lambda:1)$, $\GG(\cb,\cn_b)=(1:0:\mu)$, $\GG(\cc, \cn_c)=(\nu:1:0)$. Therefore, $\GG(\ca,\cn_a)$, $\GG(\cb,\cn_b)$, $\GG(\cc, \cn_c)$ are concurrent iff
$$
\left|
\begin{array}{ccc}
0 & \lambda & 1 \\
1 & 0 & \mu\\
\nu & 1 & 0
\end{array}
\right|=\lambda\mu\nu+1 =0.
$$
Let $\ca,\cb,\cc\approx L, M, N$. Then $\cn_a\approx (\cn_a,\cb;\cc)(M-\lambda N)$, and $\cn\sim L-\nu M+\lambda\nu N$. Therefore,
$(\ca,\cn_a;\cn) = \nu (\cb,\cn_a;\cc)$.
\end{proof}

Note that when all cevians $\cn_a,\cn_b,\cn_c$ are real the cycle $\cn$ can be virtual. 
We identify its type. We have $\cn=[u:v:w]$ where
$$
\frac{w}{v} = -(\cb,\cc;\cn_a),\quad
\frac{u}{w} = -(\cc,\ca;\cn_b),\quad
\frac{v}{u} = -(\ca,\cb;\cn_c).
$$

\begin{theorem}\label{menelaus-type}
Let $\ca,\cb,\cc$ be three non-collinear oriented cycles. Let $\cn=[u:v:w]$, and 
$$
Y = u^2+v^2+w^2 + 2uv \la\ca,\cb \ra+2uw\la\ca,\cc\ra + 2vw\la \cb,\cc \ra.
$$
Then $\cn$ is elliptic iff $Y>0$, parabolic iff $Y=0$, and hyperbolic iff $Y<0$.
\end{theorem}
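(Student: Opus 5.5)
The plan is to compute $\det$ of the representing matrix directly and then apply Proposition~\ref{cycle-type}. Let $\ca,\cb,\cc\approx L,M,N$, so that the normalized matrices satisfy $\la L,L\ra=\la M,M\ra=\la N,N\ra=1$. A generalized cycle $\cn=[u:v:w]$ is represented by $K=uL+vM+wN$, which is nonzero because $\ca,\cb,\cc$ are non-collinear and so $L,M,N$ are linearly independent. The key identity, established at the start of the paper, is $\la K,K\ra=-\det K$ for every $K\in\Herm(2)$. Bilinearity and symmetry of $\la\cdot,\cdot\ra$ then give
$$
\la K,K\ra = u^2\la L,L\ra+v^2\la M,M\ra+w^2\la N,N\ra+2uv\la L,M\ra+2uw\la L,N\ra+2vw\la M,N\ra .
$$
Substituting the normalizations and $\la L,M\ra=\la\ca,\cb\ra$, and similarly for the other pairs, this is exactly $Y$. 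Hence $\det K=-Y$.

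Next I apply Proposition~\ref{cycle-type}: $\cn$ is elliptic iff $\det K<0$, parabolic iff $\det K=0$, and hyperbolic iff $\det K>0$. Since $\det K=-Y$, these conditions become $Y>0$, $Y=0$ and $Y<0$ respectively, which is the statement.

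There is one point of well-definedness to address, though it is not a real obstacle. The coordinates $[u:v:w]$ are only homogeneous, and rescaling by $t\neq0$ multiplies both $Y$ and $\det K$ by $t^2>0$. So the sign of $Y$ depends only on $\cn$, and the classification is consistent. The hypothesis that the cycles are oriented is what makes $L,M,N$ (and hence the sign conventions in $\la\ca,\cb\ra$) canonical. Reversing the orientation of, say, $\ca$ replaces $L$ by $-L$, $u$ by $-u$, and $\la\ca,\cb\ra,\la\ca,\cc\ra$ by their negatives, and $Y$ is unchanged. Since $Y$ is just $(u,v,w)\Gamma(u,v,w)^\top$ for the Gram matrix $\Gamma$, I expect essentially no difficulty beyond recording this quadratic-form identity.
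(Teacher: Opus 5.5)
Your proposal is correct and is the paper's own argument: represent $\cn$ by $K=uL+vM+wN$, use $\det K=-\la K,K\ra$ with bilinearity to get $\det K=-Y$, then apply Proposition~\ref{cycle-type}. Your checks that the sign of $Y$ is unchanged by rescaling $[u:v:w]$ and by reversing orientations are correct, and the paper leaves them implicit.
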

\begin{proof}
Let $\ca,\cb,\cc\approx L, M, N$. Then $\cn\sim uL+vM+wN$ and
\begin{gather*}
\det (uL+vM+wN)= -\la uL+vM+wN, uL+vM+wN \ra = \\
=-u^2-v^2-w^2 - 2uv \la\ca,\cb \ra-2uw\la\ca,\cc\ra - 2vw\la \cb,\cc \ra.
\end{gather*}
Then apply Proposition~\ref{cycle-type}.
\end{proof}

Consider now the standard geometries.
Assume that the cevians $\cn_a, \cn_b, \cn_c$ satisfy the condition on the Menelaus's Theorem. Then
\begin{itemize}
\item for a spherical triangle $\cn_a, \cn_b, \cn_c$ intersect $\ca,\cb,\cc$ in three collinear points defining a new line $\cn$.

\item for a Euclidean triangle $\cn_a, \cn_b, \cn_c$ either intersect $\ca,\cb,\cc$ in three collinear points, or two cevians intersect opposite sides and the intersection points define a line parallel to the third cevian, or every cevian is parallel to the opposite side; in this case $\cn$ is the line at infinity.  

\item for a hyperbolic triangle there are even more cases. When $\ca$ and $\cn_a$
do not intersect let $\cp_a$ be their common perpendicular; define $\cp_b$ and $\cp_c$ analogously. Menelaus's Theorem defines a generalized cycle $\cn$ such that exactly one of the following cases holds (Figure~\ref{fig:menelaus}):
\begin{itemize}
\item[$(i)$] $\cn_a$, $\cn_b$, $\cn_c$ intersect the opposite sides in three collinear points lying on $\cn$.
\item[$(ii)$] one of the lines $\cp_a$, $\cp_b$, $\cp_c$ exists, $\cn$ is orthogonal to it and passes through the intersection points of the remaining two cevians with the opposite sides.

\item[$(iii)$] two of the lines $\cp_a$, $\cp_b$, $\cp_c$ exist, $\cn$ is orthogonal to both of them, and passes through the intersection point of the third cevian with the opposite side.

\item[$(iv)$] $\cp_a$, $\cp_b$, $\cp_c$ exist and are orthogonal to $\cn$.

\item[$(v)$] $\cp_a$, $\cp_b$, $\cp_c$ exist and approach the same point at infinity. In this case $\cn$ is parabolic and coincides with this point.

\item[$(vi)$] $\cp_a$, $\cp_b$, $\cp_c$ exist and are concurrent. In this case $\cn$ is virtual and corresponds to the intersection point of $\cp_a$, $\cp_b$, $\cp_c$.

\end{itemize}
\end{itemize}

\begin{figure}
\[
\begin{tikzpicture}
\tikzstyle{vertex}=[circle, fill, inner sep=1.5pt]
\draw[thick] (3.000000,1.000000) arc (170.134193:136.735705:5.836309);
\draw (3.000000,1.000000) arc (170.134193:126.735705:5.836309);
\node [vertex] at (3.000000,1.000000) {};
\node [vertex] at (4.500000,4.000000) {};
\draw [thick] (4.500000,4.000000) arc (68.404690:44.215175:4.301970);
\node [vertex] at (6.000000,3.000000) {};
\draw [thick] (3.000000,1.000000) arc (160.559965:86.820170:3.004626);

\node [vertex] at (4.883253,3.826157) {};
\node [vertex] at (4.727012,2.793534) {};
\node [vertex] at (5.007938,4.478780) {};

\draw (2.891705,0.000000) arc (180.000000:110.000000:4.671176);
\draw (4.940551,0.000000) arc (0.000000:15.000000:18.379348);
\draw (6.700885,0.000000) arc (0.000000:45.000000:6.770899);
\draw [thick] (4.549989,0.000000) arc (180.000000:167.000000:22.130393);

\draw [thick](11.000000,1.000000) arc (170.134193:136.735705:5.836309);
\node [vertex] at (11.000000,1.000000) {};
\node [vertex] at (12.500000,4.000000) {};
\draw [thick](12.500000,4.000000) arc (68.404690:44.215175:4.301970);
\node [vertex] at (14.000000,3.000000) {};
\draw [thick](11.000000,1.000000) arc (160.559965:86.820170:3.004626);

\draw (10.875815,0.000000) arc (180.000000:100.000000:4.088351);
\draw (9.983443,0.000000) arc (180.000000:90.000000:4.437225);
\draw (9.797171,0.000000) arc (180.000000:60.000000:3.172122);
\draw [thick] (10.338331,0.000000) arc (180.000000:90.000000:3.796414);
\draw [dashed] (13.303363,0.000000) arc (0.000000:35.000000:8.252262);

\node [vertex] at (13.161829,3.669631) {};
\node [vertex] at (11.685618,2.900782) {};

\draw [thick](11.000000,-11.000000) arc (170.134193:136.735705:5.836309);
\node [vertex] at (11.000000,-11.000000) {};
\node [vertex] at (12.500000,-8.000000) {};
\draw [thick](12.500000,-8.000000) arc (68.404690:44.215175:4.301970);
\node [vertex] at (14.000000,-9.000000) {};
\draw [thick](11.000000,-11.000000) arc (160.559965:86.820170:3.004626);

\node [vertex] at (12.390873,-8.739522) {};

\draw (10.498851,-12.000000) arc (180.000000:0.000000:1.248281);
\draw (10.049144,-12.000000) arc (180.000000:90.000000:4.489594);
\draw (11.716888,-12.000000) arc (180.000000:90.000000:3.112550);

\draw [dashed](11.827047,-12.000000) arc (180.000000:150.000000:9.709216);
\draw [dashed](13.208399,-12.000000) arc (0.000000:45.000000:6.910519);
\draw [dashed](13.838872,-12.000000) arc (0.000000:70.000000:4.394830);

\draw [thick](11.000000,-5.000000) arc (170.134193:136.735705:5.836309);
\node [vertex] at (11.000000,-5.000000) {};
\node [vertex] at (12.500000,-2.000000) {};
\draw [thick](12.500000,-2.000000) arc (68.404690:44.215175:4.301970);
\draw (12.500000,-2.000000) arc (68.404690:0:4.301970);
\node [vertex] at (14.000000,-3.000000) {};
\draw [thick](11.000000,-5.000000) arc (160.559965:86.820170:3.004626);

\draw (10.746949,-6.000000) arc (180.000000:0.000000:2.102409);
\draw (9.857514,-6.000000) arc (180.000000:80.000000:4.348695);
\draw (11.491933,-6.000000) arc (180.000000:80.000000:3.048244);

\draw [dashed](13.868569,-6.000000) arc (180.000000:90.000000:1.658797);
\draw [dashed](13.483185,-6.000000) arc (0.000000:25.000000:12.716299);
\draw [dashed](13.350413,-6.000000) arc (0.000000:90.000000:3.310007);
\draw [thick](12.421675,-6.000000) arc (180.000000:00.000000:1.109852);

\draw [thick](3.000000,-5.000000) arc (170.134193:136.735705:5.836309);
\node [vertex] at (3.000000,-5.000000) {};
\node [vertex] at (4.500000,-2.000000) {};
\draw [thick](4.500000,-2.000000) arc (68.404690:44.215175:4.301970);
\draw (4.500000,-2.000000) arc (68.404690:0:4.301970);
\node [vertex] at (6.000000,-3.000000) {};
\draw [thick](3.000000,-5.000000) arc (160.559965:86.820170:3.004626);

\draw (2.784682,-6.000000) arc (180.000000:0.000000:2.429804);
\draw (1.857514,-6.000000) arc (180.000000:70.000000:4.348695);
\draw (3.275946,-6.000000) arc (180.000000:60.000000:3.013977);
\node [vertex] at (6.807959,-4.165663) {};

\draw [dashed](5.483185,-6.000000) arc (0.000000:25.000000:12.716299);
\draw [dashed](4.833435,-6.000000) arc (0.000000:110.000000:2.390234);
\draw [thick](3.504482,-6.000000) arc (180.000000:0.000000:2.161019);

\draw [thick](3.000000,-11.000000) arc (170.134193:136.735705:5.836309);
\node [vertex] at (3.000000,-11.000000) {};
\node [vertex] at (4.500000,-8.000000) {};
\draw [thick](4.500000,-8.000000) arc (68.404690:44.215175:4.301970);
\draw (4.500000,-8.000000) arc (68.404690:0:4.301970);
\node [vertex] at (6.000000,-9.000000) {};
\draw [thick](3.000000,-11.000000) arc (160.559965:86.820170:3.004626);

\draw (2.740230,-12.000000) arc (180.000000:0.000000:2.054666);
\draw (1.724297,-12.000000) arc (180.000000:70.000000:4.270004);
\draw (3.642277,-12.000000) arc (180.000000:70.000000:3.087483);
\draw [dashed](5.663801,-12.000000) arc (180.000000:90.000000:1.994841);
\draw [dashed](5.674580,-12.000000) arc (0.000000:10.000000:28.353994);
\draw [dashed](5.678874,-12.000000) arc (0.000000:70.000000:4.010042);
\node [vertex] at (5.673873,-12.0) {};

\draw (1, 0) -- (16, 0);
\draw (1, -6) -- (16, -6);
\draw (1, -12) -- (16, -12);
\end{tikzpicture}
\]
\caption{Various cases $(i)$--$(vi)$ of Menelaus's Theorem for a hyperbolic triangle. Every cevian either intersects the opposite side of the triangle or admits a common perpendicular (dashed line).}
\label{fig:menelaus}
\end{figure}

\section{Duality}
Let $\ca,\cb,\cc$ be three non-collinear cycles. Let $V\subset\Herm(2)$ be the corresponding subspace of dimension $3$. If $V$ is non-degenerate there exists a natural bijection between $\GG(\ca,\cb,\cc)$ and $\PP(\ca,\cb,\cc)$.
Indeed, $\GG(\ca,\cb,\cc)$ is in bijection with $1$-dimensional subspaces of $V$, i.e. with the Grassmannian $\Gr_1(V)$, and $\PP(\ca,\cb,\cc)$ is, in turn, in bijection with $\Gr_2(V)$. The orthogonal complement $W\mapsto W^\perp$ gives a bijection between $\Gr_2(V)$ and $\Gr_1(V)$.  Therefore, we obtain an identification between $\GG(\ca,\cb,\cc)$ and $\PP(\ca,\cb,\cc)$. This works for spherical and hyperbolic geometries, but not for Euclidean. We use notation $\gP^\ast$, and $\ca^\ast$ for this map. Trivially, we have $\gP^{\ast\ast}=\gP$, $\ca^{\ast\ast}=\ca$ for all $\gP$ and $\ca$. We call $\gP^\ast$ the \textit{complement} of $\gP$, and similarly for $\ca^\ast$.

The complement can be written in coordinate form. That is, for every generalized triangle  
there exists a projective transformation $T\in\PGL(3;\R)$ that makes the identification; that is such that $[u:v:w]=(x:y:z)^\ast$ iff 
$\left(\begin{smallmatrix}u\\ v\\ w\end{smallmatrix}\right)=T
\left(\begin{smallmatrix}x\\ y\\ z\end{smallmatrix}\right)$.
We compute the transformation $T$.

\begin{prop}
Let $\ca,\cb,\cc$ be non-collinear cycles generating a non-degenerate subspace in $\Herm(2)$. Then the transformation $T$ is given by the formula
$$
T=\left(\begin{array}{ccc}
1-\la\cb,\cc\ra^2 &  \la\ca,\cc\ra\la\cb,\cc\ra-\la\ca,\cb\ra & \la\ca,\cb\ra\la\cb,\cc\ra-\la\ca,\cc\ra\\
\la\ca,\cc\ra\la\cb,\cc\ra-\la\ca,\cb\ra & 1-\la\ca,\cc\ra^2 &
\la\ca,\cb\ra\la\ca,\cc\ra-\la\cb,\cc\ra\\
\la\ca,\cb\ra\la\cb,\cc\ra-\la\ca,\cc\ra&
\la\ca,\cb\ra\la\ca,\cc\ra-\la\cb,\cc\ra& 1- \la\ca,\cb\ra^2
\end{array}\right)
$$
up to a nonzero constant. The opposite map is given by the Gram matrix
$$
\Gamma=
\left(
\begin{matrix}
1 & \la\ca,\cb\ra & \la\ca,\cc\ra \\
\la\ca,\cb\ra & 1 & \la\cb,\cc\ra \\
\la\ca,\cc\ra & \la\cb,\cc\ra & 1\\
\end{matrix}
\right).
$$
\end{prop}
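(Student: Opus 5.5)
We need to show the complement map, in trilinear coordinates, is given by $T=\operatorname{adj}\Gamma$ (up to scale) and its inverse by $\Gamma$. I will argue directly with the inner product on $V$.

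Let $\ca,\cb,\cc\approx L,M,N$, so $L,M,N$ is a basis of $V$ with Gram matrix $\Gamma$. Take a pencil $\gP=(x:y:z)$ with subspace $W\subset V$. By the definition of the coordinates on $\PP(\ca,\cb,\cc)$, $W$ consists exactly of the vectors $uL+vM+wN$ with $ux+vy+wz=0$. The complement $\gP^\ast$ is the line $W^\perp\cap V$. By Proposition~\ref{W-type}, $W^\perp$ in $\Herm(2)$ is the subspace of $\gP^\perp$; since $V$ is non-degenerate, $W^\perp\cap V$ is one-dimensional.

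We seek $K=pL+qM+rN$ with $\la K,uL+vM+wN\ra=0$ whenever $ux+vy+wz=0$. Evaluating $\la K,L\ra,\la K,M\ra,\la K,N\ra$ gives the vector $\Gamma(p,q,r)^\top$. The orthogonality condition says this vector annihilates the plane $\{ux+vy+wz=0\}$, so it is proportional to $(x,y,z)^\top$. Hence $\Gamma(p,q,r)^\top=t(x,y,z)^\top$ with $t\neq0$, because $\Gamma$ is nonsingular ($V$ non-degenerate). Therefore
\[
(p,q,r)^\top\propto\Gamma^{-1}(x,y,z)^\top\propto\operatorname{adj}(\Gamma)(x,y,z)^\top .
\]
Conversely, from a cycle $[u:v:w]$ we recover the pencil as $(x:y:z)\propto\Gamma(u,v,w)^\top$, so the opposite map is given by $\Gamma$.

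It remains to compute the adjugate of the symmetric matrix $\Gamma$. Its entries are the cofactors; for example the $(1,1)$ entry is $1-\la\cb,\cc\ra^2$ and the $(1,2)$ entry is
\[
-(\la\ca,\cb\ra-\la\ca,\cc\ra\la\cb,\cc\ra)=\la\ca,\cc\ra\la\cb,\cc\ra-\la\ca,\cb\ra.
\]
The other entries follow the same pattern, and the resulting matrix is exactly the displayed $T$. As a consistency check, the quadratic form $(x,y,z)\,T\,(x,y,z)^\top$ is precisely the expression $X$ of Theorem~\ref{type1}, just as $Y$ in Theorem~\ref{menelaus-type} is the form of $\Gamma$.

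The only delicate point is keeping the two dualities straight: the trilinear duality between pencil coordinates and cycle coordinates is given by the Euclidean pairing $ux+vy+wz$, whereas the complement uses the Minkowski pairing, which is the Gram form $\Gamma$. Non-degeneracy of $V$ is used twice: to make $\Gamma$ invertible, which gives $t\neq0$, and to ensure the map is a bijection. This is also why the construction fails in the Euclidean case, where $\det\Gamma=0$.
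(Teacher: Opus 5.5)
Your proof is correct and is essentially the paper's argument. The paper imposes orthogonality to the three cevians $[0:z:-y]$, $[z:0:-x]$, $[y:-x:0]$ spanning $\gP$, and factors the coefficient matrix as a rank-two matrix with kernel spanned by $(x,y,z)^\top$ times $\Gamma$; this is your annihilator argument written in coordinates, and both yield $\Gamma(u,v,w)^\top\propto(x,y,z)^\top$, with $\Gamma^{-1}$ identified with $T$ via $T\Gamma=\det\Gamma\, I$ just as in your adjugate computation.
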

\begin{proof}
By a straight-forward computation we establish the identity 
$T\Gamma=\det \Gamma I$.

Let $\ca, \cb, \cc\approx L, M, N$. Let $\gP=(x:y:z)$ be a pencil. Let $\cn_a=[0:z:-y]$, $\cn_b=[z:0:-x]$, $\cn_c=[y:-x:0]$ be cevians lying in $\gP$. We compute the only generalized cycle $\cn\in\gP^\perp$. Let $\cn=[u:v:w]$. Then we have that conditions $\cn\perp \cn_a, \cn_b, \cn_c$ are equivalent to the system of linear equations
\begin{gather*}
\begin{cases}
\la uL+vM+wN,zM-yN\ra= uz\la\ca,\cb\ra - uy\la\ca,\cc\ra
+vz-vy\la\cb,\cc\ra+wz\la\cb,\cc\ra - wy=0,\\
\la uL+vM+wN,zL-xN\ra=uz - ux\la\ca,\cc\ra+vz\la\ca,\cb\ra -vx\la\cb,\cc\ra
+wz\la\ca,\cc\ra-wx=0,\\
\la uL+vM+wN,yL-xM\ra=uy -ux\la\ca,\cb\ra+vy\la\ca,\cb\ra - vx
+wy\la\ca,\cc\ra-wx\la\cb,\cc\ra=0.
\end{cases}
\end{gather*}
Or, in equivalent form,
$$
\left(
\begin{array}{ccc}
z\la\ca,\cb\ra - y\la\ca,\cc\ra & z-y\la\cb,\cc\ra & z\la\cb,\cc\ra - y\\
z - x\la\ca,\cc\ra & z\la\ca,\cb\ra -x\la\cb,\cc\ra & z\la\ca,\cc\ra-x\\
y -x\la\ca,\cb\ra & y\la\ca,\cb\ra - x & y\la\ca,\cc\ra-x\la\cb,\cc\ra
\end{array}
\right)
\left(
\begin{array}{c}
u\\
v\\
w
\end{array}
\right)=0
$$
We can see that the matrix on the left hand side factors as
$$
\left(
\begin{array}{ccc}
0 & z & -y \\
z & 0 & -x \\
y & -x & 0\\
\end{array}
\right)
\left(
\begin{array}{ccc}
1 & \la\ca,\cb\ra & \la\ca,\cc\ra \\
\la\ca,\cb\ra & 1 & \la\cb,\cc\ra \\
\la\ca,\cc\ra & \la\cb,\cc\ra & 1\\
\end{array}
\right).
$$
The first factor is a rank $2$ matrix with its kernel generated by the vector 
$\left(\begin{smallmatrix}x\\ y\\ z\end{smallmatrix}\right)$, and second is the Gram matrix $\Gamma$, which is assumed to be nondegenerate.

Therefore, we have that $\left(\begin{smallmatrix}u\\ v\\ w\end{smallmatrix}\right)$ up to a nonzero constant is equal to $\Gamma^{-1}\left(\begin{smallmatrix}x\\ y\\ z\end{smallmatrix}\right)$, or to
$T\left(\begin{smallmatrix}x\\ y\\ z\end{smallmatrix}\right)$.
\end{proof}
When the subspace $V$ is spherical, then all types are elliptic. In the hyperbolic case the identification flips the type.
\begin{prop}
Let $\ca,\cb,\cc$ generate a hyperbolic (i.e. of signature $(2,1)$) subspace $V$. Then
a pencil $\gP\in\PP(\ca,\cb,\cc)$  is elliptic, parabolic, or hyperbolic
iff the cycle $\gP^\ast$ is hyperbolic, parabolic, or elliptic respectively. 
\end{prop}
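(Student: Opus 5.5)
Let $W\subset V$ be the $2$-dimensional subspace corresponding to $\gP$. By definition $\gP^\ast$ is the generalized cycle spanned by the line $W^\perp\cap V$, i.e. the orthogonal complement of $W$ taken inside $V$. Since $V$ is non-degenerate, $V=W\oplus(W^\perp\cap V)$ whenever $W$ is non-degenerate. Signatures then add, so $\sig W+\sig(W^\perp\cap V)=\sig V=(2,1)$. We translate both sides with the results already proved. By Proposition~\ref{W-type}, $\gP$ is elliptic iff $\sig W=(2,0)$ and hyperbolic iff $\sig W=(1,1)$. By Proposition~\ref{cycle-type}, together with $\la M,M\ra=-\det M$, the cycle $\gP^\ast\sim M$ is elliptic iff $\la M,M\ra>0$, parabolic iff $\la M,M\ra=0$, and hyperbolic iff $\la M,M\ra<0$.

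In the non-degenerate case the argument is immediate. If $\gP$ is elliptic, then $\sig W=(2,0)$, so the complementary line has signature $(0,1)$. Its generator satisfies $\la M,M\ra<0$, hence $\gP^\ast$ is hyperbolic. If $\gP$ is hyperbolic, then $\sig W=(1,1)$, so the line has signature $(1,0)$ and $\gP^\ast$ is elliptic.

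The parabolic case needs separate care, since the direct sum decomposition fails when $W$ is degenerate. Suppose $W$ is degenerate, and let $M\ne 0$ span its radical, so $M\perp W$ and $M\in W$. Then $M\in W^\perp\cap V$. That intersection is one-dimensional by non-degeneracy of $V$, since $\dim(W^\perp\cap V)=3-2$. Hence $\gP^\ast\sim M$. Moreover $\la M,M\ra=0$ because $M\in W$ is orthogonal to $W$, so $\gP^\ast$ is parabolic. Conversely, if $\gP^\ast\sim M$ with $\la M,M\ra=0$, then $M$ lies in its own orthogonal complement in $V$, which is $W$. Thus $W$ contains a vector orthogonal to all of $W$, so $W$ is degenerate and $\gP$ is parabolic.

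The three types of $\gP$ are exhaustive and mutually exclusive, and so are the three types of $\gP^\ast$. Each implication above therefore upgrades to an ``iff''. The main point to check is the identification of $\gP^\ast$ with the orthogonal complement of $W$ inside $V$, and that this complement is a single line even in the degenerate case. Both follow from the non-degeneracy of $V$ and the dimension count. The proof could alternatively be phrased through the matrix $T$, using $T\Gamma=\det\Gamma\, I$: one would compare the quadratic form $X$ of Theorem~\ref{type1} with $Y$ of Theorem~\ref{menelaus-type} evaluated at $T(x,y,z)^\top$, which gives $Y=\det\Gamma\cdot X$ up to a positive factor. Since $\det\Gamma<0$ for a hyperbolic $V$, the signs of $X$ and $Y$ are opposite, which gives the same result.
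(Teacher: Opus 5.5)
Your proof is correct and follows the paper's own argument. Like the paper, you compare the signature of $W$ with that of its orthogonal complement inside $V$, and you also note the same alternative via $T\Gamma=\det\Gamma\, I$, which forces $X$ and $Y$ to have opposite signs. Your explicit treatment of the degenerate (parabolic) case, via the radical of $W$ and the dimension count in the non-degenerate $V$, fills in what the paper leaves as ``analogous.''
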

\begin{proof}
We simply compute the signatures of the corresponding subspaces.
Let $\gP$ generate a subspace $W\subset V$. If, for example, $\gP$ is hyperbolic, 
then $W$ is indefinite, and $\sig W=(1,1)$. But then $W^\perp$ is a positive-definite line in $V$ that corresponds to an elliptic cycle $\ca=\gP^\ast$. The remaining cases are considered analogously.

Alternatively, we can make use of Theorems~\ref{type1} and \ref{menelaus-type}. The values $X$, $Y$ satisfy the 
identities
$$
X = \left(\begin{smallmatrix} x\\y\\z \end{smallmatrix}\right)^\top T
\left(\begin{smallmatrix} x\\y\\z \end{smallmatrix}\right),\quad 
Y = \left(\begin{smallmatrix} u\\v\\w \end{smallmatrix}\right)^\top \Gamma
\left(\begin{smallmatrix} u\\v\\w \end{smallmatrix}\right).
$$
We have also 
$$
\left(\begin{smallmatrix}u\\ v\\ w\end{smallmatrix}\right)=cT
\left(\begin{smallmatrix}x\\ y\\ z\end{smallmatrix}\right)
$$
for some $c\neq0$. This implies that
$$
X = \left(\begin{smallmatrix} x\\y\\z \end{smallmatrix}\right)^\top T
\left(\begin{smallmatrix} x\\y\\z \end{smallmatrix}\right)=
c^2\left(\begin{smallmatrix} u\\v\\w \end{smallmatrix}\right)^\top \Gamma T\Gamma
\left(\begin{smallmatrix} u\\v\\w \end{smallmatrix}\right)=
c^2\det\Gamma\cdot\left(\begin{smallmatrix} u\\v\\w \end{smallmatrix}\right)^\top \Gamma 
\left(\begin{smallmatrix} u\\v\\w \end{smallmatrix}\right)=
c^2\det\Gamma\cdot Y.
$$
Since $\det\Gamma<0$ we have that $X$ and $Y$ always have different signs or both are equal to zero.
\end{proof}

We also identify all generalized triangles generating degenerate subspaces.
These are exactly the Euclidean triangles.

\begin{theorem}
Three non-collinear cycles $\ca,\cb,\cc$ generate a degenerate subspace in $\Herm(2)$ iff $\ca,\cb,\cc$ share a common point.
\end{theorem}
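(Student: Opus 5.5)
Three non-collinear cycles $\ca,\cb,\cc\approx L,M,N$ span a $3$-dimensional subspace $V\subset\Herm(2)$. I will show that $V$ is degenerate exactly when $V^\perp$ (a line, since the form on $\Herm(2)$ is non-degenerate of signature $(3,1)$) is spanned by a rank-one matrix, and that rank-one matrices in $V^\perp$ correspond to common points of $\ca,\cb,\cc$.

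\emph{Step 1: points as isotropic vectors.} For a parabolic cycle $P\sim\left(\begin{smallmatrix}1 & -\oline{z}_0\\ -z_0 & |z_0|^2\end{smallmatrix}\right)$ (the point $z_0$; the point $\infty$ is handled similarly), $\la P,P\ra=-\det P=0$. A direct computation, using $\la M,N\ra=\frac12(\trr MN-\trr M\trr N)$ and the same kind of expansion as in the hyperbolic-geometry remark after Proposition~\ref{cycle-type}, gives $\la M,P\ra=-\frac12\left(kz_0\oline{z}_0+lz_0+m\oline{z}_0+n\right)$ up to sign and convention for $M=\left(\begin{smallmatrix}k&l\\ m&n\end{smallmatrix}\right)$. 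Hence $P\perp\ca$ iff $z_0\in\ca$. By Proposition~\ref{cycle-type}, the isotropic lines of the Minkowski space $\Herm(2)$ are exactly the parabolic cycles, that is, the points of $\M$.

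\emph{Step 2: degeneracy of $V$.} Since $\dim V=3$, the complement $V^\perp$ is a line $\R P$, and $V$ is degenerate iff $V\cap V^\perp\neq0$, i.e. iff $P\in V$. Because $P\in V^\perp$ means $P\perp P$, this is iff $\la P,P\ra=0$. In that case $V=P^\perp$, since both are $3$-dimensional and $V\subseteq P^\perp$. So $V$ is degenerate iff its orthogonal line is isotropic, iff $V^\perp$ is spanned by a point $z_0$.

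\emph{Step 3: conclude.} If $V^\perp=\R P$ with $P$ the point $z_0$, then $P\perp L,M,N$, so by Step 1 $z_0$ lies on all of $\ca,\cb,\cc$. Conversely, if $z_0$ is a common point, its matrix $P$ is orthogonal to $L,M,N$, so $P\in V^\perp$. Since $P$ is isotropic, Step 2 shows $V$ is degenerate.

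The only real work is the sign and normalization bookkeeping in Step 1, including the separate check that $\infty\leftrightarrow\left(\begin{smallmatrix}0&0\\0&1\end{smallmatrix}\right)$ is orthogonal exactly to cycles with $k=0$. Step 2 is pure linear algebra.

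As an alternative cross-check, one can send the common point to $\infty$ so that the three cycles become lines, with $k=0$. Their span then lies in $\{k=0\}$, which is the orthogonal complement of $\left(\begin{smallmatrix}0&0\\0&1\end{smallmatrix}\right)$. This agrees with the Euclidean computation $X=(x\sin\alpha+\dots)^2$ found earlier.
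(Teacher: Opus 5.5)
Your proof is correct, and it takes a genuinely different route from the paper's.

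The paper argues through angles:
1. It notes that a degenerate $V$ has signature $(2,0)$, so every pairwise M\"obius cosine lies in $[-1,1]$ and can be written as $-\cos$ of an angle.
2. It solves $\det\Gamma=0$ as a quadratic in $\cos\gamma$, obtaining $\cos\gamma=-\cos(\alpha\pm\beta)$.
3. It realizes a proper triangle bounded by the three cycles and reads this relation in two ways. Either $\alpha+\beta+\gamma=\pi$, giving a Euclidean triangle with all three cycles through $\infty$. Or $\alpha=\beta+\gamma-\pi$ up to permutation, and then Theorem~\ref{a=b+c-Pi} puts a vertex on all three cycles.
4. The converse is the Euclidean angle sum, after sending the common point to $\infty$.

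You instead stay in the Minkowski space $\Herm(2)$. The radical of $V$ is $V\cap V^\perp$, where $V^\perp=\R P$ is a line. This radical is nonzero exactly when $P$ is isotropic, and the isotropic vectors are exactly the points. The formula $\la M,P\ra=-\frac12(kz_0\oline{z}_0+lz_0+m\oline{z}_0+n)$ then turns orthogonality into incidence; your computation and your separate check at $\infty$ are both right.

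What each route buys:
\begin{itemize}
\item Your route is shorter and uniform. It needs no triangle, no case split by angle relations, and no separate handling of tangencies, zero angles, or cycles through $\infty$, all of which the paper's case analysis has to absorb.
\item Your route also identifies the common point explicitly as the line $V^\perp$, so that point is unique.
\item The paper's route gives a geometric reading of degeneracy in terms of the triangle's angles, tying it to the Euclidean angle sum and to Theorem~\ref{a=b+c-Pi}.
\end{itemize}

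One wording fix in your Step~2. The phrase ``$P\in V^\perp$ means $P\perp P$'' should say that $P\in V$ together with $P\in V^\perp$ forces $\la P,P\ra=0$. Also, $V=P^\perp$ holds always, not just ``in that case'' (since $V\subseteq P^\perp$ and both are $3$-dimensional). That is exactly what gives the converse implication from $\la P,P\ra=0$ to $P\in V$.
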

\begin{proof}
A degenerate 3-dimensional subspace of $\Herm(2)$ has signature $(2,0)$ only. Therefore, all pencils are elliptic or parabolic and we also have $|\la\ca,\cb\ra|\leq 1$,
$|\la\cb,\cc\ra|\leq 1$, $|\la\ca,\cc\ra|\leq 1$. Let $\la\ca,\cb\ra=-\cos\alpha$, 
$\la\ca,\cc\ra=-\cos\beta$, $\la\cb,\cc\ra=-\cos\gamma$, for some $\alpha,
\beta,\gamma\in[0,\pi]$. Then $\det\Gamma=0$ implies
$$
\cos^2\alpha+\cos^2\beta+\cos^2\gamma+2\cos\alpha\cos\beta\cos\gamma=1.
$$
Consider this and solve it as a quadratic equation for $\cos\gamma$. The discriminant is equal to
$$
4\cos^2\alpha\cos^2\beta - 4(\cos^2\alpha+\cos^2\beta-1) = 
4(1-\cos^2\alpha)(1-\cos^2\beta) = 4\sin^2\alpha\sin^2\beta.
$$
Therefore, 
$$
\cos\gamma = -\cos\alpha\cos\beta\pm\sin\alpha\sin\beta = -\cos(\alpha\pm\beta).
$$
Every two cycles intersect, so there exists a proper triangle $\triag ABC$ bounded by some segments of $\ca$, $\cb$, $\cc$. Change the orientation of the lines if necessary to assume that triangle is properly oriented. Then its inner angles are $\alpha$, $\beta$, $\gamma$. Then the condition $\cos\gamma=-\cos(\alpha\pm\beta)$ implies that one of the following identities holds: $\alpha+\beta+\gamma=\pi$, $\alpha=\beta+\gamma-\pi$, $\beta=\alpha+\gamma-\pi$, or $\gamma=\alpha+\beta-\pi$.

In the first case if we assume that $\ca$ and $\cb$ are straight lines, then it would immediately follow that $\cc$ is also a straight line and then all cycles pass through $\infty$. In the remaining cases Theorem~\ref{a=b+c-Pi} implies that one of the vertices $A, B, C$ belongs to all cycles.

Conversely, assume that $\ca,\cb,\cc$ have a common point. Without loss of generation, assume it is $\infty$. Then we have a generalized Euclidean triangle with angles $\alpha,\beta,\gamma$ satisfying $\alpha+\beta+\gamma=\pi$. Then
$\cos\gamma=-\cos(\alpha+\beta)$ and this implies the identity.
\end{proof}

\section{Isogonal conjugation}
Finally, we can also generalize the well-known partial transformation of isogonal
conjugation to M\"obius triangles. Let $\cn_a, \cn_b,\cn_c$ be three cevians satisfying the conditions of Ceva's or Menelaus's Theorem. Then we can flip them around the corresponding bisectors and get new cevians $\cm_a, \cm_b, \cm_c$. But since $(\ca,\cb;\cm_c)=(\ca,\cb;\cn_c)^{-1}$, and etc; we have that $\cm_a, \cm_b, \cm_c$ also satisfy the conditions of the same theorem (Figure~\ref{fig:isogonal}).

\begin{figure}[h]
\[
\begin{tikzpicture}
\tikzstyle{vertex}=[circle, fill, inner sep=1.5pt]
\draw [thick](1.410000,6.495000) arc (50.176327:28.661241:17.528410);
\node [vertex] at (1.410000,6.495000) {};
\node [vertex] at (5.565000,1.440000) {};
\draw [thick](1.410000,6.495000) arc (95.176327:65.833604:17.053332);
\node [vertex] at (9.930000,5.070000) {};
\draw [thick](5.565000,1.440000) arc (148.661241:110.833604:8.757112);
\draw (5.565000,1.440000) arc (18.577333:28.577333:36.610349);
\draw (1.410000,6.495000) arc (64.456354:38.456354:16.215145);
\draw (9.930000,5.070000) arc (79.924268:116.924268:12.207683);
\draw (5.565000,1.440000) arc (158.745149:125.745149:11.191805);
\draw (1.410000,6.495000) arc (80.896300:50.896300:16.060508);
\draw (9.930000,5.070000) arc (96.742940:136.742940:9.695034);

\draw [dashed](5.565000,1.440000) arc (178.661241:167.661241:30.310991);
\draw [dashed](1.410000,6.495000) arc (72.676327:42.676327:15.971667);
\draw [dashed](9.930000,5.070000) arc (88.333604:128.333604:10.691041);

\node [vertex] at (4.275,4.74) {};
\node [vertex] at (7.2342,4.34669) {};

\end{tikzpicture}
\]
\caption{Isogonal conjugation for pencils in a M\"obius triangle.}
\label{fig:isogonal}
\end{figure}

Therefore, we have a partial transformations on the  planes $\PP(\ca,\cb,\cc)$ and $\GG(\ca,\cb,\cc)$ -- the \textit{isogonal conjugation}; it is defined for both pencils and cycles associated with the triangle. We denote the images by $\gP^\#$ and $\cn^\flat$ respectively. The following statements are trivial.

\begin{theorem}
Isogonal conjugation is not defined on the pencils $\gA, \gB,\gC$.
If $\ca\in\gP$ then $\gP^\#=\gA$, and similarly for $\cb,\cc$.
For the remaining pencils isogonal conjugation is an involution: $\gP^{\#\#}=\gP$.
Analogously, isogonal conjugation is not defined on the cycles $\ca,\cb,\cc$.
If $\cn\in\gA$ then $\cn^\flat=\ca$, and similarly for $\gB$, $\gC$.
For the remaining cycles isogonal conjugation is an involution: $\cn^{\flat\flat}=\cn$. 
\end{theorem}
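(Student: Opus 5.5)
We will work entirely in trilinear coordinates with respect to $\ca,\cb,\cc\approx L,M,N$. There the isogonal conjugation becomes an explicit rational map on $\R P^2$. For a pencil $\gP=(x:y:z)$ not containing any of $\ca,\cb,\cc$, the cevians of $\gP$ in $\gA,\gB,\gC$ are $\cn_a=[0:z:-y]$, $\cn_b=[z:0:-x]$, $\cn_c=[y:-x:0]$. Their splitting factors are $(\cb,\cc;\cn_a)=y/z$ and so on. Reflecting in the bisectors inverts each splitting factor, by the characterization of symmetric cevians in the section on digons (and its analogues for monogons and annuli, where the bisector still corresponds to $\lambda=\pm1$ and symmetry means $\lambda\mapsto\lambda^{-1}$). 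So the new cevians are $[0:y:-z]$, $[x:0:-z]$, $[x:-y:0]$. These lie in the pencil $(1/x:1/y:1/z)=(yz:xz:xy)$. Hence, where defined,
$$
\gP^\#=(yz:xz:xy).
$$
The same computation with the Menelaus data gives, for a cycle $\cn=[u:v:w]$ with cevians $\cn_a=[0:v:w]$ etc.,
$$
\cn^\flat=[vw:uw:uv].
$$
This is the standard quadratic Cremona transformation, and the theorem becomes a statement about it.

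Next I examine where the map is undefined or degenerate.

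1. The pencils $\gA=(1:0:0)$, $\gB$, $\gC$ have two zero coordinates, so $(yz:xz:xy)=(0:0:0)$ and the map is undefined. Geometrically, $\gA$ contains $\cb$ and $\cc$, so its cevians in $\gB$ and $\gC$ coincide with sides, and reflecting them produces nothing meaningful.
2. If $\ca\in\gP$ but $\gP\neq\gB,\gC$, then $x=0$ and $y,z\neq0$. The formula gives $(yz:0:0)=(1:0:0)=\gA$. I will also check this geometrically: the cevian of $\gP$ in $\gB$ is $\ca$ itself, and its reflection in the bisector of $(\ca,\cc)$ is $\cc$. Similarly, the cevian in $\gC$ reflects to $\cb$, so the conjugate pencil contains $\cb,\cc$ and is $\gA$.
3. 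For $xyz\neq0$, applying the map twice gives $(x^2yz:xy^2z:xyz^2)=(x:y:z)$, so it is an involution.

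The statements for cycles are obtained from these by the dual computation, in which $\cn\in\gA$ plays the role of $u=0$.

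I expect the main obstacle to be justifying that "flipping around the bisector" gives exactly $\lambda\mapsto\lambda^{-1}$ uniformly for digons, monogons and annuli. The paper states this only for digons. For the other two cases I would argue via the inversion $\sigma_\cl$. If $\ca,\cb\approx M,N$ and $\cl$ is the midcycle, then $\sigma_\cl$ swaps $M$ and $N$ (up to the orientation sign captured by $\la\ca,\cb\ra$). By linearity it sends $M-\lambda N$ to a multiple of $N-\lambda M$, that is, to splitting factor $\lambda^{-1}$ by Proposition~\ref{splitting-prop}. This is cleaner than the case analysis, and it is the argument I would write down first.
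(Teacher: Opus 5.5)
Your proposal is correct and follows essentially the paper's own (implicit) argument. The paper calls the statement trivial, resting on $(\ca,\cb;\cm_c)=(\ca,\cb;\cn_c)^{-1}$ and on the coordinate formulas $(x:y:z)\mapsto(\tfrac{1}{x}:\tfrac{1}{y}:\tfrac{1}{z})$ and $[u:v:w]\mapsto[\tfrac{1}{u}:\tfrac{1}{v}:\tfrac{1}{w}]$ stated right after it; you derive exactly these and read the three claims off the points $(1:0:0)$ etc., the lines $x=0$ etc., and the locus $xyz\neq0$.

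Your inversion argument is a worthwhile addition: $\sigma_{\cl}$ acts linearly on $\Herm(2)$ with $\sigma_{\cl}(M)=\epsilon N$ and $\sigma_{\cl}(N)=\epsilon M$ for the same sign $\epsilon$, because $\sigma_{\cl}$ is an involution. This justifies $\lambda\mapsto\lambda^{-1}$ uniformly for digons, monogons and annuli, whereas the paper only asserts it for digons.
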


\begin{prop}
In trilinear coordinates the isogonal conjugation transformations $\gP\mapsto\gP^\#$ and $\cn\mapsto\cn^\flat$ are given by the maps
$$
(x:y:z)\mapsto(\tfrac{1}{x}:\tfrac{1}{y}:\tfrac{1}{z}),
$$
and
$$
[u:v:w]\mapsto[\tfrac{1}{u}:\tfrac{1}{v}:\tfrac{1}{w}].
$$
\end{prop}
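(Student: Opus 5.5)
The plan is to reduce both maps to the fact, noted just before the statement, that reflecting a cevian in the bisector of its digon (or annulus, or monogon) inverts its splitting factor: $(\ca,\cb;\cm_c)=(\ca,\cb;\cn_c)^{-1}$, and similarly for the other two pairs. Everything then comes down to reading off how trilinear coordinates encode these splitting factors.

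\emph{Pencils.} Take $\gP=(x:y:z)$ with $\gP$ different from $\gA,\gB,\gC$. The cevians of $\gP$ are the cycles $\gP\cap\gA=[0:z:-y]$, $\gP\cap\gB=[z:0:-x]$ and $\gP\cap\gC=[y:-x:0]$, as in the proof of the duality proposition. Using $\cn=[1:-\lambda:0]$ when $(\ca,\cb;\cn)=\lambda$, and its analogues for the other two pencils, these have splitting factors
$$(\cb,\cc;\cn_a)=\frac{y}{z},\quad (\cc,\ca;\cn_b)=\frac{z}{x},\quad (\ca,\cb;\cn_c)=\frac{x}{y}.$$
This agrees with the formulas stated after Ceva's Theorem.

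Reflecting each cevian in its bisector inverts these three ratios, giving $z/y$, $x/z$ and $y/x$. By Ceva's Theorem the reflected cevians again lie in one pencil. That pencil must have coordinates $(x':y':z')$ with $y'/z'=z/y$, $z'/x'=x/z$ and $x'/y'=y/x$, and $(1/x:1/y:1/z)$ satisfies all three. When $xyz\neq 0$ the ratios determine the pencil uniquely, so $\gP^\#=(1/x:1/y:1/z)$.

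\emph{Cycles.} Take $\cn=[u:v:w]$. Its associated cevians are the intersections $\GG(\ca,\cn)\cap\gA$, $\GG(\cb,\cn)\cap\gB$ and $\GG(\cc,\cn)\cap\gC$. Since $\GG(\ca,\cn)=(0:w:-v)$, the first of these is $[0:v:w]$, which has $(\cb,\cc;\cn_a)=-w/v$. Likewise $(\cc,\ca;\cn_b)=-u/w$ and $(\ca,\cb;\cn_c)=-v/u$, matching the formulas before Theorem~\ref{menelaus-type}.

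Inverting these gives $-v/w$, $-w/u$ and $-u/v$. By Menelaus's Theorem the three new pencils $\GG(\ca,\cm_a)$, $\GG(\cb,\cm_b)$, $\GG(\cc,\cm_c)$ concur in a cycle $\cn^\flat=[u':v':w']$. The same relations force $w'/v'=v/w$ and its cyclic analogues, so $\cn^\flat=[1/u:1/v:1/w]$.

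\emph{Degenerate cases.} The main obstacle is the set of points where some coordinate vanishes. Here I would interpret the maps projectively, multiplying through by $xyz$ (respectively $uvw$) to get $(yz:xz:xy)$ and $[vw:uw:uv]$. I would then check these against the preceding theorem on where isogonal conjugation is undefined or collapses. For example, $x=0$ means $\ca\in\gP$, and then $(yz:0:0)=(1:0:0)=\gA$, as that theorem requires; the other coordinates and the cycle case work the same way. If two coordinates vanish, the map is undefined, which corresponds to $\gP$ being one of $\gA,\gB,\gC$ (respectively $\cn$ being one of $\ca,\cb,\cc$).
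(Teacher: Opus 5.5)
Your proposal is correct and follows the route the paper takes implicitly; the paper calls these facts trivial and gives no separate proof. In both, the coordinates of $\gP$ and $\cn$ are read off from the splitting factors of the associated cevians ($y/z=(\cb,\cc;\cn_a)$ in the Ceva setting, $w/v=-(\cb,\cc;\cn_a)$ in the Menelaus setting), and reflection in the bisectors inverts these factors, while your projective treatment of a vanishing coordinate via $(yz:xz:xy)$ and $[vw:uw:uv]$, checked against the preceding theorem, is a sensible completion of what the paper leaves unstated.
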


\begin{corollary}
The only self-conjugate pencils are the incenter $\gI$ and the excenters 
$\gI_a$, $\gI_b$, $\gI_c$.
\end{corollary}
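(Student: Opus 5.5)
We need fixed points of the map $(x:y:z)\mapsto(\tfrac1x:\tfrac1y:\tfrac1z)$ from the preceding Proposition, restricted to the pencils on which isogonal conjugation is actually defined. By the preceding Theorem, conjugation is undefined on $\gA,\gB,\gC$, i.e.\ on $(1:0:0)$, $(0:1:0)$, $(0:0:1)$. A pencil passing through $\ca$ (coordinate $x=0$) is sent to $\gA$, so it can be self-conjugate only if it equals $\gA$, which is excluded. Hence any self-conjugate pencil $\gP=(x:y:z)$ has $xyz\neq0$, and we may apply the coordinate formula directly.

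The fixed-point condition is $(x:y:z)=(\tfrac1x:\tfrac1y:\tfrac1z)$. That is, there is $t\neq0$ with $x=t/x$, $y=t/y$, $z=t/z$, so $x^2=y^2=z^2=t$. Then $t>0$, and after rescaling $x,y,z\in\{\pm1\}$. Modulo the overall sign this gives exactly four classes: $(1:1:1)$, $(1:-1:-1)$, $(-1:1:-1)$, $(-1:-1:1)$. These are $\gI$, $\gI_a$, $\gI_b$, $\gI_c$ as identified in the section on bisectors ($\gI=(1:1:1)$, $\gI_a=(1:-1:-1)$, and symmetrically for the others).

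Conversely, each of these four is visibly fixed by the map and has no zero coordinate, so conjugation is defined on it and it is self-conjugate. Geometrically, the bisectors have splitting factor $1$ and the external bisectors $-1$, and both are invariant under $\lambda\mapsto\lambda^{-1}$.

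The only point needing care is the first step: confirming that pencils with a vanishing coordinate are genuinely excluded, using the statement that $\gP^\#=\gA$ whenever $\ca\in\gP$. Everything else is immediate algebra.
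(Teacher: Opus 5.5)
Your proposal is correct and is essentially the paper's argument, which the paper leaves implicit: the corollary is read off from the coordinate formula $(x:y:z)\mapsto(\tfrac1x:\tfrac1y:\tfrac1z)$. Its fixed points force $x^2=y^2=z^2$, which gives exactly the four sign classes $\gI=(1:1:1)$, $\gI_a=(1:-1:-1)$, $\gI_b=(-1:1:-1)$ and $\gI_c=(-1:-1:1)$. Your preliminary step, showing that pencils with a vanishing coordinate are sent to $\gA$, $\gB$ or $\gC$ and therefore are not fixed, is a detail the paper does not spell out.
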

Similarly, the only self-conjugate cycles are $[1:1:1]$, $[-1:1:1]$, $[1:-1:1]$, $[1:1:-1]$, but they don't have any special names.

Isogonal conjugation on the M\"obius plane, of course, generalizes the same concept for the standard geometries. For the spherical and hyperbolic geometries the planes $\GG(\ca,\cb,\cc)$ and $\PP(\ca,\cb,\cc)$ are both naturally identified with the space $\LL$ of linear pencils, but the identification gives two different partial transformations on $\LL$, so when taking isogonal conjugate we must remember in which plane (pencil or cycle) we are. That is why we have introduced two types of notation.

We have the following negative results:
\begin{itemize}
\item[$(i)$] isogonal conjugation does not preserve the type. An elliptic pencil can be conjugate to a hyperbolic one, and a real cycle to a virtual.

\item[$(ii)$] isogonal conjugation does not commute with the complement, i.e. $(\gP^\ast)^\flat\neq(\gP^\#)^\ast$ in general.

\item[$(iii)$] isogonal conjugation cannot be computed pointwise, i.e. 
$\gP^\#\neq\{\ca^\flat|\ca\in\gP\}$.
\end{itemize}

\end{document}